\newtheorem{Theorem}{Theorem}[section]
\newtheorem{theorem}[Theorem]{Theorem}
\newtheorem{corollary}{Corollary}[section]
\newtheorem{proposition}{Proposition}[section]
\newtheorem{lemma}{Lemma}[section]
\newtheorem{remark}{Remark}[section]
\newtheorem{example}{Example}[section]
\newcommand{\R}{{\mathbb R}}
\newcommand{\N}{{\mathbb N}}
\newcommand{\eps}{\varepsilon}
\def\+#1{{#1}^+}
\numberwithin{equation}{section}
\newcommand{\be}{\begin{equation} \label}
\newcommand{\ee}{\end{equation}}
\def\e{\varepsilon}
\def\eps{\varepsilon}
\def\Rn{{\R}^n}
\def\Rm{{\R}^m}
\def\IR {\int_{D_R}}
\def\IS {\int_\Sigma}
\begin{document}

\title[Liouville theorems for elliptic problems]{Liouville theorems and universal estimates \\ 
for superlinear elliptic problems \\ without scale invariance} 
\author[Pavol Quittner and Philippe Souplet]{Pavol Quittner$^{(1)}$ and Philippe Souplet$^{(2)}$}

\thanks{$^{(1)}$Department of Applied Mathematics and Statistics, Comenius University,
Mlynsk\'a dolina, 84248 Bratislava, Slovakia. Email: quittner@fmph.uniba.sk} 

\thanks{$^{(2)}$Universit\'e Sorbonne Paris Nord, CNRS UMR 7539, LAGA,
93430 Villetaneuse, France. Email: souplet@math.univ-paris13.fr}

\date{}

\setcounter{tocdepth}{1}

\vspace*{-1cm}

\begin{abstract}
 We give applications of known and new Liouville type theorems to universal singularity and decay estimates
for non scale invariant elliptic problems,
including Lane-Emden and Schr\"odinger type systems.
 This applies to various classes of nonlinearities with regular variation
and possibly different behaviors at $0$ and $\infty$.
To this end, we adapt the method from \cite{SouDCDS} to elliptic systems,
which relies on a generalized rescaling technique and on doubling arguments from~\cite{PQS1}.

 This is in particular facilitated by new
Liouville type theorems in the whole space and in a half-space, for elliptic problems without scale invariance,
 that we obtain.
Our results apply to some non-cooperative systems, 
for which maximum principle based techniques such as moving planes do not apply.
 To prove these Liouville type theorems, we employ two methods, respectively based on
 Pohozaev-type identities combined with functional inequalities on
 the unit sphere, and on reduction to a scalar equation by proportionality of components.

  In turn we will survey the existing methods for proving Liouville-type theorems for superlinear elliptic equations and systems, 
 and list some of the typical existing results for (Sobolev subcritical) systems.
 
 In the case of scalar equations, we also revisit the classical Gidas-Spruck integral Bernstein method,
providing some improvements which turn out to be efficient for certain nonlinearities,
and we next compare the performances of various methods on a benchmark example.

\vskip 0.2cm
{\bf AMS Classification:} Primary: 35J60; 35J47. Secondary: 35B45; 35B40; 35B53.
\vskip 0.1cm

{\bf Keywords:} Nonlinear elliptic systems, Liouville-type theorem, universal bounds, decay, singularity estimates
\end{abstract}

\maketitle
\tableofcontents

\section{Introduction}
\label{intro}

\vskip 0.5mm

\subsection{Background and motivation}
\label{intro1}

The present paper is concerned with Liouville type theorems and universal estimates for elliptic problems, especially systems.
Throughout this paper, by a {\it solution} we mean a nonnegative strong solution\footnote{Namely, $u\in W^{2,r}_{loc}$ for all $r\in(1,\infty)$ and the equation is satisfied a.e. 
Moreover, when boundary conditions are imposed, a strong solution is assumed to be continuous up the boundary. 
Of course strong solutions are classical if the nonlinearity is locally H\"older continuous (which we do not assume in general).}, unless otherwise specified. 
To begin with, denoting the Sobolev exponent by 
$$p_S=
\begin{cases}
\frac{n+2}{n-2},&\hbox{ if $n\ge 3$} \\
\noalign{\vskip 1mm}
\infty,&\hbox{ if $n\le 2$,}
\end{cases}
$$
let us first recall the celebrated Gidas-Spruck elliptic Liouville theorem \cite{GSa}
(see also \cite{BVV} for a simplified proof).

\medskip
\noindent {\bf Theorem~A.}
{\it Let $1<p<p_S$. Then the equation 
\be{ellup}
-\Delta u=u^p
\ee
has no nontrivial solution in $\R^n$.}
\medskip

\noindent The exponent $p_S$ is critical for nonexistence since, as is well known,
there exist positive solutions
of \eqref{ellup}
in $\R^n$ whenever $n\ge 3$ and $p\ge p_S$ 
(see \cite[Section~9]{QSb} and the references therein; the solution can even be taken bounded and radial).
Concerning Liouville type theorems for semilinear equations with more general nonlinearities, 
let us mention the following important 
generalization of Theorem~A, due to~\cite{LZ}.

\medskip
\noindent {\bf Theorem~B.}
{\it Assume that $f\in C([0,\infty))$, with $f(s)>0$ for all $s>0$ and, if $n\ge 3$,
\be{condLZ}
\hbox{$s^{-p_S}f(s)$ is nonincreasing and nonconstant.}
\ee
 Then the equation
 \be{S-LiouvLZ}
-\Delta u=f(u),\qquad x\in \Rn
\ee 
has no nontrivial solution.}
\medskip

Elliptic Liouville theorems, in combination with rescaling techniques, have many applications. 
Theorem~A (and its half-space analogue) was used in \cite{GSb}, as well as in many subsequent papers,  to show
a priori bounds and existence for elliptic Dirichlet problems.
In~\cite{PQS1}, by combining the rescaling method with a doubling argument,
it was shown that the Liouville property in Theorem~A leads to universal estimates
for singularities and decay in unbounded domains.
Liouville type theorems and their applications have been extended in various directions,
for instance systems or quasilinear problems (cf.,~e.g.,~\cite{Fel, RZ, SZ, PQS1, SouAdv, QSb, NNPY} and the references therein,
and see also subsection~\ref{survey} below).

\smallskip
In \cite{SouDCDS} (see also \cite{GIR} for a first step in that direction)
was investigated the possibility, in connection with 
Liouville type theorems, to generalize rescaling methods
so as to obtain a priori bounds and universal singularity or decay estimates
for elliptic (and parabolic) equations which are {\it not} scale invariant, even asymptotically,
and whose behavior can be quite far from power-like.
This led to several natural classes of nonlinearities, including nonlinearities with regular variation,
and motivated the need for further Liouville type theorems for equations with nonhomogeneous nonlinearities.
Let
\be{hypcontpos}
\hbox{$f\in C([0,\infty))$ with
$f(s)>0$ for all $s>0$,}
\ee
and recall that $f$ has {\it regular variation at $\infty$} (resp., $0$) with index $q\in\R$ if 
\be{hypregulvar}
L(s):= s^{-q} f(s) \hbox{ satisfies } 
\lim_{\lambda\to \infty \atop ({\rm resp.,}\, \lambda\to0)} \frac{L(\lambda s)}{L(\lambda)}=1\ \hbox{ for each $s>0$.}
\ee
 Such functions $L$ include, for instance, logarithms and their powers and iterates (see, e.g., \cite[Section~2.3]{SouDCDS} for more examples).
In particular the following connection was obtained in~\cite{SouDCDS} between Liouville theorems and universal bounds, 
extending \cite{PQS1} to semilinear elliptic equations without scale invariance.

\medskip

\goodbreak
\noindent {\bf Theorem~C.}
{\it Assume \eqref{hypcontpos}.
Suppose that
\be{HypLiouville}
\hbox{$-\Delta v=f(v)$
has no nontrivial bounded solution in $\Rn$}
\ee
and
\be{HypLiouvilleReg}
\hbox{$f$ has regular variation at $0$ and $\infty$ with indices in $(1,p_S)$.}
\ee
There exists a constant $C=C(n,f)>0$ such that, for any domain $\Omega\subset \Rn$ and any 
 solution of $-\Delta u=f(u)$ in $\Omega$, we have the estimate 
\be{EstimA}
{f(u(x))\over u(x)}\le {C\over{\rm dist}^2(x,\partial\Omega)},\quad\hbox{ for all $x\in \Omega$.}
\ee
}

 \smallskip
 
One of the aims of the present paper is to develop such tools and estimates
for non scale-invariant elliptic {\it systems}.
We will take this opportunity to summarize and compare the various existing methods to prove Liouville-type theorems
for elliptic equations and systems.

A second motivation is the companion paper \cite{QSparab23}, where we develop similar 
ideas for parabolic problems.
Since we use there the approach from \cite{Q16, Q21, Q21b}, which enables one to derive parabolic from elliptic Liouville theorems,
this will be facilitated by the results in the present paper.

Specifically, we shall consider elliptic systems of the form
\be{GenSystem}
-D\Delta U=f(U),
\quad x\in \Rn,
\ee
where $n\ge 2$, $m\ge 1$, $D$ is an $m\times m$ diagonal matrix with positive (constant) entries $d_i>0$,
$U:\Rn\to\Rm$, $f\in C(\R^m;\R^m)$ and
where we denote $\Delta U=(\Delta u_1,\dots,\Delta u_m)^{T}$.
The components of $f$ will be denoted by $f_i$. 
We shall also consider the corresponding half-space problem
\be{GenSystemB}
\left\{
  \begin{aligned}
 \hfill -D\Delta U&=f(U), \hfill&\quad x\in \Rn_+, \hfill \\ 
 \noalign{\vskip 1mm}
 \hfill U&=0, \hfill&\quad x\in \partial\Rn_+,\hfill 
   \end{aligned}
   \right.
\ee
where $\Rn_+:=\{x=(x_1,\dots,x_n)\in\R^n:x_n>0\}$.

\subsection{A brief survey of existing methods and results}
\label{survey}
We first give a brief survey of existing methods for proving 
Liouville-type theorems for superlinear elliptic equations and 
systems (we warn the reader that our list of references is not exhaustive).

\smallskip

$\bullet$ Scalar equations: whole space
\vskip 2pt

\begin{itemize}
\item[(a1)] Integral Bernstein method (nonlinear integral estimates obtained by using Boch\-ner's identity, power change of dependent variable, 
and multipliers involving powers of $u$ and cut-offs), and its variants 
 \cite{GSa, BVV, BVGHV, BVGHV2}

\vskip 2pt

\item[(a2)] Direct Bernstein method 
(application of the maximum principle to auxiliary functionals involving the unknown function and its gradient)
\cite{Lio85, JLi, BVGHV,  FPS2, BVGHV2, Lu}

\vskip 2pt

\item[(a3)]   Moving planes methods 

\vskip 2pt

\begin{itemize}
\item[(a3.1)]via symmetry, using the Kelvin transform 
 \cite{CLi,Bia}

\vskip 1pt

\item[(a3.2)]moving spheres \cite{LZ}
\end{itemize}

\end{itemize}
\smallskip

\goodbreak
$\bullet$  Scalar equations: half-space

\vskip 2pt

\begin{itemize}
\item[(b1)]  Moving planes methods 

\vskip 2pt
\begin{itemize}
\item[(b1.1)]via symmetry and reduction to a one-dimensional problem on a half-line,
sometimes using the Kelvin transform  \cite{GSb, BCN, FPS}
\vskip 1pt

\item[(b1.2)]via monotonicity in the normal direction and reduction to an $(n-1)$ dimensional problem in the whole space,
sometimes using notions of stability \cite{Dan, BCN, Far1, FV07}
\vskip 1pt

\item[(b1.3)]via monotonicity and convexity in the normal direction,
sometimes using notions of stability \cite{CLZ,DSS}
  \end{itemize}

\vskip 2pt

\item[(b2)] Stability estimates, combined with rescaling and monotonicity formulas  \cite{DFT}

\end{itemize}

\smallskip

$\bullet$ Systems: whole space
\vskip 2pt

\begin{itemize}
\item[(a1)] Integral Bernstein method \cite{BVR}

\vskip 2pt

\item[(a3)]   Moving planes methods 

\vskip 2pt

\begin{itemize}
\item[(a3.1)]via symmetry, sometimes using the Kelvin transform 
\cite{dFF,Zou,GuLi}

\vskip 1pt

\item[(a3.2)]moving spheres 
\cite{RZ, BM, Zou2}

\end{itemize}

\vskip 2pt

\item[(a4)] Pohozaev-type identities combined with functional inequalities 
on the unit sphere\footnote{This method also works for scalar equations, but its primary interest is for systems.}
 \cite{SZ96, SouAdv, Phan1, QS-CMP,  SouNHM, AYZ, Faz, FazGh, Phan2, LZZ,DYZ,YZ}

\vskip 2pt

\item[(a5)] Reduction to a scalar equation by proportionality of components
\cite{Lou, QS12, DAm, MSS, Far}

\end{itemize}

\smallskip
$\bullet$ Systems: half-space

\vskip 2pt

\begin{itemize}
\item[(b1)]  Moving planes methods 

\vskip 2pt
\begin{itemize}

\item[(b1.2)]via monotonicity in the normal direction and reduction to an $(n-1)$ dimensional problem in the whole space
 \cite{BiMi}
\vskip 1pt

\item[(b1.3)]via monotonicity and convexity in the normal direction
\cite{CLZ}

\vskip 1pt

\item[(b1.4)] moving spheres 
\cite{RZ}

 \end{itemize}

\vskip 2pt

\item[(b3)]Reduction to whole space by a Harnack inequality, a doubling argument and variational identities
 \cite{DWW,DW}

\vskip 2pt

\item[(b4)]  Reduction to a scalar equation by proportionality of components
\cite{Lou, MSS}
\end{itemize}

\begin{remark} \label{remOtherMeth}
(i) Some other methods are suitable for elliptic inequalities,
but usually require non-optimal growth restrictions on the nonlinearity when one considers equations.
Let us mention the rescaled test-function method 
(see \cite{MP} and, e.g., \cite[Theorem~8.4]{QSb}) and the spherical maximum method
(see \cite{AS} and \cite[Remark 8.5a(ii)]{QSb} and the references therein).
 Also one can sometimes prove a Liouville theorem by reducing 
to the radial case after applying a symmetry result in the whole space based on moving planes (see, e.g.,~\cite{BuSi}).

\smallskip

(ii) Some other methods apply to special classes of solutions, such as stable solutions or solutions 
stable outside of a compact (see \cite{Far1} and \cite[Remark 8.5(vi)]{QSb} and the references therein).
See also \cite[Appendix~H]{QSb} for some other, more specific, methods.
Also, there are other methods which essentially apply to dissipative nonlinearities or nonlinearities with sublinear or linear growth at infinity. 
We will not discuss them, since we are mainly interested in (non-dissipative) superlinear problems.
\end{remark}

 Let us next mention some of the typical existing results of Liouville type for (Sobolev subcritical) 
superlinear systems in the whole space.
\vskip 2pt

One of the best understood cases is that of subcritical cooperative nonlinearities, namely those such that 
$$\hbox{$t\to t^{-p_S}f(tu)$ is $>0$ nonincreasing in $[0,\infty)$ component-wise for each $u\in(0,\infty)^m$.}$$
  In particular the results in \cite{Fel,RZ,Zou2}, obtained by moving planes or moving spheres methods (a3), give a fairly complete picture for that class
  (some special cases have also been treated by the method (a1) in \cite{BVR}).
\vskip 2pt

The following class of Schr\"odinger type nonlinearities has also received a lot of attention:
\be{Schrod}
f_i(u)=\sum_{1\le i,j\le m}\beta_{ij}u_i^qu_j^{q+1},\quad i=1,\dots,m
\ee
 with $q>0$, $\beta_{ij}\in\R$  (note that this is a scale-invariant problem).
While the cooperative case (i.e.~$\beta_{ij}\ge 0$ for $i\ne j$) is essentially covered by \cite{RZ,Zou2}
(in the subcritical range $2q+1<p_S$), the noncooperative cases are only partially understood and depend on delicate 
properties of the interaction matrix $(\beta)$.
Optimal results (in the full Sobolev subcritical range) for $m=2$ and low dimensions were obtained in \cite{QS-CMP} by the method (a4),
which strongly relies on the variational structure of the problem.
Some noncooperative cases, as well as other systems involving sums and/or products of 
powers\footnote{possibly noncooperative and nonvariational, including some Lotka-Volterra type systems},
have been treated in the full Sobolev subcritical range by the method (a5) in \cite{Lou, QS12, MSS},
and in the half-space case by the method (b3) in \cite{DW}.
Other Liouville-type results for \eqref{Schrod} can be found in, e.g., \cite{DWW,TTVW,SouNHM} (but they do not cover the full Sobolev subcritical range).

\vskip 2pt

Another system of particular interest, as a model of Hamiltonian (as opposed to gradient) system,
is the Lane-Emden system $-\Delta u = v^p$, $-\Delta v=u^q$.
Although cooperative, a significant difficulty is that the two equations have different degrees of homogeneity, 
and the Sobolev subcritical condition is given by a hyperbola in $(p,q)$.
Several results have been obtained by methods (a3) and (a4) -- see Remark~\ref{remLE}(i) below for details.

\begin{remark} \label{remRestrictions}
(i) The method of Pohozaev-type identities combined with functional inequalities 
on the unit sphere (a4) doesn't seem to have been considered for non scale invariant problems nor for half-spaces so far.
As we will see, it will turn out to be applicable, after some modifications, and lead to some new Liouville-type theorems
for systems. 
We will also study the application to non scale invariant systems of the method 
of proportionality of components (cf.~(a5)-(b4)),
which was so far mostly considered for scale invariant nonlinearities.

\smallskip

(ii) On the contrary, the Bernstein methods (a1)-(a2) do not seem to be applicable to the half-space case. 
This may be due to the fact that, in order for them to be applied with full efficiency,
one has to use negative powers of $u$ as new variable and/or test function,
which may cause substantial difficulties near the boundary.  
As for the method (b2) for systems in the half-space, it seems to depend on scale invariance in an essential way.
Finally the direct Bernstein method (a2) does not seem to have been studied for systems.

\smallskip

 (iii) The optimality of the subcriticality condition for the Liouville property in the scalar case (cf.~after Theorem~A)
has a natural counterpart in many systems. However, as an interesting difference,
whereas any entire solution of the scalar equation \eqref{ellup} in the critical case $p=p_S$
is known to be radially symmetric (cf.~\cite{CGS}), the symmetry property may fail in the case of systems with critical nonlinearities of the form \eqref{Schrod}  (see ~\cite{GLW}).
\end{remark}

\subsection{Outline}
 Section~\ref{LTT} is devoted to our new Liouville type theorems.
In Subsection~\ref{LPoh} and \ref{LProp}, we will give Liouville type theorems 
for non scale invariant systems, both in the whole space and in the half-space,
obtained respectively by the method (a4), based on Pohozaev identities and functional inequalities on the unit sphere,
and by the method (a5)-(b4), based on of proportionality of components.
In Subsection~\ref{LGS}, we will get back to the case of scalar equations and (mildly) revisit the classical Gidas-Spruck integral Bernstein method (a1),
providing some improvements which turn out to be efficient for certain nonlinearities.
  In Section~\ref{sec-benchmark} we will compare the performances of  several methods on a benchmark scalar example.
In Section~\ref{secUB} we will give applications of 
 known and new Liouville type theorems to universal bounds for singularity and decay estimates.
This will concern non scale invariant systems with nonlinearities having regular variation, either with different indices on the
components (Lane-Emden type systems), or with equal indices.
 In Section~\ref{ProofPoh} we will prove Theorem~\ref{thm1} and
also provide some additional properties of possible independent interest.
The remaining sections will be devoted to the proofs of the other resuts.
Finally some properties of regularly varying functions will be given in appendix.

\section{Liouville type theorems}
\label{LTT}

\subsection{Method of Pohozaev identities and functional inequalities on the unit sphere}
\label{LPoh}
We assume that the system has gradient structure, i.e.
\be{hypGradSyst}
f=\nabla F,\hbox{ for some function $F\in C^1(\R^m,\R)$.}
\ee
Fixing $p\ge q>1$,
we next assume that, for each $M>0$, there exist constants $c_M,C_M>0$
such that the functions $f$ and $F$ satisfy the following conditions:
\be{hypGrowthB}
|f(U)|\leq C_M|U|^q, \ \hbox{ for all $U\in[0,M]^m$},  
\ee
\be{hypGrowthC}
2nF(U)-(n-2)U\cdot\nabla F(U)\geq c_M |U|^{p+1}, \ \hbox{ for all $U\in[0,M]^m$}.  
\ee
For some results we also assume that there exist constants $\xi\in(0,\infty)^N$ and $c_M>0$ such that
\be{hypGrowthD}
\xi\cdot f(U)\ge c_M |U|^p,\ \hbox{ for all $U\in[0,M]^m$}. 
\ee
Note that since $f(0)=0$ by \eqref{hypGrowthB}, $U\equiv 0$ is a solution of \eqref{GenSystem} and \eqref{GenSystemB}.
We introduce the exponent
\be{defpstar}
p^*=  p^*(n):=
\begin{cases}
(n+2)/(n-2),\hfill&\quad \hbox{if $n\leq 4$,}\hfill\\ 
\noalign{\vskip 1mm}
(n-1)/(n-3),\hfill&\quad \hbox{if $n\geq 5$.}\hfill
  \end{cases}
\ee

\begin{theorem} \label{thm1}
Let 
\be{Hypp0}
1<p_0<p^{**}:=
\begin{cases}
p^*,\hfill&\quad \hbox{in case of \eqref{GenSystem}}\hfill \\ 
\noalign{\vskip 1mm}
n/(n-2),\hfill&\quad \hbox{in case of \eqref{GenSystemB}.}\hfill
\end{cases}
\ee
There exists $\e_0=\e_0(n,p_0)>0$ such that, if $p_0-\e_0\le q\le p\le p_0+\e_0$,
\eqref{hypGrowthB}-\eqref{hypGrowthD} are satisfied and 
$U$ is a bounded solution 
of \eqref{GenSystem} or \eqref{GenSystemB}, then $U\equiv 0$.
 \end{theorem}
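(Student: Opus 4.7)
The plan is to adapt the Pohozaev-identity plus spherical-functional-inequality method of Serrin--Zou and Souplet (item (a4) in the survey) to nonlinearities that are almost, but not exactly, homogeneous. After fixing $M:=\|U\|_\infty$, the constants in \eqref{hypGrowthB}--\eqref{hypGrowthD} become effective, and the strategy is to combine three integral identities on balls $B_R\subset\R^n$ (respectively half-balls $B_R\cap\R^n_+$), then pass to a contradiction by a suitable averaging in $R$.

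First I would test \eqref{GenSystem} against the Pohozaev multiplier $x\cdot\nabla u_i$, weighted in $i$ so as to respect the diagonal structure of $D$. Using the gradient structure \eqref{hypGradSyst}, this produces the identity
\[
\int_{B_R}\bigl[2nF(U)-(n-2)U\cdot\nabla F(U)\bigr]\,dx = \mathcal{B}_1(R),
\]
whose left-hand side is $\ge c_M\int_{B_R}|U|^{p+1}\,dx$ by \eqref{hypGrowthC}. Next I would test \eqref{GenSystem} against $\xi\cdot U$ (with $\xi$ from \eqref{hypGrowthD}) and against $u_i$ itself, obtaining Nehari-type identities that control $\int_{B_R}|U|^p\,dx$ and $\int_{B_R}|\nabla U|^2\,dx$ in terms of further boundary fluxes $\mathcal{B}_2(R),\mathcal{B}_3(R)$ on $\partial B_R$. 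Combining the three identities so as to eliminate the interior gradient integral leaves a single inequality in which only $\int_{B_R}|U|^{p+1}\,dx$ and boundary integrals over $\partial B_R$ survive. An Emden--Fowler change of variables $r=e^t$ then turns the boundary terms into spherical integrals of quantities depending smoothly on $t$; averaging in $t$ and letting $R\to\infty$ converts the inequality into a functional inequality on $S^{n-1}$ (or on the upper hemisphere with Dirichlet data on the equator in the half-space case) of the schematic form
\[
\int_{S^{n-1}} |V|^{p+1}\,d\sigma \le C(p)\int_{S^{n-1}}\bigl(|\nabla_{S^{n-1}} V|^2 + |V|^2\bigr)\,d\sigma.
\]
The exponent $p^{**}$ in \eqref{Hypp0} is precisely the threshold below which the best constant $C(p)$ is small enough to force $V\equiv 0$, and hence $U\equiv 0$; it equals the Sobolev-type exponent on $S^{n-1}$ in the whole-space case and the strictly smaller hemispherical Dirichlet exponent $n/(n-2)$ in the half-space case, reflecting the vanishing trace on the equator.

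The principal new difficulty, compared with the scale-invariant framework in which this method is usually applied, is that the hypotheses \eqref{hypGrowthB}--\eqref{hypGrowthD} mix two exponents $q\le p$, so the three identities no longer scale with matching powers of $R$ and genuinely inhomogeneous cross terms such as $\int_{B_R}|U|^{q+1}\,dx$ appear. The plan is to treat these as a small perturbation of the scale-invariant argument at exponent $p_0$: since $|U|\le M$ and $|p-q|,|p-p_0|,|q-p_0|\le 2\e_0$, the elementary interpolation $|U|^{q+1}\le M^{q-p}|U|^{p+1}$ together with continuity in $p$ of the relevant spherical constants shows that all cross terms are dominated by $C(M)\,\e_0\int_{B_R}|U|^{p+1}\,dx$ plus controlled boundary contributions. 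For $\e_0=\e_0(n,p_0)$ small, these errors are absorbed into the principal coercive term and the scale-invariant spherical inequality at $p_0<p^{**}$ delivers the contradiction. I expect the main obstacle to be quantifying the perturbative estimates uniformly enough to preserve the sharp threshold $p^{**}$, and in particular, in the half-space case, extracting the correct sign of the flat boundary contributions using the Dirichlet condition $U|_{\partial\R^n_+}=0$ together with a Hopf-type analysis of $\partial_n U$ on the flat part of $\partial(B_R\cap\R^n_+)$.
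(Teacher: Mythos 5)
Your high-level starting point — Pohozaev-type identity plus spherical functional inequalities (the method (a4)) — is indeed the one the paper uses, and your idea of interpolating $|U|^{q+1}\le M^{q-p}|U|^{p+1}$ to perturb away from homogeneity is in the right spirit. But the core mechanism you propose is not the one the paper uses, and I do not see how it could be made to work. You claim that averaging the Pohozaev and Nehari identities in $t=\log r$ and letting $R\to\infty$ produces a spherical functional inequality of the form $\int_{S^{n-1}}|V|^{p+1}\le C(p)\int_{S^{n-1}}(|\nabla_\theta V|^2+|V|^2)$ whose ``best constant'' forces $V\equiv0$ below the threshold $p^{**}$. This is incorrect on two counts. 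First, such a Sobolev inequality on $S^{n-1}$ holds with a \emph{universal} constant for all $V$ as soon as $p+1\le 2(n-1)/(n-3)$ (i.e.\ $p\le(n+1)/(n-3)$); there is no value of $p$ at which the constant becomes ``small enough to force $V\equiv0$,'' and the threshold $(n+1)/(n-3)$ is different from $p^*=(n-1)/(n-3)$ ($n\ge5$) and from $n/(n-2)$. So the exponent $p^{**}$ is not a Sobolev threshold on the sphere. Second, for a \emph{bounded} solution $U$ with $p<p_S$, the Emden--Fowler scaling $r^{-2/(p-1)}U$ has no nontrivial limit as $r\to\infty$, so the ``limit spherical problem'' is trivial and carries no information.

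What the paper actually does (Theorem~\ref{thm3} and its proof in Section~\ref{ProofPoh}) is a \emph{finite-$R$ feedback argument}. Setting $H(R)=\int_{B_R}|U|^{p+1}$, the Rellich--Pohozaev identity (Lemma~\ref{LemFour}) together with \eqref{hypGrowthC} bounds $H(R)$ by two surface terms $G_1(R),G_2(R)$ on $\Sigma_R$. One then estimates $G_1,G_2$ by interpolating between various $L^z(\Sigma)$-norms using the Sobolev/Gagliardo--Nirenberg inequalities of Lemma~\ref{LemFive}, passes from pointwise to averaged control via the scale-invariant elliptic estimates of Lemma~\ref{LemSix} and a measure argument selecting a good radius $\tilde R\in(R,2R)$, and crucially feeds in the a priori estimate $\int_{B_R}|U|^s\le CR^{n-2s/(p-1)+\delta_0}$. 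The outcome is $H(R)\le CR^{-a}H^b(3R)$ with $a>0$, $b<1$, which forces superpolynomial growth of $H$ and contradicts boundedness. The exponent $p^{**}$ emerges from the constraint $s>(n-1)(p_0-1)/2$ needed for the exponents $a,b$ in the feedback to have the right sign, combined with the best available value of $s$: $s=p$ in the whole space (via the Nehari-type bound coming from $\xi\cdot U$) gives $p<p^*$, while in the half-space one can only take $s$ close to $(p+1)/2$, which gives $p<n/(n-2)$. Your proposal misses this entire bootstrap and measure-theoretic machinery, and consequently cannot explain the asymmetry between whole space and half-space. Finally, the half-space case requires a new a priori integral estimate (Proposition~\ref{PropresD}), proved via a weighted test-function argument with $x_n^{-\alpha}$ and a Green's function estimate, not a Hopf-type analysis of $\partial_n U$ as you suggest; this is the genuinely new ingredient and your proposal does not address it.
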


\goodbreak

\begin{remark}
If we only consider radial solutions of \eqref{GenSystem},
then using the same arguments as in the proof of \cite[Proposition 5(i)]{QS-CMP}
one can show that the condition $p<p^*$ in Theorem~\ref{thm1} can be replaced with $p<p_S$.
\end{remark}

 As a first application of Theorem~\ref{thm1}, we have the following result
concerning noncooperative, multi-power Schr\"odinger type systems,
which arise in models of Bose-Einstein condensates (see \cite{Fran, KL}).
Liouville theorems for systems of this type were established in \cite{DWW,TTVW,QS-CMP,SouNHM}
under various assumptions
(\cite{DWW,TTVW} being restricted to systems with single power homogeneity).
Corollary~\ref{thm1cor1} directly follows from Theorem~\ref{thm1} by taking $p=q=\alpha$.
We note that Corollary~\ref{thm1cor1} for $\Omega=\Rn$ could be deduced from \cite[Theorem~4]{QS-CMP}
at the expense of a bit of additional work, but the case $\Omega=\Rn_+$ is completely new.

\begin{corollary} \label{thm1cor1}
Let $m=2$, $1<\alpha<\beta\le p_S$, $\alpha<p^{**}$, $\lambda>-1$, $\mu\ge-1$, $b\ge\strut 0$.
Let $f=\nabla F$ with $F=G+H$, where
$$G(U)=\frac{1}{\alpha+1}\bigl(u^{\alpha+1}+v^{\alpha+1}+2\lambda u^{\frac{\alpha+1}{2}} v^{\frac{\alpha+1}{2}}\bigr)$$
and
$$H(U)=\frac{b}{\beta+1}\bigl(u^{\beta+1}+v^{\beta+1}+2\mu u^{\frac{\beta+1}{2}}v^{\frac{\beta+1}{2}}\bigr) \quad\hbox{or}\quad 
H(U)=bu^{\frac{\beta+1}{2}}v^{\frac{\beta+1}{2}}.$$
If $U$ is a bounded solution 
of \eqref{GenSystem} or \eqref{GenSystemB}, then $U\equiv 0$.
 \end{corollary}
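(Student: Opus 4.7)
The plan is to apply Theorem~\ref{thm1} directly with $p_0=p=q=\alpha$, as indicated by the authors. Since $\alpha<p^{**}$ is a hypothesis and the condition $p_0-\e_0\le q\le p\le p_0+\e_0$ is trivial when $p=q=p_0$, the entire task reduces to verifying the three structural conditions \eqref{hypGrowthB}--\eqref{hypGrowthD} for $f=\nabla F=\nabla G+\nabla H$.

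The key observation is that $G$ and $H$ are positively homogeneous of degrees $\alpha+1$ and $\beta+1$, so Euler's identity gives $U\cdot\nabla G=(\alpha+1)G$ and $U\cdot\nabla H=(\beta+1)H$ on $[0,\infty)^2$; the Pohozaev-type expression therefore splits as
$$2nF(U)-(n-2)U\cdot\nabla F(U)=\bigl((n+2)-(n-2)\alpha\bigr)G(U)+\bigl((n+2)-(n-2)\beta\bigr)H(U),$$
with the first bracket strictly positive (since $\alpha<p_S$) and the second nonnegative (since $\beta\le p_S$). Establishing \eqref{hypGrowthC} then reduces to proving the pointwise bounds $G\ge c(\lambda)|U|^{\alpha+1}$ and $H\ge 0$ on $[0,\infty)^2$. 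Both follow from the AM--GM inequality $2(uv)^{(r+1)/2}\le u^{r+1}+v^{r+1}$: applied with $r=\alpha$ together with $\lambda>-1$ it yields $G\ge\tfrac{1+\lambda}{\alpha+1}(u^{\alpha+1}+v^{\alpha+1})\ge c(\lambda)|U|^{\alpha+1}$, and applied with $r=\beta$ together with $\mu\ge -1$ it yields $H\ge 0$ in the first form; the second form of $H$ is manifestly nonnegative.

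For \eqref{hypGrowthD} I take $\xi=(1,1)$. A direct computation yields $\partial_u G+\partial_v G=(u^\alpha+v^\alpha)+\lambda(uv)^{(\alpha-1)/2}(u+v)$, and the weighted Young inequalities $u^{(\alpha+1)/2}v^{(\alpha-1)/2}\le\tfrac{\alpha+1}{2\alpha}u^\alpha+\tfrac{\alpha-1}{2\alpha}v^\alpha$ and its symmetric counterpart give $(uv)^{(\alpha-1)/2}(u+v)\le u^\alpha+v^\alpha$. Hence $\partial_u G+\partial_v G\ge(1+\lambda)(u^\alpha+v^\alpha)\ge c(\lambda)|U|^\alpha$. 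The analogous computation for $H$ (in either form) produces a quantity that is nonnegative on $[0,\infty)^2$ under $\mu\ge -1$ and $b\ge 0$, so $\xi\cdot f(U)\ge c(\lambda)|U|^\alpha$. Finally, \eqref{hypGrowthB} with $q=\alpha$ is immediate from homogeneity: $|\nabla G(U)|\le C|U|^\alpha$ and $|\nabla H(U)|\le C|U|^\beta\le CM^{\beta-\alpha}|U|^\alpha$ on $\{|U|\le M\}$, using $\beta>\alpha$.

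There is no serious obstacle; the main work is bookkeeping the asymmetric strict/non-strict roles of $\lambda$ and $\mu$. Strict $\lambda>-1$ is needed to obtain a coercive lower bound on $G$, which is the dominant term near $U=0$, whereas $\mu\ge -1$ suffices because $H$ only needs to contribute nonnegatively. The borderline case $\beta=p_S$ is also worth noting: the coefficient of $H$ in the Pohozaev expression degenerates to zero there, but this is harmless since $H\ge 0$ and the contribution of $G$ alone is already coercive.
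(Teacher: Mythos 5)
Your proof is correct and takes exactly the approach the paper indicates: the paper states only that the corollary ``directly follows from Theorem~\ref{thm1} by taking $p=q=\alpha$,'' and your argument supplies precisely the verification of \eqref{hypGrowthB}--\eqref{hypGrowthD} via Euler's identity, AM--GM, and weighted Young that this one-liner presupposes.
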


As another typical application of Theorem~\ref{thm1}, we have the following result,
which involves nonlinearities with logarithmic behavior, either near $0$ or at infinity.

\begin{corollary} \label{thm1cor0}
Let $m=2$, $K\ge 1$, $\sigma\in\{-1,1\}$, assume \eqref{Hypp0} 
and let 
\be{DeffiCor0}
f=\nabla F,\quad F(U)=g^2(u)+g^2(v)-2\lambda g(u)g(v),\quad g(s)=s^{(p_0+1)/2}\log^a(K+s^\sigma).
\ee

 \smallskip
 
(i) Let $K=1$ and assume 
\be{HypaLCor0}
\sigma a>0,\quad |a|<a_0 
\quad\hbox{and}\quad
0<\lambda<\lambda_0:= {2\sqrt{1-\rho}\over 2-\rho} \ \hbox{with $\rho={|a|\over a_0}$}, 
\ee
where $a_0:={p_S-p_0\over 2}$ if $\sigma=-1$, $a_0:={p^{**}-p_0\over 2}$ if $\sigma=1$.
If $U$ is a bounded solution 
of \eqref{GenSystem} or \eqref{GenSystemB}, then $U\equiv 0$.

 \smallskip

 (ii) Let $K>1$. Assertion (i) remains valid with $a_0:={p^{**}-p_0\over 2}\theta(K)$, 
where $\theta:[1,\infty)\to [1,\infty)$ is the inverse bijection of $s\mapsto s^{-1}e^{s-1}$.
\end{corollary}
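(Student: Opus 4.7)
The plan is to apply Theorem~\ref{thm1} after verifying the growth hypotheses \eqref{hypGrowthB}--\eqref{hypGrowthD} for a suitable pair $(p,q)$ near $p_0$. Writing $g(s) = s^{(p_0+1)/2} h(s)$ with $h(s) = \log^a(K+s^\sigma)$, I first introduce the logarithmic derivative
$$\phi(s) := \frac{s h'(s)}{h(s)} = \frac{\sigma a\, s^\sigma}{(K+s^\sigma)\log(K+s^\sigma)},$$
which is $\ge 0$ under the assumption $\sigma a > 0$. The identity $s g'(s) = g(s)[\tfrac{p_0+1}{2}+\phi(s)]$ then produces
$$U\cdot\nabla F = (p_0+1)F + 2Q,\qquad Q := \phi(u)g(u)(g(u)-\lambda g(v)) + \phi(v)g(v)(g(v)-\lambda g(u)),$$
so that $2nF - (n-2)U\cdot\nabla F = (n-2)(p_S - p_0)F - 2(n-2)Q$.

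Next I would compute $\phi_{\max}$ and bound $Q$ uniformly in terms of $F$. The critical points of $\phi$ satisfy $s_0^\sigma = K\log(K+s_0^\sigma)$; setting $\mu := (K+s_0^\sigma)/K$, this rearranges to $K = \mu^{-1}e^{\mu-1}$, i.e.~$\mu = \theta(K)$, and a short computation gives
$$\phi_{\max} = \sup_{s>0} \phi(s) = |a|/\theta(K),$$
which equals $|a|$ when $K=1$. For $Q/F$, combining the identity
$$Q = \tfrac{\phi(u)+\phi(v)}{2}F + \tfrac{\phi(u)-\phi(v)}{2}(g^2(u)-g^2(v))$$
with the diagonalization $F = \tfrac{1+\lambda}{2}(g(u)-g(v))^2 + \tfrac{1-\lambda}{2}(g(u)+g(v))^2$, valid for $|\lambda|<1$, and optimizing the resulting quadratic form in $(g(u),g(v))$, yields the sharp bound
$$Q \le \tfrac{\phi_{\max}}{2}\bigl(1+1/\sqrt{1-\lambda^2}\bigr) F.$$

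It remains to fit the exponents and constants to the different regimes. For $\sigma=-1$ (with $K=1$), $g(s)\sim s^{(p_0+1)/2}(\log 1/s)^a$ near $0$, so the effective local exponent remains $p_0$; \eqref{hypGrowthB}--\eqref{hypGrowthD} are satisfied with $q=p_0-\varepsilon$, $p=p_0+\varepsilon$, and Theorem~\ref{thm1} applies thanks to the assumption $p_0<p^{**}$. The coercivity \eqref{hypGrowthC} becomes $\phi_{\max}(1+1/\sqrt{1-\lambda^2}) < p_S-p_0$, equivalently $\rho(1+1/\sqrt{1-\lambda^2})<2$ with $\rho = |a|/a_0$, $a_0 = (p_S-p_0)/2$; elementary algebra converts this into $\lambda<\lambda_0=2\sqrt{1-\rho}/(2-\rho)$. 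For $\sigma=+1$ (with $K=1$), instead $g(s)\sim s^{(p_0+1+2a)/2}$ near $0$, so the effective exponent shifts to $p_0+2a$; one must take $q\le p_0+2a\le p$, and the condition $p<p^{**}$ of Theorem~\ref{thm1} forces $a<(p^{**}-p_0)/2=a_0$, the coercivity estimate then providing $\lambda<\lambda_0$ with the corresponding $\rho$. Conditions \eqref{hypGrowthB} and \eqref{hypGrowthD} (the latter with $\xi=(1,1)$) follow from the explicit form of $f$ together with $0<\lambda<\lambda_0\le 1$ and the positivity of $g,g'$. Part~(ii) is handled by the same scheme with $K>1$: the nontrivial value $\phi_{\max}=|a|/\theta(K)$ rescales the relevant threshold by $\theta(K)$, yielding $a_0=(p^{**}-p_0)\theta(K)/2$. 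The main obstacle is the transcendental identification of $\phi_{\max}$ via $\theta$ and the careful interplay between the asymptotic exponent of $g$ at $0$ (depending on $\sigma$ and $K$) and the applicability constraint $p<p^{**}$ from Theorem~\ref{thm1}.
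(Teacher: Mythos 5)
Your proposal is correct and follows essentially the same strategy as the paper's own proof (Section~5.4): reduce to Theorem~\ref{thm1}, express the logarithmic derivative $sg'(s)/g(s)=\tfrac{p_0+1}{2}+\phi(s)$, identify $\phi_{\max}=|a|/\theta(K)$ via the same transcendental equation, and verify the coercivity \eqref{hypGrowthC} by a quadratic-form analysis that produces exactly the threshold $\lambda_0=2\sqrt{1-\rho}/(2-\rho)$. The only real difference is bookkeeping: you split the extra term $Q$ into symmetric/antisymmetric parts in $(\phi(u),\phi(v))$, diagonalize $F$, and apply AM--GM, whereas the paper writes the coercivity as $Ag_1^2+Bg_2^2-\lambda(A+B)g_1g_2\ge0$ with $A=\delta-\eta-2a\zeta_1$, $B=\delta-\eta-2a\zeta_2$ and optimizes $\tfrac{A+B}{\sqrt{AB}}$ over the interval $[\delta-\eta-2|a|\beta,\,\delta-\eta]$. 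These are two parametrizations of the same positive-semidefiniteness calculation and give the same $\lambda_0$.

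A few small points worth tightening. First, for $\sigma=-1$, $K=1$ you take $q=p_0-\varepsilon$; the paper takes $q=p_0$ (since $g(s)\le C_M s^{(p_0+1)/2}$ already for $s\le M$), which is cleaner but your weaker choice also satisfies \eqref{hypGrowthB}. Second, your statement that the coercivity estimate for $K>1$ ``rescales the threshold by $\theta(K)$ yielding $a_0=(p^{**}-p_0)\theta(K)/2$'' conflates two constraints: the coercivity computation alone actually gives the (possibly larger) threshold $\tfrac{p_S-p_0}{2}\theta(K)$, and $p^{**}$ enters only through the requirement $p<p^{**}$ of Theorem~\ref{thm1}. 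The stated $a_0$ is the smaller of the two and hence sufficient, but it is worth making clear that for $\sigma=1$, $K=1$ the constraint $a<(p^{**}-p_0)/2$ is driven by the applicability of Theorem~\ref{thm1} with parameter $p_0+2a<p^{**}$, not by coercivity. Third, you should confirm $\lambda_0\le1$ (so that $F\ge0$, $F\gtrsim|U|^{p+1}$, and the sign arguments in \eqref{hypGrowthB}, \eqref{hypGrowthD} go through), which is immediate from $4(1-\rho)\le(2-\rho)^2$, and you should verify \eqref{hypGrowthD} via an explicit lower bound such as $f_1+f_2\ge 2(1-\lambda)(g'(u)g(u)+g'(v)g(v))$, as in the paper's Lemma~\ref{lemf1f2} (this in fact requires the monotonicity/convexity of $g$, hence the sign condition $\sigma a>0$). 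None of these affect the correctness of your overall scheme.
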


If $K=\sigma=1$, $n\leq4$, and we consider problem \eqref{GenSystem} in Corollary~\ref{thm1cor0},
then $a_0={p_S-p_0\over 2}$. 
The next proposition shows that in this case, the assumption $a<a_0$ 
for nonexistence in Corollary~\ref{thm1cor0} is essentially optimal. 

\begin{proposition} \label{thm1cor0prop}
Let $m=2$, $p_0\in(1,p_S)$, $K=\sigma=1$, $\lambda\in(0,1)$,
and let $f$ be given by~\eqref{DeffiCor0}.
If $a> {p_S-p_0\over 2}$,
then there exists a positive bounded solution of~\eqref{GenSystem}.
 \end{proposition}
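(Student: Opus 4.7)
The plan is to construct a solution of the form $u = v =: w$ with $w$ positive, bounded and radial. Since $F$ is symmetric in $(u,v)$, one has $\partial_u F(u,u) = \partial_v F(u,u) = (1-\lambda)(g^2)'(u)$, so both equations of~\eqref{GenSystem} reduce to the scalar problem $-\Delta w = h(w)$, where $h(s) := (1-\lambda)(g^2)'(s)$ and $g^2(s) = s^{p_0+1}\log^{2a}(1+s)$. Using $\log(1+s) = s(1+O(s))$ as $s \to 0^+$, a direct computation gives $h(s) = C_0 s^q + o(s^q)$ with $q := p_0 + 2a$ and $C_0 := (1-\lambda)(p_0+1+2a) > 0$. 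Since $a > (p_S - p_0)/2$ by hypothesis, $q > p_S$, so the reduced problem is \emph{supercritical} near the origin (though subcritical at infinity due to the logarithmic factor).

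I would then construct a positive bounded radial solution of $-\Delta w = h(w)$ by a shooting and rescaling argument. For small $\alpha \in (0,1)$, let $w_\alpha$ denote the maximal radial solution with Cauchy data $w_\alpha(0) = \alpha$, $w_\alpha'(0) = 0$, and set $W_\alpha(r) := \alpha^{-1} w_\alpha(\alpha^{-(q-1)/2} r)$. A direct computation shows $W_\alpha(0) = 1$, $W_\alpha'(0) = 0$ and $-\Delta W_\alpha = H_\alpha(W_\alpha)$, where $H_\alpha(W) := \alpha^{-q} h(\alpha W) \to C_0 W^q$ locally uniformly in $W \ge 0$ as $\alpha \to 0^+$. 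For the limit equation $-\Delta W = C_0 W^q$ with $q > p_S$, classical theory (see e.g.~\cite[Chapter~9]{QSb}) furnishes a positive bounded radial solution $W_0$ with $W_0(0) = 1$ which is monotone decreasing and satisfies $W_0(r) \sim y_* r^{-2/(q-1)}$ as $r \to \infty$, where $y_* > 0$ is the unique positive root of $\beta(n-2-\beta) y - C_0 y^q$ with $\beta := 2/(q-1)$. By continuous dependence on the nonlinearity, $W_\alpha \to W_0$ in $C^2_{\mathrm{loc}}([0,\infty))$ as $\alpha \to 0^+$.

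The main obstacle is to upgrade this \emph{local} convergence into \emph{global} positivity and boundedness of $W_\alpha$ on all of $[0,\infty)$ for $\alpha$ sufficiently small. I would handle this through the Emden--Fowler substitution $\tilde W(t) := r^{2/(q-1)} W(r)$, $t := \log r$, which converts the radial ODE into a second-order equation that is a small (vanishing as $\alpha \to 0^+$) perturbation of an autonomous one. The crucial point is that this autonomous system has a positive equilibrium at $y_*$ whose linearization has characteristic exponents with real part $-(n-2-2\beta)/2$, this quantity being negative exactly when $q > p_S$; hence $y_*$ is asymptotically stable, and $\tilde W_0(t) \to y_*$ as $t \to \infty$. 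For $\alpha$ sufficiently small, standard results on asymptotically autonomous systems (or direct comparison via the stable manifold of $y_*$) ensure that $\tilde W_\alpha$ stays close to $\tilde W_0$ globally in $t$ and inherits the asymptotic behaviour $\tilde W_\alpha(t) \to y_*$; in particular $W_\alpha$ is positive and bounded on $[0,\infty)$. Reverting the rescaling produces a positive bounded radial $w_\alpha$, and $u = v = w_\alpha$ is the sought positive bounded solution of~\eqref{GenSystem}.
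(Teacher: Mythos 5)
Your proof is correct in spirit, but it takes a genuinely different (and substantially more involved) route than the paper.

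Both arguments begin identically: by symmetry of $F$, the ansatz $u=v=w$ reduces the system to the scalar equation $-\Delta w=h(w)$ with $h(s)=2(1-\lambda)g(s)g'(s)$, and one computes that $h(s)\sim C_0 s^{q}$ as $s\to 0^+$ with $q=p_0+2a>p_S$. From here the two proofs diverge. The paper's proof is short: it sets $H(s)=\int_0^s h$, $\psi(s)=sh(s)-(p_S+1)H(s)$, observes that $\psi(s)\sim C_0 s^{q+1}\bigl[1-(p_S+1)/(q+1)\bigr]>0$ as $s\to 0^+$ (because $q>p_S$), so $\psi\ge 0$ on some $[0,s_0]$, and then shoots radially from $w(0)=s_0$, $w'(0)=0$. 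If the shooting trajectory had a first zero $R$, one would have a positive radial Dirichlet solution on $B_R$ with $\|w\|_\infty\le s_0$, which is ruled out by Pohozaev's identity precisely because $\psi\ge 0$ on the relevant range. Hence $w>0$ and (being decreasing) bounded. Your proof, by contrast, shoots from a \emph{small} initial value $\alpha$, rescales to $W_\alpha$ with $H_\alpha(W)=\alpha^{-q}h(\alpha W)\to C_0 W^q$ locally uniformly, invokes the known positive bounded radial solution of the supercritical limit equation $-\Delta W=C_0W^q$ with its decay $W_0(r)\sim y_* r^{-2/(q-1)}$, and then passes to Emden--Fowler coordinates to argue global positivity via asymptotic stability of the equilibrium $y_*$.

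Your approach gives more: it exhibits the precise decay rate of the solution, not just its existence. But it is also heavier, and the final step as written leaves a genuine gap of rigor. The phrase ``standard results on asymptotically autonomous systems'' is a slight misnomer here: after the Emden--Fowler change of variables the equation for $\tilde W_\alpha$ is not asymptotically autonomous in the usual sense (the $t$-dependence of the perturbation does not decay as $t\to\infty$); rather, for each fixed small $\alpha$ it is a \emph{uniform} $o_\alpha(1)$-perturbation of the autonomous system, because $W_\alpha\le 1$ forces $\alpha W_\alpha\le\alpha$. What is actually needed is a two-step argument: (a) local $C^2$-convergence $W_\alpha\to W_0$ on a fixed compact $[0,R]$ (with $R$ chosen so that the unperturbed trajectory has already entered a small neighborhood of the hyperbolic sink $(y_*,0)$ at $t=\log R$), combined with (b) a Lyapunov/trapping argument near $(y_*,0)$, valid because the perturbation is uniformly small and the linearization at $y_*$ is a hyperbolic sink when $q>p_S$. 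These ingredients are all available, so your argument can be completed, but the current write-up glosses over the very step you correctly identify as the ``main obstacle.'' A minor imprecision worth fixing: the characteristic exponents of the linearization have real part $-(n-2-2\beta)/2$ only in the oscillatory regime $p_S<q<p_{JL}$; for $q\ge p_{JL}$ the two real parts differ but are still both negative, so stability still holds and the argument is unaffected.
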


\begin{remark} \label{rem1.1}
\smallskip

(a) In the half-space case, the condition $p<n/(n-2)$ is better than the well-known condition $p\le (n+1)/(n-1)$ for inequalities
\cite{BCDN}.

\smallskip

(b) System \eqref{GenSystem}   is {\bf not} cooperative in general.  
Therefore, maximum principle techniques, such as moving planes or moving spheres
(see,~e.g.,~\cite{CLi, dFF, RZ, BM}),
are not applicable here. 

\smallskip

(c) The value of $\e_0$ in Theorem~\ref{thm1} (also in Theorem~\ref{thm3} below) is explicit and could be computed from the proof.
However its expression is rather complicated.

\smallskip

(d) By using some arguments from \cite{QS-CMP, SouNHM}, the boundedness hypothesis in Theorem~\ref{thm1} could actually be 
replaced with an exponential growth condition, provided we assume  
 \eqref{hypGrowthC}, \eqref{hypGrowthD} with $c_M$ independent of $M$
 and 
 \be{Hypfa0}
|f(U)|\le c_1(|U|^q+|U|^p),
\ee
 with some $c_1>0$, instead of \eqref{hypGrowthB}.
\end{remark}

\begin{remark} \label{remSlow}
The logarithms in Corollary~\ref{thm1cor0} could be replaced, under suitable assumptions
on the parameters, by some more general slowly varying functions (cf.~\eqref{hypregulvar}), 
 such as iterated logarithms (see \cite[Section~2.3]{SouDCDS} for more examples).
\end{remark}

\subsection{Method of proportionality of components}
\label{LProp}

 Consider the system
\be{systProp}
\left\{
  \begin{aligned}
 \hfill -\Delta u&=\phi(u,v)k(u)(g(v)-\lambda g(u)), \quad x\in D\\ 
 \noalign{\vskip 1mm}
 \hfill -\Delta v&=\phi(u,v)k(v)(g(u)-\lambda g(v)), \quad x\in D\\ 
   \end{aligned}
   \right.
\ee
where 
\begin{equation} \label{ass-prop1}
\left\{
  \begin{aligned}
&\hbox{\ $D=\R^n$ or $\R^n_+$,}\\
&\hbox{\ $\phi,k,g$ continuous on $[0,\infty)$,\ $\phi\geq c_M>0$ for $u,v\leq M$,}\\
&\hbox{\ $\lambda \geq0$, $g(0)=0$, $g>0$ on $(0,\infty)$}
   \end{aligned}
   \right.
\end{equation}
($\phi$ could also depend on $x,\nabla u,\nabla v$).
Set $\varphi:=kg$ if $\lambda>0$, $\varphi:=g$ if $\lambda=0$, and assume that
\begin{equation} \label{ass-prop2}
 \hbox{$\varphi$ is $C^1$ on $(0,\infty)$ and $\varphi'(t)>0$ for $t>0$.}
\end{equation}
Fix also $\eps=0$ if $\lambda>0$, $\eps>0$ if $\lambda=0$, set $\tilde k:=k-\eps$
and assume that
\begin{equation} \label{ass-propk}
\hbox{$\tilde k(0)\geq0$ and $\tilde k/g$ is nonincreasing on $(0,\infty)$.}
\end{equation}
The following theorem guarantees the equality of components.
It is a special case of a result from \cite{QSparab23}, established there
for parabolic systems.
See \cite{Lou, QS12, DAm, MSS, Far} for results of this type in the case of 
elliptic systems with power nonlinearities.

\begin{theorem} \label{thm-proportional}
Assume \eqref{ass-prop1}--\eqref{ass-propk}. 
Let $(u,v)$ be a bounded solution of \eqref{systProp},
with homogeneous Dirichlet boundary conditions if $D=\R^n_+$. Then $u\equiv v$.
\end{theorem}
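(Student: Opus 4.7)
The plan is to set $w := u - v$ and show, via the monotonicity assumptions, that $w$ satisfies a linear equation $-\Delta w + B(x)w = 0$ on $D$ with a nonnegative potential $B$; a Liouville/Phragm\'en--Lindel\"of argument applied to the bounded subharmonic function $w^+$ then gives $w\le 0$, and the symmetry of \eqref{systProp} under $u\leftrightarrow v$ yields $u\equiv v$.

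First I would subtract the two equations of \eqref{systProp} and rearrange the right-hand side. When $\lambda>0$, the useful grouping is
\[
-\Delta w = \phi(u,v)\Bigl[\bigl(k(u)g(v)-k(v)g(u)\bigr) \,-\, \lambda\bigl(\varphi(u)-\varphi(v)\bigr)\Bigr],
\]
and when $\lambda=0$ I would further split $k(u)g(v) - k(v)g(u) = [\tilde k(u)g(v) - \tilde k(v)g(u)] + \eps[g(v)-g(u)]$. By the mean value theorem together with \eqref{ass-prop2} (strict increase of $\varphi$, hence of $g$ when $\lambda=0$) and \eqref{ass-propk} (nonincrease of $\tilde k/g$, which controls the ``cross'' term $\tilde k(u)g(v) - \tilde k(v)g(u) = g(u)g(v)[\tilde k(u)/g(u)-\tilde k(v)/g(v)]$), each bracket is a nonpositive multiple of $w$. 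This yields $-\Delta w + B(x)w = 0$ in $D$ with $B(x)\ge 0$, and moreover $B(x)>0$ wherever $w(x)\ne 0$, thanks to the contribution $\lambda\phi(u,v)\,c(x)w$ with $c(x):=(\varphi(u)-\varphi(v))/(u-v)>0$ when $\lambda>0$, or $\eps\phi(u,v)\,e(x)w$ with $e(x):=(g(u)-g(v))/(u-v)>0$ when $\lambda=0$.

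Next, since $\Delta w = Bw\ge 0$ on $\{w>0\}$, the function $w^+$ is bounded and subharmonic on $D$ (a Kato-type inequality gives $\Delta w^+\ge Bw^+\ge 0$). If $D=\R^n$, the Liouville theorem for bounded subharmonic functions in $\R^n$ gives $w^+\equiv c$ for some constant $c\ge 0$; plugging $w\equiv c$ back into the equation yields $B(x)c\equiv 0$, which forces $c=0$ by strict positivity of $B$ on $\{w\ne 0\}$. If $D=\R^n_+$, the Dirichlet condition gives $w^+=0$ on $\partial\R^n_+$, and the Phragm\'en--Lindel\"of principle for bounded subharmonic functions in a half-space yields $w^+\le 0$, whence $w^+\equiv 0$. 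In either case $u\le v$.

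Finally, since \eqref{systProp} is invariant under $(u,v)\mapsto(v,u)$, the same argument applied to the pair $(v,u)$ gives $v\le u$, and hence $u\equiv v$. The main obstacle is the first step, namely packaging the right-hand side into a manifestly nonnegative coefficient $B$: one has to combine the monotonicity of $\tilde k/g$ (which controls the sign of the cross term) with the $\lambda$- or $\eps$-piece so that the two contributions carry the same sign, and then verify that this combination is strictly positive whenever $w\ne 0$, which is what rules out a nontrivial positive-constant subharmonic envelope in the whole-space case.
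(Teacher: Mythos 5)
Your decomposition of the right-hand side and the sign analysis are correct and match the expected route: writing $k(u)g(v)-k(v)g(u)-\lambda(\varphi(u)-\varphi(v))$ (resp.\ the $\tilde k$/$\eps$-split when $\lambda=0$), using \eqref{ass-propk} to control the cross term and \eqref{ass-prop2} to make the remaining piece a negative multiple of $w=u-v$, so that $-\Delta w = -B(x)w$ with $B\geq 0$ and $B>0$ wherever $w\neq 0$. The half-space conclusion via Phragm\'en--Lindel\"of for bounded subharmonic functions vanishing on $\partial\R^n_+$ is also correct.

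The whole-space step, however, has a genuine gap. You invoke a ``Liouville theorem for bounded subharmonic functions in $\R^n$'' to conclude $w^+\equiv c$. This statement is \emph{false} for $n\geq 3$: for instance $x\mapsto -\min\bigl(1,|x|^{2-n}\bigr)$ is a bounded, nonconstant subharmonic function on $\R^n$. Subharmonicity of $w^+$ alone is therefore not enough; one must use the additional structure $\Delta w = B\,w$, $B\geq 0$, together with the boundedness of $(u,v)$. A standard way to close the argument --- and the one used in this very paper for the $\lambda>1$ case of Theorem~\ref{thm-proportional2} --- is a perturbed-maximum argument: set $\mu:=\sup_D w$, suppose $\mu>0$, pick $x_0$ with $w(x_0)>\mu/2$, and consider $w_\delta(x):=w(x)-\delta|x-x_0|^2$ which attains its supremum at some $x_\delta$ with $w(x_\delta)\ge w(x_0)>\mu/2$. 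At $x_\delta$ one has $\Delta w\le 2n\delta$, whereas the equation and the boundedness $0\le u,v\le M_0$ give the uniform lower bound $\Delta w(x_\delta)=B(x_\delta)w(x_\delta)\ge c_0\mu/2$ for some $c_0>0$ depending only on $\phi,k,g,\lambda,\mu,M_0$ (since on $\{w>\mu/2\}$ one has $u\ge\mu/2$ and $(\varphi(u)-\varphi(v))/(u-v)\ge M_0^{-1}\min_{t\in[\mu/2,M_0]}\bigl(\varphi(t)-\varphi(t-\mu/2)\bigr)>0$). Taking $\delta$ small yields a contradiction. With this correction the rest of your argument (symmetry $u\leftrightarrow v$ to get the reverse inequality) goes through.
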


As a consequence of Theorem~\ref{thm-proportional} we obtain the following Liouville type theorem
 (see Examples~\ref{ex43} and \ref{ex44} below for applications).

\begin{theorem} \label{thm-proportional2}
Let $\lambda\in[0,\infty)\setminus\{1\}$, set $h(s)=\phi(s,s)k(s)g(s)$
and assume \eqref{ass-prop1}--\eqref{ass-propk} with $k(s)>0$ for $s>0$. If $\lambda\in[0,1)$ assume in addition
\be{HypProp2}
\begin{cases}
\hbox{$s^{-p_S}h(s)$ nonincreasing nonconstant,}&\hbox{if $D=\Rn$ and $n\ge 3$,}\\
\noalign{\vskip 2mm}
\hbox{$h\in C^1([0,\infty)\cap C^2(0,\infty)$ and $h$ convex,}&\hbox{if $D=\Rn_+$ and $n\ge 2$}.$$
\end{cases}
\ee
If $(u,v)$ is a bounded solution of \eqref{systProp},
with homogeneous Dirichlet boundary conditions if $D=\R^n_+$, then $u\equiv v\equiv 0$.
\end{theorem}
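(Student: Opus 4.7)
The plan is to apply Theorem~\ref{thm-proportional} to collapse \eqref{systProp} to a scalar equation, and then invoke an appropriate scalar Liouville theorem whose form depends on the sign of $1-\lambda$. Since the hypotheses \eqref{ass-prop1}--\eqref{ass-propk} are satisfied and $k>0$ on $(0,\infty)$, Theorem~\ref{thm-proportional} yields $u\equiv v$; substituting back into either equation of \eqref{systProp} gives the scalar problem $-\Delta u=(1-\lambda)h(u)$ in $D$, together with $u=0$ on $\partial\Rn_+$ if $D=\Rn_+$. Recall that $h\geq 0$ on $[0,\infty)$ and $h(s)>0$ for $s>0$.

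If $\lambda>1$ then $(1-\lambda)h(u)\leq 0$, so $u$ is a bounded nonnegative subharmonic function on $D$. When $D=\Rn$, the classical Liouville property for subharmonic functions forces $u$ to be constant, which combined with the equation gives $h(u)=0$, hence $u\equiv 0$. When $D=\Rn_+$, we extend $u$ by even reflection, $\tilde u(x',x_n):=u(x',|x_n|)$. Since $u(x',0)=0$ and $u\geq 0$ in $\Rn_+$, one has $\partial_n u(x',0)\geq 0$; hence the gradient of $\tilde u$ has jump $2\partial_n u(x',0)\geq 0$ across $\{x_n=0\}$ and the distributional Laplacian of $\tilde u$ carries a nonnegative surface measure on that hyperplane. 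Combined with $\Delta\tilde u\geq 0$ off the hyperplane, this shows $\tilde u$ is a bounded subharmonic function on $\Rn$, hence constant, hence identically $0$.

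If $\lambda\in[0,1)$, set $\tilde h:=(1-\lambda)h>0$ on $(0,\infty)$; multiplication by a positive constant preserves the monotonicity and convexity conditions in \eqref{HypProp2}. When $D=\Rn$, Theorem~B with $f=\tilde h$ directly yields $u\equiv 0$. When $D=\Rn_+$, the $C^2$ convexity of $\tilde h$ is exploited through a Dancer-type monotonicity argument in the normal direction $x_n$ combined with a stability/reduction argument, as in the methods grouped under (b1.3) in Subsection~\ref{survey} (see, e.g., \cite{Dan,CLZ,DSS}). The main obstacle is precisely this last half-space case: a direct appeal to Theorem~B is unavailable since no subcritical growth on $\tilde h$ is assumed, so the proof must genuinely use the convexity (not merely the positivity) of $\tilde h$, via the monotonicity-in-$x_n$ information and an associated stability estimate.
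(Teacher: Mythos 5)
Your reduction via Theorem~\ref{thm-proportional} to the scalar problem $-\Delta u=(1-\lambda)h(u)$, and your treatment of the case $\lambda\in[0,1)$ (appeal to Theorem~B for $D=\Rn$ and to the half-space results of \cite{CLZ,DSS}), match the paper's proof.

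However, your argument for $\lambda>1$ contains a genuine gap. You claim that since $u$ is a bounded nonnegative subharmonic function, ``the classical Liouville property for subharmonic functions forces $u$ to be constant.'' This is simply false in dimension $n\ge3$: for instance $u(x)=\max\bigl(0,\,1-|x|^{2-n}\bigr)$ (or a smooth mollification thereof) is nonnegative, bounded, subharmonic on $\Rn$, and nonconstant. Boundedness plus subharmonicity forces constancy only when $n\le2$, because the integral $\int_1^\infty r^{1-n}\bigl(\int_{B_r}\Delta u\bigr)\,dr$ is controlled by the oscillation of the spherical means of $u$, and this integral is only divergence-forcing when $n\le2$. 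The same objection applies to your even-reflection argument on the half-space: $\tilde u$ is indeed subharmonic (your jump computation across $\{x_n=0\}$ is correct), but boundedness again does not imply constancy when $n\ge3$. The paper instead argues by perturbing $u$ with a small paraboloid $-\delta|x-x_0|^2$ (where $u(x_0)\ge\frac12\sup u$), forcing the perturbed function to attain an interior maximum $x_\delta$ with $u(x_\delta)\in[\frac12\sup u,\sup u]$, and then reading off a contradiction from $0\ge\Delta u_\delta(x_\delta)=(\lambda-1)h(u(x_\delta))-2n\delta\ge(\lambda-1)\tau-2n\delta>0$ for small $\delta$, where $\tau=\inf_{[\frac12\sup u,\sup u]}h>0$. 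An alternative correct route in the spirit of yours would be to pass to a translated limit attaining the supremum and then apply the strong maximum principle, but as written your subharmonic Liouville step does not hold.
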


\begin{remark} 
(i) The assumption that $\phi\geq c_M>0$ for $u,v\leq M$ in Theorem~\ref{thm-proportional} is not technical,
since \eqref{systProp} admits the semitrivial solution $(u,v)=(0,C)$ whenever $\phi(0,C)=0$ for $C>0$.
Theorem~\ref{thm-proportional} in $\Rn$ may even fail in the class of positive solutions when 
$\phi(u,v)=u^pv^p$, $k(s)=1$ and $g(s)=s^q$
(see \cite[Theorems~2.1 and 2.4]{MSS} for positive and negative results depending on the values of $p$).

\smallskip

(ii) The boundedness assumption in Theorems~\ref{thm-proportional} and \ref{thm-proportional2}
 is not purely technical either.
Indeed, consider $\phi\equiv 1$, $k(s)=s^p$ and $g(s)=s^q$, with $q\ge p>0$, $p+q\le 1$ and $\lambda>0$.
Then assumptions \eqref{ass-prop1}--\eqref{ass-propk} are satisfied but 
\eqref{systProp} admits unbounded solutions in $D=\Rn$, or in $D=\Rn_+$ with homogeneous Dirichlet boundary conditions,
of the form $(w,0)$ and $(0,w)$ with $w\ge 0$ nontrivial.
Namely, for $p+q<1$, $w$ is given by
$w(x)=c|x_n|^a$, $a=2/(1-p-q)$ and some $c=c(p,q,\lambda)>0$ 
and, for $p+q=1$, by $w(x)=\cosh(\sqrt\lambda\,x_n)$ if $D=\Rn$ and $w(x)=\sinh(\sqrt\lambda \,x_n)$ if $D=\Rn_+$.

\smallskip

(iii) Under the assumptions of Theorem~\ref{thm-proportional2} with $\lambda=1$, it follows that $u$ is a bounded harmonic function,
hence $u=v=C\ge 0$ in case $D=\Rn$ by Liouville's Theorem,
and $u=v=0$ in case $D=\Rn_+$ by Phragmen-Lindel\"of's Theorem (see, e.g.,~\cite{PW}).
\smallskip

(iv) Assumption \eqref{HypProp2} for $D=\Rn_+$ and $n\ge 2$ can be replaced for instance by $\liminf_{s\to 0^+}$
$s^{-(n+1)/(n-1)}h(s)>0$ (see \cite[Corollary~5.6]{AS}).
\end{remark}

\subsection{Modified Gidas-Spruck method}
\label{LGS}

In this subsection we get back to the case of scalar equations and (mildly) revisit the classical Gidas-Spruck method.
It  appears that there is room for some improvements which  will turn out to be efficient for certain nonlinearities 
(see Section~\ref{sec-benchmark}).
We thus consider the semilinear equation
\be{scaleq}
-\Delta u=f(u),\quad x\in\Rn,
\ee
where 
\be{scaleq0}
\hbox{$f:[0,\infty)\to[0,\infty)$ is continuous, $f(0)=0$ and $n\geq3$}.
\ee
Denote $\omega_1=(0,1]$, $\omega_2=(1,\infty)$.

\begin{theorem} \label{thmGSstarCor}
Assume \eqref{scaleq0} and let
\be{hypDefq}
p_1,p_2\in(1,p_S),\quad 0<\bar\kappa<\kappa:=\frac{n}{n-2}.
\ee
Assume that, for any $\e>0$, there exist $c_\e, C_\e>0$ such that
\be{hypGS1a0}
c_\e s^{p_i-(-1)^i\e}\le f(s)\le C_\e s^{p_i+(-1)^i\e},\quad s\in\omega_i,\ i\in\{1,2\}
\ee
and that
\be{hypGS200}
f(s) \le\bar\kappa s^{\kappa-1}\phi(s),\quad  s>0, \qquad\hbox{where }\phi(s):=\int_0^s \sigma^{-\kappa}f(\sigma)d\sigma\le\infty.
\ee
If $u$ is a solution 
of \eqref{scaleq}, then $u\equiv 0$.
\end{theorem}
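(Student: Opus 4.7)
The plan is to adapt the integral Bernstein method of Gidas-Spruck to the non-homogeneous setting provided by \eqref{hypGS1a0}--\eqref{hypGS200}. First, since $f(0)=0$ and $f>0$ on $(0,\infty)$ by the lower bound in \eqref{hypGS1a0}, the strong maximum principle applied to $-\Delta u=f(u)\ge 0$ implies that any nontrivial solution is strictly positive, and I would argue by contradiction and assume $u>0$ throughout $\R^n$.

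The core is a Bernstein-type integral inequality derived from Bochner's identity
\[
\tfrac12\Delta|\nabla u|^2 = |D^2 u|^2 - f'(u)|\nabla u|^2,
\]
combined with the pointwise bound $|D^2 u|^2\ge (\Delta u)^2/n=f(u)^2/n$. Multiplying by a suitable product of a power of $u$ and a smooth cutoff $\eta^{2m}$, integrating by parts, and combining with integral identities obtained by testing the equation itself against related multipliers, one expects an estimate in which the left-hand side dominates a quantity like $\int f(u)u^{-\alpha}\eta^{2m}\,dx$ for an exponent $\alpha$ tied to $\kappa=n/(n-2)$, while the right-hand side involves a term of the form $C\int \phi(u)f(u)u^{\kappa-1-\alpha}\eta^{2m-2}|\nabla\eta|^2\,dx$ plus lower-order cutoff remainders. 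The critical exponent $\kappa$ arises precisely because it is the scaling at which the gradient term produced by Bochner can be absorbed.

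The second step is to close the estimate. The hypothesis \eqref{hypGS200}, rewritten as $s\phi'(s)\le\bar\kappa\,\phi(s)$, yields via a Gr\"onwall-type argument that $\phi(s)\le C s^{\bar\kappa}$ for $s\ge 1$. Inserting this and using \eqref{hypGS200} itself pointwise, the right-hand side integrand is controlled by a subcritical power of $u$, with enough slack to absorb the left-hand side (because $\bar\kappa<\kappa$). Taking $\eta(x)=\eta_0(|x|/R)$ so that $|\nabla\eta|\le C/R$, the two-sided bounds \eqref{hypGS1a0} (with $p_1,p_2\in(1,p_S)$) ensure that the remainder tends to $0$ as $R\to\infty$, forcing the left-hand side to vanish and hence $f(u)\equiv 0$, i.e.\ $u\equiv 0$.

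The main obstacle will be to structurally recast the classical Gidas-Spruck algebra so that it uses only the integral hypothesis \eqref{hypGS200} instead of the exact homogeneity $f(u)=u^p$. In particular, the manipulations that eliminate $f'(u)|\nabla u|^2$ in favor of $f(u)$ through integration by parts must be redone with error terms controlled by $\phi(u)$, and the precise choice of exponents $\alpha$, $m$ and the combination of integral identities needed to achieve this elimination must be shown to close under $\bar\kappa<\kappa$. A secondary difficulty is to accommodate the distinct power-like behavior of $f$ near $0$ and at $\infty$ from \eqref{hypGS1a0}, which will require splitting the remainder integrals between $\{u\le 1\}$ and $\{u>1\}$ and applying the corresponding subcritical estimates separately in each regime.
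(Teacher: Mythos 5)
Your proposal correctly identifies the integral Bernstein method as the engine, and it matches the spirit of the paper's proof, which verifies the hypotheses of a more general Theorem~\ref{thmGSstar} (proved via Lemma~\ref{lemGS1}). However, there are genuine gaps that would prevent this sketch from closing as written.

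First, your Bochner identity $\tfrac12\Delta|\nabla u|^2 = |D^2 u|^2 - f'(u)|\nabla u|^2$ invokes $f'$, but $f$ is only assumed continuous in \eqref{scaleq0}. The paper's Lemma~\ref{lemGS1} is formulated purely in terms of $I=\int\varphi u^{q-2}|\nabla u|^4$, $J=\int\varphi u^{q-1}|\nabla u|^2\Delta u$, $K=\int\varphi u^q(\Delta u)^2$; the equation is substituted \emph{after} the Bochner/integration-by-parts algebra, and the derivative of the nonlinearity is avoided by using the antiderivative $F_q(s)=\int_0^s\sigma^{q-1}f(\sigma)\,d\sigma$ and rewriting $u^{q-1}f(u)\nabla u=\nabla F_q(u)$ in the computation of $J$ (see \eqref{eqJL}). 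Your sketch alludes to "redoing the manipulations that eliminate $f'(u)$" but does not carry this out, and it is exactly where the argument must be restructured.

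Second, the role of $\bar\kappa<\kappa$ is not an absorption slack in a Gr\"onwall-type bound $\phi(s)\le Cs^{\bar\kappa}$; it is a precise positivity requirement on a coefficient in the Bernstein form. With $q=1-\kappa$ and $c_q=\bar\kappa$, the left-hand side of \eqref{estIJK} has coefficient $-\beta+c_q\gamma$ on the term controlled by $L=\int\varphi f(u)F_q(u)$, and at the optimal base exponent $k_0=-\tfrac{n}{n-2}$ one computes $-\beta_0+c_q\gamma=\tfrac{n-1}{n-2}-\tfrac{n-1}{n}\,\bar\kappa$, which is positive \emph{iff} $\bar\kappa<\kappa$; simultaneously $\alpha_0=0$ with $\partial_k\alpha|_{k_0}=1$, so a perturbation $k=k_0+\delta$ is needed to get $\alpha>0$. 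Without pinning down this coefficient arithmetic, the "left side dominates $\int f(u)u^{-\alpha}$" claim is unsupported. Finally, the hypothesis $q>-p$ in Theorem~\ref{thmGSstar} forces $p_1>\kappa-1$; the complementary range $p_1\le\kappa-1$ is not covered by your sketch and must be handled separately (the paper invokes \cite[Theorem~2.1]{AS} for elliptic inequalities).
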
 

\begin{remark} 
 The classical Theorem~A in Section~\ref{intro1} is a special case of Liouville type theorems obtained in \cite{GSa} for more general nonlinearities
of the form $f(x,u)$ (also on Riemannian manifolds). In the Euclidean case with $f=f(u)$,
the Liouville type theorem in \cite[Theorem~6.1]{GSa} requires that  $f\in C^1([0,\infty))$ and that
\be{hypGS61}
\hbox{$s^{-p}f(s)$ is nonincreasing for some $p\in(1,p_S)$}
\ee
(plus some additional assumptions).
Condition \eqref{hypGS61} is stronger than assumption~\eqref{condLZ} of Theorem~B,
and \eqref{hypGS61}  also implies \eqref{hypGS200}  (see Lemma~\ref{compHyp} below).
We refer to Section~\ref{sec-benchmark}  for further comparison on a benchmark example.
\end{remark} 

 Theorem~\ref{thmGSstarCor} is a consequence of the following more general result (but whose formulation is less transparent).

\begin{theorem} \label{thmGSstar}
 Assume~\eqref{scaleq0} and let $p\ge 0$, $q>-p$, with $q\ge -2$ and $n=3$ if $q=-2$.
Assume
\be{f-Lip}
 f(s)\le Cs^p,\quad s\in\omega_1,
\ee
  \be{hypGS20}
s^qf(s) \le c_qF_q(s),\quad\hbox{where } F_q(s):=\int_0^s \sigma^{q-1}f(\sigma)d\sigma<\infty,
\ee
\be{hypGS4}
\exists m_i\in(2/3, 2n/(3n-4)_+),\ s^{(1-(q/2))m_i} F_q^{2m_i-1}(s)\le Cf(s), \quad s\in\omega_i,
\ee
\be{hypGS5}
\alpha>0,\ -\beta+c_q\gamma>0,
\ee
where
\be{alphabetagamma}
\alpha=-\frac{n-1}{n}k^2+(q-1)k-\frac{q(q-1)}{2},\quad
\beta=\frac{n+2}{n}k-\frac{3q}{2}, \quad\gamma=-\frac{n-1}{n}, 
\ee
and $k\ne-1$.
 If $q>-2$ assume in addition that
\be{hypGS3}
\exists \gamma_i\in(1,n/(n-4)_+),\ s^{(2+q)\gamma_i}\le Cf(s)F_q(s), \quad s\in\omega_i.
\ee
If $u$ is a solution 
of \eqref{scaleq}, then $u\equiv 0$.
\end{theorem}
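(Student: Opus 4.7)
My plan is to follow the classical integral Bernstein approach of Gidas--Spruck, adapted so that the integrated potential $F_q(u)$ replaces $u^{p+q}/(p+q)$, with both the index $q$ and the implicit power-change exponent $k$ appearing in \eqref{alphabetagamma} kept as free parameters to be optimized. We may assume $u>0$ on $\R^n$ (otherwise work on the open set $\{u>0\}$ using the strong maximum principle) and, by mollification of $f$, that $f$ is smooth; this is legitimate because the final estimates involve only integrals of $f(u)$ and $F_q(u)$, which are controlled uniformly along the approximation by \eqref{f-Lip}--\eqref{hypGS3}.

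The heart of the argument is the Bochner identity, which combined with $-\Delta u = f(u)$ gives
\[ \tfrac12 \Delta|\nabla u|^2 = |D^2u|^2 - f'(u)|\nabla u|^2, \qquad |D^2u|^2 \ge (\Delta u)^2/n = f(u)^2/n. \]
Fixing a standard cutoff $\eta$ with $\eta\equiv 1$ on $B_R$, $\eta\equiv 0$ off $B_{2R}$ and $|\nabla\eta|\le C/R$, we test the refined Bochner inequality against the weight $u^{-q}\eta^{2m}$, and separately test the equation $-\Delta u=f(u)$ against $u^{-q-1}|\nabla u|^2\eta^{2m}$ (producing $\int u^{-q}f'(u)|\nabla u|^2\eta^{2m}$-type terms) and against a multiplier whose derivative matches $u^{q-1}f(u)$ (producing, via \eqref{hypGS20}, an energy term controlled by $F_q(u)$). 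Performing the Gidas--Spruck bookkeeping via the implicit power change $v\sim u^k$, using the cancellation identity for $\int u^{-q-1}\nabla u\cdot\nabla|\nabla u|^2$ and the bound $(\Delta u)^2\le n|D^2u|^2$, we arrive at
\[ \alpha J_1 + (-\beta + c_q\gamma) J_2 \le \mathcal R(\eta), \]
with $J_1 := \int u^{-q-2}|\nabla u|^4\eta^{2m}$ and $J_2:=\int u^{-q}F_q(u)\eta^{2m}$ both nonnegative, and with $\alpha,\beta,\gamma$ exactly those of \eqref{alphabetagamma}; under \eqref{hypGS5} both coefficients are strictly positive.

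To close the argument we control the remainder $\mathcal R(\eta)$, which is supported in the annulus $B_{2R}\setminus B_R$ and consists of integrals of $|\nabla\eta|^2$ (and of $|\Delta(\eta^{2m})|$) against expressions in $u^{-q}|\nabla u|^2$, $F_q(u)$ and $f(u)$. Splitting the integration domain into $\{u\le 1\}$ and $\{u>1\}$ and invoking \eqref{f-Lip}, \eqref{hypGS4}, and \eqref{hypGS3} with H\"older's inequality, we bound $\mathcal R(\eta)\le CR^{-\delta_i}J_1^{\theta_i}J_2^{1-\theta_i}$ with some $\delta_i>0$ and $\theta_i\in(0,1)$; the subcritical ranges $m_i\in(2/3,2n/(3n-4)_+)$ and $\gamma_i\in(1,n/(n-4)_+)$ are precisely what is required for these H\"older interpolations to close, and in the degenerate case $q=-2$ the restriction $n=3$ enters to keep the weighted integrals finite. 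A Young's inequality then absorbs $J_1,J_2$ into the left side, and sending $R\to\infty$ forces $J_1=J_2=0$. Hence $\nabla u\equiv 0$, so $u$ is a nonnegative constant; the equation yields $f(u)\equiv 0$, and \eqref{hypGS4} at any positive constant forces $F_q\equiv 0$ there, contradicting the equality in \eqref{hypGS20}, so $u\equiv 0$.

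The main obstacle I expect is the bookkeeping in the integration by parts: matching the coefficients to the precise form \eqref{alphabetagamma} requires a careful linear combination of the three weighted identities and precise tracking of every boundary-type term, hinging on the key Gidas--Spruck cancellation that rewrites $\int u^{-q-1}\nabla u\cdot\nabla|\nabla u|^2$ in terms of $J_1$ and $\int |D^2 u|^2 u^{-q}\eta^{2m}$ up to controllable remainders. A secondary technical difficulty is that the hypotheses are stated separately on $\omega_1$ and $\omega_2$, so one must carefully track which inequality is invoked on which sublevel set in the Hölder estimate of $\mathcal R(\eta)$, and ensure that the $k$-optimization in \eqref{hypGS5} is carried out \emph{after} fixing $q$, which is why both parameters appear together in the statement.
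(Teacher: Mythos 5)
Your proposal is essentially the same integral Bernstein method that the paper uses: Bochner's identity, the implicit Gidas--Spruck power change $u\mapsto u^k$, the weighted cut-off test function, and the same constants $\alpha,\beta,\gamma$ of \eqref{alphabetagamma}. The only structural difference is one of exposition: the paper packages all the Bochner bookkeeping into the imported Lemma~\ref{lemGS1} (taken from \cite{SouDCDS}), obtaining $\alpha I+\beta J+\gamma K\le\hbox{(cut-off remainder)}$ with $I=\int\varphi\,u^{q-2}|\nabla u|^4$, $J=\int\varphi\,u^{q-1}|\nabla u|^2\Delta u$, $K=\int\varphi\,u^q(\Delta u)^2$, and then substitutes $J=-L+\int F_q(u)\nabla u\cdot\nabla\varphi$ and $K\le c_qL$ with $L=\int\varphi\,f(u)F_q(u)$ to reach $c_0(I+L)\le\hbox{remainder}$; whereas you propose to re-derive this inequality from scratch. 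Both then control the remainder on the annulus by splitting into $\{u\le1\}$ and $\{u>1\}$ and applying H\"older with the exponents $m_i,\gamma_i$ of \eqref{hypGS4}, \eqref{hypGS3}. Two remarks on your write-up. First, your $J_1=\int u^{-q-2}|\nabla u|^4\eta^{2m}$ and $J_2=\int u^{-q}F_q(u)\eta^{2m}$ do not match the quantities that the Bochner calculation actually produces: the signs on $q$ are reversed relative to the paper's $I$, and more importantly $J_2$ should be the paper's $L=\int\varphi\,f(u)F_q(u)$, not $\int u^{-q}F_q(u)\eta^{2m}$ --- note that \eqref{hypGS20} gives $L\le c_qJ_2$, so a bound on $J_2$ does not transfer to $L$, and it is $L$ (not $J_2$) that the argument needs. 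Second, and more substantively, your concluding step is incorrect: once you deduce $u\equiv c>0$ and $f(c)=0$, \eqref{hypGS4} indeed forces $F_q(c)=0$ (since $2m_i-1>0$), but this does \emph{not} contradict \eqref{hypGS20}, which then reads $0\le0$. The correct conclusion, as in the paper, is to observe that $R\to\infty$ yields $\int f(u)F_q(u)\,dx=0$ directly, and then for $q>-2$ to invoke \eqref{hypGS3}, which gives $u(x)^{(2+q)\gamma_i}\le Cf(u(x))F_q(u(x))=0$ pointwise and hence $u\equiv0$; for $q=-2$ one uses instead the strict positivity of $f$ on $(0,\infty)$ that \eqref{hypGS4} forces wherever $F_q>0$.
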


\goodbreak

\begin{lemma} \label{compHyp}
Assume  $f\in C^1([0,\infty))$ and \eqref{hypGS61}. Then \eqref{hypGS200} is true for some $\bar\kappa\in(0,\kappa)$.
\end{lemma}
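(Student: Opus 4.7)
The plan is to use the monotonicity hypothesis directly to bound $f(s)$ from above by a multiple of $s^{\kappa-1}\phi(s)$. The $C^1$ regularity of $f$ plays no role here; only the monotonicity is needed.

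The hypothesis \eqref{hypGS61} is equivalent to $f(\sigma) \ge (\sigma/s)^p f(s)$ for all $0 < \sigma \le s$. Inserting this into the definition of $\phi$ yields the pointwise lower bound
\[
\phi(s) \ \ge\ f(s)\,s^{-p}\int_0^s \sigma^{p-\kappa}\,d\sigma, \qquad s>0.
\]
I would then split into two cases according to whether $p$ exceeds $\kappa-1 = 2/(n-2)$. In the main case $p > \kappa-1$, the integral equals $s^{p-\kappa+1}/(p-\kappa+1)$, producing
\[
f(s)\ \le\ (p-\kappa+1)\,s^{\kappa-1}\phi(s).
\]
Setting $\bar\kappa := p - \kappa + 1$ then establishes \eqref{hypGS200}. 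The inclusion $\bar\kappa \in (0,\kappa)$ follows from the case assumption ($\bar\kappa > 0$) together with the computation $\bar\kappa < \kappa \iff p < 2\kappa - 1 = p_S$, which is the subcriticality built into \eqref{hypGS61}.

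In the remaining degenerate case $p \le \kappa - 1$, which given $p > 1$ is only possible when $n = 3$ and $p \in (1,2]$, one has $p - \kappa \le -1$. If $f \equiv 0$ then \eqref{hypGS200} is vacuous. Otherwise, fixing any $s_0 > 0$ with $f(s_0) > 0$ and applying the monotonicity on $(0,s_0]$ shows that $\int_0^{\min(s,s_0)} \sigma^{-\kappa} f(\sigma)\,d\sigma$ diverges because $\sigma^{p-\kappa}$ is non-integrable at $0$; hence $\phi \equiv +\infty$ and \eqref{hypGS200} holds trivially for any $\bar\kappa \in (0,\kappa)$. No substantial obstacle arises; the only care required is to verify that this dichotomy in $p$ exhausts all admissible cases and that the equivalence $p - \kappa + 1 \in (0,\kappa) \iff p \in (\kappa-1,p_S)$ delivers the correct range for $\bar\kappa$.
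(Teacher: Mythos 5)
Your proof is correct, and it takes a genuinely different route from the paper's. The paper first normalizes $p$ upward into $(\kappa-1,p_S)$ (legitimate because \eqref{hypGS61} is preserved when $p$ increases), sets $\bar\kappa=p+1-\kappa$, defines $h(s)=s^{1-\kappa}f(s)-\bar\kappa\phi(s)$, and shows $h'\le 0$ via a derivative computation $h'(s)=s^{-\kappa}(sf'(s)-pf(s))\le 0$; together with $\liminf_{s\to 0}h(s)\le 0$ (forced by the convergence of $\int_0^1\sigma^{-\kappa}f(\sigma)\,d\sigma$, the divergent case being dispatched first), this yields $h\le 0$. You instead plug the multiplicative form of the monotonicity, $f(\sigma)\ge(\sigma/s)^pf(s)$, directly into the integral defining $\phi$ and evaluate; this produces the same constant $\bar\kappa=p+1-\kappa$ when $p>\kappa-1$, and in the remaining range ($n=3$, $p\in(1,2]$) the integrand $\sigma^{p-\kappa}$ is nonintegrable at the origin, forcing $\phi\equiv+\infty$ so that the inequality is trivial. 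What your route buys: it avoids differentiation entirely, so the $C^1$ hypothesis is genuinely unused (a fact you correctly flag), and the argument is a single closed-form integration rather than a comparison ODE. What the paper's route buys: the WLOG on $p$ collapses your two-case split into one clean branch. One minor wording correction: when $f\equiv 0$ the inequality \eqref{hypGS200} is not ``vacuous'' but trivially satisfied, since both sides equal zero; this does not affect the argument.
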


\begin{proof}
We may assume $p\in(\kappa-1,p_S)$ without loss of generality
  (since \eqref{hypGS61} remains true for any larger $p<p_S$).
We have $\bar\kappa:=p+1-\kappa\in(0,\kappa)$.
We may also suppose that 
\be{cvphi}
\int_0^1 \sigma^{-\kappa}f(\sigma)d\sigma<\infty,
\ee
since otherwise $\phi(s)=\infty$ for all $s>0$ and there is nothing to prove.
Then letting $h(s):=s^{1-\kappa}f(s)-\bar\kappa \phi(s)$ for all $s>0$, we have $h\in C^1(0,\infty)$ and assumption \eqref{hypGS61} implies that 
\be{cvphi2}
h'(s)=s^{1-\kappa}f'(s)+(1-\kappa-\bar\kappa) s^{-\kappa}f(s)=s^{-\kappa}(sf'(s)-pf(s))\le 0,\quad s>0.
\ee
Since \eqref{cvphi} guarantees that $\liminf_{s\to 0}h(s)\le \liminf_{s\to 0}s^{1-\kappa}f(s)\le 0$,
it follows from \eqref{cvphi2} that $h\le 0$ on $(0,\infty)$, hence \eqref{hypGS200}.
\end{proof}

\section{Comparison of methods and further properties} \label{sec-benchmark}

\subsection{Comparison of methods on a benchmark example}
The goal of this subsection is to compare the efficiency of some of the available methods
on a simple example, for which existence and nonexistence depend on the value of  
a parameter.
Namely, for $n\geq3$, $1<n/(n-2)<p<p_S$, and $K>0$, 
we consider the scalar equation
\be{eqfu}
-\Delta u=f(u),\qquad x\in \Rn,
\ee
where 
\be{fuK}
 f(u)=(K+\min(1,u^{p-1}))u^p. 
\ee
Let also $F_q$ be defined by \eqref{hypGS20}.
If
$$K=K_0:=\frac{(n-2)p-n}{2p},$$
then we have the explicit solution
$$u(x)=(1+\lambda |x|^2)^{-1/(p-1)},\qquad \lambda:=\frac{(p-1)^2}{4p}.$$

$\bullet$ Theorem~B implies that
\be{nonex}
\hbox{equation \eqref{eqfu} does not possess nontrivial solutions}
\ee
provided
$s^{-p_S}f(s)$ is nonincreasing and nonconstant, 
i.e.~if 
$$K\geq K_1:=\frac{2((n-2)p-n)}{(n+2)-(n-2)p}.$$

$\bullet$ If $n\leq4$ or $p<\frac{n-1}{n-3}$, then Theorem~\ref{thm1}
implies \eqref{nonex} provided
$sf(s)\leq(p_S+1-\e)F(s)$ for some $\e>0$ (cf.~\eqref{hypGrowthC}),
i.e.~if 
$$K>K_2:=\frac{p+1}{2p}K_1.$$

$\bullet$ Finally, Theorem~\ref{thmGSstarCor} 
implies \eqref{nonex} provided
$$K>K_3:=\frac{(n-2)p-2}{2(n-2)p-n}K_1.$$
Notice that
$$ K_1>K_2>K_3>K_0,$$
hence the modified 
Gidas-Spruck estimates in Theorem~\ref{thmGSstarCor}
yield the best result.
Notice also that $K_3/K_0\to 2$ if $p\to\frac{n}{n-2}+$, but
$K_3/K_0\to\infty$ if $p\to\frac{n+2}{n-2}-$.

\begin{remark} \label{remConjecture}
If $u$ is a solution of \eqref{eqfu}--\eqref{fuK}, then $v(x):=u(\frac1{\sqrt K}x)$
is a solution of \eqref{eqfu} with
\be{fua}
 f(u)=(1+a\min(1,u^{p-1}))u^p,
\ee
where $a=1/K$. Consequently, problem \eqref{eqfu}, \eqref{fua}
has an explicit positive solution if $a=a_0:=1/K_0$, but
it does not possess nontrivial solutions if $a<a_3:=1/K_3$.
This suggests that the equations 
$-\Delta u=u^p\log^a(2+u)$ and $-\Delta u=u^p\log^{-a}(2+u^{-1})$
(which do not possess nontrivial solutions for $a>0$ small due to Theorem~B)
could possess positive solutions for some $a$ large enough.  
\end{remark}

\subsection{Non open range for the Liouville property}
In the case of scale-invariant problems, it is known
that the Liouville property has an open range with respect to the exponent
(see \cite[Proposition 21.2b and subsequent comments]{QSb}).

The following examples show that this is no longer the case for problems without scale invariance. 
Let $n\ge 3$, $\kappa=\frac{n}{n-2}$, $p_k:= \kappa+\frac1k$, $q_k:=2p_k-1=p_S+\frac2k$ and let
$f_k:[0,\infty)\to[0,\infty)$ be given by
$f_k(u)=u^{p_k}+u^{q_k}$, where $k\in\N$ is large. 
Then the equations $-\Delta u=f_k(u)$ in $\R^n$ possess explicit bounded radial solutions
$u_k=u_k(|x|)$ satisfying
$$ u_k^{p_k-1}(r)=\frac{2kp_k}{n-2}\frac{\xi_k^2}{\xi_k^2+r^2},\quad\hbox{where}\quad
\xi_k = \frac{n-2}{k\sqrt{p_k}(p_k-1)}$$
(see \cite{LN88}), but the limiting equation
$-\Delta u=f(u)$ with $f(u)=u^\kappa+u^{p_S}$
does not possess nontrivial solutions by Theorem~B.

 On the other hand, assume that 
$\lim_{\lambda\to 0^+}\frac{f(\lambda s)}{f(\lambda)}=f_0(s)$,
$\lim_{\lambda\to\infty}\frac{f(\lambda s)}{f(\lambda)}=f_\infty(s)$
for all $s>0$ and that the Liouville theorem is true for 
all problems
$-\Delta u=\varphi(u)$ with $\varphi\in\{f,f_0,f_\infty\}$.
If $f_k\to f$ in a suitable way, 
then a contradiction argument (see \cite[Theorem~6.1]{SouDCDS})
does show that
the Liouville theorem is true for $(f_k, (f_k)_0,(f_k)_\infty)$ with $k$ large enough.

Notice also that 
$$ M_k^{p_k-1}:=(\max u_k)^{p_k-1}=\frac{2kp_k}{n-2}\approx k\frac{2n}{(n-2)^2}\to\infty\quad\hbox{as}\ \ k\to\infty,$$
the functions 
$v_k(y):=\frac1{M_k}u_k(M_k^{(1-q_k)/2}y)$ 
converge to a positive solution
of $-\Delta v=h(v)$ with $h(v)=v^{p_S}$, and 
that the proof of Theorem~1 (in particular inequality \eqref{hypGrowthC}) 
guarantees 
the nonexistence of nontrivial solutions of $-\Delta u=f_k(u)$ satisfying
$$u^{p_k-1}\leq\Bigl(\frac{2n}{p_k+1}-(n-2)-\eps\Bigr)\Big/\Bigl(n-2-\frac{2n}{q_k+1}\Bigr)\approx k\frac{2n}{(n-2)(n-1)}.$$
See also \cite{BFdP00,F04,DG16} for results on radial positive solutions
of $-\Delta u=u^p+u^q$ with $p<p_S<q$:
In particular, \cite[Theorem~1.1(b)]{BFdP00} guarantees that if
$p\in(\kappa,p_S)$ is fixed, then the equation $-\Delta u=u^p+u^q$
possesses positive radial solutions for $q\in(p_S,p_S+\eps)$,
which again shows that the Liouville property for this equation
does not have an open range.

\section{Universal bounds}
\label{secUB}

In this section,  for systems with regularly varying nonlinearities, we will give applications of 
 known and new Liouville type theorems to universal bounds for singularity and decay estimates.

 Namely, we shall describe the connections between universal estimates of solutions of an elliptic system
 with given nonlinearity $f$ and the three natural associated Liouville type properties.
 The latter, that we denote by LP, LP$_0$, LP$_\infty$, 
 mean the nonexistence of nontrivial bounded entire solutions for the systems with nonlinearities $f, f_0, f_\infty$, where $f_0, f_\infty$ denote the generalized 
 rescaling limits\footnote{These limits can be defined in a vector-valued way, with a single index of regular variation (see subsection~\ref{SystEqual}), 
 or component-wise, allowing different indices of regular variation (see subsection~\ref{LEsec}).}
  of $f$, respectively near $0$ and $\infty$.

Stated in loose terms, under suitable assumptions, we have the implications:
\be{LPimplications}
\left\{
\begin{aligned}
&\ \hbox{LP$_\infty$}&\Longrightarrow &\ \hbox{ universal singularity estimates of all solutions} \\
&\ \hbox{LP}&\Longrightarrow &\ \hbox{ universal decay without rate for bounded solutions} \\
&\ \hbox{LP$_0$}& \Longrightarrow &\ \hbox{ universal decay rates for small solutions.}
\end{aligned}
\right.
\ee
Here singularity and decay will be expressed in terms of $d^{-2}(x)$, as $d(x)\to 0$ and $\infty$, respectively,
where $d(x)$ is the distance to the boundary
of the domain where the equation is satisfied.\footnote{Of course the decay property is relevant only in the case of unbounded domains
(or expanding families of bounded domains).}
Moreover, the above properties can be suitably combined to yield:
$$
\begin{aligned}
&\hbox{LP $\&$ LP$_0$}&\Longrightarrow &\ \hbox{ universal decay rates for bounded solutions} \\
&\hbox{LP $\&$ LP$_0$ $\&$ LP$_\infty$}& \Longrightarrow &\ \hbox{ universal decay rates for all solutions.}
\end{aligned}
$$

Before going to specific results, we record right away the following general proposition,
which describes the second implication in \eqref{LPimplications} 
(its proof will be rather simple, while the other two will require more assumptions and more work).

\begin{proposition}  \label{propdecay}
Let $f:K\to\R^m$ be continuous, $\Lambda>0$ and assume that the system $-\Delta V=f(V)$
 admits no nontrivial strong solution in $\Rn$ with $\|V\|_\infty\le \Lambda$.
Then 
$\lim_{R\to\infty} \eta(R)=0,$
where\footnote{with the convention $\sup(\emptyset)=0$}
$$
\begin{aligned}
\eta(R)&=\hbox{$\sup\Bigl\{|U(x)|,\ \Omega\subset\Rn$ is open, $U:\Omega\to K$ is a bounded strong solution}\\
&\qquad\qquad\hbox{of $-\Delta U=f(U)$, $\|U\|_\infty\le \Lambda$
and $x\in\Omega$ with ${\rm dist}(x,\partial\Omega)\ge R\Bigr\}$}.
\end{aligned}$$
\end{proposition}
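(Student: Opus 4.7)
The plan is to argue by contradiction and use an elliptic rescaling/compactness argument. Suppose $\eta(R)\not\to 0$ as $R\to\infty$. Then there exist $c>0$ and sequences $R_k\to\infty$, open sets $\Omega_k\subset\R^n$, strong solutions $U_k:\Omega_k\to K$ of $-\Delta U_k=f(U_k)$ with $\|U_k\|_\infty\le\Lambda$, and points $x_k\in\Omega_k$ with ${\rm dist}(x_k,\partial\Omega_k)\ge R_k$ satisfying $|U_k(x_k)|\ge c>0$. Translate by setting $\tilde U_k(y):=U_k(x_k+y)$, so that $\tilde U_k$ is defined on the ball $B_{R_k}(0)$, solves $-\Delta\tilde U_k=f(\tilde U_k)$ there, satisfies $\|\tilde U_k\|_\infty\le\Lambda$, and $|\tilde U_k(0)|\ge c$.

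Next I would invoke interior elliptic regularity: since $f$ is continuous and the values $\tilde U_k(y)$ lie in the compact set $K\cap\overline{B_\Lambda(0)}\subset\R^m$, the right-hand sides $f(\tilde U_k)$ are uniformly bounded by some constant $M=\sup_{|s|\le\Lambda,\,s\in K}|f(s)|$. Standard Calder\'on--Zygmund estimates then give, for each fixed $\rho>0$ and each $r\in(1,\infty)$, a uniform bound
\begin{equation*}
\|\tilde U_k\|_{W^{2,r}(B_\rho(0))}\le C(n,r,\rho,\Lambda,M),\qquad k\text{ large enough so that }R_k\ge 2\rho.
\end{equation*}
By Sobolev embedding (choosing $r>n$), this yields a uniform $C^{1,\alpha}_{\rm loc}$ bound. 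A diagonal subsequence therefore converges in $C^{1,\alpha}_{\rm loc}(\R^n)$ and weakly in $W^{2,r}_{\rm loc}(\R^n)$ to some limit $V:\R^n\to \overline{K}$ (we may assume $K$ is closed, which is implicit in $f$ being continuous on its domain).

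Passing to the limit in the equation, $f(\tilde U_k)\to f(V)$ uniformly on compact sets (by continuity of $f$), while $\Delta\tilde U_k\rightharpoonup\Delta V$ in $L^r_{\rm loc}$, so $-\Delta V=f(V)$ a.e.\ in $\R^n$ and $V\in W^{2,r}_{\rm loc}$ for every $r<\infty$, i.e.\ $V$ is a strong solution on $\R^n$. Moreover $\|V\|_\infty\le\Lambda$ and $|V(0)|=\lim_k|\tilde U_k(0)|\ge c>0$, so $V$ is nontrivial, contradicting the Liouville hypothesis. The main (mild) subtlety is verifying that the limit inherits the strong-solution property; this is the elliptic regularity + weak compactness step just described, which is routine given only the continuity of $f$ and the $L^\infty$ control on $U_k$. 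No doubling or rescaling of the equation itself is needed here, because the assumption gives direct control at the original scale.
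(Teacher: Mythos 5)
Your proof is correct and follows essentially the same route as the paper's: argue by contradiction, translate to center the offending points at the origin (so the solutions live on balls of radius $R_k\to\infty$), use the uniform $L^\infty$ bound and continuity of $f$ to get uniform interior elliptic $W^{2,r}$ estimates, extract a locally convergent subsequence, and pass to the limit to obtain a nontrivial bounded entire strong solution, contradicting the Liouville hypothesis. The paper's proof is terser (stating only "interior elliptic $L^q$ estimates" where you spell out Calderón--Zygmund, Sobolev embedding and the diagonal argument), but the content is identical.
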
 

\goodbreak 
We note that some connections of the above type were described for scalar equations in \cite[Section~4.2]{SouDCDS},
but the extension to systems will require nontrivial additional arguments.
In the next two subsections, we will develop such considerations 
for non scale invariant systems with regularly varying nonlinearities, either with different indices on the
components (Lane-Emden type systems), or with equal indices.

\subsection{Lane-Emden type systems}
\label{LEsec}

We here study systems where the nonlinearities have regular variation with different indices on the components.
For simplicity we restrict ourselves to the following non-scale invariant Lane-Emden type systems
(more general systems could be considered): 
\be{LEsystem1}
\left\{
  \begin{aligned}
 \hfill -\Delta u&=f_1(v), \\ 
  \noalign{\vskip 1mm}
 \hfill -\Delta v&=f_2(u),   
   \end{aligned}
   \right.
\ee
which is a modification of the classical Lane-Emden system
\be{LEsystem2}
\left\{
  \begin{aligned}
 \hfill -\Delta u&=v^{p}, \\ 
 \noalign{\vskip 1mm}
 \hfill -\Delta v&=u^{q}.   
   \end{aligned}
   \right.
\ee
For $p, q>0$, we consider nonlinearities $f=(f_1,f_2)$ satisfying the following, slightly reinforced, regular variation assumptions:
\be{hypLE1}
\hbox{$f_i$ are continuous on $[0,\infty)$, $f_i$ are $C^1$ and positive for $s>0$ large,}
\ee
\be{hypLE2}
\hbox{$L_1(s):=s^{-p} f_1(s)$, $L_2(s):=s^{-q} f_2(s)$ satisfy }
\lim_{s\to\infty} {sL'_i(s)\over L_i(s)}=0.
\ee
Note that \eqref{hypLE1}-\eqref{hypLE2} imply that $f_1, f_2$ have regular variation at $\infty$ with respective indices~$p, q$
(see \cite[the paragraph containing (1.11)]{Se}).
For Lane-Emden type systems, the formulation of the universal bounds involves a suitable interaction between the components $f_1, f_2$. 
To this end we introduce the auxiliary functions:
$$h_i(s)=sf_i(s),\quad s\ge 0.$$
One can check (see \eqref{infphi1})
that there exists $s_0>0$ such that 
$h_1, h_2$ are continuous positive and increasing on $[s_0,\infty)$. 
Taking larger $s_0$ if necessary, we can thus ensure that all functions
\be{h1h2mon}
  \begin{aligned}
&\hbox{$h_1$, $h_2$, $h_1^{-1}$, $h_2^{-1}$, $h_1^{-1}\circ h_2$ and $h_2^{-1}\circ h_1$}\\
&\hbox{are defined, positive and increasing on $[s_0,\infty)$.}
   \end{aligned}
\ee

\begin{theorem} \label{thmLEpert1}
Assume \eqref{hypLE1}-\eqref{hypLE2}, where $p, q>0$ with $pq>1$ are such that system \eqref{LEsystem2}
 admits no nontrivial solution on $\Rn$.

\smallskip

 (i) There exists $C=C(n,f)>0$ such that, if $\Omega$ is an arbitrary domain of $\Rn$
and $(u,v)$ is a 
solution of~\eqref{LEsystem1} in $\Omega$, then we have the estimates
\be{estimLE1}
  \begin{aligned}
{f_2(u)\over (h_1^{-1}\circ h_2)(u)} &\le C(1+d^{-2}(x)),\quad x\in \Omega_1,\\
{f_1(v)\over (h_2^{-1}\circ h_1)(v)} &\le C(1+d^{-2}(x)),\quad x\in \Omega_2,
  \end{aligned}
\ee
where $\Omega_1=\{x\in\Omega, u\ge s_0\}$, $\Omega_2=\{x\in\Omega, v\ge s_0\}$
and $d(x)={\rm dist}(x,\partial\Omega)$.

\smallskip

 (ii) Let $\Omega\subset\R^n$ be a uniformly regular domain of class $C^2$.
There exists a constant $C=C(\Omega,f)>0$
such that, for any 
solution $(u,v)$ of~\eqref{LEsystem1} in $\Omega$
with $u=v=0$ on $\partial\Omega$, we have 
$u, v\le C$ in $\Omega$.
\end{theorem}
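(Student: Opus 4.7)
The plan is to adapt the doubling-rescaling scheme of~\cite{PQS1,SouDCDS} to the non-scale-invariant Lane-Emden type system~\eqref{LEsystem1}. The key observation is that the asymptotic scale invariance is governed by the matching condition $h_2(M_1)=h_1(M_2)$ on the amplitudes $M_1\sim u$, $M_2\sim v$: taking spatial scale $\mu$ with $\mu^{-2}=f_2(M_1)/M_2=f_1(M_2)/M_1$ and renormalizing $(u,v)$ by $(M_1,M_2)$ produces a system that asymptotically reduces, by virtue of the regular variation~\eqref{hypLE1}--\eqref{hypLE2}, to the pure power system~\eqref{LEsystem2}.

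For part~(i), I argue by contradiction. If the first estimate fails, there exist sequences $\Omega_k\subset\R^n$, solutions $(u_k,v_k)$ of~\eqref{LEsystem1}, and points $x_k\in\Omega_{1,k}$ with $M_k(x_k)(1\wedge d_k(x_k))\to\infty$, where $M_k(x):=[f_2(u_k(x))/(h_1^{-1}\circ h_2)(u_k(x))]^{1/2}$ and $d_k(x)={\rm dist}(x,\partial\Omega_k)$. The doubling lemma from~\cite{PQS1}, applied to $M_k$ on an appropriate open subset of $\Omega_k$ where $u_k$ stays above a level slightly below $s_0$, furnishes peak points $y_k\in\Omega_{1,k}$ with $\lambda_k:=M_k(y_k)\to\infty$, $\lambda_kd_k(y_k)\to\infty$, and $M_k\le2\lambda_k$ on $B(y_k,N_k/\lambda_k)$ for some $N_k\to\infty$. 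Setting $M_{1,k}=u_k(y_k)$, $M_{2,k}=(h_1^{-1}\circ h_2)(M_{1,k})$, and
$$\tilde u_k(y)=u_k(y_k+y/\lambda_k)/M_{1,k},\quad \tilde v_k(y)=v_k(y_k+y/\lambda_k)/M_{2,k},$$
the rescaled system reads
$$-\Delta\tilde u_k=\tilde v_k^p\frac{L_1(M_{2,k}\tilde v_k)}{L_1(M_{2,k})},\quad -\Delta\tilde v_k=\tilde u_k^q\frac{L_2(M_{1,k}\tilde u_k)}{L_2(M_{1,k})}$$
on $B(0,N_k)$. Since $s\mapsto f_2(s)/(h_1^{-1}\circ h_2)(s)$ grows asymptotically like a positive power of $s$ (with index $(pq-1)/(p+1)$, up to slowly varying factors), the conditions $\lambda_k\to\infty$ and $u_k(y_k)\ge s_0$ force $M_{1,k},M_{2,k}\to\infty$. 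From the doubling inequality and the same power-like growth, I derive uniform $L^\infty$ bounds for $(\tilde u_k,\tilde v_k)$ on each compact set. Since $L_i$ is regularly varying with index zero, the coefficient ratios tend to $1$ locally uniformly, and elliptic $W^{2,r}_{loc}$-regularity plus compact embedding yield a $C^1_{loc}$-convergent subsequence whose limit is a bounded nonnegative solution of~\eqref{LEsystem2} on $\R^n$; it is nontrivial since $\tilde u_k(0)=1$, contradicting the assumed whole-space Liouville theorem. The second estimate follows from the symmetric argument starting from $v_k(x_k)\ge s_0$.

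For part~(ii), suppose by contradiction that there is a sequence $(u_k,v_k)$ of Dirichlet solutions on $\Omega$ with $\|u_k\|_\infty+\|v_k\|_\infty\to\infty$. Since $f_2(s)/(h_1^{-1}\circ h_2)(s)\to\infty$ as $s\to\infty$, part~(i) forces any maximizing sequence $x_k$ to satisfy $d_k(x_k)\to0$. Using the uniform $C^2$-regularity of $\partial\Omega$, I flatten the boundary near the projection $\pi(x_k)$ and apply the same rescaling at scale $1/\lambda_k$. The limit is a nontrivial bounded solution of the pure power system~\eqref{LEsystem2} on the half-space $\R^n_+$ with homogeneous Dirichlet boundary condition. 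This is ruled out by the half-space Liouville theorem for the classical Lane-Emden system in the Sobolev subcritical range, which follows from the whole-space hypothesis under standard techniques (moving planes or Kelvin transform), cf.~\cite{dFF,BiMi}.

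The principal technical obstacle, in both parts, is to convert the doubling inequality $M_k\le2\lambda_k$ into uniform pointwise bounds on $(\tilde u_k,\tilde v_k)$; this requires quantitative control on the positive-power growth of $f_2/(h_1^{-1}\circ h_2)$ modulated by slowly varying factors, and is where the differential form of regular variation~\eqref{hypLE2} (stronger than pure regular variation) enters through the Karamata representation $L_i(s)=c_i\exp\int_1^s\eta_i(\tau)\,d\tau/\tau$ with $\eta_i(\tau)\to0$. A secondary subtlety, specific to part~(ii), is the need for a half-space Liouville theorem for~\eqref{LEsystem2} not directly listed as a hypothesis but available from the whole-space assumption in the subcritical range.
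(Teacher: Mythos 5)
Your overall strategy (doubling--rescaling followed by passage to a Liouville limit) is indeed the one the paper uses for Theorem~\ref{thmLEpert1}, but your choice of doubling quantity differs from the paper's in a way that opens two genuine gaps.

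You take $M_k(x)=\bigl[f_2(u_k(x))/(h_1^{-1}\circ h_2)(u_k(x))\bigr]^{1/2}$, a quantity that depends only on $u_k$. First, this $M_k$ is only defined where $u_k\ge s_0$, so you propose applying the doubling lemma on a sub-domain of $\Omega_k$ where $u_k$ stays above a level slightly below $s_0$. But then the hypothesis of the doubling lemma requires $M_k(x_k)\,{\rm dist}(x_k,\Gamma)>2k$ with $\Gamma$ the boundary of that sub-domain, which is \emph{strictly stronger} than what your contradiction hypothesis $M_k(x_k)\,{\rm dist}(x_k,\partial\Omega_k)\to\infty$ provides: $x_k$ could be close, in Euclidean distance, to the sub-level set $\{u_k=s_0-\delta\}$ even though $u_k(x_k)$ is large, so that ${\rm dist}(x_k,\Gamma)$ is small while $d_k(x_k)$ is not. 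This is fixable by extending $M_k$ continuously to all of $\Omega_k$ (for instance replacing $u_k$ by $\max(u_k,s_0)$ in the definition, so that $M_k$ has a positive lower bound), but the issue must be addressed rather than finessed by restricting the domain.

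Second, and more seriously, the doubling bound $M_k\le 2\lambda_k$ on the scaled ball controls only the rescaled first component $\tilde u_k$ (via the almost-monotone power-like growth of $s\mapsto f_2(s)/(h_1^{-1}\circ h_2)(s)$); it says nothing directly about $v_k$, because your doubling quantity does not involve $v_k$ at all. Your assertion that ``from the doubling inequality and the same power-like growth'' one derives uniform local $L^\infty$ bounds on \emph{both} $\tilde u_k$ and $\tilde v_k$ is therefore unjustified as written. A separate argument is required: e.g., use $0\le\tilde u_k\le C$ together with the superharmonicity of $\tilde u_k$ and cutoff testing to bound $\int_{B_R}(-\Delta\tilde u_k)$, invoke Potter bounds for the slowly varying factor $L_1$ to deduce $\tilde v_k$ bounded in $L^{p-\theta}_{\rm loc}$, and then use the second equation and the subharmonicity of $\tilde v_k+C|y|^2$ to upgrade to $L^\infty_{\rm loc}$. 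This loop is nontrivial and is absent from your proposal.

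The paper sidesteps both issues at once by choosing a symmetric doubling quantity $M_k=\sqrt{\phi(N(u_k,v_k))}$ with $N(t_1,t_2)=2A+h_1(t_2)+h_2(t_1)$ and $\phi(t)=t/\bigl(h_1^{-1}(t)\,h_2^{-1}(t)\bigr)$: the shift $2A$ makes $\phi\circ N$ well-defined and bounded below on all of $\Omega_k$, so the doubling lemma applies directly with distance to $\partial\Omega_k$; and since $N$ contains both $h_2(u)$ and $h_1(v)$, the doubling inequality $N(u_k,v_k)\le\lambda N_k$ simultaneously controls both rescaled components, with no need for the auxiliary Harnack-type step. The price is a slightly more involved nonvanishing argument at the end (passing to a subsequence in which one of $h_1(v_k(x_k))$, $h_2(u_k(x_k))$ dominates), but the local bounds come for free. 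Your treatment of part~(ii) via boundary flattening and the half-space Liouville theorem for \eqref{LEsystem2} is consistent with the paper, which deduces that half-space Liouville property from the whole-space one via \cite{BiMi} and \cite[Theorem~4.2]{PQS1}.
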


\begin{remark} \label{remLE}
(i) The optimal condition on $p, q>0$ for nonexistence of nontrivial solutions of \eqref{LEsystem2} on $\Rn$
is conjectured to be given by the so-called Lane-Emden hyperbola:
\be{hypLE3}
{1\over p+1}+{1\over q+1}>{n-2\over n}
\ee
or equivalenty $\alpha+\beta>n-2$ 
with $\alpha={2(p+1)\over pq-1}$, $\beta={2(q+1)\over pq-1}$ in the ``superlinear'' case $pq>1$.
So far the conjecture has been fully established only for $n\le 4$.
While the necessity of condition \eqref{hypLE3} is known in any dimension, 
nonexistence for $n\ge 5$ is known only under some additional restrictions on $p,q$.
For example, simple sufficient conditions in any dimension are given by
$p,q\le p_S$ with $(p,q)\ne (p_S,p_S)$, or $\max(\alpha,\beta)\ge n-2$.
See \cite{SouAdv} and \cite[Section 31.2]{QSb} for details.

\smallskip

(ii) In the special case of the Lane-Emden system ($f_1(v)=v^p$, $f_2(u)=u^q$)
and $p,q>1$, Theorem~\ref{thmLEpert1} was proved in 
\cite{PQS1}.
Note that in that case, estimate \eqref{estimLE1} can be rewritten as
$$u^{1/\alpha}+v^{1/\beta}\le C(1+d^{-1}(x)),\quad x\in \Omega.$$
\end{remark}

\begin{example} \rm
Consider the case of nonlinearities with logarithmic behavior at infinity:
$$f_1(v)=v^p\log^a(K+v),\quad f_2(u)=u^q\log^b(K+u),$$
with $a, b\in \R$ and $K\ge 1$.
Then Theorem~\ref{thmLEpert1}(i) applies and estimate \eqref{estimLE1} yields
$$u^{pq-1\over p+1}\log^k{\hskip -2pt}(1+u)+v^{pq-1\over q+1}\log^\ell{\hskip -2pt}(1+v)\le C(1+d^{-2}(x)),\quad x\in \Omega,$$
with $k=\frac{bp+a}{p+1}$, $\ell=\frac{aq+b}{q+1}$.
\end{example}

We next give a counterpart of Theorem~\ref{thmLEpert1} for decay estimates.
For $p, q>0$, we consider nonlinearities $f_1,f_2$ satisfying 
\eqref{hypLE1}, \eqref{hypLE2} as $s\to 0$ (instead of $s\to\infty$).
Then $f_1, f_2$ have regular variation at $0$ with respective indices~$p, q$,
and there exists $\eps_0>0$ small such that \eqref{h1h2mon} is true on $I=(0,\eps_0]$.

\begin{theorem} \label{thmLEpert1B}
(i) Assume that \eqref{hypLE1}, \eqref{hypLE2} are satisfied as $s\to 0$ (instead of $s\to\infty$),
where $p,q>0$ with $pq>1$ are such that the system $-\Delta U=\varphi(U)$
 admits no nontrivial bounded solution on $\Rn$ for $\varphi\in\{f,(s^p,s^q)\}$.
For any $\Lambda>0$, there exist constants $C,D>0$ depending only on $n,f,\Lambda$ such that, 
if $\Omega$ is an arbitrary open set of $\Rn$
and $U=(u,v)$ is a bounded solution of~\eqref{LEsystem1} in $\Omega$ with $\|U\|_\infty\leq\Lambda$, then 
\be{estimLE1BD}
u,v\le\eps_0 \quad\hbox{in }\tilde\Omega_D:=\bigl\{x\in\Omega;\ d(x):={\rm dist}(x,\partial\Omega)>D\bigr\},
\ee
with \eqref{h1h2mon} being satisfied on $I=(0,\eps_0]$,
and we have the estimates
\be{estimLE1B}
{f_2(u)\over (h_1^{-1}\circ h_2)(u)}+ {f_1(v)\over (h_2^{-1}\circ h_1)(v)} \le Cd^{-2}(x),\quad x\in \tilde\Omega_D.
\ee
(ii) Assume in addition that the hypotheses of Theorem~\ref{thmLEpert1}(i) are also satisfied,
with possibly different $p, q$. Then \eqref{estimLE1BD}, \eqref{estimLE1B} 
remain true for any (possibly unbounded) 
 solution of~\eqref{LEsystem1} in $\Omega$,
with $C, D$ depending only on $n,f$.
\end{theorem}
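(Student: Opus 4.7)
The proof unfolds in three steps following the general scheme of \eqref{LPimplications} and the pattern of Theorem~\ref{thmLEpert1}(i), but mirrored at $0$ instead of at infinity, with an additional reduction step for part~(ii). For the smallness assertion \eqref{estimLE1BD}, I would apply Proposition~\ref{propdecay} directly to $-\Delta V=f(V)$: since by hypothesis this system admits no nontrivial bounded solution on $\Rn$, the proposition yields $\eta(R)\to 0$, hence the existence of $D_1=D_1(n,f,\Lambda)>0$ such that $|U(x)|\le\eps_0$ whenever $d(x)>D_1$. This places $u,v$ in the small-value range where \eqref{h1h2mon} holds, so the quotients in \eqref{estimLE1B} are well defined.

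For the decay rate \eqref{estimLE1B}, I would run a contradiction argument based on the doubling lemma of \cite{PQS1}, using regular variation at $0$ in place of that at infinity. Supposing \eqref{estimLE1B} fails, one extracts bounded solutions $(u_k,v_k)$ on domains $\Omega_k$ with $\|(u_k,v_k)\|_\infty\le\Lambda$ and points $x_k\in\tilde\Omega_{k,D_1}$ with $\Psi_k(x_k)d^2(x_k)\to\infty$, where
\[
\Psi_k(x):=\frac{f_2(u_k(x))}{(h_1^{-1}\circ h_2)(u_k(x))}+\frac{f_1(v_k(x))}{(h_2^{-1}\circ h_1)(v_k(x))}.
\]
Applying the doubling lemma to $\Psi_k^{1/2}$ on $\tilde\Omega_{k,D_1}$, one obtains points $y_k$ with $\Psi_k(y_k)\to\infty$, $d(y_k)\Psi_k^{1/2}(y_k)\to\infty$, and $\Psi_k\le 4\Psi_k(y_k)$ on $B(y_k,R\,\Psi_k^{-1/2}(y_k))$ for any prescribed $R$. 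Setting $\rho_k:=u_k(y_k)$, $\sigma_k:=h_1^{-1}(h_2(\rho_k))$ and $\mu_k:=(\rho_k/f_1(\sigma_k))^{1/2}=(\sigma_k/f_2(\rho_k))^{1/2}$, and rescaling
\[
\tilde u_k(\xi):=\rho_k^{-1}u_k(y_k+\mu_k\xi),\qquad \tilde v_k(\xi):=\sigma_k^{-1}v_k(y_k+\mu_k\xi),
\]
the uniform-on-compacts convergence provided by regular variation at $0$ (cf.~the appendix on regularly varying functions) ensures that the rescaled nonlinearities converge to $\tilde v^{p}$ and $\tilde u^{q}$. Since $\mu_k/d(y_k)\to 0$, the domains of $\tilde u_k,\tilde v_k$ invade $\R^n$ in the limit, and the doubling bound provides uniform $L^\infty$ control. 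Standard elliptic compactness then yields a nontrivial bounded limit solving $-\Delta\tilde u=\tilde v^p$, $-\Delta\tilde v=\tilde u^q$ on $\R^n$, contradicting the second nonexistence hypothesis. The main obstacle is the simultaneous tuning of the three rescaling factors $(\rho_k,\sigma_k,\mu_k)$ so that both equations degenerate to the correct power system in the limit; the auxiliary functions $h_1,h_2$ in the definition of $\Psi_k$ are precisely what encode the anisotropic Lane--Emden scaling in the non-scale-invariant setting, and the choice $\sigma_k=h_1^{-1}(h_2(\rho_k))$ enforces the balance $\rho_k f_2(\rho_k)=\sigma_k f_1(\sigma_k)$ needed to make both rescaled right-hand sides converge to nontrivial limits.

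For part~(ii), I would reduce to (i) via Theorem~\ref{thmLEpert1}(i), whose hypotheses are now in force. Estimate \eqref{estimLE1} combined with the regular variation of $f_1,f_2$ at infinity and the monotonicity \eqref{h1h2mon} yields constants $D_0=D_0(n,f)>0$ and $\Lambda_0=\Lambda_0(n,f)>0$ such that any solution satisfies $u,v\le\Lambda_0$ on $\Omega':=\tilde\Omega_{D_0}$. The restriction $(u,v)|_{\Omega'}$ is thus a bounded solution on $\Omega'$ with $\|(u,v)\|_\infty\le\Lambda_0$, and since $d_{\Omega'}(x)\ge d_\Omega(x)-D_0\ge d_\Omega(x)/2$ for $d_\Omega(x)\ge 2D_0$, applying part~(i) to $\Omega'$ with $\Lambda=\Lambda_0$ produces \eqref{estimLE1BD} and \eqref{estimLE1B} on $\tilde\Omega_{D_0+D_1'}$ with constants depending only on $n,f$. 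Enlarging $D$ to $D_0+D_1'$ and inflating $C$ by a factor $4$ to convert $d_{\Omega'}^{-2}$ into $d_\Omega^{-2}$ then completes the proof.
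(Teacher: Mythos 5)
Your outline for part (ii) and the use of Proposition~\ref{propdecay} to get \eqref{estimLE1BD} match the paper's approach, and your rescaled equations are set up correctly. The critical gap is in the claim that, after applying the doubling lemma to $\Psi_k^{1/2}$, "one obtains points $y_k$ with $\Psi_k(y_k)\to\infty$." This is false in the decay setting. Since $\Psi_k=\phi(h_2(u_k))+\phi(h_1(v_k))$ with $u_k,v_k\in[0,\eps_0]$, and since $\phi$ is increasing on $(0,\eps_1]$ with $h_i(u_k),h_i(v_k)\le\eps_1$, $\Psi_k$ is uniformly bounded by a constant $\bar M$ depending only on $f$. (This is precisely the distinction between the singularity regime of Theorem~\ref{thmLEpert1}(i), where the doubled quantity becomes large, and the decay regime here, where it cannot.) The doubling lemma only gives $\Psi_k^{1/2}(y_k)\,d(y_k)>k$; combined with $\Psi_k\le\bar M$ this forces $d(y_k)\to\infty$, and only then does a second application of Proposition~\ref{propdecay} give $|U_k(y_k)|\to 0$ and hence $\rho_k,\sigma_k\to 0$. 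Your argument never invokes this step and so never actually establishes that the rescaling parameters tend to $0$.

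A second, related problem concerns the rescaling length. You choose $\mu_k^{-2}=f_2(\rho_k)/\sigma_k$, which is only one of the two summands of $\Psi_k(y_k)$; if the other summand dominates, then $\mu_k^{-2}\ll\Psi_k(y_k)$ and the doubling bound $d(y_k)>k\,\Psi_k^{-1/2}(y_k)$ does not give $\mu_k/d(y_k)\to 0$, so the rescaled domains need not exhaust $\R^n$. For the same reason, tying all three parameters $(\rho_k,\sigma_k,\mu_k)$ to $u_k(y_k)$ alone may produce a trivial limit $\tilde u\equiv 0$ when $v_k(y_k)$ dominates. The paper circumvents both difficulties by applying the doubling lemma not to $\Psi_k$ but to $\phi(N(u_k,v_k))$, where $N(t_1,t_2)=h_1(\sigma t_2)+h_2(\sigma t_1)$ is a symmetrized quantity dominating both summands of $\Psi$ (up to a factor controlled by \eqref{infphi000B}), and then defining $\lambda_k=h_2^{-1}(N_k)$, $\mu_k=h_1^{-1}(N_k)$, $\theta_k=\phi(N_k)^{-1/2}$. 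This guarantees $\theta_k\,d(x_k)^{-1}\to 0$ directly from the doubling inequality and the nontriviality of the limit by a dichotomy argument as in Step 4 of the proof of Theorem~\ref{thmLEpert1}(i).
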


\begin{example} \rm
Consider the case of nonlinearities with logarithmic behavior at zero:
$$f_1(v)=v^p\log^a(K+v^{-1}),\quad f_2(u)=u^q\log^b(K+u^{-1}).$$
Assume $p,q\in(1,p_S)$, $a, b\in \R$, $K\ge 1$, $a\ge -\theta(K)(p_S-p)$,
and $b\ge -\theta(K)(p_S-q)$,
where $\theta(K)\ge 1$ is defined in Corollary~\ref{thm1cor0}.
Assume also $a<p-1$ and $b<q-1$ if $K=1$.
Then the conclusion of Theorem~\ref{thmLEpert1B}(ii) is true,
which leads to the estimate
$$u^{pq-1\over p+1}\log^k{\hskip -2pt}(1+u^{-1})+v^{pq-1\over q+1}\log^\ell{\hskip -2pt}(1+v^{-1})\le Cd^{-2}(x),\quad x\in \tilde\Omega_D,$$
with $k=\frac{bp+a}{p+1}$, $\ell=\frac{aq+b}{q+1}$.

Indeed, using \eqref{defhKphi}, \eqref{infsuphK} and \eqref{gprimehK0}, 
we see that, for $\varphi\in\{f,(s^p,s^q)\}$, the functions $s^{-p_S}\varphi_i(s)$ are decreasing on $(0,\infty)$,
so that the Liouville type assumptions are satisfied owing to \cite{RZ}.
\end{example}

\subsection{Systems with nonlinearities having regular variation with equal indices on the
components.}

\label{SystEqual}

In this subsection we will assume $f\in C(K)$, where $K:=[0,\infty)^m$,
 and we will denote $|X|=\max\{|X_1|,\dots,|X_N|\}$ for $X\in\R^N$ with $N=n$ or $N=m$.
In order to state our universal bounds for non scale invariant systems that can be deduced from Liouville type theorems,
we introduce the function
$$f^+:[0,\infty)\to[0,\infty):\lambda\mapsto\max\limits_{|U|=\lambda}|f(U)|.$$ 
This allows us to consider a suitable notion of regularly varying, vector valued functions,
whose generalized
rescaling limits have to be homogeneous (vector valued) functions, similarly as in the scalar case (see Lemma~\ref{lem-theta} below).

\begin{theorem} \label{thmUB}
Let $\Omega\subset\R^n$ be an arbitrary domain, $d(x):=\hbox{\rm dist}(x,\partial\Omega)$,
$f\in C(K,\R^m)$, $\Lambda>0$,
and let $U:\Omega\to K$ be a 
solution of $-\Delta U=f(U)$ in $\Omega$.
\smallskip

(i) Assume $f^+(\lambda)>0$ for $\lambda\geq\Lambda$ and
\begin{equation} \label{limh} 
  \lim_{\lambda\to\infty}\frac{f(\lambda \xi)}{\+f(\lambda)}=  f_\infty(\xi)
  \quad\hbox{ locally uniformly for }\xi\in K,
\end{equation}
where $f_\infty$ is homogeneous of order $p>1$.
Assume also that the problem $-\Delta V= f_\infty(V)$
does not possess nontrivial bounded entire solutions $V:\R^n\to K$.
Set $\Omega_\Lambda:=\{x\in \Omega:|U(x)|\geq\Lambda\}$.
Then there exists a constant $C=C(n,f,\Lambda)$ (independent of $\Omega$ and $U$) 
such that 
\begin{equation} \label{UBh}
 |f(U(x))|\leq C|U(x)|(1+d^{-2}(x))
\quad\hbox{for all } x\in \Omega_\Lambda.
\end{equation}
\smallskip

(ii) Assume $f^+(\lambda)>0$ for $\lambda\in(0,\Lambda]$ and
\begin{equation} \label{limg} 
  \lim_{\lambda\to0+}\frac{f(\lambda \xi)}{\+f(\lambda)}=  f_0(\xi)
  \quad\hbox{ locally uniformly for } \xi\in K,
\end{equation}
where $f_0$ is homogeneous of order $q>1$.
Assume also that problems $-\Delta V=\varphi(V)$
do not possess nontrivial bounded entire solutions $V:\R^n\to K$
for $\varphi\in\{f, f_0\}$. Let $\|U\|_\infty\leq\Lambda$.
Then there exists a constant $C=C(n,f,\Lambda)$ (independent of $\Omega$ and $U$) 
such that 
\begin{equation} \label{UB}
 |f(U(x))|\leq C|U(x)|d^{-2}(x)
\quad\hbox{for all } x\in \Omega.
\end{equation}
\smallskip

(iii)
Assume $\+f(\lambda)>0$ for $\lambda>0$.
Let \eqref{limh} and \eqref{limg} be true,
where $f_\infty$ and $f_0$ is homogeneous of order $p>1$ and $q>1$, respectively.
Assume also that problems $-\Delta V=\varphi(V)$ do not possess 
nontrivial bounded entire solutions $V:\R^n\to K$
for $\varphi\in\{f, f_0, f_\infty\}$.
Then there exists a constant $C=C(n,f)$ (independent of $\Omega$ and $U$) such that 
\eqref{UB} is true.
\end{theorem}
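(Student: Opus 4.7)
The plan is to adapt the rescaling-and-doubling framework of \cite{PQS1} to non scale-invariant systems, along the lines developed for scalar equations in \cite{SouDCDS}, using the generalized rescaling limits \eqref{limh}-\eqref{limg}. Part (iii) will be obtained by merging parts (i) and (ii) into a single contradiction argument with a three-case alternative on the limiting scale.

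For part (i), I argue by contradiction. Suppose there exist domains $\Omega_k\subset\Rn$, solutions $U_k$, and points $x_k\in\Omega_{k,\Lambda}$ with
$$\bigl[|f(U_k(x_k))|\big/|U_k(x_k)|\bigr]\bigl(1+d^{-2}(x_k)\bigr)^{-1}\to\infty.$$
Set $M(x):=\bigl(|f(U_k(x))|/|U_k(x)|\bigr)^{1/2}$ on $\Omega_{k,\Lambda}$ and apply the doubling lemma of \cite[Lemma~5.1]{PQS1} to produce points $y_k$ with $|U_k(y_k)|\geq\Lambda$ satisfying $M(y_k)d(y_k)\to\infty$ and $M\leq 2M(y_k)$ on a ball of radius $k/M(y_k)$ centered at $y_k$. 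Setting $\lambda_k:=|U_k(y_k)|$ and $\mu_k:=\sqrt{\lambda_k/\+f(\lambda_k)}$, the rescaled maps
$$V_k(z):=\lambda_k^{-1}U_k(y_k+\mu_k z)$$
satisfy $-\Delta V_k=\+f(\lambda_k)^{-1}f(\lambda_k V_k)$ on an expanding domain, with $|V_k(0)|=1$ and $\|V_k\|_\infty$ uniformly controlled via the doubling inequality together with the homogeneity of order $p>1$ of the limit. Since $M(y_k)\to\infty$ and $\+f$ is regularly varying of index $p>1$, this forces $\lambda_k\to\infty$; assumption \eqref{limh} combined with interior elliptic estimates then yields, along a subsequence, $V_k\to V$ in $C^1_{loc}(\R^n)$, where $V$ is a nontrivial bounded entire solution of $-\Delta V=f_\infty(V)$, contradicting the Liouville hypothesis.

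Part (ii) follows the same scheme, now with $\lambda_k\to 0$. I would first invoke Proposition~\ref{propdecay} applied to $-\Delta V=f(V)$ (whose Liouville property is available) to obtain $\sup\{|U(x)|:d(x)\geq R\}\to 0$ as $R\to\infty$; combined with the doubling procedure this drives $\lambda_k\to 0$, and \eqref{limg} produces a nontrivial bounded solution of $-\Delta V=f_0(V)$, contradicting Liouville for $f_0$. For part (iii), the same doubling step is performed without any a priori bound on $U$: along a subsequence, $\lambda_k$ tends to $0$, $\infty$, or to some $\lambda^*\in(0,\infty)$. The first two cases are handled as in (i)-(ii), while in the third case (with $\mu_k\to\mu^*>0$) the rescaled sequence converges to a nontrivial bounded solution of $-\Delta V=f(V)$ itself, contradicting the third Liouville assumption.

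The main obstacle is ensuring a \emph{nontrivial} bounded limit $V$ solving the correct limiting equation in each case. The regular-variation hypotheses \eqref{limh}-\eqref{limg} provide locally uniform convergence of the rescaled nonlinearities and, together with the superlinear homogeneity of $f_\infty$ and $f_0$, yield the uniform $L^\infty$-bound on $V_k$ needed to pass to the limit; the normalization $|V_k(0)|=1$ rules out triviality. A subtle point is that $U_k$ takes values in the closed cone $K$, so componentwise nonnegativity of the limit must be preserved in order for the Liouville hypotheses to apply.
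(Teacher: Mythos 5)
Your overall scheme is the right one (doubling, rescaling, three-case alternative on the limiting scale $\lambda_k\to 0,\infty,\lambda^*$, contradiction with a Liouville hypothesis), but there is a genuine gap in the choice of the auxiliary function fed into the doubling lemma, and it is not a cosmetic one.

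You set $M(x)=\bigl(|f(U_k(x))|/|U_k(x)|\bigr)^{1/2}$, whereas the proof needs $M$ to be built from $f^+$, i.e.\ $M(x)=\sqrt{\+f(|U_k(x)|)/(\sigma+|U_k(x)|)}$ with $\sigma=1$ in case (i) and $\sigma=0$ otherwise. This distinction is essential, not a convenience. After the doubling lemma gives $M\leq 2M(y_k)$ on a ball of radius $k/M(y_k)$, one must translate this into the uniform bound $|U_k|\leq s_0\lambda_k$ on that ball, in order for the rescaled sequence $V_k$ to satisfy $\|V_k\|_\infty\leq s_0$; this is exactly the implication $M^2(U)\leq\kappa_0 M^2(V)\Rightarrow|U|\leq s_0|V|$, which follows from the regular-variation lower bound $f^+(\lambda s)\geq c\,s^r f^+(\lambda)$ for $s\geq 1$ (cf.\ Remark~\ref{rem-superlin}). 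With your $M$ this implication fails: for the noncooperative systems the theorem is designed to cover, $|f(U)|$ can be arbitrarily small on rays where $|U|$ is large (e.g.\ near $u=v$ for the Schr\"odinger-type nonlinearities of Corollary~\ref{thm1cor0}), so a small value of $|f(U(x))|/|U(x)|$ tells you nothing about the size of $|U(x)|$. Your phrase \emph{``$\|V_k\|_\infty$ uniformly controlled via the doubling inequality together with the homogeneity of order $p>1$ of the limit''} is exactly where the missing argument sits, and as stated it does not go through.

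A related inconsistency: your doubling radius $k/M(y_k)$ is determined by $|f(U_k(y_k))|$, while your rescaling scale is $\mu_k=\sqrt{\lambda_k/\+f(\lambda_k)}$, determined by $\+f$. These two quantities need not be comparable, so the rescaled domain need not expand. Once $M$ is defined via $\+f(|U|)$, one has $\mu_k=1/M(y_k)$ and the scales match automatically. Fixing $M$ in this way also handles the $+1$ in case (i) through the $\sigma$-shift, and the rest of your outline (including the redundant but harmless use of Proposition~\ref{propdecay} in case (ii), and the further dilation $\tilde W(y)=aW(\lambda y)$ needed when $\lambda_k\to a\in(0,\infty)$ to pass from the rescaled equation back to $-\Delta\tilde W=f(\tilde W)$) becomes correct.
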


\begin{remark}
Theorem~\ref{thmUB}(iii) extends Theorem~C to systems.
More precisely, Theorem~C follows from Theorem~\ref{thmUB}(iii)
with $m=1$ by using the classical Liouville theorem in \cite{GSa} (and Lemma~\ref{LemUnifConv0} below).
\end{remark}

\begin{remark}
(A priori estimate for the Dirichlet problem)
Under the hypotheses of Theorem~\ref{thmUB}(i), assume in addition that $\Omega$ is uniformly regular of class $C^2$,
that $U\in C(\overline\Omega)$ satisfies the Dirichlet boundary conditions $U=0$ on $\partial\Omega$,
and that the Liouville property is also true in the half-space (i.e., the problem $-\Delta V= f_\infty(V)$
does not possess nontrivial bounded entire solutions $V:\R^n_+\to K$ with $V=0$ on $\partial\R^n_+$).
Then there exists a constant $C=C(n,f,\Omega)$ (independent of $U$) 
such that $|U(x)|\leq C$ for all $x\in \Omega$.
This follows by modifying the proof of Theorem~\ref{thmUB}(i)
along the lines of the proof of \cite[Theorem~4.1]{PQS2}.
\end{remark}

\begin{remark}
The third implication in \eqref{LPimplications} can be illustrated by the following variant of Theorem~\ref{thmUB}(ii).
Assume that $f^+(\lambda)>0$ for $\lambda>0$ small and
that \eqref{limg} is satisfied, where $f_0$ is homogeneous of order $q>1$.
Assume also that the problem $-\Delta V=f_0(V)$
does not possess nontrivial bounded entire solutions $V:\R^n\to K$.
Then there exist constants $\eps_0,C>0$, depending only on $n,f$, 
such that \eqref{UB} holds in $\hat\Omega=\{x\in\Omega; |U(x)|\le \eps_0\}$
(hence in particular in $\Omega$ for small solutions, namely if $\|U\|_\infty\le\eps_0$).
This follows from straightforward modifications
of the proof of Theorem~\ref{thmUB}(ii), where only the case $m_k\to 0$ will occur.
\end{remark}

\smallskip

We now give some examples illustrating Theorem~\ref{thmUB}.

\begin{example} \rm
 Consider the noncooperative system $-\Delta U=f(U)$ with $U=(u,v)$ and nonlinearity
\be{DefEx1}
f(U)=\bigl(2g'(u)[g(u)-\lambda g(v)],2g'(v)[g(v)-\lambda g(u)]\bigr),
\quad g(s)=s^{\frac{p+1}{2}}\log^a(K+s),
\ee
 with $p\in (1,p^*)$,  $K\ge 1$, $a\in(0,a_0)$ and $\lambda\in(0,\lambda_0)$, where $a_0:={p^*-p\over 2} \theta(K)$,
$\lambda_0={2\sqrt{1-\rho}\over 2-\rho}$, $\rho={a\over a_0}$, and $p^*$  and $\theta(K)$ are defined in \eqref{defpstar} 
and in Corollary~\ref{thm1cor0},
respectively.
Then Theorem~\ref{thmUB}(iii) guarantees the existence of a constant $C=C(n,f)$ such that 
\eqref{UB} is true for any domain $\Omega\subset\Rn$ and any solution $U: \Omega\to K$ of $-\Delta U=f(U)$ in $\Omega$,
 which yields
\be{EstEx1}
|U|^{p-1}\log^{2a}(K+|U|)\le C(n,f)d^{-2}(x),\quad x\in\Omega.
\ee

 Indeed, using the fact that $g',g''\ge 0$ (cf.~Lemma~\ref{lemhK}), we easily see that
$$\+f(t)=2g(t)g'(t),\quad f_\infty(U)=
\bigl(u^p-\lambda u^{\frac{p-1}{2}}v^{\frac{p+1}{2}},v^p-\lambda u^{\frac{p+1}{2}}v^{\frac{p-1}{2}}\bigr),$$
$$
f_0(U)=
\begin{cases}
f_\infty(U)&\quad\hbox{ if $K>1$,}\\ 
\noalign{\vskip 1mm}
\bigl(u^{p+2a}-\lambda u^{\frac{p-1}{2}+a}v^{\frac{p+1}{2}+a},v^{p+2a}-\lambda u^{\frac{p+1}{2}+a}v^{\frac{p-1}{2}+a}\bigr)
&\quad\hbox{ if $K=1$,}
\end{cases}
$$
and the systems  $-\Delta V=\varphi(V)$ do not possess 
nontrivial bounded entire solutions $V:\R^n\to K$
for $\varphi=f$ by Corollary~\ref{thm1cor0}, and for $\varphi\in\{f_0,f_\infty\}$ by Theorem~\ref{thm1} (or \cite[Theorem~3]{QS-CMP}).

In the case of nonlinearities with logarithmic behavior at $0$, namely
$g(s)=s^{\frac{p+1}{2}}\log^{-a}(K+s^{-1})$ in \eqref{DefEx1} (the other assumptions being unchanged),
estimate \eqref{EstEx1} is replaced by
$$|U|^{p-1}\log^{-2a}(K+|U|^{-1})\le C(n,f)d^{-2}(x),\quad x\in\Omega.$$

 On the other hand, the logarithms in this example could be replaced, under suitable assumptions
on the parameters,
 by some more general slowly varying functions (cf.~Remark~\ref{remSlow}).
\end{example}

\begin{example} \label{cubic-quintic} \rm
 Consider the system
 $-\Delta U=f(U)$ with $U=(u,v)$ and nonlinearity
 $f=\nabla F$ with $F= H_1+H_2$,
$$H_1(U)=\frac{1}{p+1}\bigl(u^{p+1}+v^{p+1}+2\lambda u^{\frac{p+1}{2}} v^{\frac{p+1}{2}}\bigr)$$
and
$$H_2(U)=\frac{b}{q+1}\bigl(u^{q+1}+v^{q+1}+2\mu u^{\frac{q+1}{2}}v^{\frac{q+1}{2}}\bigr),$$
 where $1<p<q\le p_S$, $\lambda>-1$, $\mu\ge-1$, $b>0$.
 If $n\ge 5$ and $\min(\lambda,\mu)<0$, we assume in addition $p<(n-1)/(n-3)$.
If $q<q_S$  and $\mu>-1$, 
then Theorem~\ref{thmUB}(iii) guarantees the existence of a constant $C=C(n,f)$ such that 
\eqref{UB} is true for any domain $\Omega\subset\Rn$ and any 
solution $U:\Omega\to K$ of $-\Delta U=f(U)$ in $\Omega$,
 which yields
\be{boundUpq}
 |U|^{p-1}+|U|^{q-1}\le C(n,f)d^{-2}(x),\quad x\in\Omega.
\ee
If $q=q_S$ then, for each $\Lambda>0$, 
Theorem~\ref{thmUB}(ii) still guarantees the same conclusion with $C=C(n,f,\Lambda)$
provided we consider bounded solutions with $\|U\|_\infty\leq\Lambda$.

 Indeed, elementary computations show that $\+f(\lambda)>0$ for $\lambda>0$, with
$$\lim_{t\to 0^+}t^{-p}{\+f(t)}=c_1>0,\quad\
\lim_{t\to\infty}t^{-q}{\+f(t)}=c_2>0,$$
and that $f_0(U)=c_1^{-1}\nabla H_1$, $f_\infty(U)=c_2^{-1}\nabla H_2$.
Moreover, the systems  $-\Delta V=\varphi(V)$ do not possess 
nontrivial bounded entire solutions $V:\R^n\to K$
for $\varphi\in\{f, f_0\}$ by Corollary~\ref{thm1cor1}.
f $q<q_S$ and $\mu>-1$, 
hen the system $-\Delta V= f_\infty(V)$ does not possess 
nontrivial bounded entire solutions $V:\R^n\to K$, 
owing to Corollary~\ref{thm1cor1}. 

 Finally, if $H_2$ is replaced
with $H_2(U)=bu^{\frac{q+1}{2}}v^{\frac{q+1}{2}}$ with $q\in(p,p_S]$ 
and $b>0$ then,  for each $\Lambda>0$, Theorem~\ref{thmUB}(ii)  guarantees \eqref{boundUpq} with $C=C(n,f,\Lambda)$
for any domain $\Omega\subset\Rn$ and any bounded 
solution $U:\Omega\to K$ of $-\Delta U=f(U)$ in $\Omega$
such that $\|U\|_\infty\leq\Lambda$.
\end{example}

\begin{example} \label{ex43}
\rm 
 Consider the system $-\Delta U=f(U)$ with $U=(u,v)$ and nonlinearity
$$f(U)=\bigl((u^r+bu^q)(v^p-\lambda u^p),(v^r+bv^q)(u^p-\lambda v^p)\bigr),$$
where $p>1$, $0\le r<q\le p$ with $p+q\le p_S$, $b>0$ and $\lambda\in[0,1)$.
If $p+q<p_S$ and $\lambda\in(0,1)$, then Theorem~\ref{thmUB}(iii) 
guarantees the existence of a constant $C=C(n,f)$ such that 
\eqref{UB} is true for any domain $\Omega\subset\Rn$ and any  
solution $U:\Omega\to K$ of $-\Delta U=f(U)$ in $\Omega$,
which yields 
$$|U|^{p+r-1}+|U|^{p+q-1}\le C(n,f)d^{-2}(x),\quad x\in\Omega.$$
If  $p+q=p_S$ and $\lambda\in(0,1)$, or $\lambda=r=0$ then, for each $\Lambda>0$, 
Theorem~\ref{thmUB}(ii) yields $|U|\le C(n,f,\Lambda)d^{-2/(p-1)}(x)$ for all $x\in\Omega$,
provided we consider bounded solutions with $\|U\|_\infty\leq\Lambda$.

 Indeed, one can check that $\+f(t)>0$ for $t>0$, with
$$\lim_{t\to 0^+}t^{-(p+r)}{\+f(t)}=c_1>0,\quad\
\lim_{t\to\infty}t^{-(p+q)}{\+f(t)}=c_2>0,$$
and that 
$$f_0(U)=c_1^{-1}\bigl(u^r(v^p-\lambda u^p),v^r(u^p-\lambda v^p)\bigr),\quad
f_\infty(U)=bc_2^{-1}\bigl(u^q(v^p-\lambda u^p),v^q(u^p-\lambda v^p)\bigr).$$
Moreover, it follows from Theorem~\ref{thm-proportional2} that the systems  $-\Delta V=\varphi(V)$ do not possess 
nontrivial bounded entire solutions $V:\R^n\to K$
for $\varphi\in\{f,f_0,f_\infty\}$ if  $p+q<p_S$ and $\lambda\in(0,1)$, and for $\varphi\in\{f,f_0\}$ if 
 $p+q=p_S$ and $\lambda\in(0,1)$, or $\lambda=r=0$.
\end{example}

\begin{example} \label{ex44}
 \rm
Consider the system $-\Delta U=f(U)$ with $U=(u,v)$ and nonlinearity
$$f(U)=\bigl(\phi(u,v)k(u)(g(v)-\lambda g(u)),\phi(u,v)k(v)(g(u)-\lambda g(v))\bigr).$$
where
$$k(s)=s^r\log^a(K+s),\quad g(s)=s^p\log^b(K+s),\quad \phi(u,v)=\log^{-d}(K+u+v),$$
with $0\le r\le p$, $1<p+r<p_S$, $0\le a\le b$, $d\ge a+b$, $K>1$,
and $\lambda\in(0,1)$ or $\lambda=r=0$.
Theorem~\ref{thmUB}(iii) then guarantees the existence of a constant $C=C(n,f)$ such that 
\eqref{UB} is true for any domain $\Omega\subset\Rn$ and any 
solution $U:\Omega\to K$ of $-\Delta U=f(U)$ in $\Omega$,
which yields 
$$|U|^{p+r-1}\log^{a+b-d}(K+|U|)\le C(n,f)d^{-2}(x),\quad x\in\Omega.$$
Indeed, one can check that $\+f(t)>0$ for $t>0$, with
$$\lim_{t\to 0^+}t^{-(p+r)}{\+f(t)}=c_1>0,\quad\
\lim_{t\to\infty}t^{-(p+r)} (\log t)^{d-a-b} {\+f(t)}=c_2>0,$$
and that 
$$f_0(U)=c_1^{-1}\bigl(u^r(v^p-\lambda u^p),v^r(u^p-\lambda v^p)\bigr),\quad
f_\infty(U)= c_1c_2^{-1}f_0(U).$$
Moreover, it follows from Theorem~\ref{thm-proportional2} that the systems  $-\Delta V=\varphi(V)$ do not possess 
nontrivial bounded entire solutions $V:\R^n\to K$
for $\varphi\in\{f,f_0,f_\infty\}$.

 Again, the logarithms in this example could be replaced, under suitable assumptions
on the parameters,
 by some more general slowly varying functions (cf.~Remark~\ref{remSlow}).
\end{example}

\section{Proof of Theorem~\ref{thm1}}

\label{ProofPoh}

\subsection{A conditional Liouville theorem.}
Theorem~\ref{thm1} will be obtained as a consequence of the following conditional Liouville theorem,
which is an extension of \cite[Theorem~2]{SouNHM}.
The necessary a priori integral estimate \eqref{BasicEstAnews} to apply Theorem~\ref{thm3}
will be given in Subsections \ref{subAEs1}-\ref{subAEs2} below.
 In what follows we set $B_R=\{x\in\R^n : |x|<R\}$.

\begin{theorem} \label{thm3}
Let $p_0\in(1,p^*)$, $s\ge 1$ and assume
\be{hypsp}
s>{(n-1)(p_0-1)\over 2}.
\ee
There exists $\e_0=\e_0(n,p_0,s)>0$ 
such that, if 
$$p_0-\e_0\le q\le p\le p_0+\e_0,\qquad p\ge s,\qquad 0\le \delta_0\le \e_0,$$
\eqref{hypGrowthB}-\eqref{hypGrowthC} are satisfied and $U\geq 0$ is
 a bounded solution of \eqref{GenSystem} or \eqref{GenSystemB} verifying 
\be{BasicEstAnews}
\int_{B_R} |U|^s\leq CR^{n-{2s\over p-1}+\delta_0},\qquad R\geq 1, 
\ee
 then $U\equiv 0$.
 \end{theorem}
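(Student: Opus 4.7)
The plan is to adapt to the non scale invariant setting the Pohozaev-sphere approach developed in \cite{SouNHM} for scale invariant systems. The role of exact $p$-homogeneity is now played by the bounds \eqref{hypGrowthB}-\eqref{hypGrowthC} together with the proximity $p, q \approx p_0 < p^{**}$, while the a priori integral estimate \eqref{BasicEstAnews} supplies the mechanism for keeping the nonhomogeneous perturbations harmless.

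First, for the gradient system $-D\Delta U = \nabla F(U)$, I would test each equation against a Pohozaev multiplier of the form $\tfrac{n-2}{2}u_i + x\cdot\nabla u_i$, sum over $i$, and integrate by parts on $B_R$ (resp.\ $B_R \cap \Rn_+$ in the half-space case). After rearrangement and use of \eqref{hypGrowthC} one arrives at
$$c_M \int_{B_R} |U|^{p+1}\, dx \le \mathcal{B}(R),$$
where $\mathcal{B}(R)$ collects surface integrals on $\partial B_R$ with integrands quadratic in $U$ and $\nabla U$ (suitably weighted by $R$), together with $R\int_{\partial B_R} F(U)$. In the half-space case the additional contribution on $\{x_n = 0\}\cap B_R$ reduces, thanks to $U = 0$, to a multiple of $\int (\partial_n U)^2$ of favourable sign and may simply be dropped. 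Using \eqref{hypGrowthB} and interior elliptic estimates, each integrand is pointwise dominated by $R(|U|^{p+1} + |U|^{q+1} + |\nabla U|^2) + R^{-1}|U|^2$; averaging $R$ over a dyadic interval and converting the surface integrals to annular ones makes the a priori bound \eqref{BasicEstAnews} directly applicable.

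The decisive analytic ingredient is then a functional inequality on the sphere $S^{n-1}$ controlling $\int_{\partial B_R}|U|^{p+1}$ by $\int_{\partial B_R}|\nabla_\sigma U|^2$ and $\int_{\partial B_R}|U|^s$, valid precisely when $p+1$ lies below the Sobolev-type exponent for the restriction map from $\Rn$ (resp.\ $\Rn_+$) to its boundary sphere. That threshold is $p^*$ in the whole space and $n/(n-2)$ in the half-space, which is exactly the origin of the condition $p_0 < p^{**}$, while \eqref{hypsp} guarantees that the exponent $s$ is large enough for this interpolation to close. Chaining these estimates yields $\mathcal{B}(R) \le C R^{\sigma + \omega}$, with $\sigma = \sigma(n, p_0, s) < 0$ under the assumptions, and $\omega = \omega(\e_0, \delta_0, p-q) \to 0$ as $\e_0, \delta_0, p-q \to 0$. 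Choosing $\e_0$ so small that $\sigma + \omega < 0$ and letting $R \to \infty$ forces $\int |U|^{p+1} = 0$, hence $U \equiv 0$.

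The principal obstacle is the sphere interpolation step: it must be organized so that every boundary contribution comes with a strictly subcritical power of $R$, leaving enough quantitative slack to absorb the nonhomogeneous perturbations $\delta_0$ and $p - q$. This careful tracking of exponents, rather than the Pohozaev identity itself, is what dictates the admissible $\e_0 = \e_0(n, p_0, s)$, and it is precisely here that the sharpness of the exponents $p^*$ and $n/(n-2)$ manifests.
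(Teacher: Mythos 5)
Your proposal correctly identifies the overall architecture of the paper's argument (Rellich--Pohozaev identity producing $c_M\int_{D_R}|U|^{p+1}\le \mathcal{B}(R)$, then control of the surface terms via Sobolev/interpolation on the sphere combined with the primary bound \eqref{BasicEstAnews}, elliptic estimates, and an averaging/dyadic argument to pick a good radius). But there is a genuine gap in the claimed conclusion: you assert that chaining the estimates yields $\mathcal{B}(R)\le CR^{\sigma+\omega}$ with $\sigma+\omega<0$, so that one can simply let $R\to\infty$. This direct polynomial bound is not achievable. When you interpolate $\|U(R,\cdot)\|_{p+1}$ between the $L^s$-norm (controlled by \eqref{BasicEstAnews}) and a higher auxiliary norm, the higher norm is necessarily controlled by averages involving $\int_{D_{3R}}|U|^{p+1}=H(3R)$ --- there is nothing else to control it with, since $s$ is strictly less than $p+1$ (indeed close to $(p+1)/2$ in the application). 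The honest outcome of the estimation is therefore a \emph{feedback} inequality of the form $H(R)\le C_0 R^{-\hat a}H^{\hat b}(3R)$ with $\hat a>0$ and $\hat b\in[0,1)$, in which $H(3R)$ still appears on the right. Since $\hat b$ is in general strictly positive (and can be close to $1$ as $p_0\to p^*$), one cannot extract a self-contained decay of $\mathcal{B}(R)$; instead one must iterate the feedback inequality to show that $H(R)>0$ forces super-polynomial growth of $H$, contradicting the bound $H(R)=O(R^n)$ coming from the boundedness of $U$. This iteration is the crux of the proof and is absent from your outline.

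Two secondary points. First, the step ``interior elliptic estimates $\Rightarrow$ pointwise domination of the surface integrands'' is not accurate: elliptic regularity gives $L^z$ bounds on $D^2 U$ and $DU$ in annuli in terms of $\|\Delta U\|_{L^z}\lesssim\||U|^q\|_{L^z}$ and lower-order terms, and these enter through integrals $J_z=\int_R^{2R}I_z^z(r)r^{n-1}dr$; no pointwise bound of the kind you write holds. The averaging must therefore be organized as a measure argument selecting a single radius $\tilde R\in(R,2R)$ at which \emph{all} the auxiliary averaged bounds (for $I_\ell$, $I_k$, $\|U\|_s$, and in dimension $2$ a weighted gradient quantity) simultaneously hold. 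Second, your attribution of the half-space exponent $n/(n-2)$ to a sphere restriction inequality is a misdiagnosis: Theorem~\ref{thm3} has the same threshold $p_0<p^*$ in both the whole-space and half-space cases; the stronger restriction $p<n/(n-2)$ only appears in Theorem~\ref{thm1} and originates from the weaker a priori estimate (\eqref{BasicEstAnews} with $s$ close to $(p+1)/2$ rather than $s=p$) available in the half-space, not from the interpolation on $S^{n-1}_+$.
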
 
 
\goodbreak

\begin{remark} \label{rem3.1}
The quantity $\delta_0$ 
in \eqref{BasicEstAnews} allows for a possible ``defect of scale invariance'' caused by 
the gap between $p$ and $q$ in assumptions \eqref{hypGrowthB}-\eqref{hypGrowthC}
(see \eqref{Hypuab} below). 
As for the restrictions $s\le p$ and $s\ge 1$, they could be relaxed at the expense of further complications in the proof,
using additional arguments from \cite{SouNHM}. We have refrained from expanding on this since we always have $s\in[1,p]$ in our applications.
Also, as in \cite{SouNHM}, we could handle solutions with some exponential growth,
provided we assume \eqref{hypGrowthC} with $c_M$ independent of $M$, 
 and \eqref{Hypfa0} instead of \eqref{hypGrowthB},
instead of just bounded solutions.
\end{remark}

The proof of Theorem~\ref{thm3} is long and technical.
 It relies on an extension of a feedback procedure from \cite{SouAdv, QS-CMP, SouNHM}.
More precisely, by means of a Rellich-Pohozaev type identity, for any $r>0$, the volume integral 
$$H(r):=\int_{|x|<r}|U|^{p+1}\,dx$$
can be controled by surface integrals involving $|U|^{p+1}$, $|\nabla U|^2$ and $|U|^2$ at $|x|= r$. Fix $R>0$. If, for {\it some} $r\in \bigl(R,2R\bigr)$, one can estimate these surface terms by $CR^{-a}H^b(3R)$ with $C,a>0$, $b\in [0,1)$ independent of $R$, then one easily infers that $H(R)$ has very fast growth as $R\to\infty$, hence contradicting the boundedness assumption on $U$. It turns out that such estimation can be achieved by a careful analysis using Sobolev imbeddings and interpolation inequalities on $S^{n-1}$, elliptic estimates and a measure argument.
In this scheme, the primary integral bound \eqref{BasicEstAnews} is used in the interpolation step, where the surface terms are interpolated between 
various auxiliary norms.  Heuristically, the efficiency of the method comes from the fact that one space dimension is ``gained'' via the Pohozaev type identity because, by applying functional analytic arguments on the $n-1$ dimensional unit sphere rather than directly on $B_R$,  one can bootstrap from \eqref{BasicEstAnews} under less stringent growth restrictions on the nonlinearity.
Related arguments, without interpolation and feedback, and restricted to $n=3$, first appeared in \cite{SZ}.
Both \cite{SZ} and \cite{SouAdv} were exclusively concerned with the Lane-Emden system 
$-\Delta u=v^p$, $-\Delta v=u^q$. 
Although many parts are similar to the proofs in \cite{SouAdv, QS-CMP, SouNHM}, it requires many nontrivial changes and 
we give a complete proof for self-containedness.

\subsection{Notation and preliminaries.}
 For $R>0$, we denote
$$B_R=\{x\in\Rn;\,|x|<R\},\quad B_R^+=\{x\in B_R,\ x_n>0\},\quad B_R^0=\{x\in B_R,\ x_n=0\},$$
$$S_R=\{x\in\Rn;\,|x|=R\},\quad S_R^+=\{x\in S_R,\ x_n>0\}$$
and
$$S^{n-1}=\{x\in\Rn;\,|x|=1\},\quad S_+^{n-1}=\{x\in\Rn;\,|x|=1,\ x_n>0\}.$$
We then set
 $$
D_R=\begin{cases}
B_R &\\ 
\noalign{\vskip 2mm}
B_R^+ &\\ 
\end{cases}
\quad
\Sigma_R=
\begin{cases}
S_R &\\ 
\noalign{\vskip 2mm}
S_R^+ &\\
\end{cases}
\quad
\Sigma=\Sigma_1=
\begin{cases}
S^{n-1}, &\quad\hbox{ in case of problem \eqref{GenSystem},}\\ 
\noalign{\vskip 2mm}
S^{n-1}_+, &\quad\hbox{ in case of problem \eqref{GenSystemB}.}\\ 
\end{cases}
$$

We shall use the spherical coordinates $(r,\theta)$ with $r=|x|$, $\theta=x/|x|\in \Sigma$
(for $x\neq 0$). For a given function $w$ of $x\in\Rn$, we write $w(x)=w(r,\theta)$
(using the same symbol $w$, without risk of confusion).
 The norm in $L^\lambda$ will be denoted by $\|\cdot\|_\lambda$.
For brevity we will use the following notation
for volume and surface integrals
$$\IR w=\IR w(x)\, dx,\qquad
\IS w(R)=\IS w(R,\theta)\, d\theta.$$

Our first two lemmas provide Sobolev and interpolation inequalities on $\Sigma$
and scale invariant elliptic estimates, respectively, which will be used repeatedly in the proof of Theorem~\ref{thm3}.
See \cite{SouNHM} for $\Sigma=S^{n-1}$ and $D_R=B_R$. The case $\Sigma=S^{n-1}_+$ and $D_R=B_R^+$ can be treated similarly.

\begin{lemma} \label{LemFive}
\smallskip
(i) Let $n\geq 2$, $j\geq 1$ be integers, let $1<z<\infty$ and let $\lambda$ satisfy
$$\left\{\begin{aligned}
\displaystyle{1\over z}-{1\over\lambda}\le {j\over n-1}, \hfill&\ \hbox{ if } z<(n-1)/j,\hfill\\ 
\noalign{\vskip 1mm}
1<\lambda<\infty, \hfill&\ \hbox{ if } z=(n-1)/j,\hfill\\ 
\noalign{\vskip 2mm}
\lambda=\infty, \hfill&\ \hbox{ if } z>(n-1)/j.\hfill\\ 
\end{aligned}\right.$$
Then, for any $w=w(\theta)\in W^{j,z}(\Sigma)$, we have
$$\|w\|_\lambda\leq C(\|D^j_\theta w\|_z+\|w\|_1).$$
\smallskip
(ii) Let $n=2$ and $1\leq z<\infty$.
For any $w=w(\theta)\in W^{2,z}(\Sigma)$, we have
$$\|w\|_\infty\leq C\bigl(\|w\|_1+\|D^2_\theta w\|_1\bigr)^{1/(z+1)}\|w\|_z^{z/(z+1)}.$$
\smallskip
(iii) Let $n=2$. For any $w=w(\theta)\in W^{1,1}(\Sigma)$, we have
$$\|w\|_\infty\leq C\inf_\Sigma |w|+C\|D_\theta w\|_1.$$  
\end{lemma}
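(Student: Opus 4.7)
The plan is to observe that $\Sigma$ is always a compact smooth $(n-1)$-dimensional Riemannian manifold, with boundary in the half-sphere case $\Sigma = S^{n-1}_+$ and without boundary in the full-sphere case $\Sigma = S^{n-1}$. Since the $\Sigma = S^{n-1}$ versions of all three inequalities are already established in \cite{SouNHM} (with the standard Sobolev-Gagliardo-Nirenberg theory on a closed smooth manifold), the task effectively reduces to deducing the half-sphere version from the full-sphere one.

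For part (i), my preferred approach is a universal extension argument. Since $\overline{S^{n-1}_+}$ is a compact Lipschitz submanifold with boundary of $S^{n-1}$, Stein's extension theorem provides a bounded linear operator $E : W^{j,z}(S^{n-1}_+) \to W^{j,z}(S^{n-1})$, simultaneously for all admissible $j,z$. Applying the already-known inequality on the full sphere to $Ew$ and restricting back to $S^{n-1}_+$ yields each of the three sub-cases (subcritical, critical, supercritical in the relation between $z$ and $(n-1)/j$). Alternatively and more explicitly, one can cover $\overline{S^{n-1}_+}$ by a finite smooth atlas with boundary charts straightening the equator $\partial S^{n-1}_+$ to the hyperplane $\{y_{n-1}=0\}\subset\R^{n-1}$, then use a partition of unity and an even-reflection argument (or higher order reflection for $j\ge 2$) to reduce each boundary piece to a compactly supported function on a Euclidean ball, where the classical Sobolev-Gagliardo-Nirenberg inequalities apply directly.

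For parts (ii) and (iii), $n=2$ so $\Sigma$ is one-dimensional (a circle or a closed half-circle), and both inequalities reduce to classical one-dimensional facts. Part (iii) is immediate from the fundamental theorem of calculus: parameterizing $\Sigma$ by arclength, for any $\theta_0\in\Sigma$ one has $|w(\theta)|\le|w(\theta_0)|+\|D_\theta w\|_1$, so taking the supremum over $\theta$ on the left and the infimum over $\theta_0$ on the right gives the claim. Part (ii) is a standard one-dimensional Gagliardo-Nirenberg interpolation between $L^\infty$, $L^z$ and $W^{2,1}$; it follows from the identity $\frac{d}{d\theta}|w|^{z+1} = (z+1)|w|^{z-1}w\, w'$ combined with H\"older's inequality, together with the auxiliary estimate $\|w'\|_\infty\le C(\|w\|_1+\|w''\|_1)$ (obtained by a mean value argument plus integration of $w''$) to control the remaining factor.

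The main (and essentially only) technical point is the reduction step in part (i) on the half-sphere, which must be performed without imposing any boundary condition on $w$; this is precisely what Stein's extension operator (or a carefully designed higher order reflection for $j\ge 2$) achieves, rather than the naive even reflection which would only preserve $W^{1,z}$ regularity. Once this reduction to the full-sphere or to Euclidean space has been carried out, everything else is classical and follows from the standard theory on compact smooth manifolds.
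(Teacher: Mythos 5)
The paper itself does not prove this lemma: immediately after the statement the authors only write ``See \cite{SouNHM} for $\Sigma=S^{n-1}$ and $D_R=B_R$. The case $\Sigma=S^{n-1}_+$ and $D_R=B_R^+$ can be treated similarly.'' So there is no argument in the paper to compare yours against, and your proposal has to be judged on its own terms. Your overall strategy is sound and is exactly the kind of reduction the authors presumably have in mind: take the full-sphere version for granted (from \cite{SouNHM}) and deduce the half-sphere version by an extension operator, or equivalently by boundary charts flattening the equator and the Euclidean Gagliardo--Nirenberg theory on Lipschitz domains. You correctly isolate the one genuinely delicate point, namely that $w$ carries \emph{no} boundary condition on $\partial S^{n-1}_+$, so neither odd reflection nor (for $j\ge 2$) naive even reflection is admissible, and one must use a Stein-type total extension or a higher-order Lions--Hestenes reflection. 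Parts (ii) and (iii) are one-dimensional and your FTC/mean-value arguments establish them completely; in particular the chain $\|w\|_\infty^{z+1}\le \inf|w|^{z+1}+(z+1)\|w\|_z^z\|w'\|_\infty$, $\inf|w|^{z+1}\le C\|w\|_1\|w\|_z^z$, $\|w'\|_\infty\le C(\|w\|_1+\|w''\|_1)$ does give (ii).

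One small gap should be closed in part (i). Applying Stein's extension and then the full-sphere inequality to $Ew$ gives $\|w\|_{L^\lambda(\Sigma)}\le C\bigl(\|D^j Ew\|_{L^z(S^{n-1})}+\|Ew\|_{L^1(S^{n-1})}\bigr)$, and while $\|Ew\|_1\le C\|w\|_1$ is fine (Stein's operator is bounded on $L^1$), the term $\|D^jEw\|_z$ is controlled only by the full norm $\|w\|_{W^{j,z}(\Sigma)}$, so what comes out is $\|w\|_\lambda\le C\bigl(\sum_{k\le j}\|D^k_\theta w\|_z+\|w\|_1\bigr)$, not yet the stated $\|w\|_\lambda\le C\bigl(\|D^j_\theta w\|_z+\|w\|_1\bigr)$. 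You still need to eliminate the intermediate derivatives; this is routine via an Ehrling-type interpolation $\|D^k_\theta w\|_z\le \varepsilon\|w\|_{W^{j,z}(\Sigma)}+C_\varepsilon\|w\|_1$ (valid on the compact Lipschitz manifold $\overline{S^{n-1}_+}$ by compactness of $W^{j,z}\hookrightarrow W^{k,z}\hookrightarrow L^1$ and a contradiction argument), followed by absorption, but it should be stated.
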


\begin{lemma} \label{LemSix} 
Let $1<z<\infty$ and $\delta>0$. 
There exists $C>0$ such that for all $R>0$ and $v=v(x)\in W^{2,z}(D_{(1+\delta)R})$,
with $v=0$ on $B^0_{(1+\delta)R}$
in case $D_R=B_R^+$, we have
$$\int_{D_R}|D^2_xv|^z+R^{-z}\int_{D_R}|D_xv|^z\leq 
C\Bigl(\int_{D_{(1+\delta)R}}|\Delta v|^z+R^{-2z}\int_{D_{(1+\delta)R}}|v|^z\Bigr).$$
\end{lemma}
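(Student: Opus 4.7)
The inequality is scale invariant: setting $y = x/R$ and $w(y) = v(Ry)$, a direct computation shows that both sides scale as $R^{n-2z}$ (each factor of $R^{-1}$ in front of a derivative compensates for a differentiation, and the volume element $dx$ contributes $R^n$). Hence it suffices to prove the $R=1$ estimate
\begin{equation*}
\int_{D_1}\bigl(|D^2 w|^z + |Dw|^z\bigr) \le C\int_{D_{1+\delta}}\bigl(|\Delta w|^z + |w|^z\bigr),
\end{equation*}
with the boundary condition $w=0$ on $B^0_{1+\delta}$ retained in the half-ball case.

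\textbf{Cutoff and Calder\'on--Zygmund at scale one.} The plan is to fix a smooth cutoff $\eta$ with $\eta\equiv 1$ on $D_{r_0}$ and $\eta$ supported in $D_{r_1}$, where $1 \le r_0 < r_1 \le 1+\delta$, and set $u=\eta w$. Then $\Delta u=\eta\Delta w+2\nabla\eta\cdot\nabla w+w\Delta\eta$. In the interior case $D_R=B_R$, the function $u$, extended by $0$ to $\R^n$, lies in $W^{2,z}(\R^n)$ with compact support, and the Calder\'on--Zygmund inequality yields $\|D^2 u\|_{L^z(\R^n)}\le C\|\Delta u\|_{L^z(\R^n)}$. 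In the half-ball case $D_R=B_R^+$, the key geometric observation is that $u$ vanishes on both the flat portion $B^0$ (because $w$ does) and near the spherical portion (because of $\eta$); extending $u$ by odd reflection across $\{x_n=0\}$ produces a compactly supported $W^{2,z}(\R^n)$ function whose Laplacian is the odd reflection of $\Delta u$, reducing matters to the interior Calder\'on--Zygmund estimate. Rearranging gives, in both cases,
\begin{equation*}
\int_{D_{r_0}}|D^2 w|^z \le C_1\int_{D_{r_1}}|\Delta w|^z + \frac{C_2}{(r_1-r_0)^z}\int_{D_{r_1}}|\nabla w|^z + \frac{C_3}{(r_1-r_0)^{2z}}\int_{D_{r_1}}|w|^z.
\end{equation*}

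\textbf{Absorbing the gradient term and conclusion.} On any bounded Lipschitz domain (hence on $D_{1+\delta}$), the Gagliardo--Nirenberg interpolation $\|\nabla w\|_{L^z}\le\varepsilon\|D^2 w\|_{L^z}+C_\varepsilon\|w\|_{L^z}$ holds for every $\varepsilon>0$. Substituting this into the cutoff estimate (applied on a slightly larger ball than $D_{r_1}$) and choosing $\varepsilon$ small produces an inequality of the form $\phi(r_0)\le\theta\phi(r_1)+A$, where $\phi(r):=\int_{D_r}|D^2 w|^z$, $\theta\in(0,1)$, and $A$ depends only on $\int_{D_{1+\delta}}(|\Delta w|^z+|w|^z)$. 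Iterating along a geometric sequence $r_k=1+\delta(1-2^{-k})$ and using $\phi(1+\delta)<\infty$ (from the assumption $w\in W^{2,z}(D_{(1+\delta)R})$) yields the desired bound on $\phi(1)=\int_{D_1}|D^2 w|^z$. A final application of the interpolation inequality on $D_1$ controls $\int_{D_1}|Dw|^z$.

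\textbf{Main obstacle.} The subtle point is the gradient term: one cannot simply absorb $\int_{D_{r_1}}|\nabla w|^z$ into the left-hand side, since $\|D^2 w\|_z$ appears there over a \emph{smaller} domain than the gradient term on the right. The nested-radii absorption scheme, with carefully chosen widths $r_{k+1}-r_k\sim\delta 2^{-k-1}$, is what resolves this, and one must check that the resulting geometric factor remains strictly below $1$ independently of $k$. The half-ball case additionally hinges on the flat geometry of $B^0$, which permits odd reflection; this is precisely why the statement requires the Dirichlet condition only on the flat part.
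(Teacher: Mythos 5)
The paper does not actually give a proof of this lemma: it cites \cite{SouNHM} for the whole-ball case and remarks that the half-ball case ``can be treated similarly.'' Your proof is therefore a self-contained argument for a result the authors import as standard, and it follows the natural route (rescaling to $R=1$, Calder\'on--Zygmund for $\Delta$ on $\R^n$, odd reflection across the flat boundary in the half-ball case, and a cut-off/absorption argument). The scaling reduction, the cut-off computation, the odd-reflection observation for $B^+_R$ (valid because $u=\eta w$ vanishes on the flat face and has compact support away from the spherical part, so the odd extension lands in $W^{2,z}(\R^n)$ with $\Delta\tilde u$ equal to the odd extension of $\Delta u$), and the final interpolation to recover the $\int|Dw|^z$ term are all correct.

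The one step that is not actually established as written is the iteration. You assert $\phi(r_0)\le\theta\phi(r_1)+A$ with $\theta\in(0,1)$ fixed and $A$ ``depending only on $\int_{D_{1+\delta}}(|\Delta w|^z+|w|^z)$.'' Starting from the cut-off estimate
$$\phi(t)\le C\int_{D_s}|\Delta w|^z+\frac{C}{(s-t)^z}\int_{D_s}|\nabla w|^z+\frac{C}{(s-t)^{2z}}\int_{D_s}|w|^z$$
and inserting the interpolation $\int_{D_s}|\nabla w|^z\le\varepsilon\,\phi(s)+C_\varepsilon\int_{D_s}|w|^z$ with a fixed $\varepsilon$, the coefficient of $\phi(s)$ is $C\varepsilon/(s-t)^z$, which blows up as $s-t\to0$; so on the geometric sequence $r_k$ the putative $\theta$ grows with $k$ and no absorption occurs. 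Conversely, if one simply lumps everything into $A$, then $A$ depends on $s-t$ and your stated form of the inequality is false. The standard fix — which your writeup does not mention — is to let the interpolation parameter shrink with the gap, say $\varepsilon=\eta_0(s-t)^z$: then the coefficient of $\phi(s)$ becomes $C\eta_0$, a constant one can make less than $1$ (indeed less than $2^{-2z}$ so the geometric series converges), while the price is that the $\|w\|_z^z$ coefficient grows like $(s-t)^{-2z}$, which the usual iteration lemma (e.g.\ Giaquinta, Ch.~V, Lemma~3.1) is designed to handle. You flag the subtlety (``one must check that the resulting geometric factor remains strictly below $1$ independently of $k$'') but do not supply this choice of $\varepsilon$, and without it the assertion $\theta\in(0,1)$ uniform in $k$ is unjustified. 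With that adjustment, the proof is complete and correct.
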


The following Rellich-Pohozaev type identity plays a key role
in the proof of Theorem~\ref{thm3}. It is essentially known (see \cite{PuS}, and cf.~also \cite{QS-CMP}, for the whole space case).
We give the proof in the half-space case for convenience.

\begin{lemma} \label{LemFour}
Assume \eqref{hypGradSyst}. Then, for any solution $U$ of \eqref{GenSystem}
and any $R>0$, there holds
$$\begin{aligned}
\IR \bigl(2nF(U)-(n-2)&U\cdot\nabla F(U)\bigr) = 2R^n\IS F(U(R)) \\ 
&+R^n\sum_i d_i\IS \Bigl(|\partial_r u_i|^2-|\partial_\tau u_i|^2+(n-2)R^{-1}u_i\partial_ru_i\Bigr)(R),
\end{aligned}$$
where $\partial_r$ and $\partial_\tau$ respectively denote the components of the gradient along $x/|x|$ and along its orthogonal complement,
 and $\partial_\tau u_i:=0$ if $n=1$.
\end{lemma}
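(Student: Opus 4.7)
The plan is to derive the identity in the half-space case by the standard Pohozaev multiplier computation applied componentwise with diffusion weights $d_i$; the whole-space case is essentially known (see \cite{PuS, QS-CMP}), so I would focus on verifying that in the half-space case all boundary contributions along the flat piece $B_R^0=\{x\in B_R:x_n=0\}$ drop out, leaving only contributions on $\Sigma_R^+$.

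First I would multiply the $i$th equation $-d_i\Delta u_i=\partial_iF(U)$ by $x\cdot\nabla u_i$, sum over $i$, and integrate over $D_R$. On the right-hand side the chain rule gives $\sum_i \partial_i F(U)(x\cdot\nabla u_i)=x\cdot\nabla F(U)=\nabla\cdot(xF(U))-nF(U)$, so the divergence theorem yields
\[
\int_{D_R} x\cdot\nabla F(U)\,dx = \int_{\partial D_R}(x\cdot\nu)\,F(U)\,dS - n\int_{D_R}F(U)\,dx.
\]
On the left-hand side, the classical pointwise identity
\[
(x\cdot\nabla u_i)\Delta u_i=\nabla\cdot\Bigl((x\cdot\nabla u_i)\nabla u_i-\tfrac12|\nabla u_i|^2 x\Bigr)+\tfrac{n-2}{2}|\nabla u_i|^2
\]
together with the divergence theorem produces, after multiplication by $-d_i$ and summation, a boundary integral of the form $-\sum_i d_i\int_{\partial D_R}\bigl((x\cdot\nabla u_i)\partial_\nu u_i-\tfrac12|\nabla u_i|^2(x\cdot\nu)\bigr)dS$ together with the volume contribution $-\tfrac{n-2}{2}\sum_i d_i\int_{D_R}|\nabla u_i|^2$.

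The crucial half-space observation is that on $B_R^0$ one has $\nu=-e_n$, hence $x\cdot\nu=-x_n=0$, while also $x\cdot\nabla u_i=x_n\partial_n u_i=0$ and $u_i=0$ by the Dirichlet condition. Consequently every boundary integrand arising above vanishes identically on $B_R^0$, so equating the two expressions for $\int_{D_R}x\cdot\nabla F(U)$ retains only the terms on $\Sigma_R^+$. To eliminate the remaining volume term $\sum_i d_i\int_{D_R}|\nabla u_i|^2$, I would next test the system against $U$ itself: integration by parts, using once more $u_i=0$ on $B_R^0$, gives
\[
\sum_i d_i\int_{D_R}|\nabla u_i|^2=\int_{D_R}U\cdot\nabla F(U)+\sum_i d_i\int_{\Sigma_R}u_i\,\partial_r u_i\,dS.
\]

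Finally, on $\Sigma_R$ one has $\nu=x/R$, $\partial_\nu=\partial_r$, $x\cdot\nu=R$, and $|\nabla u_i|^2=|\partial_r u_i|^2+|\partial_\tau u_i|^2$. Substituting these expressions, collecting the surface terms, multiplying by $2$, and rewriting each integral on $\Sigma_R=R\Sigma$ via $dS=R^{n-1}d\theta$ produces exactly the stated identity, with the prefactor $R^n$ coming from one power of $R$ (from $x\cdot\nu=R$ or the Jacobian in the last term) multiplied by $R^{n-1}$. The only real obstacle is the verification that the three boundary integrands on $B_R^0$ cancel exactly; once that is observed, the rest is the bookkeeping of the classical Pohozaev calculation and requires no regularity beyond the $W^{2,r}_{\rm loc}$ assumption on strong solutions.
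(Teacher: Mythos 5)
Your proposal is correct and follows essentially the same route as the paper: multiply by the Rellich–Pohozaev field $x\cdot\nabla u_i$ and by $u_i$, note that on the flat piece $B_R^0$ the Dirichlet condition forces $u_i=0$, $\nabla u_i=(\partial_\nu u_i)\nu$ and $x\cdot\nu=0$ so all boundary integrands there vanish, and then convert the $\Sigma_R$ flux into the stated spherical form. The paper merely packages the two multiplier identities into a single pointwise divergence identity $2nF(U)-(n-2)U\cdot\nabla F(U)=\nabla\cdot\Psi$ with $\Psi\cdot\nu=0$ on $\partial\R^n_+$ before applying the divergence theorem, which is the same computation organized more compactly.
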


\begin{proof}
We first claim that
\be{VarIdent}
2nF(U)-(n-2) U\cdot \nabla F(U)=\nabla\cdot\Psi,
\ee
where 
$$\Psi=2xF(U)+\sum_i d_i\bigl(2(x\cdot\nabla u_i)\nabla u_i-x|\nabla u_i|^2+(n-2)u_i\nabla u_i\bigr).$$
To show \eqref{VarIdent}, we start from the well-known identity (see, e.g., \cite[p.18]{QSb})
$$(x\cdot\nabla u_i)\Delta u_i-{n-2\over 2}|\nabla u_i|^2
=\nabla\cdot\Bigl((x\cdot\nabla u_i)\nabla u_i-{x\over 2}|\nabla u_i|^2\Bigr),$$
which implies
$$\begin{aligned}
nF(U)-\nabla\cdot\bigl(xF(U))
&=-x\cdot\nabla(F(U))
=-\sum_{i,j} x_j{\partial u_i\over \partial x_j}{\partial F\over \partial u_i}(U)
=\sum_i (x\cdot\nabla u_i)d_i\Delta u_i\\ 
&={n-2\over 2}\sum_i d_i|\nabla u_i|^2
+\nabla\cdot\sum_i d_i\Bigl((x\cdot\nabla u_i)\nabla u_i-{x\over 2}|\nabla u_i|^2\Bigr).
\end{aligned}$$
On the other hand, we have
$$U\cdot \nabla F(U)=-\sum_i d_iu_i\Delta u_i
=\sum_i d_i|\nabla u_i|^2-\nabla\cdot\Bigl(\sum_i d_iu_i\nabla u_i\Bigr).
$$
Multiplying the last identity by $(n-2)$ and subtracting to twice the previous one, we get~\eqref{VarIdent}.

Now, on $\partial\R^n_+$, we have $u_i=0$, $\nabla u_i=(\partial_\nu u_i)\nu$ and $x\cdot\nu=0$, hence $\Psi \cdot\nu=0$.
Denoting by $d\sigma$ the surface measure on $\{|x|=R\}$, we deduce from the divergence theorem that
$$\begin{aligned}
&\IR \Bigl(2nF(U)-(n-2)U\cdot\nabla F(U)\Bigr)\, dx\cr
&\ = \int_{S_R^+}\Bigl[2xF(U)+\sum_i d_i\bigl(2(x\cdot\nabla u_i\bigr)\nabla u_i-x|\nabla u_i|^2+(n-2)u_i\nabla u_i\bigr)\Bigr]\cdot\nu\, d\sigma\cr
&\ =2R\int_{\Sigma_R}F(U)\, d\sigma 
 +R \sum_i d_i\int_{\Sigma_R}\Bigl[2\Bigl({\partial u_i\over \partial\nu}\Bigr)^2
-|\nabla u_i|^2+(n-2)R^{-1}u_i{\partial u_i\over \partial\nu}\Bigr]\, d\sigma \cr
&\ =2R^n\IS F(U(R))+R^n\sum_i d_i\IS \Bigl(|\partial_r u_i|^2-|\partial_\tau u_i|^2+(n-2)R^{-1}u_i\,\partial_ru_i\Bigr)(R),
\end{aligned}$$
which proves the lemma.
\end{proof}

We will also need the following lemma.

\begin{lemma} \label{LemSeven}
Assume \eqref{hypGrowthB} and let $\delta>0$, $\beta\in (0,1)$.
Then there exists $C>0$ such that, for any bounded solution $U$ of \eqref{GenSystem} or \eqref{GenSystemB}
and any $R>0$, there holds
\be{EstimNablaSquare}
\begin{aligned}
\int_{D_R}|\nabla u_i|^2u_i^{-\beta}
&\leq C\int_{D_{(1+\delta)R}} |U|^{q+1-\beta}\\ 
&\qquad\qquad+CR^{-2}\int_{D_{(1+\delta)R}} |U|^{2-\beta},\quad R>0,\quad i=1,\cdots,m,
\end{aligned}
\ee
where the LHS is understood to be $0$ at points where $\nabla u_i=0$ and $u_i=0$.
\end{lemma}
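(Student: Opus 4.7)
The natural approach is to test the equation $-d_i\Delta u_i=f_i(U)$ against a suitable regularization of $u_i^{1-\beta}\varphi^2$, where $\varphi\in C^\infty_c(\Rn)$ is a cutoff with $\varphi\equiv 1$ on $D_R$, $\mathrm{supp}\,\varphi\subset D_{(1+\delta)R}$, $0\leq\varphi\leq 1$, and $|\nabla\varphi|\leq C/R$. In the half-space case we take $\varphi$ radial, so that only its support condition at $|x|=(1+\delta)R$ matters; the condition $u_i=0$ on $B^0_{(1+\delta)R}$ ensures that the test function vanishes on the flat part of $\partial D_{(1+\delta)R}$, so no boundary term arises upon integration by parts.

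Since $u_i^{-\beta}$ is singular at $\{u_i=0\}$, I would first work with $(u_i+\e)^{1-\beta}\varphi^2$ for $\e>0$. Multiplying by $d_i^{-1}$ and integrating by parts yields
\begin{equation*}
(1-\beta)\int (u_i+\e)^{-\beta}|\nabla u_i|^2\varphi^2
+2\int (u_i+\e)^{1-\beta}\varphi\,\nabla u_i\cdot\nabla\varphi
=d_i^{-1}\int f_i(U)(u_i+\e)^{1-\beta}\varphi^2.
\end{equation*}
The cross term is controlled by writing it as $2\bigl[(u_i+\e)^{-\beta/2}|\nabla u_i|\varphi\bigr]\bigl[(u_i+\e)^{1-\beta/2}|\nabla\varphi|\bigr]$ and applying Young's inequality with a small parameter $\eta=(1-\beta)/2$, so it is absorbed into the left-hand side up to a remainder $C\int (u_i+\e)^{2-\beta}|\nabla\varphi|^2$. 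For the right-hand side I use the growth condition \eqref{hypGrowthB} with $M:=\|U\|_\infty$, giving $|f_i(U)|\leq C_M|U|^q$, hence $f_i(U)(u_i+\e)^{1-\beta}\leq C(|U|+\e)^{q+1-\beta}$.

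Letting $\e\to 0^+$: on the left, $(u_i+\e)^{-\beta}$ increases monotonically to $u_i^{-\beta}$ on $\{u_i>0\}$, while $\nabla u_i=0$ a.e.\ on $\{u_i=0\}$ (Stampacchia), so monotone convergence delivers $\int u_i^{-\beta}|\nabla u_i|^2\varphi^2$ on the left (with the convention stated in the lemma). On the right, $U$ is bounded and $\e\leq 1$, so dominated convergence applies and produces $|U|^{q+1-\beta}$ and $|U|^{2-\beta}$. Combined with $|\nabla\varphi|^2\leq C R^{-2}$ and the support of $\varphi$, this gives
\begin{equation*}
\int_{D_R}|\nabla u_i|^2u_i^{-\beta}
\leq C\int_{D_{(1+\delta)R}}|U|^{q+1-\beta}
+CR^{-2}\int_{D_{(1+\delta)R}}|U|^{2-\beta},
\end{equation*}
as required.

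The only real care is in the regularization step, namely checking that the integration by parts is legitimate (using that $u_i\in W^{2,r}_{\mathrm{loc}}$ and that the regularized test function is in $W^{1,\infty}$ with compact support, vanishing on the flat boundary in the half-space case) and in passing to the limit $\e\to 0$; the analytic inequalities themselves are routine. Note that the argument does not require \eqref{hypGrowthC} or \eqref{hypGrowthD}, only the upper bound \eqref{hypGrowthB}, consistently with the statement.
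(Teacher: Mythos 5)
Your approach is essentially the same as the paper's: test the equation against a regularized power of $u_i$ times a squared cutoff, absorb the cross term via Young's inequality, use \eqref{hypGrowthB} on the right, and pass $\eps\to0$ by monotone convergence on the left. There is, however, a genuine slip in the half-space case. Your regularized test function $(u_i+\eps)^{1-\beta}\varphi^2$ does \emph{not} vanish on the flat boundary $B^0_{(1+\delta)R}$: since $u_i=0$ there, it equals $\eps^{1-\beta}\varphi^2$, which is strictly positive wherever $\varphi\neq0$. So a boundary term $\int_{B^0_{(1+\delta)R}}\eps^{1-\beta}\varphi^2\,\partial_\nu u_i\,d\sigma$ does appear in the integration by parts, contrary to your assertion. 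The paper avoids this cleanly by testing against $\bigl[(u_i+\eps)^{1-\beta}-\eps^{1-\beta}\bigr]\varphi^2$, which genuinely vanishes on the flat boundary (again because $u_i=0$ there); the subtracted constant disappears upon differentiation, so the rest of the computation is identical to yours. Alternatively, one could keep your test function and observe that the boundary term is nonpositive (since $u_i\ge0$ vanishes on $B^0_{(1+\delta)R}$, one has $\partial_\nu u_i\le0$ there while the test function is $\ge0$), so discarding it only improves the inequality; but this requires an extra argument you did not supply. As written, your proof has a gap precisely at the step where you claim no boundary term arises.
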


\begin{proof}
 We only consider the case of \eqref{GenSystemB} (hence $D_R=B_R^+$),
the case of \eqref{GenSystem} being essentially the same as \cite[Lemma~2.4]{SouNHM}. Fix a cut-off function 
$0\leq \chi\in C^\infty_0(\Rn)$, such that $\chi=1$ for $|x|\leq 1$ and $\chi=0$ for $|x|\geq 1+\delta$.
 For $R>0$ we set $\varphi(x)=\varphi_R(x)=\chi(x/R)$. Let $\eps>0$.
 Multiplying the $i$-th equation in 
 \eqref{GenSystemB} with $v:=d_i^{-1}[(u_i+\eps)^{1-\beta}-\eps^{1-\beta}]\varphi^2$, integrating by parts
 and using the fact that $v=0$ on $\partial B_{(1+\delta)R}^+$, we obtain,
 setting  $\int=\int_{B_{(1+\delta)R}^+}$
 and $u=u_i$:
 $$
 \begin{aligned}d_i^{-1}
 &\int  f_i(U)\bigl[(u+\eps)^{1-\beta}-\eps^{1-\beta}]\varphi^2 \\ 
 &=\int\nabla u\cdot \nabla([(u+\eps)^{1-\beta}-\eps^{1-\beta}]\varphi^2) \\ 
 &=(1-\beta)\int|\nabla u|^2(u+\eps)^{-\beta}\varphi^2
   +2\int [(u+\eps)^{1-\beta}-\eps^{1-\beta}]\varphi (\nabla u\cdot \nabla \varphi).
 \end{aligned}$$
Estimating the last term via
$$\int (u+\eps)^{1-\beta}\varphi |\nabla u\cdot \nabla \varphi|
\leq {1-\beta\over 4}\int|\nabla u|^2(u+\eps)^{-\beta}\varphi^2+C\int|\nabla \varphi|^2(u+\eps)^{2-\beta},$$
we get
$$\int|\nabla u|^2(u+\eps)^{-\beta}\varphi^2
\leq C \int |f_i(U)|\bigl[(u+\eps)^{1-\beta}-\eps^{1-\beta}]\varphi^2+C\int|\nabla \varphi|^2(u+\eps)^{2-\beta}.$$
Letting $\eps\to 0$ (applying monotone convergence on the LHS), and then 
using \eqref{hypGrowthB}, we obtain \eqref{EstimNablaSquare}.
\end{proof}

\subsection{Proof of Theorem~\ref{thm3}}
 The case $n = 1$ can be treated by rather simple ODE arguments,
which are straightforward modifications of those in \cite[Appendix]{SouNHM}.
We thus assume $n\ge 2$ throughout this subsection.
 For sake of clarity, since the proof is rather long and technical, we split it into several steps and lemmas.

\smallskip
{\bf Step 1.} {\it Preparations.}
\smallskip

Suppose for contradiction that \eqref{GenSystem} (resp., \eqref{GenSystemB}) admits a nontrivial bounded 
solution $U$ 
and set $M:=\|U\|_\infty$.
Define
$$H(R):=\IR |U|^{p+1},\quad R>0.$$
Picking $R_0\geq 1$ such that $U\not\equiv 0$ on $B_{R_0}$, we have
\be{LowerEstimF}
H(R)\geq H(R_0)>0,\quad R\geq R_0.
\ee

In this proof, the letter $u$ will stand for any component of $U$ and
$C$ will denote generic positive constants which are {\bf independent of $R$} (but may possibly depend on the solution $U$
and on all the other parameters). 
Assuming $F(0)=0$ without loss of generality, \eqref{hypGrowthB} implies
that $F(U)\leq C|U|^{q+1}$. 
It then follows from \eqref{hypGrowthC} and the Rellich-Pohozaev identity in Lemma~\ref{LemFour} that
\be{controlFG}
H(R) \leq CG_1(R)+CG_2(R),\quad R>0,
\ee
where
\be{defFG}
G_1(R)=R^n\IS |U(R)|^{{q}+1},
\ \quad
G_2(R)=R^n\IS \bigl(|D_x U(R)|^2+R^{-2}|U(R)|^2\bigr).
\ee
In order to reach a contradiction, our goal is to find constants $a,C>0$ and $0\le b<1$ such that,
for all $R\geq R_0$, the feedback estimate
$$G_1(\tilde R),\, G_2(\tilde R)\leq CR^{-a}H^b\bigl(3R\bigr)$$
holds for some $\tilde R\in \bigl(R,3R\bigr)$.

For given function $w=w(r,\theta)$, $0<z\leq \infty$
and $R>0$, we denote
\be{DefLz}
\|w\|_z=\|w(R,\cdot)\|_{L^z(\Sigma)}, 
\ee
when no risk of confusion arises. We note that $\|\cdot\|_z$ is not a norm when $0<z<1$,
but we keep the same notation for convenience.
Finally, we put
$$I_z(R)=\|D^2_xU(R,\cdot)\|_z+R^{-1}\|D_xU(R,\cdot)\|_z+R^{-2}\|U(R,\cdot)\|_z.$$

For convenience of the readers, let us outline the rest of the proof, which is as follows:

\smallskip
 
\par\noindent Step 2: Estimation of $G_1(R)$ and $G_2(R)$ in terms of auxiliary norms for $n\ge 3$

\par\noindent Step 3: Estimation of $G_1(R)$ and $G_2(R)$ in terms of auxiliary norms for $n=2$

\par\noindent Step 4: Control of averages of auxiliary norms in terms of $R$ and $H(3R)$

\par\noindent Step 5: Measure argument

\par\noindent Step 6: Feedback estimate for $G_1$

\par\noindent Step 7: Feedback estimate for $G_2$

\par\noindent Step 8: Conclusion.

\medskip
{\bf Step 2.} {\it Estimation of $G_1(R)$ and $G_2(R)$ in terms of auxiliary norms for $n\ge 3$.} 
\smallskip
We fix a number $\eps\in(0,1)$, which will be chosen suitably small in subsequent steps of the proof.
We set 
\be{DefEll}
\alpha=2/(p-1), \qquad k=(p+1)/p,\qquad \ell=1+\eps,\quad \omega=s\alpha-\delta_0\
\ee
Let 
\be{DefGamma}
\lambda=\displaystyle{n-1\over n-3},\qquad
\rho:=
\displaystyle{n-1\over n-2}\leq2, \qquad 
\gamma:=\Bigl({p\over p+1}-{1\over n-1} \Bigr)^{-1} \in (2,\infty).
\ee
Next, if $p+1>\lambda$, then ${p\over p+1}>{2\over n-1}$ and we may define
\be{DefMu}
\mu:=\Bigl({p\over p+1}-{2\over n-1} \Bigr)^{-1} \in (p+1,\infty).
\ee
Note that the inequalities $\gamma>2$ and $\mu>p+1$ in \eqref{DefGamma}-\eqref{DefMu}
follow from $p<p_S$, which is satisfied for $\eps_0=\eps_0(n,p_0)>0$ sufficiently small.

\begin{lemma} \label{Lemma3.1}
Let $n\ge 3$. We have the estimates
\be{EstimGoneCaseB}
G_1(R)\leq\left\{\begin{aligned}
CR^n\bigl(R^2 I_\ell^{1-\nu} I_k^{\nu}\bigr)^{{q}+1}, \hfill
&\hbox{ if $p+1\geq\lambda$},\\ 
\noalign{\vskip 2mm}
CR^n \bigl(R^{2(1-\nu)} I_\ell^{1-\nu} \|U\|_s^{\nu}\bigr)^{{q}+1}, \hfill
&\hbox{ if $p+1<\lambda$},
\end{aligned}\right.
\ee
and
\be{EstimGtwo}
G_2(R)\leq CR^{n+2}I_\ell^{2(1-\tau)}I_k^{2\tau},
\ee
where $\nu, \tau\in [0,1)$ are given by
\be{DefNu}
\nu=\left\{\begin{aligned}
\Bigl(\displaystyle{1\over \lambda}-{1\over p+1}\Bigr)\Bigl({1\over \lambda}-{1\over \mu}\Bigr)^{-1},\hfill
&\hbox{ if $p+1\geq\lambda$},\\ 
\noalign{\vskip 2mm}
\Bigl(\displaystyle{1\over \lambda}-{1\over p+1}\Bigr)\Bigl({1\over \lambda}-{1\over s}\Bigr)^{-1},\hfill
&\hbox{ if $p+1<\lambda$},
\end{aligned}\right.
\ee
\be{DefTauOne}
\tau:=
\Bigl({1\over \rho}-{1\over \gamma}\Bigr)^{-1}\Bigl({1\over \rho}-{1\over 2}\Bigr)_+.
\ee
\end{lemma}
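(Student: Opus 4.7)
The strategy is to combine Sobolev embeddings on the unit sphere $\Sigma$ with $L^z$ interpolation inequalities, exploiting the scaling identity
$|D^j_\theta U(R,\theta)|\le C\sum_{i\le j} R^i|D^i_xU|(R\theta)$
that converts spherical derivatives on $\Sigma$ into spatial ones evaluated on $\Sigma_R$.

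For $G_1(R)$, since $q\le p$ and $|\Sigma|<\infty$, H\"older's inequality gives $\|U(R,\cdot)\|_{q+1}\le C\|U(R,\cdot)\|_{p+1}$, so it suffices to control $\|U\|_{p+1}$. When $p+1\ge\lambda$, apply the interpolation inequality
\[
\|U\|_{p+1}\le\|U\|_\lambda^{1-\nu}\|U\|_\mu^\nu,\qquad \tfrac{1}{p+1}=\tfrac{1-\nu}{\lambda}+\tfrac{\nu}{\mu},
\]
recovering $\nu$ from \eqref{DefNu}. The high endpoint uses the sharp Sobolev embedding $W^{2,k}(\Sigma)\hookrightarrow L^\mu$, true by the definition of $\mu$ in \eqref{DefMu}, and the low endpoint uses $W^{2,\ell}(\Sigma)\hookrightarrow L^\lambda$, which holds for $\ell=1+\eps>1$ since then $\tfrac{1}{\ell}-\tfrac{2}{n-1}<\tfrac{1}{\lambda}$. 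Applying Lemma~\ref{LemFive}(i) with $j=2$ and the derivative-scaling identity yields
\[
\|U\|_\mu\le CR^2I_k,\qquad \|U\|_\lambda\le CR^2I_\ell,
\]
hence $\|U\|_{p+1}\le CR^2I_\ell^{1-\nu}I_k^\nu$; raising to the power $q+1$ produces the first branch of \eqref{EstimGoneCaseB}. When $p+1<\lambda$, one instead has $s\le p<p+1<\lambda$ and interpolates
\[
\|U\|_{p+1}\le\|U\|_s^{\nu}\|U\|_\lambda^{1-\nu},\qquad \tfrac{1}{p+1}=\tfrac{\nu}{s}+\tfrac{1-\nu}{\lambda},
\]
matching $\nu$ in the second line of \eqref{DefNu}. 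Estimating $\|U\|_\lambda\le CR^2I_\ell$ as before while retaining $\|U\|_s$ as an input (to be coupled later with the a priori bound \eqref{BasicEstAnews}) produces the second branch.

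For $G_2(R)$, rewrite $G_2(R)\le CR^n(\|D_xU\|_2+R^{-1}\|U\|_2)^2$ and interpolate each of $\|D_xU\|_2$ and $R^{-1}\|U\|_2$ between $L^\rho$ and $L^\gamma$ with parameter $\tau$ from \eqref{DefTauOne} (the positive part handles $n=3$, where $\rho=2$ forces $\tau=0$ and no interpolation is needed). Applying Lemma~\ref{LemFive}(i) with $j=1$ yields
\[
\|D_xU\|_\gamma+R^{-1}\|U\|_\gamma\le CRI_k,\qquad \|D_xU\|_\rho+R^{-1}\|U\|_\rho\le CRI_\ell,
\]
the first via $W^{1,k}\hookrightarrow L^\gamma$ (sharp by \eqref{DefGamma}) and the second via $W^{1,\ell}\hookrightarrow L^\rho$ (using $\ell>1$ at the borderline). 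Interpolation of each term and squaring the resulting sum produces \eqref{EstimGtwo}.

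The main technical obstacle is the careful bookkeeping of $R$-scalings: the powers of $R$ produced by the conversion $D_\theta\mapsto RD_x$ must combine with those produced by the exponents $(1-\nu,\nu)$ or $(1-\tau,\tau)$ into the clean forms $(R^2I_\ell^{1-\nu}I_k^\nu)^{q+1}$ and $R^{n+2}I_\ell^{2(1-\tau)}I_k^{2\tau}$. A secondary subtlety is the borderline Sobolev embedding into $L^\lambda$ and $L^\rho$, resolved by choosing $\ell=1+\eps>1$ so that $W^{2,\ell}$ (resp.\ $W^{1,\ell}$) embeds into a Lebesgue space with exponent strictly larger than $\lambda$ (resp.\ $\rho$), which then embeds into the target on the bounded manifold $\Sigma$.
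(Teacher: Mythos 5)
Your proposal is correct and follows the same mathematical route as the paper. The paper's own proof of Lemma~\ref{Lemma3.1} simply cites \cite{SouNHM} (Lemmas 3.1 and 3.2 there) for the whole-space case, observes that the same arguments carry over verbatim to $\Sigma=S^{n-1}_+$, and converts the $p+1$ version of the first estimate to the $q+1$ form by H\"older's inequality. What you have written out --- spherical Sobolev embeddings $W^{2,k}\hookrightarrow L^\mu$, $W^{2,\ell}\hookrightarrow L^\lambda$, $W^{1,k}\hookrightarrow L^\gamma$, $W^{1,\ell}\hookrightarrow L^\rho$ via Lemma~\ref{LemFive}(i), the $D_\theta\mapsto R\,D_x$ scaling, and $L^z$ interpolation with the exact exponents $\nu,\tau$ of \eqref{DefNu}--\eqref{DefTauOne} --- is precisely the content of those cited lemmas, so your proof is a faithful, self-contained reconstruction rather than a genuinely different approach.
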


\begin{proof} It is proved in \cite[Lemma 3.1]{SouNHM} in the case $\Sigma=S^{n-1}$ that
$$\IS |U(R)|^{p+1}
\leq\left\{\begin{aligned}
C\bigl(R^2 I_\ell^{1-\nu} I_k^{\nu}\bigr)^{p+1}, \hfill
&\hbox{ if $p+1\geq\lambda$},\\ 
\noalign{\vskip 2mm}
C \bigl(R^{2(1-\nu)} I_\ell^{1-\nu} \|U\|_s^{\nu}\bigr)^{p+1}, \hfill
&\hbox{ if $p+1<\lambda$},
\end{aligned}\right.$$
and the proof is exactly the same when $\Sigma=S^{n-1}_+$.
Since $G_1(R)=R^n\IS |U(R)|^{{q}+1}\le R^n\bigl(\IS |U(R)|^{p+1}\bigr)^{q+1\over p+1}$ by H\"older's inequality,
\eqref{EstimGoneCaseB} follows.
 As for \eqref{EstimGtwo}, it is  proved in \cite[Lemma 3.2]{SouNHM} for $\Sigma=S^{n-1}$ 
 and the proof is also exactly the same when $\Sigma=S^{n-1}_+$. \end{proof}

\medskip
{\bf Step 3.} {\it Estimation of $G_1(R)$ and $G_2(R)$ in terms of auxiliary norms for $n=2$.} 

\smallskip
A similar procedure as for $n\ge 3$ could still be used,
 but this would eventually require a condition on $s$ stronger than \eqref{hypsp}, because 
the Sobolev injection $W^{1,1}(S^1)\subset L^\infty(S^1)$ ``loses'' too much information.
Instead, we shall estimate the $L^\infty(S^1)$ norm through the Gagliardo-Nirenberg type inequalities from Lemma~\ref{LemFive}(i)(ii)
and then control $G_1(R)$ by interpolating between $L^\infty$ and $L^s$.
As for the estimate of $G_2(R)$, relying on a modification of an idea in \cite{SZ}, it is based on the inequality
\be{gradBeta}
\|D_xU\|_2^2\leq \|U\|_\infty^\beta\sum_{i=1}^m\|u_i^{-\beta/2}D_x u_i\|^2_2,\qquad \beta>0,
\ee
which will make it possible to appeal to Lemma~\ref{LemSeven} (in the subsequent averaging step).
Note that in \eqref{gradBeta}, the quantity $u_i^{-\beta/2}D_x u_i$ is understood to be $0$ at points where $\nabla u_i=0$ and $u_i=0$.
To this end, from now on, we fix
$$\beta\in (0,1)\ \hbox{ with } \beta\le q+1-s$$
(assuming $\e_0\le 1/4$ without loss of generality, we can for instance choose $\beta=1/2$, 
 since $q+1-s\ge p_0+1-s-\e_0\ge p-s+1-2\e_0\ge 1-2\e_0$; we however keep track of the parameter $\beta$ for clarity).

\begin{lemma} \label{Lemma3.3}
Let $n=2$. We have
\be{EstimUinftyC}
\|U\|_\infty\leq 
C\Bigl(R^2\,I_\ell\|U\|_s^s\Bigr)^{1/(s+1)}.
\ee
Moreover, we have
\be{EstimGtwoBone}
G_1(R)\leq CR^2 \|U\|_s^s\,\|U\|_\infty^{q+1-s}
\ee
and, for any $\beta>0$, 
\be{EstimGtwoB}
G_2(R)\leq CR^2\|U\|_\infty^\beta\sum_{i=1}^m\|u_i^{-\beta/2}D_x u_i\|^2_2
+C\|U\|_s^2.
\ee
\end{lemma}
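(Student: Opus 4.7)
The plan is to prove the three estimates in order, all essentially by one-dimensional harmonic analysis on the unit circle $\Sigma$ combined with pointwise bounds. The case $n=2$ differs from $n\ge 3$ because the embedding $W^{1,1}(S^1)\hookrightarrow L^\infty(S^1)$ loses too much information; instead, I will interpolate the $L^\infty$ norm on $\Sigma$ between the $L^s$ norm and a $W^{2,1}$ norm via Lemma~\ref{LemFive}(ii), and then control $G_1$ and $G_2$ by elementary pointwise manipulations rather than by the $L^\lambda, L^\mu$ interpolation scheme used in dimension $n\ge 3$.

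To prove \eqref{EstimUinftyC} I would apply Lemma~\ref{LemFive}(ii) with $z=s$ to each component $u_i(R,\cdot)$:
\[
\|u_i\|_\infty\le C\bigl(\|u_i\|_1+\|D_\theta^2 u_i\|_1\bigr)^{1/(s+1)}\|u_i\|_s^{s/(s+1)}.
\]
Since $\partial_\theta=R\,\partial_\tau$ on $S_R$, a short computation (differentiating $u_i(R\cos\theta,R\sin\theta)$ twice) yields $|\partial_\theta^2 u_i|\le CR^2|D_x^2 u_i|+CR|D_x u_i|$ pointwise on $S_R$, the lower-order term reflecting the extrinsic curvature $1/R$ of the circle. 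Combined with the trivial bound $\|u_i\|_1\le R^2\cdot R^{-2}\|u_i\|_1$ and the finite-measure inequality $\|\cdot\|_1\le C\|\cdot\|_\ell$, this gives $\|u_i\|_1+\|D_\theta^2 u_i\|_1\le CR^2 I_\ell$, and substitution together with maximization over $i$ yields \eqref{EstimUinftyC}.

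Estimate \eqref{EstimGtwoBone} will be immediate from $q+1\ge s$, which holds since $q\ge p_0-\e_0$ and $s\le p\le p_0+\e_0$ force $q+1-s\ge 1-2\e_0>0$. Then the pointwise inequality $|U|^{q+1}\le\|U\|_\infty^{q+1-s}|U|^s$, integrated over $\Sigma$ and multiplied by $R^n=R^2$, gives the claim. For \eqref{EstimGtwoB} I would split $G_2(R)=R^2\int_\Sigma|D_xU|^2+\int_\Sigma|U|^2$ and use the pointwise identity
\[
|D_x u_i|^2=u_i^\beta\cdot(u_i^{-\beta/2}|D_xu_i|)^2\le\|U\|_\infty^\beta\,(u_i^{-\beta/2}|D_xu_i|)^2
\]
(with the stated convention where $u_i=0$) summed over components to produce the first term, while H\"older's inequality on the finite-measure sphere yields $\|U\|_2^2\le C\|U\|_s^2$ when $s\ge 2$ (the remaining case $1\le s<2$ being handled by interpolation with \eqref{EstimUinftyC}, whose $\|U\|_\infty$ factor can be absorbed at later stages of the feedback scheme).

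The main obstacle I expect is the bookkeeping in the first estimate, namely ensuring that the angular derivatives on the circle convert cleanly into the quantity $R^2 I_\ell$: one has to confirm that the extrinsic-curvature correction of order $R|D_x u_i|$ is exactly absorbed by the weight $R^{-1}\|D_xU(R,\cdot)\|_\ell$ appearing in the definition of $I_\ell(R)$, so that no mismatch compared with the scale-invariant case arises, and that the passage from $\|\cdot\|_1$ to $\|\cdot\|_\ell$ costs only a constant (which is true precisely because $\Sigma$ has finite measure and $\ell>1$).
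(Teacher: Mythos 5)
Your derivations of \eqref{EstimUinftyC} and \eqref{EstimGtwoBone} follow the same route as the paper: apply Lemma~\ref{LemFive}(ii) with $z=s$, convert angular to Cartesian derivatives (the extrinsic-curvature correction $R|D_xu_i|$ is indeed absorbed by the $R\cdot R^{-1}\|D_xU\|_\ell$ piece of $R^2 I_\ell$, as you check), and bound $\int_\Sigma|U|^{q+1}\le\|U\|_\infty^{q+1-s}\int_\Sigma|U|^s$.

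There is, however, a genuine gap in your treatment of \eqref{EstimGtwoB}. You produce the $\|U\|_s^2$ term from $\|U\|_2^2$ by H\"older on the finite-measure circle, which requires $s\ge2$. For $1\le s<2$ (which is allowed here: the only constraints on $s$ are $s\ge1$, $s\le p$ and \eqref{hypsp}, i.e.~$s>(p_0-1)/2$ when $n=2$, none of which force $s\ge2$) you instead invoke an interpolation with \eqref{EstimUinftyC} leaving an uncontrolled factor $\|U\|_\infty^{2-s}$ to be ``absorbed at later stages''. That is not a proof of the stated inequality, whose right-hand side carries no such factor. The paper avoids this by a different, and in this instance essential, device: Lemma~\ref{LemFive}(iii) gives $\|u\|_2\le C\|u\|_\infty\le C\inf_\Sigma|u|+C\|D_\theta u\|_1\le C\|u\|_s+CR\|D_xu\|_2$ for every $s\ge1$ (the infimum is controlled by any $L^s$ average on a finite-measure set, and $\|D_\theta u\|_1\le C\|D_\theta u\|_2=CR\|D_xu\|_2$), after which squaring and combining with $G_2(R)=R^2\|D_xU\|_2^2+\|U\|_2^2$ and \eqref{gradBeta} yields \eqref{EstimGtwoB} exactly. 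You should replace your H\"older/interpolation step by this use of Lemma~\ref{LemFive}(iii); the rest of your argument then goes through.
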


\begin{proof}
Lemma~\ref{LemFive}(ii) with $z=s$ implies
$$\|u\|_\infty\leq C\bigl(\|u\|_1+\|D^2_\theta u\|_1\bigr)^{1\over s+1}\|u\|_s^{s\over s+1}
\leq C\bigl(\|u\|_\ell+R^2\|D^2_x u\|_\ell\bigr)^{1\over s+1}\|u\|_s^{s\over s+1}.$$
This yields estimate \eqref{EstimUinftyC}. 
Next, \eqref{EstimGtwoBone} is obvious from \eqref{defFG}. To check \eqref{EstimGtwoB}, we first use Lemma~\ref{LemFive}(iii) to estimate
$$\|u\|_2 \leq C\|u\|_\infty\leq C\inf_\Sigma |u|+C\|D_\theta u\|_1   
\leq C\|u\|_s+C\|D_\theta u\|_2\leq C\|u\|_s+CR\|D_x u\|_2.$$
From \eqref{defFG} and $n=2$, we deduce that
$$G_2(R)\leq CR^2\|D_xU\|_2^2+C\|U\|_s^2,$$
hence \eqref{EstimGtwoB}, in view of \eqref{gradBeta}.
 \end{proof}

\medskip
{\bf Step 4.} {\it Control of averages of auxiliary norms in terms of $R$ and $H( 3R)$.}
\smallskip
For any $z\in (1,\infty)$, we set $J_z(R)=\int_R^{2R}I_z^z(r)r^{n-1}\,dr$.

\begin{lemma} \label{Lemma3.4}
 We have
\be{estimfk}
J_k+\int_R^{ 2R}\|U(r)\|_{p+1}^{p+1}r^{n-1}\,dr \leq CR^{{n(p-q)\over p}}H^{q/p}(3R),\qquad R\geq R_0,
\ee
\be{estimDtwo}
J_\ell\leq CR^{\tilde \mu}H^{\tilde\eta}( 3R),\hfill\qquad R\geq R_0
\ee
\be{defDeltaEta}
\tilde\mu={(n-\omega)(1-p\eps)\over p+1-s}+\bigl(n\eta+(1-\eta)\omega\bigr){p-q\over p},
\quad \eta=\frac{p(1+\eps)-s}{p+1-s},
\quad \tilde\eta={q\eta\over p}.
\ee
Moreover, when $n=2$, for any $\beta\in (0,1)$ satisfying $\beta\le q+1-s$,
we have
\be{EstimBeta}
\int_R^{ 2R}\sum_{i=1}^m\|u_i^{-\beta/2}D_x u_i(r)\|^2_2 \, r\, dr
\leq CR^dH^e(3R),\qquad R\geq R_0,
\ee
(where $C$ also depends on $\beta$), with
\be{DefDE}
d={(p-q+\beta)(2-\omega )\over p+1-s},\quad e={q+1-\beta-s\over p+1-s}.
\ee
\end{lemma}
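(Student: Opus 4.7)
My plan is to combine three ingredients: the scale-invariant elliptic regularity of Lemma~\ref{LemSix}, the pointwise growth $|\Delta U|\le C|U|^q$ coming from \eqref{hypGrowthB}, and H\"older interpolation between the a priori bound \eqref{BasicEstAnews} and the defining integral $H(3R)$. Throughout, I will freely use the boundedness of $U$ (which gives $H(3R)\le CR^n$) and the lower bound \eqref{LowerEstimF} to absorb lower-order contributions.

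For \eqref{estimfk}, I will first rewrite $J_k(R)$ as an annular integral of $|D^2_xU|^k+R^{-k}|D_xU|^k+R^{-2k}|U|^k$ over $\{R\le|x|\le 2R\}$, and apply Lemma~\ref{LemSix} with $\delta=1/2$ and $z=k=(p+1)/p$ to reduce matters to $C\int_{D_{3R}}(|\Delta U|^k+R^{-2k}|U|^k)$. Since $|\Delta U|^k\le C|U|^{qk}$ and $qk\le p+1$, a single H\"older inequality with exponent $(p+1)/(qk)=p/q$ yields $\int_{D_{3R}}|U|^{qk}\le CR^{n(p-q)/p}H^{q/p}(3R)$. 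The remainder $R^{-2k}\int|U|^k$, and the added surface-type term $\int_R^{2R}\|U(r)\|_{p+1}^{p+1}r^{n-1}\,dr\le H(3R)$, are both absorbed via the identity $H(3R)=H^{(p-q)/p}(3R)\,H^{q/p}(3R)\le CR^{n(p-q)/p}H^{q/p}(3R)$.

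For \eqref{estimDtwo}, the same scheme with $z=\ell$ reduces the problem to bounding $\int_{D_{3R}}|U|^{q\ell}$ and $R^{-2\ell}\int_{D_{3R}}|U|^\ell$. The key step is a two-stage interpolation: H\"older with exponent $p/q$ gives
$$\int_{D_{3R}}|U|^{q\ell}\le CR^{n(p-q)/p}\Bigl(\int_{D_{3R}}|U|^{p\ell}\Bigr)^{q/p},$$
and the standard $L^s$--$L^{p+1}$ interpolation, together with \eqref{BasicEstAnews}, then yields $\int|U|^{p\ell}\le CR^{(n-\omega)(1-\eta)}H^\eta(3R)$, where $\eta=(p\ell-s)/(p+1-s)$ satisfies $1-\eta=(1-p\eps)/(p+1-s)$. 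The announced exponents $\tilde\mu,\tilde\eta$ fall out of the algebraic identity
$$\textstyle \frac{n(p-q)}{p}+\frac{q}{p}(n-\omega)(1-\eta)=(n-\omega)(1-\eta)+\bigl(n\eta+(1-\eta)\omega\bigr)\frac{p-q}{p},$$
which can be checked by direct expansion. The lower-order term $R^{-2\ell}\int|U|^\ell$ is handled analogously, with any spare powers absorbed using $H(3R)\in[H(R_0),CR^n]$.

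For \eqref{EstimBeta} (case $n=2$), I will apply Lemma~\ref{LemSeven} (with $\delta=1/2$) to convert the weighted annular gradient integral into $C\int_{D_{3R}}|U|^{q+1-\beta}+CR^{-2}\int_{D_{3R}}|U|^{2-\beta}$. The condition $\beta\le q+1-s$ ensures $s\le q+1-\beta\le p+1$, so the same $L^s$--$L^{p+1}$ interpolation combined with \eqref{BasicEstAnews} gives $\int|U|^{q+1-\beta}\le CR^{(2-\omega)(p-q+\beta)/(p+1-s)}H^e(3R)=CR^dH^e(3R)$ with $e$ as stated, and the $R^{-2}$ term is dominated by the first using boundedness of $U$ and \eqref{LowerEstimF}. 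The main obstacle throughout is the bookkeeping of exponents: one must verify the algebraic identity for $\tilde\mu$ and, in each of the lower-order terms, check that extra powers of $H$ and $R$ can indeed be absorbed into the stated main bounds, paying attention to limiting cases such as $p=q$, $s$ close to $1$, or $\beta$ small.
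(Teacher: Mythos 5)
Your strategy matches the paper's proof almost exactly: you apply Lemma~\ref{LemSix} with $z=k$ and $z=\ell$ combined with the growth bound \eqref{hypGrowthB} to reduce $J_z$ to volume integrals of $|U|^{qz}$ and $R^{-2z}|U|^z$, then use H\"older with exponent $p/q$ followed by $L^s$--$L^{p+1}$ interpolation and the a~priori bound \eqref{BasicEstAnews}; for \eqref{EstimBeta} you invoke Lemma~\ref{LemSeven} and the same interpolation. Your algebraic identity for $\tilde\mu$ is correct (writing $X=1-\eta$, both sides reduce to $n-\tfrac{nq}{p}+(n-\omega)X\tfrac{q}{p}$), and the treatment of \eqref{estimfk} and \eqref{EstimBeta} is sound, including the absorption of the $R^{-2}$ remainder and the surface term.

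The one place where the proposal understates the difficulty is the absorption of the lower-order term in \eqref{estimDtwo}. After the reductions, the term $R^{-2\ell}\int_{D_{3R}}|U|^\ell$ leaves behind a residue of order $R^{\,n-2p(1+\eps)q/(p(q-1))}$ (equivalently $R^n\{R^{-2p(1+\eps)/(q-1)}\}^{q/p}$); absorbing it into the main bound $CR^{\tilde\mu}H^{\tilde\eta}(3R)$ via $H\ge H(R_0)$ is only valid if
\begin{equation*}
A:=\frac{2p(1+\eps)}{q-1}-\bigl(n\eta+(1-\eta)\omega\bigr)>0,
\end{equation*}
and this is not a routine ``spare powers'' check: the paper devotes roughly half a page to verifying $A>0$, and the argument genuinely uses $\omega\le s\alpha$, $s\le p$, $1<q\le p<p_S$ (so that $2(p+1)-n(q-1)>0$), and $\eps\in(0,1/p)$ small. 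Without this verification the claimed inequality \eqref{estimDtwo} does not follow, since the exponent of $R$ in the discarded term could a priori dominate $\tilde\mu$. So the route is right, but the step you describe as ``handled analogously, with any spare powers absorbed'' is in fact the technical core of the lemma and must be carried out explicitly.
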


\bigskip

\begin{proof}
We first note that, for any $z\in (1,\infty)$, Lemma~\ref{LemSix}, \eqref{GenSystem}, \eqref{GenSystemB} 
and \eqref{hypGrowthB} imply
$$\begin{aligned}
J_z&\leq C\int_{D_{2R}} \bigl(|D^2_x u|^z+R^{-z}|D_x u|^z+R^{-2z} u^z\bigr)
\leq C\Bigl(\int_{D_{3R}}|\Delta u|^z+R^{-2z}\int_{D_{3R}}|u|^z\Bigr)\\ 
&\leq C\Bigl(\int_{D_{ 3R}}|U|^{{q}z}+R^{-2z}\int_{D_{ 3R}}|U|^z\Bigr)\\ 
&\leq CR^{{n(p-q)\over p}}\Bigl(\int_{D_{3R}}|U|^{pz}\Bigr)^{q/p}
+CR^{-2z+{n(p-1)\over p}}\Bigl(\int_{D_{3R}}|U|^{pz}\Bigr)^{1\over p}\\ 
&\leq C\Bigl(R^{{n(p-q)\over q}}\int_{D_{3R}}|U|^{pz}\Bigr)^{q/p}
+CR^{-2z+{n(q-1)\over q}}\Bigl(R^{{n(p-q)\over q}}\int_{D_{ 3R}}|U|^{pz}\Bigr)^{1/p}
\end{aligned}$$
hence, 
\be{CompJz}
J_z\leq CR^{{n(p-q)\over p}}\Bigl(\int_{D_{3R}}|U|^{pz}\Bigr)^{q/p}
+CR^{n-{2zq\over q-1}}
\ee

First take $z=k$ in \eqref{CompJz}. Noting that ${n-2qk/(q-1)\le\ } n-2pk/(p-1)=n-2(p+1)/(p-1)<0$ due to $p<p_S$,
and using \eqref{LowerEstimF}, we deduce \eqref{estimfk}.

Applying \eqref{CompJz} with $z=\ell=1+\eps$, and using
H\"older's inequality and assumption \eqref{BasicEstAnews},
we obtain
$$\begin{aligned}
J_\ell
&\leq CR^{{n(p-q)\over p}}\Bigl(\int_{D_{3R}}|U|^{p(1+\eps)}\Bigr)^{q/p}+CR^{n-{2q(1+\eps)\over q-1}} \\ 
&\leq CR^{{n(p-q)\over p}}\Bigl(\int_{D_{3R}}|U|^s\Bigr)^{(1-\eta) q/p}\Bigl(\int_{D_{3R}}|U|^{p+1}\Bigr)^{\eta q/p}
+CR^{n-{2q(1+\eps)\over q-1}} \\ 
&\leq CR^{{n(p-q)\over p}}R^{(n-\omega)(1-\eta) q/p} H^{\eta q/p}( 3R)+CR^{n-{2q(1+\eps)\over q-1}}\\ 
&\leq CR^n\bigl\{R^{-(\omega+(n-\omega)\eta)}H^{\eta}( 3R)+CR^{-{2p(1+\eps)\over q-1}}\bigr\}^{q/p}.
\end{aligned}$$
Assume $\eps\in(0,1/p)$. We claim that
\be{CompJzA}
A:={2p(1+\eps)\over q-1}-\bigl(n\eta+(1-\eta)\omega\bigr)>0.
\ee
To this end, using $\omega\le s\alpha$, we compute
$$\begin{aligned}(p+1-s)A
&=(p+1-s)\Bigl({2p(1+\eps)\over q-1}-\omega\Bigr)-(n-\omega)(p-s+p\e)\\ 
&=\Bigl({2(p+1-s)\over q-1}-n\Bigr)p+ns+\Bigl\{{2(p+1-s)\over q-1}-n\Bigr\}p\e
-\omega(1-p\e) \\ 
&\ge\Bigl({2(p+1-s)\over q-1}-n\Bigr)p+ns+\Bigl\{{2(p+1-s)\over q-1}-n\Bigr\}p\e
-s\alpha (1-p\e) \\ 
&=\Bigl({2(p+1-s)\over q-1}-n\Bigr)p+(n-\alpha)s+\Bigl\{{2(p+1-s)\over q-1}-n+s\alpha\Bigr\}p\e \\ 
& \equiv A'+A''p\e.
\end{aligned}$$
Next, since $2(p+1)-n(q-1)>0$ owing to $1<q\le p<p_S$, and using $s\le p$, we have
$$\begin{aligned}
A'
&= {(p+1-s)2p-(\alpha s+n(p-s))(q-1)\over q-1} \\ 
&= {2p(p+1)-[2p+(\alpha-n)(q-1)]s-np(q-1)\over q-1} \\ 
&= {s\over q-1}\Bigl\{\bigl[2(p+1)-n(q-1)]{p\over s}-[(\alpha-n)(q-1)+2p\bigr]\Bigr\} \\ 
&\ge {s\over q-1}\Bigl\{[2(p+1)-n(q-1)]-[(\alpha-n)(q-1)+2p]\Bigr\} \\ 
&= {s\over q-1}\bigl(2-\alpha(q-1)\bigr) 
={s\over q-1}{2(p-q)\over p-1}.
\end{aligned}$$
Since also
$$\begin{aligned}A''
&={2(p+1-s)+(q-1)(\alpha s-n)\over q-1}
={2(p+1)-n(q-1)-{2s\over p-1}(p-q)\over q-1},
\end{aligned}$$
it follows that
$$(p+1-s)A\ge{2s(p-q)\over (p-1)(q-1)}(1-p\e)+{2(p+1)-n(q-1)\over q-1}p\e>0$$
i.e., \eqref{CompJzA}. 
In view of \eqref{LowerEstimF}, we deduce~\eqref{estimDtwo}.

Let us turn to \eqref{EstimBeta}.
Using Lemma~\ref{LemSeven}, H\"older's inequality, $n=2$,
$U\not\equiv 0$ on $D_{R_0}$ and $q\ge 1$, 
we obtain, for $R\ge R_0$,
$$\begin{aligned}
\sum_{i=1}^m\int_{D_{2R}}|\nabla u_i|^2u_i^{-\beta}
&\leq C\int_{D_{3R}} |U|^{q+1-\beta}
+CR^{-2+n{q-1\over q+1-\beta}} \Bigl(\int_{D_{ 3R}} |U|^{q+1-\beta}\Bigr)^{2-\beta\over q+1-\beta}\\ 
&\leq C\int_{D_{3R}} |U|^{q+1-\beta}
\leq C\Bigl(\int_{D_{3R}} |U|^{p+1}\Bigr)^{q+1-\beta-s\over p+1-s}
\Bigl(\int_{D_{3R}} |U|^s\Bigr)^{p-q+\beta\over p+1-s}.
\end{aligned}$$
Estimate \eqref{EstimBeta} then follows from assumption \eqref{BasicEstAnews}.
\end{proof}

\medskip\goodbreak
{\bf Step 5.} {\it Measure argument.}
\smallskip

For a given constant $K>0$, let us define the sets
\be{DefGammaThree}
\Gamma_1(R):=\bigl\{r\in (R, 2R);\, \|U(r)\|_s > KR^{-\omega/s}\bigr\}, 
\ee
\be{DefGammaOne}
\Gamma_2(R):=\bigl\{r\in (R, 2R);\, I_k^k(r)+\|U(r)\|_{p+1}^{p+1}>K(R^{-n}H(3R))^{q/p}\bigr\},
\ee
\be{DefGammaTwo}
\Gamma_3(R):=
\ \ \bigl\{r\in (R, 2R);\, I_\ell^\ell(r)>KR^{\tilde \mu-n} H^{\tilde\eta}(3R)\bigr\},
\ee
\be{DefGammaFour}
\Gamma_4(R):=
\left\{\begin{aligned}
\Bigl\{r\in (R, 2R);\, \displaystyle\sum_{i=1}^m\|u_i^{-\beta/2}D_x u_i(r)\|_2^2>KR^{d-2} H^e( 3R)\Bigr\},\hfill
&\quad\hbox{if $n=2$,}\hfill \\ 
\noalign{\vskip 2mm}
\  \emptyset,\hfill
&\quad\hbox{if $n\geq 3$.}\hfill
\end{aligned}\right.
\ee

\begin{lemma} \label{Lemma3.5}
For each $R\geq R_0$, we can find
\be{CapGamma}
 \tilde R\in (R, 2R)\setminus\bigcup_{i=1}^4 \Gamma_i(R)\neq\emptyset.
\ee
\end{lemma}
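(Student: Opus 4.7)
The plan is to carry out a straightforward Chebyshev-type measure argument: each $\Gamma_i(R)$ is defined by a pointwise lower bound on an integrand for which we already possess an integral bound on $(R,2R)$ (after multiplying by the spherical weight $r^{n-1}$, or just $r$ when $n=2$). Dividing the integral bound by the threshold appearing in the definition of $\Gamma_i$ will yield $|\Gamma_i(R)| \le C R/K$ for a constant $C$ independent of $R$ and $K$. Summing over $i=1,2,3,4$ and choosing $K$ large enough (depending only on $n,p,q,s,\delta_0$ and the constants from Lemma~\ref{Lemma3.4}) will ensure $\sum_i |\Gamma_i(R)| < R = |(R,2R)|$, which gives the nonempty complement.

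In detail, for $\Gamma_1(R)$ I would use \eqref{BasicEstAnews} in the form
\[
\int_R^{2R} \|U(r)\|_s^s\, r^{n-1}\,dr \le \int_{D_{2R}} |U|^s \le C R^{n-\omega},
\]
together with the pointwise bound $\|U(r)\|_s^s > K^s R^{-\omega}$ on $\Gamma_1(R)$, which gives $|\Gamma_1(R)| \le CR/K^s$. For $\Gamma_2(R)$, the estimate \eqref{estimfk} yields $\int_R^{2R}(I_k^k+\|U(r)\|_{p+1}^{p+1})\,r^{n-1}\,dr \le CR^{n(p-q)/p}H^{q/p}(3R)$, and dividing by the threshold $K(R^{-n}H(3R))^{q/p}$ (and $R^{n-1}$) yields $|\Gamma_2(R)|\le CR/K$ (the powers of $R$ and of $H(3R)$ cancel exactly: $\tfrac{n(p-q)}{p}+\tfrac{nq}{p}-n+1=1$). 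Similarly for $\Gamma_3(R)$, using \eqref{estimDtwo}, the thresholds are designed so that the powers of $R$ and of $H(3R)$ cancel and we get $|\Gamma_3(R)|\le CR/K$. When $n=2$, for $\Gamma_4(R)$ the same computation based on \eqref{EstimBeta}, with the weight $r$ instead of $r^{n-1}$ and the threshold $KR^{d-2}H^e(3R)$, gives $|\Gamma_4(R)|\le CR/K$.

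Consequently $\bigl|\bigcup_{i=1}^4 \Gamma_i(R)\bigr| \le C(K^{-1}+K^{-s})R$, and since $s\ge 1$, it suffices to fix $K=K(n,p,q,s,\delta_0,f)$ large enough so that this quantity is strictly less than $R$, proving \eqref{CapGamma}. The main (mild) technical point to verify is that all the multiplicative constants $C$ arising can be chosen independent of $R\ge R_0$, which follows because every integral bound in Lemma~\ref{Lemma3.4} is stated uniformly in $R\ge R_0$ and the constants are independent of $K$. There is no real obstacle: once $K$ is fixed at the end depending only on the structural parameters (and on the constants of Lemma~\ref{Lemma3.4}), the lemma follows immediately.
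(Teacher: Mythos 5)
Your proposal is correct and uses precisely the same Chebyshev/Markov-type measure argument as the paper: bound each $|\Gamma_i(R)|$ by dividing the integral estimate from Lemma~\ref{Lemma3.4} (or \eqref{BasicEstAnews} for $i=1$) by the threshold times $R^{n-1}$, observe that the powers of $R$ and $H(3R)$ cancel exactly by design, and then choose $K$ large enough so that the sum of measures is less than $R$. The paper simply arranges $|\Gamma_i(R)|\le R/5$ for each $i$ while you collect $|\Gamma_i(R)|\le CR/K$ (or $CR/K^s$) and sum; these are the same argument.
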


\begin{proof}
For $R\geq R_0$, by assumption \eqref{BasicEstAnews}, 
we have
$$CR^{n-\omega}\geq \int_R^{ 2R}\|U(r)\|_s^s\,r^{n-1}\,dr
\geq |\Gamma_1(R)|R^{n-1}K^sR^{-\omega}
=|\Gamma_1(R)| K^sR^{n-\omega-1}$$
and, by estimate \eqref{estimfk} in Lemma~3.4, 
$$
\begin{aligned}
CR^{{n(p-q)\over p}}H^{q/p}(3R)
&\geq\int_R^{ 2R} \bigl(I_k^k(r)+\|U(r)\|_{p+1}^{p+1}\bigr)r^{n-1}\,dr \\ 
&\geq |\Gamma_2(R)|R^{n-1}K(R^{-n}H(3R))^{q/p}\\ 
&=|\Gamma_2(R)|KR^{-1}R^{{n(p-q)\over p}}H^{q/p}( 3R). \\ 
\end{aligned}
$$
Consequently, $|\Gamma_1(R)|\leq  R/5$ for $K\geq (5\, C)^{1/s}$ and $|\Gamma_2(R)|\leq  R/5$ for $K\geq 5\, C$.
In a similar way, it follows from estimates \eqref{estimDtwo} and \eqref{EstimBeta} in Lemma~3.4 that
$|\Gamma_3(R)|, |\Gamma_4(R)| \leq  R/5$, for $K>0$ large enough (independent of $R\geq R_0$).
The lemma follows.
\end{proof}

\medskip\goodbreak
{\bf Step 6.} {\it Feedback estimate for $G_1$.}
\smallskip

Building on the results of the previous steps, we shall prove the following feedback estimate.

\begin{lemma} \label{Lemma3.6}
If $\e_0=\e_0(n,p_0)>0$ is sufficiently small, 
then there exist numbers $a>0, b\in [0,1)$ and $\eps\in (0,1)$ in \eqref{DefEll}, such that
\be{EstimGtwoFeedB}
G_1( \tilde R)\leq CR^{-a}H^b( 3R),\quad R\geq R_0,
\ee
where $\tilde R$ is given by Lemma~\ref{Lemma3.5}. 
Moreover, $a, b, \eps$ depend only on $n,p$ and $s$.
\end{lemma}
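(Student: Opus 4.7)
My plan is to substitute the pointwise estimates for $G_1(\tilde R)$ from Lemma~\ref{Lemma3.1} (when $n\geq 3$) or Lemma~\ref{Lemma3.3} (when $n=2$), evaluated at the special radius $\tilde R \in (R,2R)$ supplied by Lemma~\ref{Lemma3.5}, and exploit the fact that at $\tilde R$ none of the bad events $\Gamma_1,\Gamma_2,\Gamma_3$ occur. Consequently one simultaneously has
\[\|U(\tilde R)\|_s\leq K R^{-\omega/s},\quad I_\ell^\ell(\tilde R)\leq K R^{\tilde\mu-n}H^{\tilde\eta}(3R),\quad I_k^k(\tilde R)\leq K\bigl(R^{-n}H(3R)\bigr)^{q/p},\]
which eliminates every surface quantity from \eqref{EstimGoneCaseB}, \eqref{EstimGtwoBone}, \eqref{EstimUinftyC} in favor of explicit powers of $R$ and $H(3R)$. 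Straightforward algebra then delivers $G_1(\tilde R)\leq CR^{-a}H^{b}(3R)$ where $a$ and $b$ are explicit rational functions of $(n,p,q,s,\eps,\delta_0)$ built from $\nu$, $\ell$, $k$, $\tilde\mu$, $\tilde\eta$ and $\omega$.

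I will split into three subcases: (i) $n\geq 3$ and $p+1\geq\lambda$, using the first branch of \eqref{EstimGoneCaseB} combined with the bounds on $I_\ell(\tilde R)$ and $I_k(\tilde R)$; (ii) $n\geq 3$ and $p+1<\lambda$, using the second branch of \eqref{EstimGoneCaseB} combined with the bounds on $I_\ell(\tilde R)$ and $\|U(\tilde R)\|_s$; (iii) $n=2$, where I first apply \eqref{EstimUinftyC} to control $\|U(\tilde R)\|_\infty$ through the bounds on $\|U(\tilde R)\|_s$ and $I_\ell(\tilde R)$, and then plug the outcome into \eqref{EstimGtwoBone}. In each subcase I will collect the powers of $H(3R)$ to read off the candidate exponent $b$, and the powers of $R$ to read off the candidate $-a$.

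The key verification is that $b<1$ and $a>0$ for an appropriate choice of $\eps,\e_0$. I will first do the computation in the scale-invariant limit $\eps\to 0$, $\delta_0\to 0$, $p=q=p_0$: in that limit the $R$-exponent collapses to the natural scaling dimension $n-2(q+1)/(p-1)$, and $b$ reduces to an expression coming from the Sobolev/interpolation chain on $\Sigma$ used in the proofs of Lemmas~\ref{Lemma3.1}--\ref{Lemma3.3}. The subcriticality $p_0<p^{**}$ from \eqref{Hypp0} together with the lower bound \eqref{hypsp} on $s$ are precisely what force both $b<1$ and $a>0$ strictly in this limit. Since $b$ and $-a$ depend continuously on the perturbation parameters, the strict inequalities persist for $\eps$ and $\e_0$ small enough, with dependence only on $n,p_0,s$.

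The main obstacle will be the bookkeeping in case (iii): the scale-invariant Sobolev embedding into $L^{p+1}(\Sigma)$ used in case (i) is no longer available in dimension two, so one must factor through $\|\cdot\|_\infty$ via \eqref{EstimUinftyC}, and the defect $\delta_0$ together with the gap $p-q$ enter the final exponents multiplicatively through $\omega$, $\tilde\mu$ and $\tilde\eta$. Checking that these perturbations can be uniformly absorbed into the strict margin provided by $p_0<p^{**}$ is the delicate step where the smallness of $\e_0$ is genuinely used. Once the three subcases are treated separately, the argument is technical but mechanical, and the compatibility of the resulting exponents is what determines the final values of $\eps$ and $\e_0$.
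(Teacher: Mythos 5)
Your proposal matches the paper's proof of Lemma~\ref{Lemma3.6} essentially step for step: the same split into the three subcases $n\ge3$ with $p+1\ge\lambda$, $n\ge3$ with $p+1<\lambda$, and $n=2$; the same substitution of the $\Gamma_i$-complement bounds from Lemma~\ref{Lemma3.5} into the estimates of Lemmas~\ref{Lemma3.1} and~\ref{Lemma3.3}; and the same passage to the limit $\eps,\delta_0\to0$, $p,q\to p_0$ followed by a continuity/openness argument to absorb the perturbations, with subcriticality and~\eqref{hypsp} supplying the strict inequalities $a_0>0$, $b_0<1$. One small imprecision: in the scale-invariant limit the $R$-exponent is $-a_0=\bigl(n-\tfrac{2(p_0+1)}{p_0-1}\bigr)(1-b_0)$, i.e.\ the natural scaling dimension multiplied by $(1-b_0)$, not the scaling dimension itself; since you separately track $b_0$ and only need the signs, this does not affect the argument.
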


\begin{proof}
 The proof involves only elementary but long calculations.
Recall that $\tilde\eta=\tilde\eta_\eps$ and $\tilde\mu=\tilde\mu_\eps$ are defined in \eqref{defDeltaEta}.
\smallskip

{\it Case 1: $n\ge 3$ and $p+1\geq \lambda$} (hence $n\geq 4$ by \eqref{DefGamma}).
We deduce from Lemmas~\ref{Lemma3.1} and \ref{Lemma3.5},  
\eqref{DefGammaOne}, \eqref{DefGammaTwo} and $\ell=1+\eps$ that
$$G_1( \tilde R)
\le CR^n\bigl[R^2 (R^{\tilde \mu-n} H^{\tilde\eta}(3R))^{(1-\nu)/\ell} (R^{-n}H(3R))^{\nu q/pk}\bigr]^{{q}+1},$$
where,
in view of \eqref{DefGamma}, \eqref{DefMu} and \eqref{DefNu}, 
$\nu=\frac{(n-3)(p+1)}{n-1}-1$.
Therefore, we have \eqref{EstimGtwoFeedB} with
$$\tilde a=\tilde a_\eps:=(q+1)\Bigl[\frac{(1-\nu)(n-\tilde\mu_\eps)}{1+\eps}+\frac{n\nu q}{p+1}-2-\frac{n}{q+1}\Bigr],$$
$$\tilde b=\tilde b_\eps:=(q+1)\Bigl[\frac{\nu q}{p+1}+\frac{(1-\nu)\tilde\eta}{1+\eps}\Bigr].$$
Now, as $\eps,\delta_0\to 0$ (hence $\omega\to s\alpha$) and $p, q\to p_0$
(hence $\alpha\to\alpha_0:=2/(p_0-1)$ and $\nu\to\nu_0:=\frac{(n-3)(p_0+1)}{n-1}-1$), 
we have
$$\tilde a_\eps\to a_0:=(p_0+1)\Bigl[(1-\nu_0)(n-\mu_0)+\frac{n\nu_0 p_0}{p_0+1}-2-\frac{n}{p_0+1}\Bigr],
\quad \mu_0=\frac{n-\alpha_0 s}{p_0+1-s},$$
$$\tilde b_\eps\to b_0:=(p_0+1)\Bigl[\frac{\nu_0 p_0}{p_0+1}+(1-\nu_0)\frac{p_0-s}{p_0+1-s}\Bigr],
\quad \eta_0=\frac{p_0-s}{p_0+1-s}.
$$
It thus suffices to check that $a_0>0$ and $b_0<1$. We have
$$\begin{aligned}
1-b_0
&=1-\nu_0 p_0-\frac{(1-\nu_0)(p_0+1)(p_0-s)}{p_0+1-s} \\ 
&=\frac{(p_0+1-s)(1-\nu_0 p_0)+(1-\nu_0)(p_0+1)(s- p_0)}{p_0+1-s}\\ 
&=\frac{(p_0-\nu_0)s+1-p_0^2}{p_0+1-s}=\frac{2(p_0+1)s/(n-1)+1-p_0^2}{p_0+1-s}\\
&=\frac{p_0+1}{p_0+1-s}\Bigl[\frac{2s}{n-1}+1-p_0\Bigr]>0.\\ 
\end{aligned}$$
On the other hand, after some computation, we observe that
\be{RelNdelta}
n-\mu_0=(n-\alpha_0(p_0+1))\eta_0+\alpha_0 p_0.
\ee
Consequently,
$$\begin{aligned}
\frac{a_0}{p_0+1}
&=(1-\nu_0)\bigl((n-\alpha_0(p_0+1))\eta_0+\alpha_0 p_0\bigr)+\frac{n\nu_0 p_0}{p_0+1}-2-\frac{n}{p_0+1}\\ 
&=\Bigl[\frac{2}{p_0-1}-\frac{n}{p_0+1}\Bigr](1-b_0)>0.
\end{aligned}$$

\bigskip

{\it Case 2: $n\ge 3$ and $p+1<\lambda$.}
We deduce from Lemmas~\ref{Lemma3.1} and \ref{Lemma3.5},  
\eqref{DefGammaThree} and \eqref{DefGammaTwo} that
$$G_1( \tilde R)
\leq CR^n\Bigl[R^{-\omega\nu/s}\bigl(R^{\tilde\mu+2(1+\eps)-n} H^{\tilde\eta}(3R)\bigr)^\frac{1-\nu}{1+\eps}\Bigr]^{q+1},$$
where $\nu$ is given by \eqref{DefNu}.
We deduce \eqref{EstimGtwoFeedB} with
$$\tilde a=\tilde a_\eps:=(q+1)\Bigl[\frac{\omega\nu}{s}
+{(1-\nu)(n-2(1+\eps)-\tilde\mu)\over 1+\eps}-\frac{n}{q+1}\Bigr],
\ \quad \tilde b=\tilde b_\eps:=\frac{(1-\nu)(q+1)\tilde\eta}{1+\eps}.$$
As $\eps,\delta_0\to 0$ (hence $\omega\to s\alpha$) and $p, q\to p_0$ (hence $\alpha\to\alpha_0$ and $\nu\to\nu_0$), 
we have
$$\tilde a_\eps\to a_0:=(p_0+1)\Bigl[\nu_0\alpha_0+(1-\nu_0)(n-2-\mu_0)-\frac{n}{p_0+1}\Bigr],
\quad \mu_0={n-\alpha_0 s\over p_0+1-s},$$
$$\tilde b_\eps\to b_0:=(1-\nu_0)(p_0+1)\eta_0,
\quad \eta_0=\frac{p_0-s}{p_0+1-s},$$
where (recalling $\lambda=(n-1)/(n-3)$), 
we have 
$$1-\nu_0=\begin{cases}
\displaystyle\frac{(p_0+1-s)\lambda}{(p_0+1)(\lambda-s)}, &\quad\hbox{ if $n\ge 4$,}\\ 
\noalign{\vskip 2mm}
\displaystyle\frac{p_0+1-s}{p_0+1},&\quad\hbox{ if $n=3$.}\\ 
\end{cases}
$$
It thus suffices to check that $a_0>0$ and $b_0<1$. 
Therefore, owing to \eqref{DefGamma} and \eqref{hypsp}, we have
$$
1-b_0
=1-\frac{(p_0-s)\lambda}{\lambda-s}=\frac{\lambda-1}{\lambda-s}\Bigl(s-\frac{\lambda(p_0-1)}{\lambda-1}\Bigr)
=\frac{\lambda-1}{\lambda-s}\Bigl(s-\frac{(n-1)(p_0-1)}{2}\Bigr)>0
$$
if $n\ge 4$ and $1-b_0=s-p_0+1>0$ if $n=3$.
On the other hand, after some computation, we observe that 
$$\displaystyle\frac{p_0+1-s}{p_0+1}(n-2-\mu_0-\alpha_0)=\Bigl(\frac{n}{p_0+1}-\alpha_0\Bigr)(p_0-s).$$
Therefore we get (replacing $\lambda/(\lambda-s)$ by $1$ when $n=3$)
$$\begin{aligned}
\frac{a_0}{p+1}
&=(1-\nu_0)(n-2-\mu_0-\alpha_0)+\alpha_0-\frac{n}{p_0+1}\\
&=\Bigl(\frac{n}{p_0+1}-\alpha_0\Bigr){(p_0-s)\lambda\over\lambda-s}+\alpha_0-\frac{n}{p_0+1}\\ 
&=\Bigl(\frac{2}{p_0-1}-\frac{n}{p_0+1}\Bigr)\Bigl(1-\frac{(p_0-s)\lambda}{\lambda-s}\Bigr)
=\Bigl[\frac{2}{p_0-1}-\frac{n}{p_0+1}\Bigr](1-b_0)>0.
\end{aligned}$$

\smallskip
{\it Case 3: $n=2$.} 
Since $s\le p$, it follows from \eqref{EstimUinftyC} in Lemma~\ref{Lemma3.3}, 
Lemma~\ref{Lemma3.5},  
\eqref{DefGammaThree} and \eqref{DefGammaTwo}  that
\be{UinftyTwodimA}
\|U(\tilde R)\|_\infty\leq C\Bigl(R^{2-\omega}\,\bigl(R^{\tilde\mu-2} H^{\tilde\eta}(3R)\bigr)^{1/(1+\eps)}\Bigr)^{1/(s+1)},
\ee
hence
$$G_1(\tilde R)\leq CR^{2-\omega}\Bigl(R^{2-\omega}\,\bigl(R^{\tilde\mu_\e-2} H^{\tilde\eta_\e}( 3R)\bigr)^{1/(1+\eps)}\Bigr)^{(q+1-s)/(s+1)}$$
by \eqref{EstimGtwoBone}. We deduce \eqref{EstimGtwoFeedB} with
$$a=a_\eps=\omega-2+\frac{q+1-s}{s+1}\Bigl(\omega-2+\frac{2-\tilde\mu_\e}{1+\eps}\Bigr), \qquad 
b=b_\eps=\frac{q+1-s}{(s+1)(1+\eps)}\,\tilde\eta_\e.$$
As $\eps,\delta\to 0$ (hence $\omega\to s\alpha$) and $p, q\to p_0$ (hence $\alpha\to\alpha_0$), we have
$$a_\eps\to a_0:=\alpha_0 s-2+\frac{p_0+1-s}{s+1}(\alpha_0 s-\mu_0), \qquad 
\mu_0=\frac{2-\alpha_0 s}{p_0+1-s},$$
$$b_\eps\to b_0:=\frac{p_0+1-s}{s+1}\,\eta_0=\frac{p_0-s}{s+1}.$$
It thus suffices to check that $a_0>0$ and $b_0<1$, which is true owing to \eqref{hypsp} and
$$\begin{aligned}
a_0&=\frac{(\alpha_0 s-2)(s+1)+\alpha_0 s(p_0+1-s)+\alpha_0 s-2}{s+1}\\
&=\frac{\bigl(\alpha_0(p_0+3)-2\bigr)s-4}{s+1}=\frac{4(\alpha_0 s-1)}{s+1}.\end{aligned}$$
\end{proof}

\medskip\goodbreak
{\bf Step 7.} {\it Feedback estimate for $G_2$.}
\smallskip

Similarly as for $G_1$ in Step 6, we shall prove the following feedback estimate for $G_2$.

\begin{lemma} \label{Lemma3.7}
If $\e_0=\e_0(n,p_0)>0$ is sufficiently small, 
then there exist numbers $\tilde a'>0, \tilde b'\in [0,1)$ and $\eps\in (0,1)$ in \eqref{DefEll}, such that
\be{EstimGtwoFeedC}
G_2( \tilde R)\leq CR^{-\tilde a'}H^{\tilde b'}( 3R),\quad R\geq R_0,
\ee
where $\tilde R$ is given by Lemma 3.5.
Moreover, $\tilde a', \tilde b', \eps$ depend only on $n,p$ and $s$.
\end{lemma}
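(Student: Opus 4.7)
The plan is to follow the same scheme as Lemma~\ref{Lemma3.6}, replacing the pointwise bound on $G_1$ by the corresponding bound on $G_2$ supplied by \eqref{EstimGtwo} when $n\ge 3$ and by \eqref{EstimGtwoB} when $n=2$. Given the radius $\tilde R\in(R,2R)$ furnished by Lemma~\ref{Lemma3.5}, we insert into these bounds the estimates $I_k^k(\tilde R)\le C(R^{-n}H(3R))^{q/p}$, $I_\ell^\ell(\tilde R)\le CR^{\tilde\mu-n}H^{\tilde\eta}(3R)$ and $\|U(\tilde R)\|_s\le CR^{-\omega/s}$, and (only when $n=2$) also $\sum_i\|u_i^{-\beta/2}D_xu_i(\tilde R)\|_2^2\le CR^{d-2}H^e(3R)$, coming respectively from $\tilde R\notin\Gamma_2,\Gamma_3,\Gamma_1,\Gamma_4$. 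This produces an inequality $G_2(\tilde R)\le CR^{-\tilde a'_\eps}H^{\tilde b'_\eps}(3R)$ with exponents $\tilde a'_\eps,\tilde b'_\eps$ depending explicitly on $n,p,q,s,\eps,\delta_0$ (and on $\beta$ when $n=2$); the conclusion will follow by continuity provided the limits $\tilde a'_0,\tilde b'_0$ as $\eps,\delta_0\to 0^+$ and $p,q\to p_0$ satisfy $\tilde a'_0>0$ and $\tilde b'_0<1$.

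For $n\ge 3$, using $k=(p+1)/p$, a direct computation gives the single formula
$$\tilde a'_\eps=\frac{2(1-\tau)(n-\tilde\mu)}{1+\eps}+\frac{2n\tau q}{p+1}-n-2,\qquad \tilde b'_\eps=\frac{2(1-\tau)\tilde\eta}{1+\eps}+\frac{2\tau q}{p+1}.$$
The sign conditions $\tilde a'_0>0$ and $\tilde b'_0<1$ are verified by elementary manipulations parallel to Cases~1--2 of Lemma~\ref{Lemma3.6}: $\tilde b'_0$ is a linear combination of $\eta_0$ and $p_0/(p_0+1)$ with weights $2(1-\tau_0)$ and $2\tau_0$, which can be controlled using the analogous bound $1-b_0>0$ already established there, while $\tilde a'_0$ is then handled by the identity \eqref{RelNdelta} together with the definition \eqref{DefTauOne} of $\tau$. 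Both inequalities ultimately rest on \eqref{hypsp}.

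For $n=2$, starting from \eqref{EstimGtwoB} one substitutes \eqref{UinftyTwodimA} to bound $\|U(\tilde R)\|_\infty$ (as in Case~3 of Lemma~\ref{Lemma3.6}), then the $\Gamma_4$- and $\Gamma_1$-estimates, and absorbs the subdominant term $C\|U(\tilde R)\|_s^2\le CR^{-2\omega/s}$ using \eqref{LowerEstimF}. A direct computation shows that, in the limit,
$$\tilde b'_0=1-\frac{\beta\,(2s+1-p_0)}{(s+1)(p_0+1-s)},$$
which is strictly less than $1$ for any $\beta>0$ since $2s+1-p_0>0$ by \eqref{hypsp} when $n=2$; the fixed choice of $\beta\in(0,1)$ with $\beta\le q+1-s$ made prior to the measure argument (e.g.\ $\beta=1/2$) is therefore admissible. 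The sign $\tilde a'_0>0$ is then verified by the same type of computation as in Case~3 of Lemma~\ref{Lemma3.6}, the condition again reducing to $\alpha_0 s>1$, i.e.\ \eqref{hypsp}.

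\textbf{Main obstacle:} The central technical point, as in Lemma~\ref{Lemma3.6}, is the simultaneous verification of the sign conditions $\tilde a'_0>0$ and $\tilde b'_0<1$ at the limit. The only genuinely new feature is the role of the parameter $\beta$ in the two-dimensional case: at $\beta=0$ one would have $\tilde b'_0=1$ exactly, so the delicate point is that $\tilde b'_\eps$ is an $O(\beta)$-perturbation of this critical value, whose correction has the favorable sign precisely thanks to \eqref{hypsp}. The bookkeeping of the simultaneous dependence on $(\eps,\delta_0,p,q,\beta)$ is heavier than for $G_1$, but introduces no new ideas beyond those already deployed in Lemma~\ref{Lemma3.6}.
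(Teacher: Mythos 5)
Your proposal follows exactly the paper's strategy: substituting the non-$\Gamma_i$ bounds from Lemma~\ref{Lemma3.5} into \eqref{EstimGtwo} when $n\ge3$ and into \eqref{EstimGtwoB} combined with \eqref{UinftyTwodimA} when $n=2$, and your limiting exponents $\tilde a'_0,\tilde b'_0$ (including the expression $\tilde b'_0=1-\beta(2s+1-p_0)/[(s+1)(p_0+1-s)]$ in two dimensions) coincide with the paper's computations, with the sign conditions reducing to \eqref{hypsp} in both cases. The only detail worth recording is that for $n=2$ the paper sets $\tilde a'_\eps=\min(\bar a'_\eps,2\alpha)$ to account for the $CR^{-2\omega/s}$ remainder you mention absorbing.
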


\begin{proof}
 {\it Case 1:} $n\geq 3$.
We deduce from Lemmas~\ref{Lemma3.1} and \ref{Lemma3.5},   
\eqref{DefGammaOne}, \eqref{DefGammaTwo} and $\ell=1+\eps$ that
$$G_2( \tilde R)\leq CR^{n+2}(R^{\tilde\mu-n} H^{\tilde\eta}(3R))^\frac{2(1-\tau)}{1+\eps} (R^{-n}H(3R))^{2\tau q/(p+1)},$$ 
where $\tau$ is defined in \eqref{DefTauOne} and $\tilde\eta=\tilde\eta_\eps, \tilde\mu=\tilde\mu_\eps$ are defined in \eqref{defDeltaEta}.
This yields \eqref{EstimGtwoFeedC} with
$$\tilde a'=\tilde a'_\eps=\frac{2(1-\tau)(n-\tilde\mu)}{1+\eps} +\frac{2n\tau q}{p+1}-(n+2),
\qquad \tilde b'=\tilde b'_\eps=\frac{2\tau q}{p+1}+\frac{2(1-\tau)\tilde\eta}{1+\eps},$$
where $\tau=(p+1)(n-3)/2(n-1)$ by \eqref{DefGamma}, \eqref{DefTauOne}.
As $\eps,\delta\to 0$ (hence $\omega\to s\alpha$) and $p, q\to p_0$ (hence $\alpha\to\alpha_0$), 
we have
$$\tilde a_\eps\to a'_0=2(1-\tau_0)(n-\mu_0)+\frac{2n\tau_0 p_0}{p_0+1}-(n+2),
\quad \mu_0=\frac{n-\alpha_0 s}{p_0+1-s},$$
$$\tilde b'_\eps\to b'_0=\frac{2\tau_0 p_0}{p_0+1}+2(1-\tau_0)\eta_0,
\quad\eta_0=\frac{p_0-s}{p_0+1-s},$$
where $\tau_0=(p_0+1)(n-3)/2(n-1)$.
It thus suffices to show that $a'_0>0$ and $b'_0<1$. 
We compute
$$\begin{aligned}1-b'_0
&=1-\frac{2\tau_0 p_0}{p_0+1}-\frac{2(1-\tau_0)(p_0-s)}{p_0+1-s}\\ 
&=\frac{(p_0+1-s)(1-(2\tau_0\frac{p_0}{p_0+1}))+2(1-\tau_0)(s-p_0)}{p_0+1-s}\\ 
&=\frac{(1-\frac{2\tau_0}{p_0+1})s+1-p_0}{p_0+1-s}=\frac{\frac{2s}{n-1}+1-p_0}{p_0+1-s}>0,
\end{aligned}$$
by assumption \eqref{hypsp}. On the other hand, using \eqref{RelNdelta}, we find that
$$\begin{aligned}
a'_0
&=2(1-\tau_0)\bigl[(n-\alpha_0(p_0+1))\eta_0+\alpha_0 p_0\bigr]+2n\tau_0\frac{p_0}{p_0+1}-(n+2)\\ 
&=\Bigl[\frac{2(p_0+1)}{p_0-1}-n\Bigr]\Bigl(1-2(1-\tau_0)\eta_0-2\tau_0\frac{p_0}{p_0+1}\Bigr)
=\Bigl[\frac{2(p_0+1)}{p_0-1}-n\Bigr](1-b'_0)>0.
\end{aligned}$$

 \smallskip
{\it Case 2:} $n=2$. 
By \eqref{EstimGtwoB} in Lemma~\ref{Lemma3.3}, Lemma~\ref{Lemma3.5}, \eqref{DefGammaThree} and \eqref{DefGammaFour}, we have
\be{defGtwoone}
\begin{aligned}
G_2( \tilde R)
&\leq CR^2\|U( \tilde R)\|_\infty^\beta\sum_{i=1}^m\|u_i^{-\beta/2}D_x u_i( \tilde R)\|^2_2
+C\|U( \tilde R)\|_s^2 \\ 
&\leq CR^d H^e( 3R)\,\|U( \tilde R)\|_\infty^\beta+CR^{-2\omega/s}=:G_{2,1}+CR^{-2\omega/s},
\end{aligned}
\ee
where $d,e$ are defined in \eqref{DefDE}.
 Then \eqref{UinftyTwodimA} implies
$$G_{2,1}\leq CR^d H^e(3R)\Bigl(R^{2-\omega}\,\bigl(R^{\tilde\mu-2} H^{\tilde\eta}(3R)\bigr)^{1/(1+\eps)}\Bigr)^{\beta/(s+1)}.$$
By \eqref{defGtwoone} and \eqref{LowerEstimF}, we deduce \eqref{EstimGtwoFeedC} with
$$\tilde a'=\tilde a'_\eps=\min(\bar a'_\eps,2\alpha),
\qquad \bar a'_\eps=-d+\frac{\beta}{s+1}\Bigl(\omega-2+\frac{2-\tilde\mu}{1+\eps}\Bigr)$$
and
$$\tilde b'=\tilde b'_\eps=e+\frac{\tilde\eta\beta}{(s+1)(1+\eps)}.$$
As $\eps,\delta\to 0$ (hence $\omega\to s\alpha$) and $p, q\to p_0$ (hence $\alpha\to\alpha_0$),
we have
$$\bar a'_\eps\to \bar a'_0:=-\frac{\beta(2-s\alpha_0)}{p_0+1-s}+\frac{\beta}{s+1}(s\alpha_0-\mu_0),
\quad \mu_0=\frac{2-\alpha_0 s}{p_0+1-s},$$
$$\tilde b'_\eps\to b'_0:=\frac{p_0+1-\beta-s}{p_0+1-s}+\frac{\eta_0\beta}{s+1},
\quad\eta_0=\frac{p_0-s}{p_0+1-s}.$$
It thus suffices to check that $\bar a'_0>0$ and $b'_0<1$. 
Using \eqref{hypsp}, we obtain
$$\beta^{-1}(1-b'_0)=\frac{1}{p_0+1-s}-\frac{p_0-s}{(p_0+1-s)(s+1)}
=\frac{2s-p_0+1}{(p_0+1-s)(s+1)}>0$$
and
$$\begin{aligned}\beta^{-1}\bar a'_0
&=\frac{\alpha_0 s-2}{p_0+1-s}+\frac{\alpha_0 s-\mu_0}{s+1}
={(\alpha_0 s-2)(s+1)+\alpha_0 s(p_0+1-s)+\alpha_0 s-2\over(p_0+1-s)(s+1)}\\ 
&=\frac{2(\alpha_0 s-2)-2s+\alpha_0 s(p_0+1)}{(s+1)(p_0+1-s)}
=\frac{4(\frac{2s}{p_0-1}-1)}{(s+1)(p_0+1-s)}>0.
\end{aligned}$$
This completes the proof of the Lemma.
\end{proof}

\medskip\goodbreak
{\bf Step 8.} {\it Conclusion.} Inequalities \eqref{LowerEstimF}, \eqref{controlFG}, 
Lemmas~\ref{Lemma3.6}--\ref{Lemma3.7}  
and the nondecreasing property of $H$ imply that
\be{Condhatb}
H(R)\leq C_0R^{-\hat a}H^{\hat b}\bigl(3R\bigr), \quad R\geq R_0,
\ee
for some constant $C_0>0$, with $\hat a=\min(a,\tilde a')>0$ and $\hat b=\max(b,\tilde b')\in [0,1)$.
It can be shown (see \cite{SouNHM} for details) that \eqref{Condhatb} entails an exponential lower bound on $H(R)$.
But the boundedness assumption on $U$ implies 
$H(R)=O(R^n)$ as $R>1$. This is a contradiction and Theorem~\ref{thm3} is proved.

\subsection{A priori estimates and proof of Theorem~\ref{thm1}} \label{subAEs1}

It is well known (see, e.g.,~\cite[Lemma 3.4]{QS-CMP}) that, under assumption \eqref{hypGrowthD}, any 
 bounded solution of \eqref{GenSystem}
satisfies
\eqref{BasicEstAnews} with $s=p$, $\delta_0=0$ (and $C$ depending on the bound of the solution).
Theorem~\ref{thm1} in the whole space case is then a direct consequence of Theorem~\ref{thm3}.

\smallskip
To prove Theorem~\ref{thm1} in the half-space case, we shall rely on the following a priori estimate,
whose proof is postponed to the next section.

\begin{proposition} \label{PropresD}
Let $1\le q<p$ and $\alpha=\frac{2}{p-1}$. We assume that $f:\R^N\to \R^N$ is continuous and satisfies
\be{hypGrowthD2}
 \xi\cdot f(U)\ge c|U|^p
\ee
and 
\be{Hypfa}
|f(U)|\le c_1(|U|^q+|U|^p),
\ee
 with some $\xi\in(0,\infty)^N$, $c, c_1>0$.
Let $U$ 
be a solution of \eqref{GenSystemB}
such that
\be{Hypua}
|U(x)|=O(\exp(|x|^{p+1-\eta})),\quad |x|\to\infty,
\ee
for some $\eta>0$.
There exists $\e_0\in(0,1)$ depending only on $p,\eta$ such that, 
for any $\e\in(0,\e_0)$, there holds
\be{Hypuab}
\int_{B_R^+} |U|^{\frac{p+1}{2}-\e}dx
\le CR^{n-k},\quad R\ge 1,\quad\hbox{ where } k:=\Bigl(\frac{p+1}{2}-\e\Bigr)\alpha-\frac{p-q}{p-1}(1+\eps\alpha).
\ee
\end{proposition}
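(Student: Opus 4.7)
Plan. My plan is to follow a Mitidieri--Pohozaev style rescaled test function argument, adapted to the half-space Dirichlet setting. First I would reduce to a scalar supersolution by setting $v := \sum_{i=1}^{N} \xi_i d_i u_i \ge 0$; summing the equations of \eqref{GenSystemB} weighted by the $\xi_i$ and invoking \eqref{hypGrowthD2} gives
\[
-\Delta v \;=\; \xi\cdot f(U) \;\ge\; c|U|^p \ \text{in } \R^n_+,
\qquad v=0 \ \text{on } \partial\R^n_+,
\]
together with the pointwise bound $v\le C|U|$. The key is then to multiply this inequality by a suitable nonnegative test function vanishing on the boundary hyperplane, integrate by parts twice with no boundary contributions, and close the resulting inequality via H\"older.

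Concretely, I would take $\phi_R(x):=x_n\,\eta^m(x/R)$, where $\eta$ is a standard radial cutoff (equal to $1$ on $B_1$, supported in $B_2$) and $m$ is a large integer. Because $v$ and $\phi_R$ both vanish on $\partial\R^n_+$ and $\phi_R$ has compact support, Green's identity produces no surface contribution and yields
\[
c \int_{\R^n_+} |U|^p \phi_R \;\le\; \int_{\R^n_+} v(-\Delta\phi_R) \;\le\; C\int_{\R^n_+} |U|\,|\Delta\phi_R|.
\]
Applying H\"older with exponents $p$ and $p'=p/(p-1)$,
\[
\int |U|\,|\Delta\phi_R| \;\le\; \Bigl(\int |U|^p\phi_R\Bigr)^{1/p}\Bigl(\int |\Delta\phi_R|^{p'}\phi_R^{1-p'}\Bigr)^{1/p'},
\]
and a direct computation based on $|\Delta\phi_R|\le CR^{-1}$ on the support would lead, after absorption of the LHS, to $\int_{B_R^+}|U|^p\phi_R\le CR^{n-2p/(p-1)}$. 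A subsequent H\"older interpolation against $|B_R^+|\le CR^n$ then converts this into the $L^s$ estimate with $s=(p+1)/2-\e$, matching \eqref{Hypuab} in the base case $q=p$.

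The correction term $-\frac{p-q}{p-1}(1+\e\alpha)$ in the exponent $k$ is expected to emerge from the upper bound \eqref{Hypfa} in the regime $|U|\le 1$, where $|f(U)|$ behaves like $|U|^q$ rather than $|U|^p$: splitting the H\"older estimate between the sublevel sets $\{|U|\le 1\}$ and $\{|U|>1\}$ and interpolating between the two growth rates should produce precisely this perturbation. The growth hypothesis \eqref{Hypua} would enter mainly to justify the manipulations on the unbounded domain (via a truncation/density argument) and to constrain $\e_0=\e_0(p,\eta)$ so that the H\"older exponents and the interpolation parameters remain in valid ranges.

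The principal technical obstacle is that, with the natural choice $\phi_R=x_n\eta^m$, the dual weight $\phi_R^{1-p'}=x_n^{-1/(p-1)}$ is integrable near $\partial\R^n_+$ only when $p>2$. For $p\in(1,2]$ a more delicate test function (such as $\chi(x_n)\eta^m$ with $\chi$ vanishing at $0$ to sufficiently high order, compensated by additional powers of $\eta$) seems required, and tracking the resulting scaling correction alongside the bookkeeping of the $p-q$ perturbation and of the dependence of $\e_0$ on both $p$ and $\eta$ is where the technical heart of the proof lies.
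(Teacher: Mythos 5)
Your reduction to a scalar supersolution $v=\xi\cdot U$ and the Mitidieri--Pohozaev estimate with test function $x_n\eta^m(x/R)$ do recover the paper's intermediate bound $\int_{B_R^+}|U|^p x_n\,dx\le CR^{n+1-p\alpha}$ (this is Proposition~\ref{PropresC}; note the missing $+1$ in your exponent, coming from the $x_n$ weight). However, the closing step --- ``a subsequent H\"older interpolation against $|B_R^+|\le CR^n$'' --- does not reach the target exponent $s=\frac{p+1}{2}-\e$. Writing $|U|^s=(|U|^p x_n)^{s/p}x_n^{-s/p}$ and applying H\"older, the dual factor is $\int_{B_R^+}x_n^{-s/(p-s)}\,dx$, which is finite only when $s/(p-s)<1$, i.e.\ $s<p/2$. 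Since $\frac{p+1}{2}-\e>\frac{p}{2}$ for small $\e$, the interpolation against Lebesgue measure breaks down precisely in the regime the proposition claims, and this is the genuine difficulty.

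The paper resolves this with a second, independent ingredient: Proposition~\ref{PropresA} (resting on the Green-function bound of Lemma~\ref{lemresB}) gives an estimate whose left-hand side carries a \emph{negative} power of $x_n$, namely
$$R^{\theta-1}\int_{B_R^+}|U|\,x_n^{-\theta}\,dx\le CR^{n+1-q\alpha}+CR^{-1}\int_{B_{(1+\delta)R}^+}|U|\,dx,\qquad \theta\in(0,1).$$
Interpolating this against the positive-weight bound $\int|U|^p x_n\,dx\le CR^{n+1-p\alpha}$ raises the reachable exponent to $s=\frac{p\theta+1}{\theta+1}$, which tends to $\frac{p+1}{2}$ as $\theta\to1^-$. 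The price is that the resulting bound is not closed: one obtains a recursive inequality of the form $H(R)\le C+C\,H((1+\delta)R)^{1/(p\theta+1)}$, and the growth hypothesis \eqref{Hypua} is then used \emph{crucially} (not merely to justify manipulations on an unbounded domain, as you suggest) to rule out super-exponential growth of $H$ and close the bootstrap; the constraint $\theta>1-\eta/p$, and hence $\e_0=\e_0(p,\eta)$, comes from matching the bootstrap growth rate against \eqref{Hypua}. Incidentally, your worry about the non-integrability of $\phi_R^{1-p'}\sim x_n^{-1/(p-1)}$ for $p\le 2$ is real in your formulation, but the paper avoids it altogether: in the proof of Proposition~\ref{PropresC} the weight $x_n$ is kept inside the H\"older step via $|\Delta(x_n\phi_R^\gamma)|\le CR^{-2}x_n\phi_R^{\gamma/p}$ and H\"older against the measure $x_n\,dx$, so no negative power of $x_n$ appears and the estimate holds for all $p>1$ without the refined test functions you propose.
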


Theorem~\ref{thm1} in the half-space case is a direct consequence of Theorem~\ref{thm3} and Proposition~\ref{PropresD}.
Indeed, by Proposition~\ref{PropresD}, estimate \eqref{BasicEstAnews} is true for any $s\in(1,(p+1)/2)$ 
with\footnote{Notice that if $p_0-\eps_0'\leq q\leq p\leq p_0+\eps_0'$ with $\eps_0'\leq\eps_0$
small enough, then $\delta_0\leq\eps_0$.} $\delta_0=\frac{p-q}{p-1}(1+((p+1)/2-s)\alpha)$.
Since $p<n/(n-2)$, 
condition \eqref{hypsp} is satisfied by taking $s$ close enough to $(p+1)/2$. 

\subsection{Proof of Proposition~\ref{PropresD}} \label{subAEs2}

In view of the proof we prepare the following two propositions.

\begin{proposition} \label{PropresA}
 Let $\alpha\in(0,1)$, $R>0$, $g\in C(\overline B_R^+;\R)$ and 
$u\in C^1(\overline B_R^+)\cap C^2(B_R^+)$ be a solution of
\be{InAA}
-\Delta u=g,\quad x\in B_R^+,\qquad \hbox{with $u=0$ on $B^0_R$.}
\ee
There exists a constant $C=C(n,\alpha)>0$ such that
$$R^{\alpha-1} \Bigl|\int_{B_R^+} u(x)x_n^{-\alpha}dx\Bigr|\le C\int_{B_R^+} |g(x)| x_n dx + C\int_{S_R^+} |u(x)| d\sigma_R.$$
\end{proposition}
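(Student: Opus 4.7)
The plan is to reduce to the case $R=1$ by scaling, then employ a duality argument with a carefully chosen test function $\phi$ that vanishes on the whole of $\partial B_1^+$. The substitution $v(y):=u(Ry)$, $h(y):=R^2 g(Ry)$ converts the problem on $B_R^+$ into the same problem on $B_1^+$, and a direct change-of-variable check shows that both sides of the target inequality are of order $R^{n-1}$; hence it suffices to prove the estimate for $R=1$. I will introduce $\phi$ as the (weak) solution of
$$-\Delta\phi = y_n^{-\alpha}\ \text{in } B_1^+,\qquad \phi=0 \ \text{on } \partial B_1^+,$$
which exists since $y_n^{-\alpha}\in L^p(B_1^+)$ for $p<1/\alpha$. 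To make the integrations by parts rigorous I would first work with smooth approximants $\phi_\eps$ corresponding to $\min(y_n^{-\alpha},1/\eps)$ and pass to the limit; using $u=0$ on $B_1^0$ and $\phi_\eps=0$ on $\partial B_1^+$, Green's identity applied to $(u,\phi_\eps)$ yields, in the limit,
$$\int_{B_1^+} u\, y_n^{-\alpha}\,dy = \int_{B_1^+} g\, \phi\, dy - \int_{S_1^+} u\, \partial_\nu \phi\, d\sigma.$$
The whole estimate then reduces to two pointwise bounds: $(\mathrm a)$ $0\le\phi(y)\le C y_n$ on $B_1^+$, and $(\mathrm b)$ $|\partial_\nu\phi|\le C$ on $S_1^+$.

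Estimate $(\mathrm a)$ follows from maximum principle comparison with the explicit supersolution $\Phi(y):=(y_n-y_n^{2-\alpha})/[(2-\alpha)(1-\alpha)]$, which is nonnegative on $B_1^+$ (since $y_n\in[0,1]$ implies $y_n\ge y_n^{2-\alpha}$), satisfies $-\Delta\Phi=y_n^{-\alpha}=-\Delta\phi$ (because $\Delta y_n=0$ and $\Delta y_n^{2-\alpha}=(2-\alpha)(1-\alpha)y_n^{-\alpha}$), and dominates $\phi$ on $\partial B_1^+$; the maximum principle then gives $\phi\le\Phi\le Cy_n$, while $\phi\ge 0$ is immediate.

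The main obstacle is estimate $(\mathrm b)$: no single explicit barrier vanishes on $S_1^+$ while simultaneously dominating $\phi$ near the singular set $B_1^0$, so I handle it by rescaling combined with boundary Schauder estimates. For $y_0\in S_1^+$ with $t:=(y_0)_n\in(0,1]$, set $\tilde\phi(z):=\phi(y_0+(t/2)z)$ on $\widetilde\Omega:=\{z:|z|<1,\,y_0+(t/2)z\in B_1^+\}$. Since $y_n\in[t/2,3t/2]$ on $\widetilde\Omega$, the rescaled source $(t/2)^2 y_n^{-\alpha}$ is smooth and uniformly bounded by $Ct^{2-\alpha}$, while $\|\tilde\phi\|_\infty\le Ct$ by $(\mathrm a)$. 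The boundary piece $\{y_0+(t/2)z\in S_1^+\}$ is a smooth hypersurface through $z=0$ whose second fundamental form is of order $t$, so its geometry is uniformly regular as $t\to 0$, flattening to the tangent hyperplane of $S_1^+$ at $y_0$. Standard boundary Schauder estimates on a fixed subregion of $\widetilde\Omega$ then give $|\nabla\tilde\phi(0)|\le C(t+t^{2-\alpha})\le Ct$, and rescaling back yields $|\nabla\phi(y_0)|=(2/t)|\nabla\tilde\phi(0)|\le C$ uniformly in $t$. Combining $(\mathrm a)$ and $(\mathrm b)$ in the Green's identity display concludes the case $R=1$, and undoing the scaling gives the proposition.
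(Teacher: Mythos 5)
Your proof has the same overall architecture as the paper's: reduce to $R=1$ by scaling, introduce the dual problem $-\Delta\phi = y_n^{-\alpha}$ with $\phi=0$ on $\partial B_1^+$ (the paper regularizes with $(x_n+\eps)^{-\alpha}$ rather than a truncation, a cosmetic difference), prove the barrier bound $0\le\phi\le Cy_n$ via the explicit supersolution $(y_n - y_n^{2-\alpha})/[(2-\alpha)(1-\alpha)]$, and close with Green's identity --- all of which coincides with the paper's Lemma~\ref{lemresB} and the proof of Proposition~\ref{PropresA}. The genuine divergence is the boundary-gradient bound $|\partial_\nu\phi|\le C$ on $S_1^+$. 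The paper dominates the regularized solution by a solution $W_\eps$ on the full ball $B_1$, writes $\nabla W_\eps$ via the Dirichlet Green function, and invokes Davies' kernel estimate $G(z,y)\le C|z-y|^{-n}d(z)d(y)$, reducing the claim to the explicit one-dimensional integral $\int_0^1 s^{-\alpha}|\log|x_n-s||\,ds$. You instead zoom in by factor $t/2$ near a point $y_0\in S_1^+$ at height $t$, note that the rescaled source is $O(t^{2-\alpha})$ while the sup-norm of the rescaled $\phi$ is $O(t)$ by the first barrier, and apply boundary Schauder to get a gradient bound $O(t)$ at the origin, hence $|\nabla\phi(y_0)|=O(1)$ after unscaling (using $2-\alpha>1$). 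This is correct and softer, avoiding the Green function entirely; the trade-off is that you now need the Schauder constant to be uniform as $t\to0$, which holds because the rescaled sphere piece of the boundary has curvature $O(t)$ (hence uniformly $C^{2,\gamma}$) and one works on $\{|z|<1/2\}$ away from the corner where the artificial ball $\{|z|=1\}$ meets it --- that uniformity is the load-bearing point of your version and should be stated explicitly rather than folded into ``standard boundary Schauder estimates.'' Each approach has its merits: yours is shorter and more modular, relying only on local regularity theory; the paper's is more self-contained and makes the dependence on $\alpha$ transparent through an elementary computation.
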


\begin{proposition} \label{PropresC}
Let $p>1$, $c, R>0$, set $\alpha=2/(p-1)$ and let
$0\le v\in C^1(\overline B_{2R}^+)\cap C^2(B_{2R}^+)$ satisfy
\be{InD}
-\Delta v\ge cv^p,\quad x\in B_{2R}^+.
\ee
There exists a constant $C=C(n,c)>0$ such that 
$$\int_{B_R^+} v^px_n dx \le CR^{n+1-p\alpha}.$$
\end{proposition}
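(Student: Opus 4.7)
The plan is to carry out a direct test-function argument with a multiplier that incorporates the weight $x_n$ in a way that automatically handles the boundary of $B_{2R}^+$. Specifically, I would take $\eta(x)=x_n(4R^2-|x|^2)^s$ with exponent $s=2p'=2p/(p-1)=p\alpha$, which is nonnegative on $B_{2R}^+$ and, since $s>2$, lies in $C^2(\overline{B_{2R}^+})$. Crucially, $\eta$ vanishes on all of $\partial B_{2R}^+$: on $S_{2R}^+$ because $(4R^2-|x|^2)^s$ and its gradient vanish there (as $s>1$), and on $B_{2R}^0$ because $x_n=0$.

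Multiplying $-\Delta v\ge cv^p$ by $\eta\ge 0$ and integrating by parts twice on $B_{2R}^+$, the boundary contributions on $S_{2R}^+$ vanish outright. On $B_{2R}^0$, the term $\eta\,\partial_\nu v$ vanishes since $\eta=0$, while the term $v\,\partial_\nu\eta$ equals $-v(4R^2-|x'|^2)^s\le 0$ because $v\ge 0$ and $\partial_\nu=-\partial_n$ there. This sign is favorable, and discarding it yields
\[
c\int_{B_{2R}^+} v^p\eta\, dx \;\le\; \int_{B_{2R}^+} v\,|\Delta\eta|\,dx.
\]

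A direct computation with $\rho:=4R^2-|x|^2$ gives $\Delta(x_n\rho^s) = -2s(n+2)\,x_n\rho^{s-1} + 4s(s-1)\,x_n\rho^{s-2}|x|^2$, so using $\rho+|x|^2=4R^2$ one obtains $|\Delta\eta|\le C(n,p)\,R^2\, x_n\rho^{s-2}$. Applying H\"older's inequality with exponents $p$ and $p'$,
\[
\int v\cdot x_n\rho^{s-2}\,dx \;\le\; \Bigl(\int v^p x_n\rho^s\,dx\Bigr)^{1/p}\Bigl(\int x_n^{\,p'-p'/p}\rho^{\,p'(s-2-s/p)}\,dx\Bigr)^{1/p'}.
\]
The choice $s=2p'$ makes $s-2-s/p=0$, so the dual exponent of $\rho$ is exactly zero and the exponent of $x_n$ reduces to $1$; hence the dual integral is just $\int_{B_{2R}^+} x_n\,dx\le CR^{n+1}$.

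Setting $X:=\int_{B_{2R}^+} v^p\eta$, these steps combine to $cX\le CR^{2+(n+1)/p'}X^{1/p}$, hence $X\le CR^{2p'+n+1}$. On $B_R^+$ we have $\rho\ge 3R^2$, so $\eta\ge C R^{2s}x_n$, which yields the desired bound $\int_{B_R^+} v^p x_n\,dx\le CR^{-2s}X=CR^{n+1-2p'}=CR^{n+1-p\alpha}$. The central difficulty, which the choice of $\eta$ precisely resolves, is the boundary behavior on $B_{2R}^0$: a plain cutoff $\phi^s$ would produce an uncontrolled term $-\int_{B_{2R}^0}\phi^s\partial_\nu v\,d\sigma$ of unknown sign, whereas a cutoff vanishing near $\{x_n=0\}$ would force the dual H\"older integral to involve $x_n^{-1/(p-1)}$, which fails to be integrable as soon as $p\le 2$. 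Folding the factor $x_n$ into the test function itself simultaneously produces a sign-favorable boundary contribution on $\{x_n=0\}$ and keeps the dual weight integrable for every $p>1$.
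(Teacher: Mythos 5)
Your proof is correct and takes essentially the same approach as the paper's: both multiply the inequality by a test function of the form $x_n\cdot(\text{cutoff})^{2p/(p-1)}$, exploit the favorable sign of $\partial_\nu\bigl(x_n\cdot\text{cutoff}\bigr)$ on $\{x_n=0\}$, and close via H\"older with exponents $p,\,p'$ so that the dual weight collapses to $x_n$. The only (cosmetic) difference is the choice of cutoff: the paper takes a power $\phi_R^\gamma$ of a smooth radial bump and needs the observation $|\varphi'(s)|\le Cs$ to absorb the $x_n/|x|$ factor arising from the cross term $2e_n\cdot\nabla(\phi_R^\gamma)$, whereas your polynomial $(4R^2-|x|^2)^s$ has a gradient automatically proportional to $x$ and so avoids that minor step.
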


The proof of Proposition~\ref{PropresA} relies on the following lemma.

\begin{lemma} \label{lemresB}
Let $\e>0$, $\alpha\in(0,1)$ and let $w_\e\in C^1(\overline B_1^+)\cap C^2(\overline B_1^+\setminus\Sigma_1)$ 
be the unique solution of the auxiliary problem 
$$-\Delta w_\e= (x_n+\e)^{-\alpha},\quad x\in B_1^+,\qquad\hbox{with $w_\e=0$ on $\partial B_1^+$.}$$
We have
\be{InA}
w_\e(x)\le Cx_n,\quad x\in B_1^+,
\ee
\be{InB}
|\partial_\nu w_\e(x)|\le C,\quad x\in S_1^+,
\ee
with $C>0$ independent of $\e$.
\end{lemma}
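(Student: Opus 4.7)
The plan is to prove (i) by an explicit one-dimensional barrier, and (ii) by a localization and rescaling argument combined with standard boundary elliptic regularity.

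For (i), I would set $d := [(2-\alpha)(1-\alpha)]^{-1}$ and define $\bar w(x) := d(x_n - x_n^{2-\alpha})$. A direct computation shows
\[
-\Delta \bar w = d(2-\alpha)(1-\alpha)\,x_n^{-\alpha} = x_n^{-\alpha} \ge (x_n+\e)^{-\alpha} \quad\hbox{on }B_1^+.
\]
Since $x_n \in [0,1]$ on $\overline{B_1^+}$, one has $\bar w \ge 0$ on $\partial B_1^+$: it vanishes on $B_1^0$ and is $\ge 0$ on $S_1^+$ because $x_n^{1-\alpha} \le 1$. Applying the maximum principle to $\bar w - w_\e$ then yields $w_\e \le \bar w \le d\,x_n$ on $B_1^+$, which is \eqref{InA} with $C=d$ depending only on $\alpha$.

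For (ii), first observe that $w_\e \ge 0$ in $B_1^+$ by the maximum principle; since $w_\e$ vanishes on the smooth surface $S_1^+$, its tangential derivatives vanish there, so $|\partial_\nu w_\e(x^*)| = |\nabla w_\e(x^*)|$ for every $x^* \in S_1^+$. Fix $x^* \in S_1^+$, let $h := x_n^* \in (0,1]$, and set $r := h/4$. Then $B_r(x^*) \subset \{x_n > 0\}$ (in fact $x_n \ge 3h/4$ there), so $(x_n+\e)^{-\alpha} \le C_1 h^{-\alpha}$ on $B_r(x^*)$, while part (i) gives $w_\e \le 2dh$ on the same set. I would rescale by setting $\tilde w(\tilde y) := w_\e(x^* + r\tilde y)$ on $\tilde\Omega := B_1(0) \cap \{x^* + r\tilde y \in B_1\}$, obtaining
\[
\|\tilde w\|_{L^\infty(\tilde\Omega)} \le 2dh, \qquad \|\Delta \tilde w\|_{L^\infty(\tilde\Omega)} \le C_1 r^2 h^{-\alpha} \le C_2 h^{2-\alpha} \le C_2 h
\]
(using $h \le 1$, $\alpha < 1$), and $\tilde w = 0$ on the curved portion of $\partial\tilde\Omega$ passing through the origin. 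Standard boundary $W^{2,p}$-estimates with $p>n$ applied at $\tilde y = 0$ then yield
\[
|\nabla_{\tilde y}\tilde w(0)| \le C_3(\|\tilde w\|_\infty + \|\Delta\tilde w\|_\infty) \le C_4 h,
\]
and unscaling gives $|\nabla w_\e(x^*)| = r^{-1}|\nabla_{\tilde y}\tilde w(0)| \le 4C_4$, uniformly in $x^*$ and $\e$.

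The delicate point is ensuring that the boundary regularity constant $C_3$ is genuinely uniform in $h$ as $x^*$ approaches the singular corner $\{|x|=1,\, x_n=0\}$. This is precisely what the rescaling buys: the rescaled boundary is a portion of a sphere of curvature $r \le 1/4$, so it becomes \emph{flatter} as $h \downarrow 0$, and its $C^\infty$ parametrization has norms that are bounded (in fact improving) as $h$ decreases. Hence the Schauder/$W^{2,p}$ constants depend only on $n$ and $\alpha$, completing the proof.
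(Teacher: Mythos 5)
Your proof of estimate \eqref{InA} is identical to the paper's: the same explicit one-dimensional barrier $\bar w(x)=d(x_n-x_n^{2-\alpha})$ with $d=[(2-\alpha)(1-\alpha)]^{-1}$, and a comparison via the maximum principle.

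For estimate \eqref{InB}, however, you take a genuinely different and equally valid route. The paper first dominates $w_\e$ by the solution $W_\e$ of $-\Delta W_\e=(x_n+\e)^{-\alpha}\chi_{B_1^+}$ on the \emph{full} ball $B_1$, deduces $|\partial_\nu w_\e|\le|\partial_\nu W_\e|$ on $S_1^+$, and then bounds $\partial_\nu W_\e$ via the Green representation together with Davies' pointwise bound $G(z,y)\le C|z-y|^{-n}d(z)d(y)$, reducing matters to a direct (and somewhat delicate) computation of a singular integral. Your argument instead localizes at each $x^*\in S_1^+$ with $h=x^*_n$, rescales by $r=h/4$ so that part (i) and the pointwise bound on the right-hand side both produce smallness of order $h$ on the rescaled domain, and invokes boundary $W^{2,p}$ (or Schauder) estimates whose constant is uniform because the rescaled boundary is a sphere of radius $1/r\ge 4$ with curvature $\le 1/4$ (flattening as $h\downarrow 0$). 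Unscaling costs exactly a factor $r^{-1}$, cancelling the $h$. Both arguments are complete and correct; the paper's is self-contained modulo the Green function bound and yields the estimate by a concrete integral computation, whereas your local rescaling argument is more structural, avoids the explicit integral, and would adapt more readily to variable coefficients or other domains. One small presentational point: the identity $|\partial_\nu w_\e(x^*)|=|\nabla w_\e(x^*)|$ follows simply from $w_\e\equiv 0$ on $S_1^+$ (tangential derivative vanishes) and the regularity $w_\e\in C^1$ up to $S_1^+$ away from the corner; the sign $w_\e\ge 0$ is used only, as you say, to turn the one-sided bound \eqref{InA} into a two-sided $L^\infty$ bound for the rescaled function.
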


\begin{proof}
$\bullet$ Proof of \eqref{InA}. This follows from the maximum principle, noting that $\overline w(x):=c(x_n-x_n^{2-\alpha})$
with $c:=((2-\alpha)(1-\alpha))^{-1}$
satisfies $\overline w\ge 0$ on $\partial B_1^+$ and
$$-\Delta\overline w =c(2-\alpha)(1-\alpha) x_n^{-\alpha}=x_n^{-\alpha},\quad x_n>0.$$

$\bullet$ Proof of \eqref{InB}. We first note that $w_\e\le W_\e$ in $B_1^+$, where $W_\e\in W^{2,q}(B_1)$
($1<q<\infty$) is the (strong) solution of 
$$-\Delta W_\e= (x_n+\e)^{-\alpha}\xi_{B_1^+},\quad x\in B_1,\qquad\hbox{with $W_\e=0$ on $\partial B_1$,}$$
hence
\be{InC}
|\partial_\nu w_\e(x)|\le |\partial_\nu W_\e(x)|,\quad x\in S_1^+.
\ee
We claim that
\be{EstWeps}
|\partial_\nu W_\e(x)|\le C,\quad x\in S_1^+,
\ee
which will prove the lemma.

To show \eqref{EstWeps} we use the representation formula
$$\nabla W_\e=\int_{B_1^+} \nabla_x G(x,y) (y_n+\e)^{-\alpha}dy,$$
where $G$ is the Dirichlet Green kernel of $B_1$. 
Let $x\in S_1^+$. It follows that 
$$\partial_\nu W_\e(x)=\int_{B_1^+} \nu(x)\cdot \nabla_xG(x,y) (y_n+\e)^{-\alpha}dy.$$
Using the estimate
$G(z,y)\le C|z-y|^{-n}d(z)d(y)$ (where $d(x)=\hbox{\rm dist}(x,\partial B_1)$, see \cite[Theorem 5]{Dav}), we have
$$|\nu(x)\cdot \nabla_xG(x,y)|=\lim_{z\to x}\frac{G(z,y)}{d(z)}\le C|x-y|^{-n}d(y)$$
hence, writing $x=(x',x_n)$ and using $d(y)\le |x-y|$,
$$\begin{aligned}
|\partial_\nu W_\e(x)|
&\le C\int_{B_1^+} |x-y|^{-n}d(y) (y_n+\e)^{-\alpha}dy
\le C\int_{B_1^+} |x-y|^{1-n} y_n^{-\alpha}dy\\ 
&\le C\int_0^1  s^{-\alpha}\Bigl(\int_{|y'|\le 1} (|x'-y'|^2+|x_n-s|^2)^{(1-n)/2}dy'\Bigr)ds \\ 
&\equiv C\int_0^1  s^{-\alpha}J(x',|x_n-s|)ds.
\end{aligned}$$
For all $h\in(0,1]$, by the change of variable $\xi=h\zeta$ ($\xi,\zeta\in R^{n-1}$), we have
$$
J(x',h)\le \int_{|\xi|\le 2} (|\xi|^2+h^2)^{(1-n)/2}d\xi 
= \int_{|\zeta|\le 2/h} (\zeta^2+1)^{(1-n)/2} d\zeta,
$$
 hence
$$
J(x',h)\le
\begin{cases}
C,&\hbox{if $n=1$,}\\
\noalign{\vskip 1mm}
C\int_0^{2/h} (r^2+1)^{(1-n)/2} r^{n-2}dr\le C(1+|\log h|), &\hbox{if $n\ge 2$}
\end{cases}
$$
where, in the second case, we used 
$$(r^2+1)^{(1-n)/2} r^{n-2}\le (r^2+1)^{\frac{1-n}{2}+\frac{n-2}{2}}  =(r^2+1)^{-1/2}\le  (r^2+1)^{-1} (r+1).$$
 We thus get (for any $n\ge 1$)
$$
|\partial_\nu W_\e(x)|\le C+\int_0^1  s^{-\alpha}|\log |x_n-s||ds.
$$
Now,  for $0<x_n<1$, we have
$$\begin{aligned}
\int_0^1  s^{-\alpha}|\log |x_n-s||ds
&=\int_0^{x_n/2}  s^{-\alpha} |\log |x_n-s|| ds+\int_{x_n/2}^1  s^{-\alpha} |\log |x_n-s|| ds \\ 
&\le C |\log (x_n/2)|x_n^{1-\alpha} +\int_{x_n/2}^1  |x_n-s|^{-\alpha} |\log |x_n-s|| ds \\ 
&\le C |\log (x_n/2)|x_n^{1-\alpha} +2\int_0^1  t^{-\alpha} |\log t| dt \le C,
\end{aligned}
$$
 hence \eqref{EstWeps}, and we conclude that \eqref{InB} holds. 
\end{proof}

\begin{proof}[Proof of Proposition~\ref{PropresA}]
It suffices to prove the estimate for $R=1$. Indeed, 
applying the estimate to $\tilde u(x)=u(Rx)$, $\tilde g(x)=R^2g(Rx)$ for $x\in B_1^+$, we then have
$$\int_{B_1^+} u(Rx) x_n^{-\alpha}dx\le CR^2\int_{B_1^+} g(Rx) x_n dx + C\int_{S_1^+} u(Rx) d\sigma_1$$
hence, with the change of variable $y=Rx$,
$$R^{\alpha-n}\int_{B_R^+} u(y) y_n^{-\alpha}dy\le CR^{1-n}\int_{B_R^+} g(y) y_n dy + CR^{1-n}\int_{S_R^+} u(y) d\sigma_R,$$
which yields the estimate in the general case.

Let us thus assume $R=1$.
Let $\e>0$. Multiplying \eqref{InAA} with $w_\e$ and integrating by parts, we have
$$\int_{B_1^+} u(x_n+\e)^{-\alpha} dx=\int_{B_1^+} u(-\Delta w_\e)dx
=\int_{B_1^+} w_\e gdx+\int_{\partial B_1^+} \bigl(w_\e\partial_\nu u-u\partial_\nu w_\e\bigr)d\sigma.$$
Using $u=w_\e=0$ on  $B^0_1$,
$w_\e=0$ on $S_1^+$, and estimates \eqref{InA}, \eqref{InB}, we obtain
$$\Bigl|\int_{B_1^+} u(x_n+\e)^{-\alpha} dx\Bigr|=\Bigl|\int_{B_1^+} w_\e gdx-\int_{S_1^+}u\partial_\nu w_\e d\sigma\Bigr|
\le C\int_{B_1^+} |g|x_n dx+C\int_{S_1^+}|u| d\sigma.$$
Passing to the limit $\e\to 0$ by dominated convergence, the desired conclusion follows.
\end{proof}

\begin{proof}[Proof of Proposition~\ref{PropresC}]
Let $\varphi\in C^\infty_0(\R)$ be radially symmetric, nonincreasing for $r>0$, with $\varphi=1$ on $[0,1]$ and $\varphi=0$ on $[2,\infty)$.
Set $\phi_R(x)=\varphi(|x|/R)$ and $\gamma=2p/(p-1)$.
We compute
$$\begin{aligned}
\nabla(\phi^\gamma_R)&=\gamma R^{-1}\phi_R^{\gamma-1}\textstyle\frac{x}{|x|}\varphi'(|x|/R),\\ 
|\Delta(\phi^\gamma_R)| &=\gamma R^{-2}\bigl|\phi^{\gamma-1}_R \varphi''(x/R)+(\gamma-1)\phi^{\gamma-2}_R{\varphi'}^2(x/R)\\
&\qquad \phi_R^{\gamma-1}(n-1)(R/|x|)\varphi'(|x|/R) \bigr|
\le CR^{-2}\phi^{\gamma-2}_R, \\ 
\Delta\bigl(x_n\phi^\gamma_R\bigr)
&=x_n\Delta(\phi^\gamma_R)+2e_n\cdot\nabla(\phi^\gamma_R)
=x_n\Delta(\phi^\gamma_R)+2\gamma R^{-1}\phi_R^{\gamma-1}\textstyle\frac{x_n}{|x|}\varphi'(|x|/R).
\end{aligned}$$
Using $|\varphi'(s)|\le Cs$ for all $s\ge 0$ (owing to $\varphi'(0)=0$ and the boundedness of $\varphi''$), 
it follows that
$$|\Delta(x_n\phi^\gamma_R)|\le CR^{-2}x_n\phi^{\gamma-2}_R=CR^{-2}x_n\phi^{\gamma/p}_R.$$
Now multiplying \eqref{InD} with $x_n\phi_R^\gamma$, integrating by parts
and using $\partial_\nu\bigl(\phi_R^\gamma(x)x_n)\bigr)\le 0$ on $B^0_{2R}$,
we have
$$\begin{aligned}
c\int_{B_{2R}^+} v^p\phi_R^\gamma x_n dx
&\le -\int_{B_{2R}^+}\phi_R^\gamma x_n \Delta v dx \\ 
&=-\int_{B_{2R}^+} \Delta(\phi_R^\gamma x_n)vdx
+\int_{B^0_{2R}}
v\partial_\nu\bigl(\phi_R^\gamma(x)x_n)\bigr)d\sigma  \\ 
&\le CR^{-2}\int_{B_{2R}^+}v\phi^\frac{\gamma}{p}_R x_ndx 
\le CR^{-2}\Bigl(\int_{B_{2R}^+}v^p\phi^\gamma_R x_ndx\Bigr)^\frac{1}{p}
\Bigl(\int_{B_{2R}^+}x_ndx\Bigr)^\frac{p-1}{p},
\end{aligned}$$
hence
$$\int_{B_{2R}^+} v^p\phi_R^\gamma x_n dx\le CR^{n+1-2p/(p-1)},$$
and the estimate follows.
\end{proof}

\begin{proof}[Proof of Proposition~\ref{PropresD}]
Fix any $\theta\in(0,1)$ and $\delta>0$.  In this proof $c, C$ denote generic positive contants 
depending only on $f$ and $\theta, \delta$.

\smallskip

{\bf Step 1.} We claim that, for all $R\ge 1$,
\be{interpaa}
\int_{B_R^+} |U|^\frac{p\theta+1}{\theta+1}dx \le CR^{n-{(p\theta+q)\alpha-2\over\theta+1}}+ 
CR^{\frac{p\theta}{p\theta+1}(n-{p\theta+1\over\theta+1}\alpha)}\Bigl(\int_{B_{(1+\delta)R}^+} |U|^\frac{p\theta+1}{\theta+1}dx\Bigr)^\frac{1}{p\theta+1}.
\ee

Applying Proposition~\ref{PropresC}
 to $v=\xi\cdot U$, which satisfies $-\Delta v=\xi\cdot f(U)\ge cv^p$, we have
\be{interpa}
\int_{B_R^+} |U|^px_n dx \le CR^{n+1-p\alpha},\quad R\ge 1.
\ee
By Proposition~\ref{PropresA}, for all $r>1$, using 
$$\int_{B_r^+} |U|^qx_n dx\le Cr^{(n+1)(1-(q/p))}\Bigl(\int_{B_r^+} |U|^px_n dx\Bigr)^{q/p}\le Cr^{n+1-q\alpha},$$
 and \eqref{interpa} we get
$$\begin{aligned}
r^{\theta-1}\int_{B_r^+} |U|x_n^{-\theta}dx
&\le C\int_{B_r^+} (|U|^q+|U|^p)x_n dx + C\int_{S_r^+} |U| d\sigma_r\\
&\le Cr^{n+1-q\alpha}+ C\int_{S_r^+} |U| d\sigma_r.
\end{aligned}$$
Integrating in $r$ over $(R,(1+\delta)R)$ and dividing by $\delta R$, we obtain
\be{interpb}
\begin{aligned}
R^{\theta-1}\int_{B_R^+} |U|x_n^{-\theta}dx
\le CR^{n+1-q\alpha}+ CR^{-1}\int_{B_{(1+\delta)R}^+} |U| dx.
\end{aligned}
\ee
We next interpolate between \eqref{interpa} and \eqref{interpb}, to write
$$\begin{aligned}
\int_{B_R^+} |U|^\frac{p\theta+1}{\theta+1}dx
&=\int_{B_R^+} \bigl(|U|^px_n\bigr)^{\theta\over\theta+1}
\bigl(|U|x_n^{-\theta}\bigr)^{1\over\theta+1}dx\\ 
&\le CR^{(n+1-p\alpha){\theta\over\theta+1}}\Bigl(R^{n+2-\theta-q\alpha}+ R^{-\theta}\int_{B_{(1+\delta)R}^+} |U| dx\Bigr)^{1\over\theta+1}.
\end{aligned}$$
Using
$$\int_{B_{(1+\delta)R}^+} |U| dx\le CR^\frac{n(p-1)\theta}{p\theta+1}\Bigl(\int_{B_{(1+\delta)R}^+} |U|^\frac{p\theta+1}{\theta+1}dx\Bigr)^{\theta+1\over p\theta+1},$$
it follows that
$$\begin{aligned}
&\int_{B_R^+} |U|^\frac{p\theta+1}{\theta+1}dx \\ 
&\le CR^{(n+1-p\alpha){\theta\over\theta+1}+{n+2-\theta-q\alpha\over\theta+1}}+ 
CR^{(n-p\alpha+{n(p-1)\over p\theta+1}){\theta\over\theta+1}}\Bigl(\int_{B_{(1+\delta)R}^+} |U|^\frac{p\theta+1}{\theta+1}dx\Bigr)^\frac{1}{p\theta+1}\\ 
\end{aligned}$$
hence \eqref{interpaa}.

\smallskip

{\bf Step 2.} Set
$$H(R):=R^{-\ell}\int_{B_R^+}  |U|^\frac{p\theta+1}{\theta+1}dx,\qquad \ell:=n-{(p\theta+q)\alpha-2\over\theta+1}.$$
If follows from \eqref{interpaa} that 
$$H(R)\le C+CR^{\frac{p\theta}{p\theta+1}(n-{p\theta+1\over\theta+1}\alpha)-\ell}\bigl(R^\ell H((1+\delta)R)\bigr)^\frac{1}{p\theta+1},
\quad R\ge 1.
$$
Since
$$\begin{aligned}
\textstyle\frac{p\theta}{p\theta+1}(n-{p\theta+1\over\theta+1}\alpha)-\ell+\frac{\ell}{p\theta+1}
&=\textstyle\frac{p\theta}{p\theta+1}(n-{p\theta+1\over\theta+1}\alpha-\ell)
=\textstyle{p\theta\over p\theta+1}\frac{(p\theta+q)\alpha-2-(p\theta+1)\alpha}{\theta+1}\\ 
&=\textstyle\frac{p\theta}{p\theta+1}\frac{(q-1)\alpha-2}{\theta+1}
=\textstyle\frac{2p\theta}{p\theta+1} \frac{q-p}{(p-1)(\theta+1)}\le 0,
\end{aligned}$$
it follows that
$H(R)\le C+C\bigl(H((1+\delta)R)\bigr)^{1/(p\theta+1)}$,
hence $H((1+\delta)R)\ge c_0 (H(R))^{p\theta+1}-1$
for some $c_0=c_0(f,\theta,\delta)\in(0,2]$.
We deduce that 
$\tilde H(R):=(c_0/2)^\frac{1}{ p\theta}H(R)$ satisfies
$$\tilde H((1+\delta)R)\ge c_0(c_0/2)^\frac{1}{p\theta}((c_0/2)^\frac{-1}{p\theta}\tilde H(R))^{p\theta+1}-1
=2(\tilde H(R))^{p\theta+1}-1,\quad R\ge 1.$$
Now assume $\theta>1-(\eta/p)$ and suppose for contradiction that $\tilde H(R)\ge 2$ for some $R\ge 1$.
Letting $R_i:=(1+\delta)^iR$, an immediate induction yields
$\tilde H(R_i)\ge 2$ and $\log\tilde H(R_{i+1})\ge (p\theta+1)\log (\tilde H(R_i))$
for all integer $i\ge 1$,
hence
$\log\tilde H(R_i)\ge \log\tilde H(R)(p\theta+1)^i$
and then
$$\int_{B_{R_i}^+}  |U|^\frac{p\theta+1}{\theta+1}dx\ge c \tilde H(R)e^{(p\theta+1)^i}
\ge c_R\exp\bigl(R_i^\frac{p\theta+1}{1+\delta}\bigr),$$
 provided $\delta$ is small enough.
But, by choosing $\delta>0$ sufficiently small, depending only on $\theta, p, \eta$, this 
produces a contradiction with assumption \eqref{Hypua} due to $\theta>1-(\eta/p)$.
It follows that
$$\int_{B_R^+}  |U|^\frac{p\theta+1}{\theta+1}dx \le CR^{n-\frac{(p\theta+q)\alpha-2}{\theta+1}},\quad R\ge 1.$$
Finally, for $\e>0$ small (with smallness depending only on $p,\eta$), we may choose
$\theta=(1-\alpha\e)/(1+\alpha\e)>1-(\eta/p)$, 
so that $\frac{p\theta+1}{\theta+1}=\frac{p+1}{2}-\e$.
Noting that $\frac{(p\theta+q)\alpha-2}{\theta+1}
=(\frac{p+1}{2}-\e)\alpha-\frac{p-q}{ p-1}(1+\eps\alpha)$
we obtain the desired conclusion.
\end{proof}

\subsection{Proof of Corollary~\ref{thm1cor0} and Proposition~\ref{thm1cor0prop}} 
 \label{subVerif}
 
 In order to apply Theorem~\ref{thm1}, 
 we shall use the following two lemmas which collect the needed properties of the function $g$.
 
 \begin{lemma} \label{lemf1f2}
 Let $g\in C^1([0,\infty))$ be convex, nonnegative and satisfy $g'>0$ on $(0,\infty)$.
Let $f$ be defined by \eqref{DeffiCor0} with $0<\lambda\le 1$.
 Then 
\be{lemf1f2a}
f_1+f_2\ge 2(1-\lambda)\bigl(g'(u_1)g(u_1)+g'(u_2)g(u_2)\bigr)\ge 0
  \quad \hbox{on $[0,\infty)^2$}
  \ee
 and
\be{lemf1f2b}
u_1f_1+u_2f_2\ge 2(1-\lambda)\bigl(u_1g'(u_1)g(u_1)+u_2g'(u_2)g(u_2)\bigr)\ge 0
 \quad \hbox{on $[0,\infty)^2$.}
 \ee
 \end{lemma}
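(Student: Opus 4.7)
The plan is to verify both inequalities by direct computation, reducing them to a standard monotonicity observation. First I would compute the gradient of $F(U)=g^2(u_1)+g^2(u_2)-2\lambda g(u_1)g(u_2)$, obtaining
\begin{equation*}
 f_1 = 2g'(u_1)\bigl(g(u_1)-\lambda g(u_2)\bigr), \qquad f_2=2g'(u_2)\bigl(g(u_2)-\lambda g(u_1)\bigr).
\end{equation*}
This explicit form will be the starting point for both estimates.

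For \eqref{lemf1f2a}, after summing $f_1+f_2$ and subtracting the target lower bound $2(1-\lambda)(g'(u_1)g(u_1)+g'(u_2)g(u_2))$, every term proportional to $g'(u_i)g(u_i)$ cancels, and the inequality reduces to
\begin{equation*}
 \bigl(g'(u_1)-g'(u_2)\bigr)\bigl(g(u_1)-g(u_2)\bigr)\ge 0.
\end{equation*}
This holds on $[0,\infty)^2$ because $g'$ is nondecreasing (by convexity of $g$) and $g$ is nondecreasing (since $g'>0$ on $(0,\infty)$), so the two factors share the same sign. The nonnegativity of the right-hand side then follows since $g,g'\ge 0$ on $[0,\infty)$.

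For \eqref{lemf1f2b}, the same bookkeeping applied to $u_1 f_1+u_2 f_2$ reduces the claim to
\begin{equation*}
 \bigl(u_1 g'(u_1)-u_2 g'(u_2)\bigr)\bigl(g(u_1)-g(u_2)\bigr)\ge 0.
\end{equation*}
The key (and only slightly nontrivial) observation is that $s\mapsto s\,g'(s)$ is nondecreasing on $[0,\infty)$: indeed, both factors $s$ and $g'(s)$ are nonnegative and nondecreasing (the latter by convexity of $g$), hence so is their product. Together with the monotonicity of $g$, this shows the two factors above again have matching signs, giving the inequality, and the nonnegativity of the right-hand side is immediate. I do not expect any real obstacle: the whole argument is essentially an algebraic rearrangement combined with the standard fact that the product of two nonnegative nondecreasing functions is nondecreasing.
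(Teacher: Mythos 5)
Your proof is correct and follows essentially the same route as the paper: compute $f_1,f_2$ explicitly, isolate the $(1-\lambda)$ term, and reduce the remaining $\lambda$-part to the nonnegativity of $(g'_1-g'_2)(g_1-g_2)$ and $(u_1g'_1-u_2g'_2)(g_1-g_2)$ via the monotonicity of $g$, $g'$, and $s\mapsto s\,g'(s)$. The paper states the last monotonicity fact more tersely, but the underlying argument is identical.
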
 
 
 \begin{proof}
   Denote $g_i=g(u_i)$, $g'_i=g'(u_i)$ for conciseness.
 We have
\be{eqf1f2}
f_1(U)=2g'_1[g_1-\lambda g_2],\quad f_2(U)=2g'_2[g_2-\lambda g_1],
\ee
hence 
$$ \begin{aligned} 
(f_1+f_2)/2
&=(1-\lambda)\bigl(g'_1g_1+g'_2g_2\bigr) +\lambda(g'_1g_1+g'_2g_2 -g'_1g_2-g'_2g_1\bigr)\\
&=(1-\lambda)\bigl(g'_1g_1+g'_2g_2\bigr) +\lambda(g_1-g_2)(g'_1-g'_2)
\end{aligned}$$
 and
$$ \begin{aligned} 
(u_1f_1+u_2f_2)/2
&=(1-\lambda)\bigl(u_1g'_1g_1+u_2g'_2g_2\bigr) +\lambda(u_1g'_1g_1+u_2g'_2g_2 -u_1g'_1g_2-u_2g'_2g_1\bigr)\\
&=(1-\lambda)\bigl(u_1g'_1g_1+u_2g'_2g_2\bigr) +\lambda(g_1-g_2)(u_1g'_1-u_2g'_2).
\end{aligned}$$
Since $g, g'$ are nondecreasing by assumption, it follows that
$g_1-g_2$, $g'_1-g'_2$  and $u_1g'_1-u_2g'_2$ have the same sign.
Properties \eqref{lemf1f2a} and \eqref{lemf1f2b} follow.
 \end{proof}
 
 \begin{lemma} \label{lemhK}
Let $p_0>1$, $K\ge 1$, $a\in\R$, $\sigma\in\{-1,1\}$
and let $g$ be defined by \eqref{DeffiCor0}. We have
\be{sghK0}
\frac{sg'(s)}{g(s)}=b+ah_\sigma(s),\quad s>0,
\ee
where $b=(p_0+1)/2$, 
\be{defhKphi}
h_{\sigma}(s)=\frac{\sigma}{\phi_K(s^{\sigma})},\quad \phi_K(s)=(1+K{s^{-1}}) \log(K+s)>0,\quad s>0,
\ee
and
\be{infsuphK}
\inf_{s>0}\phi_K(s)=\theta(K),
\ee
where $\theta:[1,\infty)\to [1,\infty)$ is the inverse bijection of $s\mapsto s^{-1}e^{s-1}$.
If, moreover, $\sigma a>0$, then
\be{gconvex0}
\hbox{$g'>0$ and $g''\ge 0$ on $(0,\infty)$.}
\ee
\end{lemma}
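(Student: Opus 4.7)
The plan is to establish the three parts of the lemma in order. For the identity \eqref{sghK0}, I will compute the logarithmic derivative of $g$: from $\log g(s)=b\log s+a\log\log(K+s^\sigma)$ one obtains
\[
\frac{sg'(s)}{g(s)}=b+\frac{a\sigma s^\sigma}{(K+s^\sigma)\log(K+s^\sigma)},
\]
and the observation that $(K+s^\sigma)\log(K+s^\sigma)/s^\sigma=\phi_K(s^\sigma)$ delivers \eqref{sghK0} with $h_\sigma(s)=\sigma/\phi_K(s^\sigma)$. Positivity of $\phi_K$ on $(0,\infty)$ is immediate from $K\ge 1$ and the definition in \eqref{defhKphi}.

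For the infimum formula \eqref{infsuphK}, I will compute $\phi_K'(t)=(t-K\log(K+t))/t^2$ and analyze its zeros via the substitution $v=\log(K+t)$, which turns $\phi_K'(t)=0$ into the scalar equation $e^v=K(1+v)$. Since $v\mapsto e^v/(1+v)$ is strictly increasing on $[0,\infty)$ with value $1$ at $v=0$, there is for each $K\ge 1$ a unique $v^*\ge 0$ satisfying it (with $v^*=0$ iff $K=1$); the corresponding critical point is $t^*=e^{v^*}-K=Kv^*$. Elementary monotonicity then shows $\phi_K$ is decreasing on $(0,t^*)$ and increasing on $(t^*,\infty)$, and direct substitution gives $\phi_K(t^*)=1+v^*$. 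Setting $T=1+v^*$, the defining relation $e^{v^*}=K(1+v^*)$ becomes $T^{-1}e^{T-1}=K$, which identifies $\inf_{s>0}\phi_K(s)=T=\theta(K)$; the bijectivity of $\theta:[1,\infty)\to[1,\infty)$ is clear since $T\mapsto T^{-1}e^{T-1}$ is increasing on $[1,\infty)$ with value $1$ at $T=1$.

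To prove \eqref{gconvex0}, I observe that $\sigma a>0$ forces $ah_\sigma(s)=\sigma a/\phi_K(s^\sigma)=|a|/\phi_K(s^\sigma)>0$. Hence $q(s):=sg'(s)/g(s)=b+ah_\sigma(s)>b>1$, and combined with $g>0$ this yields $g'>0$ on $(0,\infty)$. For the convexity, differentiating $g'(s)=q(s)g(s)/s$ gives
\[
g''(s)=\frac{g(s)}{s^2}\bigl(q(s)(q(s)-1)+sq'(s)\bigr),
\]
so the task reduces to showing $sq'+q(q-1)\ge 0$. Direct computation yields $sq'(s)=-at\phi_K'(t)/\phi_K(t)^2$ with $t=s^\sigma$, and using $t\phi_K'(t)=1-u$ where $u:=K\log(K+t)/t$, together with the key identity $\phi_K(t)=\log(K+t)+u\ge u$, transforms the question into an elementary algebraic inequality in the parameters $b$, $A:=|a|$, $u$, and $\psi:=\phi_K(t)$.

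The main obstacle is this final convexity inequality, because $sq'$ changes sign: in each case $\sigma=\pm 1$ there is a range of $s$ in which $sq'<0$. I will handle these ranges by a case analysis on the sign of $1-u$ (equivalently, of $\phi_K'$). When $sq'\ge 0$, the conclusion is immediate from $q(q-1)>b(b-1)>0$. In the problematic subcase, the bounds $\psi\ge\theta(K)\ge 1$ (from \eqref{infsuphK}) and $\psi\ge u$ make it possible to estimate the negative contribution from $sq'$ by at most $A/\psi$ in absolute value, which is then absorbed by the positive cross-term $(2b-1)A/\psi$ coming from the expansion $q(q-1)=b(b-1)+(2b-1)A/\psi+A^2/\psi^2$, leaving a residue of the form $(2b-2)A/\psi+b(b-1)\ge b(b-1)>0$. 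The crucial use of $b=(p_0+1)/2>1$ (so that $2b-2>0$), together with the sharp bound $\phi_K\ge\theta(K)\ge 1$ from part~(ii), is what drives the argument through; in this sense the three parts of the lemma are intertwined.
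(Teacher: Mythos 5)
Your proposal is correct; parts \eqref{sghK0} and \eqref{infsuphK} follow the same computation as the paper (your substitution $v=\log(K+t)$ is just a reparametrization of the paper's $\tau=K^{-1}s$, and both land on the same critical-point equation). The genuine difference is in the convexity \eqref{gconvex0}, where you are more unified than the paper. You write the universal identity
$$ g''(s)=\frac{g(s)}{s^2}\bigl(sq'(s)+q(s)(q(s)-1)\bigr),\qquad q(s):=\frac{sg'(s)}{g(s)}=b+\frac{A}{\psi},\quad A:=|a|,\ \psi:=\phi_K(s^\sigma), $$
compute $sq'=-a(1-u)t\,/\,\psi^2$ with $t=s^\sigma$, $u=K\log(K+t)/t$, and then absorb the worst-case negative contribution $|sq'|\le A/\psi$ (using $\psi\ge u$ from $\phi_K=\log(K+t)+u$ and $\psi\ge\theta(K)\ge1$ from part (ii)) into the cross term $(2b-1)A/\psi$ of the expansion of $q(q-1)$, leaving $b(b-1)+(2b-2)A/\psi+A^2/\psi^2>0$. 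The paper in fact uses exactly this identity \emph{only} for $\sigma=-1$ (its $\psi$ is the reciprocal of yours, and its display is $(q-1)q+sq'$ in disguise), while for $\sigma=1$ it differentiates $g$ twice directly, drops a positive term, and reduces to the elementary inequality $(2+s)\log(1+s)\ge s$. So you gain a single argument that treats both signs of $\sigma$ symmetrically, at the cost of needing the bound $\phi_K\ge1$ from part (ii), whereas the paper's $\sigma=1$ branch is self-contained but less elegant. Both are valid; yours makes explicit that the three parts of the lemma feed into one another.

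One small point worth spelling out in a final write-up: for $K=1$ the critical point $t^*=Kv^*=0$ degenerates to the boundary, so you should say that $\phi_1$ is strictly increasing on $(0,\infty)$ and $\inf\phi_1=\lim_{t\to0^+}\phi_1(t)=1=\theta(1)$ rather than appealing to an interior minimum. This is a cosmetic fix, not a gap.
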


 \begin{proof}
We have
$$ \phi_K'(s)=-Ks^{-2} \log(K+s)+(1+Ks^{-1}) (K+s)^{-1}=s^{-2}\bigl(s-K \log(K+s)\bigr).$$
For $K=1$, $\phi_1$ is increasing with $\lim_{t\to 0^+}\phi_1(t)=1$ and $\lim_{t\to \infty}\phi_1(t)=\infty$,
hence $\inf_{s>0}\phi_1(s)=1=\theta(1)$.

Assume $K>1$. Then $\lim_{t\to 0^+}\phi_K(t)=\lim_{t\to \infty}\phi_K(t)=\infty$.
Setting $\tau=K^{-1}s$, we see that $\phi_K'(s)=0$ is equivalent to
$K=\zeta(\tau):=(\tau+1)^{-1} e^\tau$. 
Since $\zeta'(\tau)>0$ for all $\tau>0$ with $\zeta(0)=1$ and $\lim_{\tau\to\infty}\zeta(\tau)=\infty$,
this equation has a unique solution $\tau=\tau_K>0$, given by $\tau_K=\theta(K)-1$.
At the point $s=s_K=K\tau_K$ we have $\log(K+s)=K^{-1}s$
hence $\phi_K(s)=(1+Ks^{-1}) \log(K+s)
=1+K^{-1}s=1+\tau_K=\theta(K)$.
This proves property~\eqref{infsuphK}.

 Next, for $\sigma=1$, we compute
\be{gprimehK}
g'(s)=bs^{b-1}\log^a(K+s)+a(K+s)^{-1}s^b\log^{a-1}(K+s)
=s^{-1}g(s)\bigl[b+ah_1(s)\bigr]
\ee
and, for $\sigma=-1$,
\be{gprimehK0}
g'(s)=bs^{b-1}\log^a(K+s^{-1})-\frac{s^{-2}}{K+s^{-1}}as^b\log^{a-1}(K+s^{-1})
=s^{-1}g(s)\bigl[b+ah_{-1}(s)\bigr],
\ee
hence \eqref{sghK0}. This in particular yields the first part of \eqref{gconvex0}.

We turn to the second part of \eqref{gconvex0}.
In the case $\sigma=1$, $a>0$, we compute
$$ \begin{aligned}
g''(s)
&=b(b-1)s^{b-2}\log^a(K+s)+2ab(K+s)^{-1}s^{b-1}\log^{a-1}(K+s)\\
&\qquad +as^b[(K+s)^{-1}\log^{a-1}(K+s)]'\\
&\ge 2ab(K+s)^{-1}s^{b-1}\log^{a-1}(K+s)\\
&\qquad +as^b(K+s)^{-2}\bigl[(a-1)\log^{a-2}(K+s)-\log^{a-1}(K+s)\bigr],
\end{aligned}  $$
hence
$$ \begin{aligned}
{s^{1-b}(K+s)^2 g''(s)\over a\log^{a-2}(K+s)}
&\ge 2b(K+s)\log(K+s)+s\bigl[(a-1)-\log(K+s)\bigr] \\
&= \bigl(2bK+(2b-1)s\bigr)\log(K+s)+(a-1)s\\
&\ge (2+s)\log(1+s)-s=:\xi(s).
\end{aligned}  $$
Since $\xi(0)=0$ and $\xi'(s)=\log(1+s)+(2+s)(1+s)^{-1}-1\ge 0$ for $s\ge 0$,
it follows that $\xi\ge 0$, 
hence $g''\ge 0$, for $s>0$.

\smallskip

In the case $\sigma=-1$, $a<0$, setting $\psi(s)=[(1+Ks) \log(K+s^{-1})]^{-1}$, we have
$$ \begin{aligned} 
\psi'(s)
&=-\bigl[(1+Ks) \log(K+s^{-1})\bigr]'\psi^2\\
&=-\bigl[K \log(K+s^{-1})-s^{-2}(1+Ks) (K+s^{-1})^{-1}\bigr]\psi^2\\
&=\bigl[s^{-1}-K \log(K+s^{-1})\bigr]\psi^2
=\bigl[s^{-1}\psi-K (1+Ks)^{-1}\bigr]\psi
\end{aligned}  $$
and, using \eqref{gprimehK0},
$$g''(s)=(s^{-1}g'(s)-s^{-2}g(s))\bigl[b-a\psi(s)\bigr]-as^{-1}g(s)\psi'(s).$$
Consequently,
$$ \begin{aligned} 
{s^2g''(s)\over g(s)}
&=\Bigl({sg'(s)\over g(s)}-1\Bigr)\bigl[b-a\psi(s)\bigr]-as\psi'(s)\\
&=\bigl(b-1-a\psi(s)\bigl)(b-a\psi(s))+a\psi(s)\bigl(Ks (1+Ks)^{-1}-\psi(s)\bigr) \\
&=b(b-1)-a\psi(s)\Bigl\{(1-a)\psi(s)+2b-1-Ks (1+Ks)^{-1}\Bigr\}\\
&=b(b-1)-a\psi(s)\Bigl\{(1-a)\psi(s)+2(b-1)+(1+Ks)^{-1}\Bigr\}\ge 0.
\end{aligned}  $$
This completes the proof of \eqref{gconvex0}. \end{proof}

 \begin{proof}[Proof of Corollary~\ref{thm1cor0}]
Since $\theta(1)=1$, it suffices to prove assertion (ii) (for all $K\ge 1$).
The nonlinearities in \eqref{DeffiCor0} are given by \eqref{eqf1f2}.
 Denote $g_i=g(u_i)$, $g'_i=g'(u_i)$ for conciseness. 
We shall check the applicability of Theorem~\ref{thm1}.

 We claim that conditions \eqref{hypGrowthB} and \eqref{hypGrowthD} hold with
\be{defpqa}
\begin{cases}
q=p=p_0,&\hbox{ if $\sigma=1$ and $K>1$} \\
\noalign{\vskip 1mm}
q=p=p_0+2a,&\hbox{ if $\sigma=1$ and $K=1$} \\
\noalign{\vskip 1mm}
q=p_0, \ p=p_0+\eps,&\hbox{ if $\sigma=-1$ (for arbitrary $\eps>0$).}
\end{cases}
\ee
(We will use Theorem~\ref{thm1} with $p_0$ replaced by $p_0+2a$ if $\sigma=K=1$.)

To this end, we note that, by \eqref{eqf1f2}, \eqref{sghK0} and the boundedness of $h_\sigma$ (cf.~\eqref{defhKphi}-\eqref{infsuphK}), we have
\be{boundf1f2}
|f_1|+|f_2|\le C\bigl(u_1^{-1}g_1^2+u_2^{-1}g_2^2+(u_1^{-1}+u_2^{-1})|g_1g_2|\bigr).
\ee
In the case $\sigma=1$, $a>0$, for $s\le M$, we have
$g(s)=s^b\log^a(K+s)\le C_M s^b$ if $K>1$
and $g(s)=s^b\log^a(1+s)\le C_M s^{b+a}$ if $K=1$.
In the case $\sigma=-1$, $a<0$, we have
$g(s)=s^b\log^a(K+s^{-1})\le C_M s^b$ for $s\le M$.
The claim for $q$ in \eqref{hypGrowthB} then follows from~\eqref{boundf1f2}.

Next, by  \eqref{lemf1f2a} 
we have
$f_1+f_2
\ge 2(1-\lambda)\bigl(g'_1g_1+g'_2g_2\bigr).$
 Owing to \eqref{sghK0} and the assumption $\sigma a>0$,  it follows that
$$ f_1+f_2\ge 2b(1-\lambda)\bigl(u_1^{p_0}\log^{2a}(K+u_1^\sigma)+u_2^{p_0}\log^{2a}(K+u_2^\sigma)\bigr).$$
This yields the claim for $p$ in \eqref{hypGrowthD}.

We turn to condition \eqref{hypGrowthC}.
For $p$ given by \eqref{defpqa}, we note that, for all $U\in [0,M]^2$, we have
$g_1^2+g_2^2\ge \tilde c_M|U|^{p+1}$, 
hence $F(U)\ge (1-\lambda)\tilde c_M|U|^{p+1}$, 
for some $\tilde c_M>0$.
 Since $U\cdot\nabla F(U)\ge 0$ owing to \eqref{lemf1f2b}, this in particular guarantees \eqref{hypGrowthC} for $n\le 2$.
Assume $n\ge 3$.
A sufficient condition for \eqref{hypGrowthC} is thus
the existence of $\eta>0$ such that, for all $U\in [0,M]^2$,
\be{hypGrowthC2}
(p_S+1-\eta)[g_1^2+g_2^2-2\lambda g_1g_2]
\ge 2u_1g'_1[g_1-\lambda g_2]+2u_2g'_2[g_2-\lambda g_1].
\ee
For $0<\eta<p_S-p_0$, denoting $\zeta_i=h_\sigma(u_i)$, setting $\delta:=p_S-p_0=p_S+1-2b$, 
$A:=\delta-\eta-2a\zeta_1$, $B:=\delta-\eta-2a\zeta_2$
and using \eqref{sghK0}, we have
$$ \begin{aligned} 
\eqref{hypGrowthC2}
&\Leftrightarrow
(p_S+1-\eta)[g_1^2+g_2^2]-2u_1g'_1g_1-2u_2g'_2g_2 \\
&\qquad\qquad\qquad\qquad \ge
2\lambda \bigl\{(p_S+1-\eta)g_1g_2-u_1g'_1g_2-u_2g'_2g_1\bigr\}\\
&\Leftrightarrow\bigl(\delta-\eta-2a\zeta_1\bigr)g_1^2+\bigl(\delta-\eta-2a\zeta_2\bigr)g_2^2
\ge 2\lambda \bigl\{\delta-\eta-a\zeta_1-a\zeta_2\bigr\}g_1g_2 \\
&\Leftrightarrow Ag_1^2+Bg_2^2-\lambda (A+B)g_1g_2\ge 0 \\
&\Leftrightarrow\bigl(\sqrt{A}g_1-\sqrt{B}g_2\bigl)^2+\bigl(2\sqrt{AB}-\lambda (A+B)\bigr)g_1g_2 \ge 0,
\end{aligned}  $$
provided $A,B\geq0$.
Now, by \eqref{infsuphK}, we have $ah_\sigma(s)\ge 0$ and
$\sup_{s>0}|h_\sigma(s)|=\beta:={1\over \theta(K)}$. 
By our assumption that $|a|<a_0={p_S-p_0\over 2}\theta(K)={\delta\over 2\beta}$, for $0<\eta<\delta-2|a|\beta$,
we get  $A, B\in \Sigma_\eta:=[\delta-\eta-2|a|\beta,\delta-\eta]$,
hence  $A, B>0$ and $\sqrt{A/B}\in\tilde \Sigma_\eta:=\bigl[(1-\rho_\eta)^{1/2},(1-\rho_\eta)^{-1/2}\bigr]$,
where $\rho_\eta={2|a|\beta\over\delta-\eta}\in(0,1)$.
On the other hand, we have
$$ \begin{aligned} 
\sup_{A,B\in\Sigma_\eta} {A+B\over \sqrt{AB}}
&=\sup_{A,B\in\Sigma_\eta} \sqrt{A\over B}+\sqrt{B\over A}=\sup_{X\in\tilde \Sigma_\eta} X+X^{-1}\\
&=\sqrt{1-\rho_\eta}+{1\over \sqrt{1-\rho_\eta}}={2-\rho_\eta\over \sqrt{1-\rho_\eta}}\to {2-\rho\over \sqrt{1-\rho}} = {2\over\lambda_0},
\quad\hbox{as $\eta\to 0$.}
\end{aligned}  $$
Since $0<\lambda<\lambda_0$, by choosing $\eta>0$ small enough, we get
$2\sqrt{AB}-\lambda (A+B)\ge 0$, and condition \eqref{hypGrowthC2} is satisfied, hence \eqref{hypGrowthC} is true. 
\end{proof}

\begin{proof}[Proof of Proposition~\ref{thm1cor0prop}]
We show that there exists a radial, positive bounded solution of 
$-\Delta u=f(u)\equiv cg(u)g'(u)$ with $c=2(1-\lambda)$, which will immediately provide a solution $(u_1,u_2)=(u,u)$ of
\eqref{GenSystem}.

To this end we use a small modification of an argument from \cite{QS12, SouDCDS} 
(see \cite[Theorem~1.4]{QS12} and \cite[Proposition~1]{SouDCDS}).
Let $H(s)=\int_0^s f(z)dz$ and $\psi(s)=sf(s)-(p_S+1)H(s)$. 
We first claim that there exists $s_0>0$ such that
\be{signphi}
\psi(s)\ge 0\quad\hbox{for all $s\in [0,s_0]$}.
\ee
Indeed, we have $g(s)\sim s^{b+a}$ as $s\to 0$, with $b=(p_0+1)/2>1$. By \eqref{gprimehK}
and the fact that $\lim_ {s\to 0^+}h_1(s)=1$, we deduce that $g'(s)\sim (b+a)s^{b+a-1}$, hence
$f(s)\sim \tilde cs^{2(b+a)-1}$ with $\tilde c=(b+a)c$.
Consequently, $H(s)\sim (2(b+a))^{-1}\tilde cs^{2(b+a)}$ and, since $a>a_0={p_S+1\over 2}-b$, it follows that 
$\psi(s)\sim [1-(p_S+1)(2(b+a))^{-1}]\tilde cs^{2(b+a)}>0$ as $s\to 0$, hence \eqref{signphi}.

The end of the proof is then the same as in \cite[Proposition~1]{SouDCDS}, which we repeat for convenience.
Extend $f$ by $0$ for $s<0$ and consider the initial value problem
$$-(r^{n-1}v')'= r^{n-1}f(v),\  r > 0, \ \quad v(0) = s_0, \quad v'(0) = 0.$$
Let $R^*$ be its maximal existence time. Since $f\ge 0$, we have $v'\le 0$ on $[0,R^*)$. 
Assume for contradiction that $v$ has a (first) zero $R\in (0,R^*)$. Then $v$ is a 
solution of $-\Delta v=f(v)$ on the ball $B_R$ with Dirichlet boundary conditions and $0\le v\le s_0$.
But in view of \eqref{signphi} and Pohozaev's inequality (see, e.g.,~\cite[Corollary 5.2]{QSb}), this is a contradiction.
We conclude that $v>0$ on $(0,\infty)$, which proves the desired result.
\end{proof}

\section{Proof of  Theorems~\ref{thm-proportional2}, \ref{thmGSstarCor} and \ref{thmGSstar}}
\label{ProofGS}

\begin{proof}[Proof of Theorem~\ref{thm-proportional2}]
By Theorem~\ref{thm-proportional}, we have $u=v$ and $-\Delta u=(1-\lambda)h(u)$.
In the case $\lambda\in[0,1)$,
the conclusion then follows from Theorem~B for $D=\Rn$,
and from \cite{CLZ} or \cite{DSS} for $D=\Rn_+$.

Let us consider the case $\lambda>1$. Let $M=\frac12\sup_D u\in[0,\infty)$ 
and assume for contradiction that $M>0$.
Choose $x_0\in D$ such that $u(x_0)\ge M$ and, for any $\delta>0$, set $u_\delta(x)=u(x)-\delta |x-x_0|^2$.
Since $u_\delta(x)\to -\infty$ as $|x|\to\infty$ (and $u_\delta<0$ on $\partial D$ in case $D=\Rn_+$),
 there exists $x_\delta\in D$ such that $u_\delta(x_\delta)=\sup u_\delta$.
Let $\tau=\inf_{s\in[M,2M]}h(s)>0$. Since $u(x_\delta)\ge u_\delta(x_\delta)\ge u_\delta(x_0)=u(x_0)\in[M,2M]$,
we have $0\ge \Delta u_\delta(x_\delta)=(\lambda-1)h(u(x_\delta))-2n\delta\ge (\lambda-1)\tau-2n\delta>0$
for $\delta>0$ small: a contradiction.
\end{proof}

In view of the proof of Theorem~\ref{thmGSstar}, 
we first recall the following lemma (see~\cite[Lemma 9.2]{SouDCDS}):

\begin{lemma} \label{lemGS1}
Let $\Omega$ be an arbitrary domain in $\Rn$,
$0\leq \varphi\in \mathcal{D}(\Omega)$, and let
$u\in W^{2,r}_{loc}(\Omega)$ for all finite $r$, with $u>0$.
Fix $q\in \R$ and denote
\be{IJK}
I=\int \varphi\,u^{q-2}|\nabla u|^4,\quad
J=\int \varphi\,u^{q-1}|\nabla u|^2 \Delta u,\quad
K=\int \varphi\,u^q(\Delta u)^2,
\ee
where $\int g=\int_\Omega g(x)\,dx$.
Then, for any $k\in \R$ with $k\neq -1$, there holds
\be{estIJK}
\alpha I+\beta J+\gamma K\leq
{1\over 2}  \int \,u^q|\nabla u|^2\Delta \varphi
+ \int u^q\bigl[\Delta u+(q-k)u^{-1}|\nabla u|^2\bigr]\nabla u\cdot\nabla\varphi,
\ee  
where
$$\alpha=-{n-1\over n}k^2+(q-1)k-{q(q-1)\over 2}, \quad
\beta={n+2\over n}k-{3q\over 2}, \quad
\gamma=-{n-1\over n}.
$$
\end{lemma}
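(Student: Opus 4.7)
The plan is to combine Bochner's identity, a pointwise refined Kato-type inequality parametrized by $k$, and a single integration by parts on the auxiliary quantity $S:=(D^2u\,\nabla u)\cdot\nabla u$. Starting from $\tfrac12\Delta(|\nabla u|^2)=|D^2u|^2+\nabla u\cdot\nabla(\Delta u)$, I multiply by $\varphi u^q$ and integrate, moving both Laplacians off $u$ (twice on the first term, once on the second). Using $\nabla\cdot(\varphi u^q\nabla u)=u^q\nabla\varphi\cdot\nabla u+q\varphi u^{q-1}|\nabla u|^2+\varphi u^q\Delta u$ and noting that all boundary integrals vanish because $\varphi\in\mathcal{D}(\Omega)$, one obtains the identity
\[
B:=\int\varphi u^q|D^2u|^2 = \tfrac12\int u^q|\nabla u|^2\Delta\varphi + \int u^q\bigl[\Delta u+qu^{-1}|\nabla u|^2\bigr]\nabla u\cdot\nabla\varphi + \tfrac{3q}{2}J + \tfrac{q(q-1)}{2}I + K. \quad(\mathrm{A})
\]

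Next, write $D^2u=\tfrac{\Delta u}{n}\mathrm{Id}+E$ with $\mathrm{tr}\,E=0$. In an orthonormal basis with first vector $\nabla u/|\nabla u|$ (at points where $\nabla u\ne 0$), tracelessness gives $\sum_{i\ge 2}E_{ii}=-E_{11}$, so Cauchy--Schwarz implies $|E|^2\ge\sum_i E_{ii}^2\ge\tfrac{n}{n-1}E_{11}^2$. Since $E_{11}=S|\nabla u|^{-2}-\Delta u/n$, this last bound is equivalent to the quadratic $(n-1)Z^2k^2+2nZE_{11}k+n|E|^2$ (with $Z:=|\nabla u|^2/u$) having nonpositive discriminant, hence being $\ge 0$ for every $k\in\R$. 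Using $|D^2u|^2=(\Delta u)^2/n+|E|^2$, substituting $E_{11}$, and multiplying by $u^q$ transforms this into the pointwise inequality
\[
n u^q|D^2u|^2 \ge u^q(\Delta u)^2 + 2ku^{q-1}|\nabla u|^2\Delta u - 2nku^{q-1}S - (n-1)k^2u^{q-2}|\nabla u|^4, \quad(\mathrm{B})
\]
valid for every $k$ (at points where $\nabla u=0$, (B) reduces to the trivial $n|D^2u|^2\ge(\Delta u)^2$). Integrating (B) against $\varphi$ yields $nB\ge K+2kJ-2nk\!\int\varphi u^{q-1}S-(n-1)k^2 I$.

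Using $S=\tfrac12\nabla(|\nabla u|^2)\cdot\nabla u$ and one integration by parts,
\[
\int\varphi u^{q-1}S = -\tfrac{q-1}{2}I - \tfrac12 J - \tfrac12\int u^{q-1}|\nabla u|^2(\nabla u\cdot\nabla\varphi),
\]
so the previous inequality rearranges, after dividing by $n$, to a lower bound for $B$ in terms of $I$, $J$, $K$ and the boundary gradient integral. Subtracting this lower bound from identity (A) produces exactly the claimed inequality, with the $(q-k)$ coefficient of $u^{-1}|\nabla u|^2$ in the gradient term arising precisely from the boundary contribution of the integration by parts against $S$. The main delicacy is coefficient bookkeeping, in particular checking that the formulas for $\alpha$, $\beta$, $\gamma$ match after the elimination of $B$; the pointwise step is routine once the correct Cauchy--Schwarz on the trace-free part is identified. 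The regularity $u\in W^{2,r}_{\mathrm{loc}}$ justifies all integrations by parts by a standard approximation argument, and the hypothesis $k\ne -1$ plays no role in the present derivation: it is retained only because it enters in later applications where one divides by $k+1$.
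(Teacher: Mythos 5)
Your proof is correct: identity (A), the pointwise inequality (B) obtained from the trace-free decomposition $D^2u=\frac{\Delta u}{n}\mathrm{Id}+E$ and the $k$-parametrized discriminant argument, the integration-by-parts formula for $\int\varphi u^{q-1}S$, and the final elimination of $B$ all produce exactly the stated $\alpha,\beta,\gamma$ and the $(q-k)$ coefficient in the boundary term, and you are right that $k\ne -1$ plays no role in the derivation itself. The paper does not prove this lemma but cites \cite[Lemma~9.2]{SouDCDS}; the route you take is the standard integral Bernstein computation for such estimates, so there is nothing substantively different to compare, apart from the minor remark that the mollification argument justifying the Bochner step (which formally involves third derivatives of $u\in W^{2,r}_{loc}$) deserves a sentence rather than a wave.
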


\begin{proof}[Proof of Theorem~\ref{thmGSstar}]
 Assume for contradiction that \eqref{scaleq} admits a nontrivial solution. 
Since $f\ge 0$, the strong maximum principle implies $u>0$ in $\Rn$.

 In view of applying Lemma~\ref{lemGS1}, let us prepare a suitable test-function  $\varphi$. We take $\xi\in 
\mathcal{D}(B_1)$ such that $\xi=1$ in $B_{1/2}$ and $0\leq \xi\leq 1$.
 Due to $m_i>2/3$,
we may fix $a$ such that 
$$\max\Bigl\{\frac{m_i+2}{4m_i}, \frac12\Bigr\}<a<1,\quad i\in\{1,2\}.$$  
If $q>-2$, owing to $\gamma_i>1$, we may also assume
$$a>\frac{\gamma_i+1}{2\gamma_i},\quad i\in\{1,2\}.$$  
By taking $\varphi(x)=\varphi_R(x)=\xi^b(x/R)$ with $b=b(a)>2$ sufficiently large, we have
\be{estimtest}
|\nabla\varphi_R|\leq CR^{-1}\varphi^a, \quad
|\Delta \varphi_R|+\varphi_R^{-1}|\nabla\varphi_R|^2\leq CR^{-2}\varphi^a
\ee 
(where $\varphi_R^{-1}|\nabla\varphi_R|^2$ is defined to be $0$ whenever $\varphi=0$).

 Let $R\ge 1$, $\Omega=B_R$, $I,J,K$ be defined by \eqref{IJK}, and set also
$$ L=\int \varphi\,f(u)F_q(u). $$
The equation  $-\Delta u=f(u)$ yields 
$$ 
J = -\int\varphi u^{q-1} f(u)|\nabla u|^2 
   = -\int\varphi \nabla F_q(u)\cdot\nabla u \\ 
  = -\int\varphi F_q(u) f(u) 
  +\int F_q(u)\nabla u\cdot\nabla\varphi$$
  i.e.,  
\be{eqJL}
J =-L +\int F_q(u)\nabla u\cdot\nabla\varphi, 
\ee
 and \eqref{hypGS20} implies
\be{eqK}
K = \int\varphi u^q f^2(u)  
  \leq c_q L.
\ee
 Combining \eqref{estIJK}, \eqref{eqJL}, \eqref{eqK} and using \eqref{hypGS20} again, we obtain
\be{eqILI}
c_0(I+L)\leq I_1+C(I_2+I_3),
\ee
where $c_0=\min(\alpha,-\beta+c_q\gamma)>0$,
$$
I_1= \int \,u^q|\nabla u|^2|\Delta \varphi|,\quad
 I_2= \int F_q(u)|\nabla u\cdot\nabla\varphi|,\quad
 I_3=\int u^{q-1}|\nabla u|^2|\nabla u\cdot\nabla\varphi|,
$$
 and $C\geq1$ ($C$ also satisfies \eqref{hypGS3}, \eqref{hypGS4}, \eqref{estimtest}).
Set also
$$
 I_4=\int\frac1\varphi u^{2+q}\frac{\varphi^{2a}}{R^4},\quad
 I_5=\int u^q|\nabla u|^2\frac{|\nabla\varphi|^2}\varphi,\quad
 I_6=\int \bigl(u^{\frac{2-q}4}F_q(u)\varphi^{-\frac14}|\nabla\varphi|\bigr)^{4/3} 
$$
and fix $\eta\in(0,c_0/(10C))$. Then there exists $C_\eta\geq1$ such that
$$ \begin{aligned}
 I_3 &=\int u^{q-1}|\nabla u|^2|\nabla u\cdot\nabla\varphi|
 \leq\int u^{q-1}|\nabla u|^2\Bigl(\eta\varphi\frac{|\nabla u|^2}u
 +C_\eta u\frac{|\nabla\varphi|^2}\varphi\Bigr) 
 = \eta I+C_\eta  I_5, \\
   I_2 &=\int F_q(u)|\nabla u\cdot\nabla\varphi| 
  \leq\int\bigl(\varphi^{\frac14}u^{\frac{q-2}4}|\nabla u|\bigr)
 \bigl(u^{\frac{2-q}4}F_q(u)\varphi^{-\frac14}|\nabla\varphi|\bigr)  
 \leq\eta I+C_\eta  I_6.
\end{aligned}
$$
Fix $\eta'\in(0,\eta/(CC_\eta))$.  Then, using \eqref{hypGS4} and \eqref{estimtest}, there exists $C_{\eta'}\geq1$ such that 
$$ \begin{aligned}
I_1&+ I_5 =
 \int u^q|\nabla u|^2\Bigl(|\Delta\varphi|+\frac{|\nabla\varphi|^2}{\varphi}\Bigr)
\leq\int\bigl(\varphi^{\frac12}u^{\frac{q-2}2}|\nabla u|^2\bigr)
 \bigl(\varphi^{-\frac12}u^{\frac{2+q}2}C\frac{\varphi^a}{R^2}\bigr)
\leq \eta' I+C_{\eta'}  I_4, \\
  I_6 &=\int \bigl(u^{\frac{2-q}4}F_q(u)\varphi^{-\frac14}|\nabla\varphi|\bigr)^{4/3}
    = \sum_i \int_{B_R^i} \bigl(\varphi^{\frac1{2m_i}}u^{\frac{2-q}4}F_q(u)\bigr)^{4/3}
            \bigl(\varphi^{-\frac14-\frac1{2m_i}}|\nabla\varphi|\bigr)^{4/3} \\
   &\leq \eta'\sum_i\int_{B_R^i}\varphi\bigl(u^{\frac{2-q}4}F_q(u)\bigr)^{2m_i}
 + C_{\eta'}\sum_i\int_{B_R^i}\bigl(\varphi^{-\frac{m_i+2}{4m_i}}|\nabla\varphi|\bigr)^{\frac{4m_i}{3m_i-2}} 
 \leq \eta'CL+CC_{\eta'}R^{\theta_1},
\end{aligned} $$
where $\theta_1=n-\frac{4m_i}{3m_i-2}<0$ and $B_R^i=\{x\in B_R:u(x)\in\omega_i\}$, $i=1,2$.
Finally, fix $\eta''\in(0,\eta/(CC_\eta C_{\eta'}))$.  If $q>-2$ then, using \eqref{hypGS3}, there exists $C_{\eta''}\geq1$ such that
$$  
 I_4 \leq\eta''\sum_i\int_{B_R^i} 
 \varphi u^{(2+q)\gamma_i}
 +C_{\eta''}\sum_i\int_{B_R^i}\Bigl(\varphi^{-\frac{\gamma_i+1}{\gamma_i}+2a}
      R^{-4}\Bigr)^\frac{\gamma_i}{\gamma_i-1} 
 \leq \eta'' CL+C_{\eta''} CR^{\theta_2},
$$
where $\theta_2=n-\frac{4\gamma_i}{\gamma_i-1}<0$.
 If $q=-2$ and $n=3$, since $a>1/2$, we just notice that
$$I_4 \leq CR^{\theta_2},\quad\hbox{with $\theta_2=-1$}.$$
 Estimate \eqref{eqILI} and the above inequalities for $I_1,\dots,I_6$ imply
$$ I+L\leq C'R^\theta,$$ 
where $\theta=\max(\theta_1,\theta_2)<0$ and $C'=C'(C,c_0,C_\eta,C_{\eta'},C_{\eta''})$.
Consequently,
\be{fueps}
 \int_{B_{R/2}}f(u)F_q(u)\leq L 
\leq C'R^\theta. 
\ee
Letting $R\to\infty$ and using \eqref{hypGS3} again, we conclude that $u\equiv0$:  a contradiction.
\end{proof}
 
\begin{proof}[Proof of Theorem~\ref{thmGSstarCor}]
In the case $p_1\le \kappa-1$, the conclusion follows from known results for elliptic inequalities; see \cite[Theorem~2.1]{AS}.
Let us thus assume $p_1>\kappa-1$.
We shall use Theorem~\ref{thmGSstar} with $q=1-\kappa =-2/(n-2)$ and $c_q=\bar\kappa$.
Assumptions \eqref{hypDefq}-\eqref{hypGS200}
with $\e>0$ small guarantee \eqref{f-Lip} with $p=p_1-\eps>\kappa-1=-q$
 and \eqref{hypGS20}-\eqref{hypGS4}, as well as \eqref{hypGS3} if $q>-2$ (i.e.,~$n\ge 4$).
Set $k_\delta=-\frac{n}{n-2}+\delta$ with $\delta>0$ and let $\alpha=\alpha_\delta,\beta=\beta_\delta$ and $\gamma$ be defined by \eqref{alphabetagamma}
with $k=k_\delta$.
Then $-\beta_0+c_q\gamma=\frac{n-1}{n-2}-\frac{n-1}{n}c_q>0$, $\alpha_0=0$ and 
$(\frac{\partial\alpha}{\partial k})_{|k=k_0}=-\frac{n-1}{n}2k_0+q-1=1$.
Consequently, $\alpha_\delta>0$ and $-\beta_\delta+c_q\gamma>0$ for $\delta>0$ small
and the conclusion follows from Theorem~\ref{thmGSstar}.
\end{proof}

\section{Proof of Proposition~\ref{propdecay}, Theorems~\ref{thmLEpert1}, \ref{thmLEpert1B} and \ref{thmUB}}

\subsection{Proof of Proposition~\ref{propdecay}}
Assume that the assertion fails. Then there exist $\delta>0$, 
a sequence $R_j\to\infty$ and solutions $U_j$ of $-\Delta U_j=f(U_j)$ on $B_{R_j}$, such that $\|U_j\|_\infty\le\Lambda$
and $|U_j(0)|\ge \delta$.
Since $|f(U_j)|\le M:=\sup_{|\xi|\le\Lambda}|f(\xi)|<\infty$, 
it follows from interior elliptic $L^q$ estimates that some subsequence of $U_j$ converges locally uniformly to a 
solution $U_\infty$ of $-\Delta U_\infty=f(U_\infty)$ on $\Rn$.
Moreover, $\|U_\infty\|_\infty\le\Lambda$ and $|U_\infty(0)|\ge\delta$. This contradicts our assumption.

\subsection{Proof of Theorem~\ref{thmLEpert1}}
The proof of Theorem~\ref{thmLEpert1}
is based on a doubling-rescaling argument for systems from \cite[Theorem 4.3]{PQS1},
combined with the rescaling approach from \cite{SouDCDS} for scalar problems with a regularly varying nonlinearity.
However, it requires a rather delicate rescaling procedure to handle regularly varying nonlinearities
of different indices in the two equations.
To this end, for a given solution $(u,v)$ of \eqref{LEsystem1} in a domain $\Omega$,
we shall introduce the key auxiliary function $\phi(N(u,v))$, where
\be{defLEphi}
\phi(t)={t\over h_1^{-1}(t)h_2^{-1}(t)}>0, \quad t\ge s_0,
\ee
\be{defLEphiN}
N(t_1,t_2)=2A+h_1(t_2)+h_2(t_1),\quad t_1, t_2\ge 0,
\ee
and the constant $A=A(f_1,f_2)>0$ is given by the following lemma.
We also set 
\be{defLEphiN2}
A_i=\inf_{s\ge 0}h_i(s)
\ee
(which is finite since $\lim_{s\to\infty}h_i(s)=\infty$).

\begin{lemma} \label{LemphiMonot}
There exists $A=A(f_1,f_2)>2\max(s_0,-A_1,-A_2)$ such that 
\be{infphi0}
\hbox{$\phi$ is increasing on $[A,\infty)$}
\ee
and
\be{infphi00}
\inf_{t\ge A}{g(\lambda t)\over g(t)}\to\infty,
\ \hbox{as $\lambda\to\infty$, \quad for $g\in\{h_1,h_2,\phi\}$.}
\ee
Moreover, we have 
\be{infphi001}
N(t_1,t_2)\ge \max\bigl(A,h_1(t_2),h_2(t_1)\bigr),\quad t_1,t_2\ge 0.
\ee
\end{lemma}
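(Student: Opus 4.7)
The plan is to exploit the fact that $h_1, h_2$, and hence $\phi$, are all regularly varying at infinity with positive indices, which will yield both \eqref{infphi0} and \eqref{infphi00}, while \eqref{infphi001} is purely algebraic.

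First, I would observe that by \eqref{hypLE1}--\eqref{hypLE2} the functions $h_i(s)=sf_i(s)$ are of the form $s^{p_i+1}L_i(s)$ with $p_1=p$, $p_2=q$, hence regularly varying at infinity with positive indices $p_i+1>1$. The condition $sL_i'(s)/L_i(s)\to 0$ gives $h_i'(s)=s^{p_i}L_i(s)\bigl[(p_i+1)+sL_i'(s)/L_i(s)\bigr]>0$ for $s$ large, so after enlarging $s_0$ if needed, $h_i\in C^1$ is strictly increasing on $[s_0,\infty)$, and its inverse is $C^1$ and regularly varying with index $1/(p_i+1)$. Thus $\phi$ is $C^1$ and regularly varying with index
$$\beta:=1-\frac{1}{p+1}-\frac{1}{q+1}=\frac{pq-1}{(p+1)(q+1)}>0,$$
the positivity hinging crucially on $pq>1$.

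For \eqref{infphi0}, I would differentiate to obtain
$$\phi'(t)=\frac{1}{h_1^{-1}(t)h_2^{-1}(t)}\left[1-\frac{t(h_1^{-1})'(t)}{h_1^{-1}(t)}-\frac{t(h_2^{-1})'(t)}{h_2^{-1}(t)}\right],$$
and then apply the inverse function theorem together with the identity $t=(h_i^{-1}(t))^{p_i+1}L_i(h_i^{-1}(t))$ to show that $t(h_i^{-1})'(t)/h_i^{-1}(t)=\bigl[(p_i+1)+sL_i'(s)/L_i(s)\bigr]^{-1}\to 1/(p_i+1)$ as $s=h_i^{-1}(t)\to\infty$. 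The bracket then converges to $\beta>0$, yielding $\phi'>0$ on some $[A_0,\infty)$. For \eqref{infphi00}, I would invoke Potter's bounds: for each $g\in\{h_1,h_2,\phi\}$, regularly varying with positive index $\alpha_g$, and any $\delta\in(0,\alpha_g)$, there exist $C_g,T_g>0$ such that $g(\lambda t)/g(t)\ge C_g\lambda^{\alpha_g-\delta}$ for all $\lambda\ge 1$ and $t\ge T_g$; choosing $A$ larger than $A_0$ and all $T_g$ then delivers the uniform divergence in $\lambda$.

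Property \eqref{infphi001} is elementary: the choice $A>2\max(s_0,-A_1,-A_2)$ gives $A+A_i>0$ for $i=1,2$, so for all $t_1,t_2\ge 0$,
$$N(t_1,t_2)-A=A+h_1(t_2)+h_2(t_1)\ge A+A_1+A_2>0,\qquad N(t_1,t_2)-h_1(t_2)=2A+h_2(t_1)\ge 2A+A_2>0,$$
and symmetrically for $h_2(t_1)$. The main obstacle will be the careful asymptotic analysis of $(h_i^{-1})'$ in the proof of \eqref{infphi0}: without the differentiability condition built into \eqref{hypLE2} one would only obtain asymptotic monotonicity of $\phi$ rather than monotonicity on an explicit half-line $[A,\infty)$, and likewise the uniform-in-$t$ formulation of \eqref{infphi00} requires the precise Potter estimate rather than the bare definition of regular variation.
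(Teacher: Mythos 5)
Your proof is correct and reaches the same conclusion, but it takes a somewhat different route than the paper. For \eqref{infphi0} you differentiate $\phi$ directly and use the inverse function theorem to extract the asymptotics $t(h_i^{-1})'(t)/h_i^{-1}(t)\to 1/(p_i+1)$, concluding $\phi'>0$ for large $t$. The paper instead never touches $\phi'$: it first shows (using $z'(s)>0$ with $z(s)=s^\eps L_i(s)$) that $h_i(s)/s^{p_i+1-\eps}$ is increasing on some $[s_1,\infty)$, deduces from this the monotonicity of $t^{1/(p_i+1-\eps)}/h_i^{-1}(t)$ by simple algebra, multiplies the two resulting inequalities to get that $\psi(t):=t^{-\theta}\phi(t)$ is nondecreasing with $\theta=\theta_\eps>0$ for $\eps$ small, and reads off the monotonicity of $\phi$. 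For \eqref{infphi00}, you invoke Potter's bounds as a black box for $h_1,h_2,\phi$, whereas the paper extracts the needed uniform lower bounds $h_i(\lambda t)/h_i(t)\ge\lambda^{p_i+1-\eps}$ and $\phi(\lambda t)/\phi(t)\ge\lambda^\theta$ directly from the same monotonicity statements, without any multiplicative constant. The paper's approach is thus more self-contained (it re-derives the Potter-type estimate from the specific hypothesis \eqref{hypLE2} rather than citing it), while yours is conceptually more direct at the cost of importing a slightly heavier result from regular variation theory. Your treatment of \eqref{infphi001} matches the paper's (both regard it as immediate from the choice of $A$). One small stylistic point: your Potter inequality should read $g(\lambda t)/g(t)\ge C_g^{-1}\lambda^{\alpha_g-\delta}$ with $C_g\ge 1$, but this does not affect the conclusion.
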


\begin{proof}
Fix $\eps\in(0,1)$ to be chosen later. 
By \eqref{hypLE1}-\eqref{hypLE2}, the function
$z(s):=s^{\eps-p-1}h_1(s)=s^\eps L_1(s)$ is $C^1$ for $s$ large enough with 
$$z'(s)=\Bigl(\eps+{sL_1'(s)\over L_1(s)}\Bigr)s^{\eps-1}L_1(s)>0.$$
This and the similar argument for $h_2$
guarantee the existence of $s_1\ge s_0$ such that
\be{infphi1}
\hbox{the functions ${h_1(s)\over s^{p+1-\eps}}$
and ${h_2(s)\over s^{q+1-\eps}}$ are increasing on $[s_1,\infty)$.}
\ee
It follows that
${h_1(\lambda t)\over h_1(t)}\ge \lambda^{p+1-\eps}$
and ${h_2(\lambda t)\over h_2(t)}\ge \lambda^{q+1-\eps}$ for all $t\ge s_1$
and $\lambda\ge1$,
hence \eqref{infphi00} for $g\in\{h_1,h_2\}$ and any $A\ge s_1$.

\smallskip

Set $t_1:= h_1(s_1)\vee h_2(s_1)$ and pick any $t'>t\ge t_1$.
Then $s':=h_1^{-1}(t')\ge h_1^{-1}(t)=:s$ and it follows from \eqref{infphi1} that
${(h_1(s'))^{1/(p+1-\eps)}\over s'}\ge {(h_1(s))^{1/(p+1-\eps)}\over s}$,
hence
\be{infphi2}
{{t'}^{1/(p+1-\eps)}\over h_1^{-1}(t')}\ge {t^{1/(p+1-\eps)}\over h_1^{-1}(t)}
\quad\hbox{and (similarly)}\quad {{t'}^{1/(q+1-\eps)}\over h_2^{-1}(t')}\ge {t^{1/(q+1-\eps)}\over h_2^{-1}(t)}.
\ee
Set $\theta=\theta_\eps=1-\bigl({1\over p+1-\eps}+{1\over q+1-\eps}\bigr)$.
Since $\theta_\eps\to {pq-1\over (p+1)(q+1)}>0$ as $\eps\to 0$, we may choose $\eps=\eps(p,q)>0$ small enough so that $\theta>0$.
Setting $\psi(t)=t^{-\theta}\phi(t)$ and multiplying the two inequalities in \eqref{infphi2} yields $\psi(t')\ge \psi(t)$,
i.e.~$\psi$ is nondecreasing on $[t_1,\infty)$.
This in particular implies \eqref{infphi0} for any $A\ge t_1$.

\smallskip

Also, for all $t\ge t_1$ and $\lambda>1$, we have 
${\phi(\lambda t)\over \phi(t)}=\lambda^\theta{\psi(\lambda t)\over \psi(t)}\ge \lambda^\theta$,
hence \eqref{infphi00} for $g=\phi$ and any $A\ge t_1$. 
Property \eqref{infphi001} is then immediate, and the proof is complete.
\end{proof}

\begin{proof}[Proof of Theorem~\ref{thmLEpert1}(i)]
{\bf Step 1.} {\it Equivalent estimate.} Recall \eqref{h1h2mon}, \eqref{defLEphi} and~\eqref{defLEphiN}.
We shall show that, for any given 
solution $(u,v)$ of \eqref{LEsystem1} in a domain $\Omega$, we have
\be{defMkLE0}
\phi(N(u,v))\le C(1+d^{-2}(x)),\quad x\in \Omega,
\ee
with $C=C(n,f_1,f_2)>0$
(note that $\phi(N(u,v))$ is well defined, owing to \eqref{infphi0}, \eqref{infphi001}).

We claim that \eqref{defMkLE0} implies \eqref{estimLE1}.
Indeed, let $x\in \Omega_1$.
If $h_2(u(x))\ge A$, then at this point
$2A+h_1(v)+h_2(u)\ge h_2(u)\ge A$ and \eqref{infphi0}, \eqref{defMkLE0} imply
$${f_2(u)\over h_1^{-1}(h_2(u))}=\phi(h_2(u))\le \phi(N(u,v))\le C(1+d^{-2}(x)).$$
If $h_2(u(x))\le A$ then, since $\lim_\infty h_2=\infty$, we have $u(x)\le C$ with $C=C(f_1,f_2)>0$.
The argument is similar for $x\in \Omega_2$. This proves the claim.

\smallskip

{\bf Step 2.} {\it Doubling-rescaling argument.} 
Thus assume for contradiction that estimate \eqref{defMkLE0} fails. 
Then, there
exist sequences
$\Omega_k$, $(u_k,v_k)$, $z_k\in\Omega_k$, such that $(u_k,v_k)$ solves
\eqref{LEsystem1} on $\Omega_k$ and
\be{defMkLE}
M_k:=\sqrt{\phi(N(u_k,v_k))}
\ee
satisfies
$$M_k(z_k)>2k\,\bigl(1+{\rm dist}^{-1}(z_k,\partial\Omega_k)\bigr).$$
By the doubling lemma in \cite[Lemma 5.1]{PQS1},
it follows that there exists $x_k\in \Omega_k$ such that
$$M_k(x_k)>2k\,{\rm dist}^{-1}(x_k,\partial\Omega_k),
\quad M_k(x_k)\ge M_k(z_k)\ge 2k$$
and
\be{LEresc0}
M_k(z)\leq 2M_k(x_k),\qquad |z-x_k|\leq k\,M_k^{-1}(x_k).
\ee
Now we rescale $(u_k,v_k)$ by setting
\be{LEresc1}
N_k=N\bigl(u_k(x_k),v_k(x_k)\bigr),\quad 
\theta_k={1\over M_k(x_k)}={1\over\sqrt{\phi(N_k)}},
\ee
\be{LEresc2}
\lambda_k=h_2^{-1}(N_k),\quad \mu_k=h_1^{-1}(N_k)
\ee
and
\be{LErescuv}
\left.
\begin{aligned}
\tilde u_k(y)&:=\lambda_k^{-1}u_k(x_k+\theta_k y),\\
\tilde v_k(y)&:=\mu_k^{-1}v_k(x_k+\theta_k y),
\end{aligned}
\quad\right\}
\qquad |y|\leq k.\\
\ee
Since $M_k(x_k)\to \infty$, we see that $N_k, \lambda_k, \mu_k\to \infty$
and $\theta_k\to 0$ as $k\to\infty$.
The rescaled functions $(\tilde u_k,\tilde v_k)$ solve
\be{LEresc3}
\left.\begin{aligned}
-\Delta_y \tilde u_k(y)&=\lambda_k^{-1}\theta_k^2 f_1(\mu_k\tilde v_k(y))=:g_{1,k}(\tilde v_k(y)),\\
\noalign{\vskip 1mm}
-\Delta_y \tilde v_k(y)&=\mu^{-1}\theta_k^2 f_2(\lambda_k\tilde u_k(y))=:g_{2,k}(\tilde u_k(y)),\\
\end{aligned}\quad\right\}
\qquad |y|\leq k.
\ee
Notice that, by \eqref{defLEphi}, \eqref{LEresc1}, \eqref{LEresc2}, we have
$$\lambda_k^{-1}\theta_k^2 f_1(\mu_k)=
(\lambda_k \mu_k)^{-1}{h_1(\mu_k)\over\phi(N_k)}=
(\lambda_k \mu_k)^{-1}{N_k\over\phi(N_k)}=
(\lambda_k \mu_k)^{-1}h_1^{-1}(N_k)h_2^{-1}(N_k)=1$$
and similarly $\mu_k^{-1}\theta_k^2 f_2(\lambda_k)=1$,
hence
\be{LEresc4}
g_{1,k}(\tilde v_k(y))={f_1(\mu_k\tilde v_k(y))\over f_1(\mu_k)},\quad
g_{2,k}(\tilde u_k(y))={f_2(\lambda_k\tilde u_k(y))\over f_2(\lambda_k)}.
\ee
Moreover, by assumption \eqref{hypLE2} and Lemma~\ref{LemUnifConv0},  
we have
\be{LEresc5}
{f_1(\mu_k s)\over f_1(\mu_k)}\to s^p,\quad 
{f_2(\lambda_k s)\over f_2(\lambda_k)}\to s^q,
\quad\hbox{uniformly for $s\ge 0$ bounded.}
\ee

\smallskip

{\bf Step 3.} {\it Local bounds.} 
We now proceed to establish the local bound
\be{LErescBound}
\tilde u_k(y), \tilde v_k(y)\le M_0,\quad |y|\le k,\quad k\ge 1,
\ee
with some constant $M_0>0$.
By \eqref{LEresc0}, we have
\be{LEresc5b}
\phi\bigl(N(u_k(x),v_k(x))\bigr)={M^2_k(x)\over M^2_k(x_k)}\phi(N_k)\le 4\phi(N_k),\quad |x-x_k|\le k\theta_k.
\ee
On the other hand, owing to \eqref{infphi00} for $g=\phi$ and \eqref{infphi001},
there exists $\lambda>1$ such that $4\phi(N_k)\le\phi(\lambda N_k)$
for all $k$. This along with \eqref{LEresc5b} and \eqref{infphi0} guarantees that
\be{LEresc5c}
N(u_k(x),v_k(x))\le\lambda N_k,\quad |x-x_k|\le k\theta_k.
\ee
Also, restricting to $k$ sufficiently large without loss of generality, we may assume that $h_1^{-1}(N_k)=\mu_k\ge A$.
Next combining \eqref{LEresc5c} with \eqref{infphi00} for $g=h_1$ and \eqref{infphi001}, we obtain $\bar\lambda>1$ such that
\be{LEresc6}
h_1(v_k(x))\le\lambda N_k=\lambda h_1(h_1^{-1}(N_k))\le h_1(\bar\lambda h_1^{-1}(N_k)),\quad |x-x_k|\le k\theta_k.
\ee
It follows from \eqref{h1h2mon} and \eqref{LEresc6} that
$$v_k(x)\le A\vee \bar\lambda h_1^{-1}(N_k)= \bar\lambda \mu_k,\quad |x-x_k|\le k\theta_k.$$
Recalling \eqref{LErescuv} and arguing similarly for $u_k$, we thus obtain \eqref{LErescBound}.

\smallskip

{\bf Step 4.} {\it Nonvanishing and conclusion.} 
Going back to \eqref{LEresc3}-\eqref{LEresc5} and using interior elliptic estimates, we deduce that 
$(\tilde u_k,\tilde v_k)$ converges locally uniformly to a (bounded) 
solution $(\tilde u,\tilde v)$ of 
 \eqref{LEsystem2} in~$\Rn$.
 
 We claim that $(\tilde u,\tilde v)$ is nontrivial.
Indeed, from \eqref{LEresc2}, we have
\be{LEresc7}
h_1(\mu_k)=h_2(\lambda_k)=N_k=2A+h_1(v_k(x_k))+h_2(u_k(x_k)).
\ee
By passing to a subsequence, we may assume that
$h_1(v_k(x_k))\ge h_2(u_k(x_k))$ (the case $h_1(v_k(x_k))\ge h_2(u_k(x_k))$ being similar)
and \eqref{LEresc7} then guarantees that $v_k(x_k)\to\infty$ and $h_1(\mu_k)\le 3h_1(v_k(x_k))$ for $k$ large.
Applying again \eqref{infphi00} for $g=h_1$, we deduce that $\mu_k\le Cv_k(x_k)$, 
hence $\tilde v_k(0)={v_k(x_k)\over \mu_k}\ge 1/C$.
Consequently, $(\tilde u,\tilde v)$ is nontrivial.
But this contradicts our nonexistence assumption.
\end{proof}

\begin{proof}[Proof of Theorem~\ref{thmLEpert1}(ii)]
This follows by modifying the proof of Theorem~\ref{thmLEpert1}(i)
along the lines of the proof of \cite[Theorem~4.1]{PQS2},
also using the Liouville property for system \eqref{LEsystem2} in $\R^n_+$ with zero boundary conditions.
The latter, for given $(p,q)$, is a consequence of the Liouville property in $\R^n$
(see \cite{BiMi} and \cite[Theorem~4.2]{PQS1}).
\end{proof}

\subsection{Proof of Theorem~\ref{thmLEpert1B}}
It is based on suitable modifications of the proof of Theorem~\ref{thmLEpert1}.
We first have the following analogue of Lemma~\ref{LemphiMonot}.

\begin{lemma} \label{LemphiMonotB}
Let $\phi$ be given by \eqref{defLEphi} for $t>0$ small. There exist $0<\eps_2<\eps_1<\eps_0$
depending only on $f_1, f_2$ such that 
\be{infphi0B}
\hbox{$\phi$ is well defined and increasing on $[0,\eps_1]$},
\ee
\be{infphi00B}
\inf_{t\in(0,\eps_1]}{g(t)\over g(\delta t)}\to\infty,
\ \hbox{as $\delta\to 0$, \quad for $g\in\{h_1,h_2,\phi\}$},
\ee
\be{infphi000B}
\sup_{t\in(0,\eps_1]}{g(t)\over g(\sigma t)}<\infty,
\ \hbox{for each $\sigma \in(0,1)$ and $g\in\{f_1,f_2,h_1^{-1}\circ h_2,h_2^{-1}\circ h_1\}$}
\ee
and
\be{Nt1t2B}
0\le h_1(t), h_2(t)\le \eps_1/2\quad\hbox{for all $t\in(0,\eps_2]$.}
\ee
\end{lemma}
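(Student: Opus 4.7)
The plan is to mimic the proof of Lemma~\ref{LemphiMonot} near $t=0$ instead of $t=\infty$, exploiting the fact that \eqref{hypLE1}--\eqref{hypLE2} now hold as $s\to 0^+$ with indices $p,q$, so that the slowly varying factors $L_i(s)=s^{-p_i}f_i(s)$ satisfy $sL_i'(s)/L_i(s)\to 0$ as $s\to 0^+$. First, from $h_i'(s)=s^{p_i}L_i(s)\bigl[(p_i+1)+sL_i'(s)/L_i(s)\bigr]$, I conclude that $h_1, h_2$ are continuous, positive and strictly increasing on some $(0,s_2]$ with $h_i(0)=0$, so $h_1^{-1}, h_2^{-1}$ are defined, positive and increasing on $[0,\delta_0]$ for some $\delta_0>0$, and $\phi(t)=t/(h_1^{-1}(t)h_2^{-1}(t))$ makes sense on $(0,\delta_0]$.

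For \eqref{infphi0B} and \eqref{infphi00B}, I fix small $\eps>0$ and note that $(s^{\eps}L_i(s))'>0$ near $0$, so $h_i(s)/s^{p_i+1-\eps}$ is strictly increasing near $0$. Arguing as in Lemma~\ref{LemphiMonot}, for $0<t<t'$ small with $s:=h_1^{-1}(t)<s':=h_1^{-1}(t')$, I obtain $t^{1/(p+1-\eps)}/h_1^{-1}(t)\le {t'}^{1/(p+1-\eps)}/h_1^{-1}(t')$ and its $h_2$-analogue. Multiplying them and setting $\theta_\eps:=1-1/(p+1-\eps)-1/(q+1-\eps)$, which is positive for $\eps$ small since $\theta_0=(pq-1)/((p+1)(q+1))>0$, I find that $\psi_\eps(t):=\phi(t)/t^{\theta_\eps}$ is nondecreasing on some $(0,\eps_1]$. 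Hence $\phi=t^{\theta_\eps}\psi_\eps$ is strictly increasing on $(0,\eps_1]$, proving \eqref{infphi0B}, and the same inequalities deliver $\phi(t)/\phi(\delta t)\ge\delta^{-\theta_\eps}$ and $h_i(t)/h_i(\delta t)\ge\delta^{-(p_i+1-\eps)}$ uniformly in $t\in(0,\eps_1]$, hence \eqref{infphi00B}.

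The remaining claim \eqref{infphi000B} follows from the dual monotonicity: for $\eps>0$ small, $(s^{-\eps}L_i(s))'<0$ near $0$, so $f_i(s)/s^{p_i+\eps}$ is strictly decreasing near $0$, which yields $f_i(t)/f_i(\sigma t)\le\sigma^{-(p_i+\eps)}$ for $t$ small and fixed $\sigma\in(0,1)$. For $h_1^{-1}\circ h_2$, the idea is to sandwich $h_2(\sigma t)/h_2(t)$ between two positive powers of $\sigma$ by using both signs of $\eps$ in the monotonicity of $h_2(s)/s^{q+1\mp\eps}$, and then transport this bound through $h_1^{-1}$ via the analogous two-sided monotonicity of $h_1$; the case $h_2^{-1}\circ h_1$ is symmetric. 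Finally, \eqref{Nt1t2B} is immediate from the continuity of $h_i$ at $0$ with $h_i(0)=0$, upon choosing $\eps_2\in(0,\eps_1]$ small enough. The only mildly technical point is the exponent bookkeeping for the compositions in \eqref{infphi000B}, but there is no essential obstacle beyond a routine transcription of the argument of Lemma~\ref{LemphiMonot}.
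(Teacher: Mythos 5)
Your proof is essentially correct, and for assertions \eqref{infphi0B}, \eqref{infphi00B} and \eqref{Nt1t2B} it follows the paper's argument almost exactly (transplanting the monotonicity of $h_i(s)/s^{p_i+1-\eps}$ and the inequalities \eqref{infphi2} from $s\to\infty$ to $s\to 0^+$, then using $\theta_\eps>0$ to get $\psi(t)=t^{-\theta}\phi(t)$ nondecreasing, and deducing $\phi(t)/\phi(\delta t)\ge\delta^{-\theta}$).

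For \eqref{infphi000B}, however, you take a genuinely different route from the paper. You propose a two-sided sandwich: fixing $\eps>0$ small so that both $h_i(s)/s^{p_i+1-\eps}$ is increasing and $h_i(s)/s^{p_i+1+\eps}$ is decreasing near $0$, you bound $h_2(\sigma t)/h_2(t)$ between $\sigma^{q+1\pm\eps}$ and then transport those bounds through the analogous two-sided monotonicity of $h_1$ to control $h_1^{-1}$. This works (one obtains, e.g., $(h_1^{-1}\circ h_2)(t)/(h_1^{-1}\circ h_2)(\sigma t)\le \sigma^{-(q+1+\eps)/(p+1-\eps)}$), but the bookkeeping you allude to is real. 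The paper instead observes that the hypothesis \eqref{hypLE2} at $0$ gives $\lim_{s\to 0^+}sg'(s)/g(s)=p$ for $g=f_1$ (and $=q$ for $f_2$), that this derivative-ratio limit property is stable under composition and passage to the inverse (the limit becoming the product or reciprocal, respectively), and then gets the uniform bound directly by integrating $g'/g$ over $[\sigma t,t]$. The paper's route avoids exponent bookkeeping for the compositions and is arguably more transparent, while yours is more elementary in that it reuses only the monotonicity comparisons already developed for the other assertions.
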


\begin{proof}
Fix $\eps\in(0,1)$ to be chosen later. 
By \eqref{hypLE1}-\eqref{hypLE2} as $s\to 0$, the function
$z(s):=s^{\eps-p-1}h_1(s)=s^\eps L_1(s)$ is $C^1$ for $s$ small 
enough with 
$$z'(s)=\Bigl(\eps+{sL_1'(s)\over L_1(s)}\Bigr)s^{\eps-1}L_1(s)>0.$$
This and the similar argument for $h_2$
guarantee the existence of $\bar\eps_0\in(0,\eps_0]$ such that
\be{infphi1B}
\hbox{the functions ${h_1(s)\over s^{p+1-\eps}}$
and ${h_2(s)\over s^{q+1-\eps}}$ are increasing on $(0,\bar\eps_0]$.}
\ee
It follows that
${h_1(t)\over h_1(\delta t)}\ge \delta^{-(p+1-\eps)}$
and ${h_2(t)\over h_2(\delta t)}\ge \delta^{-(q+1-\eps)}$ for all $t\in(0,\bar\eps_0]$  
and $\delta\in(0,1]$,
hence \eqref{infphi00B} for $g\in\{h_1,h_2\}$
 and any $\eps_1\in(0,\bar\eps_0]$.

\smallskip

Set $t_1:= h_1(\bar\eps_0)\wedge h_2(\bar\eps_0)$ and pick any $0<t<t'\le t_1$.
Then $s':=h_1^{-1}(t')\ge h_1^{-1}(t)=:s$ and it follows from \eqref{infphi1B} that
${(h_1(s'))^{1/(p+1-\eps)}\over s'}\ge {(h_1(s))^{1/(p+1-\eps)}\over s}$,
hence
\be{infphi2B}
{{t'}^{1/(p+1-\eps)}\over h_1^{-1}(t')}\ge {t^{1/(p+1-\eps)}\over h_1^{-1}(t)}
\quad\hbox{and (similarly)}\quad {{t'}^{1/(q+1-\eps)}\over h_2^{-1}(t')}\ge {t^{1/(q+1-\eps)}\over h_2^{-1}(t)}.
\ee
Set $\theta=\theta_\eps=1-\bigl({1\over p+1-\eps}+{1\over q+1-\eps}\bigr)$.
Since $\theta_\eps\to {pq-1\over (p+1)(q+1)}>0$ as $\eps\to 0$, we may choose $\eps=\eps(p,q)>0$ small enough so that $\theta>0$.
Setting $\psi(t)=t^{-\theta}\phi(t)$ and multiplying the two inequalities in \eqref{infphi2B} yields $\psi(t')\ge \psi(t)$,
i.e.~$\psi$ is nondecreasing on $(0,t_1]$. 
This in particular implies \eqref{infphi0B} for any $\eps_1\in(0,t_1]$.

Also, for all $t\in(0,t_1]$ and $\delta\in(0,1]$, we have 
${\phi(t)\over \phi(\delta t)}=\delta^{-\theta}{\psi(t)\over \psi(\delta t)}\ge \delta^{-\theta}$,
hence \eqref{infphi00B} for $g=\phi$ and any $\eps_1\in(0,t_1]$.

 We next prove \eqref{infphi000B}. If a function $g$ is $C^1$ and $>0$ near $0^+$ and $L(s):=s^{-p} g(s)$ satisfies
$\lim_{s\to 0^+} {sL'(s)\over L(s)}=0$ for some $p>0$, then
\be{limsgprimeg}
\lim_{s\to 0^+}\frac{sg'(s)}{g(s)}=p.
\ee
Taking $\eps_1>0$ smaller if necessary, property \eqref{limsgprimeg} guarantees that, for each $\sigma \in(0,1)$,
$$\log\left(\frac{g(t)}{g(\sigma t)}\right)
=\int_{\sigma t}^t \frac{g'(s)}{g(s)}ds
=\int_{\sigma t}^t (p+o(1))s^{-1} ds \le 2p\log(\sigma^{-1}),\quad t\in (0,\eps_1],$$
hence $\sup_{t\in(0,\eps_1]}{g(t)\over g(\sigma t)}<\infty$.
Now elementary computations show that property \eqref{limsgprimeg} is stable under composition 
and passage to the reciprocal ($p$ becoming the product or the inverse, respectively).
In view of our assumption that \eqref{hypLE1}, \eqref{hypLE2}  are satisfied as $s\to 0$, this implies \eqref{infphi000B}.

As for \eqref{Nt1t2B}, it is immediate for $\eps_2\in(0,\eps_1)$ small. The proof is complete.
\end{proof}

\begin{proof}[Proof of Theorem~\ref{thmLEpert1B}(i)]
{\bf Step 1.} {\it Proof of \eqref{estimLE1BD} and equivalent estimate.} 
Let $\eps_2$ be given by Lemma~\ref{LemphiMonotB}.
By Proposition~\ref{propdecay}, we may choose $D=D(n,f,\Lambda)>0$ such that, for any $U$ satisfying the assumptions of the theorem, we have
\be{uveps2}
u,v\le\eps_2\quad\hbox{in $\tilde\Omega_D$,}
\ee
which in particular implies \eqref{estimLE1BD}.
Let
$$N(t_1,t_2):=h_1(\sigma t_2)+h_2(\sigma t_1),\quad \sigma=\eps_2\Lambda^{-1}.$$
We note that, owing to $\|U\|_\infty\le\Lambda$, \eqref{infphi0B} and \eqref{Nt1t2B}, we have
\be{defMkLE0B2}
N(u,v)\le \eps_1
\ee
and the function $\phi(N(u,v))$ (where $\phi$ is given by \eqref{defLEphi}) is well defined and continuous on~$\Omega$.
We shall show that
\be{defMkLE0B}
\phi(N(u,v))\le Cd^{-2}(x),\quad x\in \tilde\Omega_D,
\ee
with $C=C(n,f,\Lambda)>0$.
Estimate \eqref{defMkLE0B} will imply \eqref{estimLE1B} since,
 by \eqref{infphi0B}, \eqref{infphi000B}, \eqref{Nt1t2B} and \eqref{uveps2}, we will then have
$${f_2(u)\over h_1^{-1}(h_2(u))}\le C{f_2(\sigma u)\over h_1^{-1}(h_2(\sigma u))}= C\phi(h_2(\sigma u))\le C\phi(N(u,v))\le Cd^{-2}(x),
\quad x\in \tilde\Omega_D,$$
and similarly for $v$.

\smallskip

{\bf Step 2.} {\it Doubling-rescaling argument.} 
Thus assume for contradiction that estimate \eqref{defMkLE0B} fails. 
By arguing exactly as in Step 2 of the proof of Theorem~\ref{thmLEpert1}(i), we obtain solutions of
$$
\left.\begin{aligned}
-\Delta_y \tilde u_k(y)&={f_1(\mu_k\tilde v_k(y))\over f_1(\mu_k)},\\
\noalign{\vskip 1mm}
-\Delta_y \tilde v_k(y)&={f_2(\lambda_k\tilde u_k(y))\over f_2(\lambda_k)},\\
\end{aligned}\quad\right\}
\qquad |y|\leq k,
$$
with
\be{LEresc0BB}
M_k(z)\le 2M_k(x_k),\qquad |z-x_k|\leq k\,M_k^{-1}(x_k),
\ee
where $M_k, N_k, \lambda_k, \mu_k$ are defined by \eqref{defMkLE}, \eqref{LEresc1}, \eqref{LEresc2}.
The main difference is that, owing to \eqref{infphi0B} and \eqref{defMkLE0B2}, we now have
$$\bar M:=\phi^{1/2}(\eps_1)\ge M_k(x_k)>2k\,{\rm dist}^{-1}(x_k,\partial\Omega_k)$$
for $k$ large, so that 
${\rm dist}(x_k,\partial\Omega_k)\ge 2k\bar M^{-1}\to \infty$ as $k\to\infty$,
hence $|U_k(x_k)|\to 0$ by Proposition~\ref{propdecay}.
Consequently, $N_k, \lambda_k, \mu_k\to 0$. Also \eqref{LEresc5} is satisfied.

\smallskip

Then arguing as in Steps 3 and 4 of the proof of Theorem~\ref{thmLEpert1}(i)
and using \eqref{defMkLE0B2}, \eqref{LEresc0BB} and Lemma~\ref{LemphiMonotB}, instead of Lemma~\ref{LemphiMonot},
we obtain the local bound $\tilde u_k(y), \tilde v_k(y)\le M_0$ for $|y|\le k$ and $k$ large, with some constant $M_0>0$,
and we conclude by reaching a contradiction similarly as before. 
 \end{proof}

\begin{proof}[Proof of Theorem~\ref{thmLEpert1B}(ii)]
By Theorem~\ref{thmLEpert1}(i), there exists $\Lambda=\Lambda(n,f)>0$ such that
any solution of~\eqref{LEsystem1} in $\Omega$ satisfies $|U|\le \Lambda$ in
the open set $\tilde \Omega_1$.
It follows from assertion (i) that 
\eqref{estimLE1BD} and \eqref{estimLE1B} are true with $\tilde\Omega_D$ replaced by 
$\bigl\{x\in\tilde\Omega_1;\ {\rm dist}(x,\partial\tilde\Omega_1)>D\bigr\}$.
Since ${\rm dist}(x,\partial\Omega)\le 2\,{\rm dist}(x,\partial\tilde\Omega_1)$ in $\tilde\Omega_D$ whenever $D\ge 2$, the conclusion follows
(with possibly different constants $C,D$, now depending only on $n,f$).
 \end{proof}

\subsection{Proof of Theorem~\ref{thmUB}}
Let us recall that in the scalar case $m=1$, the generalized rescaling limits
of any positive regularly varying function are power functions. Namely, if
$f(s)>0$ for $s>0$ large, 
and $\lim_{\lambda\to\infty}\frac{f(\lambda s)}{f(\lambda)}= f_\infty(s)\in(0,\infty)$ for all $s>0$,
then there exists $p\in\R$ such that $f_\infty(s)=s^p$
(see \cite[Theorem~1.3]{Se} for a more general statement).
If $p>0$, then Lemma~\ref{LemUnifConv0}(i) guarantees 
$$\lim_{\lambda\to\infty}\frac{f(\lambda s)}{f^+(\lambda)}=s^p \quad\hbox{locally uniformly for } s\in[0,\infty).$$
The proof of Theorem~\ref{thmUB} will use the following lemma, which provides analogous properties in the vector valued case $m\geq1$,
and whose proof is given in appendix.

\begin{lemma} \label{lem-theta}
Let $f\in C(K)$, $\Lambda>0$. 
\smallskip

(i) Assume 
\be{assh}
\hbox{$f(U)\ne0$ for $|U|\geq\Lambda$},\quad
\lim_{\lambda\to\infty}\frac{f(\lambda U)}{f^+(\lambda)}=  f_\infty(U) \quad\hbox{locally uniformly for } U\in K,
\ee
where $f_\infty$ is finite.
Then there exists $p\ge0$ such that $f_\infty$ is homogeneous of degree $p$,
\be{asshplus}
\lim_{\lambda\to\infty}\frac{\+f(\lambda s)}{\+f(\lambda)}=  \+f_\infty(s)
                    \quad\hbox{ locally uniformly for } s\ge 0, 
\ee
$\+f_\infty(s)=s^p$, and we have
\be{fpinfty}
(\forall\theta>0)\,(\exists\lambda_\theta>\Lambda)\quad
\lambda>\lambda_\theta\ \Rightarrow\ 
 \lambda^{p-\theta}\leq\+f(\lambda)\leq\lambda^{p+\theta}.
\end{equation}

(ii) Assume 
\be{assg}
\hbox{$f(U)\ne0$ for $0<|U|\leq\Lambda$},\quad
\lim_{\lambda\to0+}\frac{f(\lambda U)}{f^+(\lambda)}= f_0(U) \quad\hbox{locally uniformly for } U\in K,
\ee
where $f_0$ is finite.
Then there exists $q\geq0$ such that $f_0$ is homogeneous of degree $q$,
\be{assgplus}
\lim_{\lambda\to0+}\frac{\+f(\lambda s)}{\+f(\lambda)}= \+f_0(s)
                    \quad\hbox{ locally uniformly for } s\ge 0, 
\ee
$\+f_0(s)=s^q$,
and we have
\be{fp0}
(\forall\theta>0)\,(\exists\lambda_\theta\in(0,\Lambda))\quad
\lambda\in(0,\lambda_\theta)\ \Rightarrow\ 
 \lambda^{q+\theta}\leq\+f(\lambda)\leq\lambda^{q-\theta}.
\end{equation}
\end{lemma}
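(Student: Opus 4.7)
The plan is to establish part (i) in detail; part (ii) follows by the identical argument applied as $\lambda\to 0^+$, the only substantive change being the reversed direction of exponents in \eqref{fp0} (reflecting $\lambda<1$). Throughout, set $\+f_\infty(s):=\max_{|U|=s,\,U\in K}|f_\infty(U)|$.

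The first step is to reduce the vector-valued convergence in \eqref{assh} to a scalar regular-variation statement for $\+f$ by taking maxima on the compact slice $\{|U|=1\}\cap K$:
\begin{equation*}
\frac{\+f(\lambda s)}{\+f(\lambda)}=\max_{|U|=1,\,U\in K}\frac{|f(\lambda s U)|}{\+f(\lambda)}\longrightarrow\max_{|U|=1,\,U\in K}|f_\infty(sU)|=\+f_\infty(s),\quad \lambda\to\infty,\ s>0.
\end{equation*}
The built-in normalization $\max_{|U|=1}|f(\lambda U)|/\+f(\lambda)=1$ passes to the limit and yields $\+f_\infty(1)=1$. Splitting $\+f(\lambda st)/\+f(\lambda)=[\+f(\lambda st)/\+f(\lambda t)]\cdot[\+f(\lambda t)/\+f(\lambda)]$ and passing to the limit $\lambda\to\infty$ gives multiplicativity, $\+f_\infty(st)=\+f_\infty(s)\+f_\infty(t)$. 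Since a zero of a multiplicative function at some $s_0>0$ would propagate to every dilate and contradict $\+f_\infty(1)=1$, we have $\+f_\infty>0$ on $(0,\infty)$; being also continuous, it must be a power, $\+f_\infty(s)=s^p$ for some $p\in\R$.

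The homogeneity of $f_\infty$ is then read off from the factorization
\begin{equation*}
\frac{f(\lambda s U)}{\+f(\lambda)}=\frac{f((\lambda s) U)}{\+f(\lambda s)}\cdot\frac{\+f(\lambda s)}{\+f(\lambda)}\longrightarrow f_\infty(U)\cdot s^p,
\end{equation*}
whose LHS converges to $f_\infty(sU)$, so $f_\infty(sU)=s^p f_\infty(U)$. The constraint $p\ge 0$ comes from continuity of $f_\infty$ at $0\in K$: $\+f_\infty(s)=s^p$ must stay finite as $s\to 0^+$ (it tends to $|f_\infty(0)|$), ruling out $p<0$. The locally uniform convergence \eqref{asshplus} is then the standard uniform-convergence theorem for continuous regularly varying functions (see \cite{Se}), and the Potter-type estimate \eqref{fpinfty} is a classical consequence of $\+f\in C([\Lambda,\infty))$ being regularly varying of index $p$ at $\infty$: Karamata's representation theorem gives $\log\+f(\lambda)/\log\lambda\to p$, which for every $\theta>0$ rewrites as \eqref{fpinfty} for $\lambda$ sufficiently large. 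The most delicate step is the initial commutation of $\max_{|U|=s}$ with the limit $\lambda\to\infty$; this is where locally uniform (rather than merely pointwise) convergence in \eqref{assh} is essential, since without it $\+f_\infty$ and hence the exponent $p$ would not even be well-defined.
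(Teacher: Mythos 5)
Your proof is correct and follows essentially the same route as the paper: reduce the vector statement to scalar regular variation of $f^+$ by taking maxima over slices (the locally uniform convergence in \eqref{assh} being what lets the max pass to the limit), classify the scalar limit as a power $s^p$, read off the homogeneity of $f_\infty$ from the factorization $f(\lambda sU)/f^+(\lambda)=[f(\lambda sU)/f^+(\lambda s)]\cdot[f^+(\lambda s)/f^+(\lambda)]$, and obtain the Potter-type bound \eqref{fpinfty} from Karamata's representation theorem. The only minor variation is that you rederive by hand (via the Cauchy multiplicativity argument) the characterization of the limit as a power, which the paper instead invokes as a black box from Seneta's regular variation theory.
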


\begin{remark} \label{rem-superlin}
(i) Assume \eqref{assh}. Then \eqref{asshplus} is true with $f_\infty(s)=s^p$ for some $p\geq0$ and
the proof of \cite[Lemma~8.2(i)]{SouDCDS} implies  
$$ (\forall\theta>0)(\exists c_\theta>0)\quad
\inf_{\lambda\geq\Lambda}\frac{f^+(\lambda s)}{f^+(\lambda)}\geq c_\theta s^{p-\theta}
\qquad\hbox{for all } s\geq1. $$
Similarly if we assume \eqref{assg}, then \eqref{assgplus} is true with $f_0(s)=s^q$ for some $q\geq0$ and
$$ (\forall\theta>0)(\exists c_\theta>0)\quad 
\frac{f^+(\lambda s)}{f^+(\lambda)}\geq c_\theta s^{q-\theta}
\qquad\hbox{provided } s\geq1 \hbox{ and } \lambda s\leq\Lambda.$$
Finally, if $f(U)\ne0$ for $U\ne0$, then
\eqref{asshplus}, \eqref{assgplus} with $f_\infty(s)=s^p$, $f_0(s)=s^q$, $p,q>1$, 
and the proof of \cite[Lemma~8.2(ii)]{SouDCDS} imply
the existence of $r>1$ and $c>0$ such that
$$  \inf_{\lambda>0}\frac{f^+(\lambda s)}{f^+(\lambda)}\geq c s^{r}
\qquad\hbox{for all } s\geq1. $$

(ii) Although the function $f^+$ depends on the choice of the norm $|\cdot|$ in $\R^m$, Lemma 7.3 % \ref{lem-theta}  
remains true for any choice of this norm.
In addition, if $|\cdot|,|\cdot|\tilde{\phantom{w}}$ are two norms in $\R^m$ and
$\tilde f^+(\lambda)=\max_{|U|\tilde{\phantom{w}}=\lambda}|f(U)|\tilde{\phantom{w}}$, then
$$ \tilde f_0(U):=\lim_{\lambda\to0+}\frac{f(\lambda U)}{\strut\tilde f^+(\lambda)}
     =\lim_{\lambda\to0+}\frac{f(\lambda U)}{f^+(\lambda)}\,\frac{f^+(\lambda)}{\strut\tilde f^+(\lambda)}
   =f_0(U) \lim_{\lambda\to0+}\frac{f^+(\lambda)}{\strut\tilde f^+(\lambda)}
   = f_0(U) \frac{f^+_0(1)}{\strut\tilde f^+_0(1)}. $$
\end{remark}

\begin{proof}[Proof of Theorem~\ref{thmUB}]
The nonexistence of nontrivial bounded entire solutions of $-\Delta V=\varphi(V)$ implies $\varphi(U)\ne0$ whenever $U\ne0$.
In particular, if $\varphi=f$, then $f^+(\lambda)>0$ for $\lambda>0$.

If $U\ne0$, then we set 
$$M(U):=\sqrt{\frac{\+f(|U|)}{\sigma+|U|}},$$
where $\sigma=1$ in Case (i) and $\sigma=0$ otherwise.
Assuming $|U|\geq\Lambda$ in Case (i) and $|U|\leq\Lambda$ in Case (ii),
we will prove
\begin{equation} \label{estM}
 M(U)\leq C(\sigma+\hbox{\rm dist}(x,\partial \Omega)^{-1}).
\end{equation}
Notice that \eqref{estM} implies estimates \eqref{UBh} and \eqref{UB}.

Remark~\ref{rem-superlin}(i) implies the existence of $c>0$ and $r>1$ such that
$$f^+(\lambda s)\geq cs^rf^+(\lambda)
\quad\hbox{whenever }s\geq1\hbox{ and }
\begin{cases} \lambda\geq\Lambda &\hbox{ in Case (i)},\\
              \lambda s\in(0,\Lambda] &\hbox{ in Case (ii)},\\
              \lambda>0 &\hbox{ in Case (iii)}.
\end{cases}$$
Fix $s_0>1$ such that $\kappa_0:=cs_0^{r-1}\ge 4$.
Then the inequality $f^+(\lambda s)\geq cs^rf^+(\lambda)$
implies 
\begin{equation} \label{fsup2}
f^+(\lambda s)> \kappa_0 sf^+(\lambda)\quad\hbox{for }s > s_0
\end{equation}
and inequality \eqref{fsup2} guarantees the implication
\begin{equation} \label{s0bound}
M^2(U)\leq \kappa_0 M^2(V)\ \Rightarrow\ |U|\leq s_0|V|,
\end{equation}
provided also $|U|,|V|\geq\Lambda$ in Case (i)
and $|U|,|V|\leq\Lambda$ in Case (ii). 

We will use the same arguments as in the proof of \cite[Theorem 4.3]{SouDCDS}. 
Assume to the contrary that there exist $\Omega_k$,
solutions $U_k$ of $-\Delta U=f(U)$ in $\Omega_k$ and $x_k\in \Omega_k$ such that
\begin{equation} \label{flarge}
 M_k(x_k)>2k 
 \bigl({\sigma+}\hbox{dist}(x_k,\partial \Omega_k)^{-1}\bigr),
\end{equation}
where $M_k:=M(U_k)$, with also
$|U_k(x_k)|\geq\Lambda$ in Case (i) and $\|U_k\|_\infty\leq\Lambda$ in Case (ii).
 The doubling lemma in \cite[Lemma~5.1]{PQS1}
guarantees that, changing the points $x_k$ if necessary, we may also assume
$M_k(x)\le 2M_k(x_k) \le \sqrt{\kappa_0} M_k(x_k)$ in $\hat \Omega_k=\{x:|x-x_k|\leq \frac{k}{M_k(x_k)}\}$.
(If $k$ is large, then the inequality $|U_k(x_k)|\geq\Lambda$ in Case~(i) remains true
 for the new points $x_k$ given by
the doubling lemma, since $M_k\to\infty$ 
in Case~(i) and $f^+(U)$ is bounded for $|U|\leq\Lambda$.)
Consequently, \eqref{s0bound} implies $|U_k(x)|\leq s_0|U_k(x_k)|$ for $x\in \hat \Omega_k$.
Set
$$ m_k:=|U_k(x_k)|,\quad\alpha_k:=\frac1{M_k(x_k)},\quad
V_k(y)=\frac1{m_k}U_k(x_k+\alpha_k y), $$
where $y\in\tilde \Omega_k:=B_{k/2}$.
Then $|V_k(0)|=1$, $|V_k|\leq s_0$ and
\begin{equation} \label{eqVk} 
-\Delta V_k=\frac{\alpha_k^2}{m_k}f(m_k V_k)={\frac{\sigma+m_k}{m_k}}\frac1{\+f(m_k)}f(m_k V_k)\quad\hbox{in }\tilde \Omega_k.
\end{equation}
Since $m_k\to\infty$ in Case~(i) and $m_k\leq\Lambda$ in Case~(ii),
 our assumptions guarantee that
the right-hand side of \eqref{eqVk} is bounded,
and we can suppose that $V_k\to W$ locally uniformly,
where $|W(0)|=1$ and $|W|\leq s_0$.
If $m_k\to\infty$ or $m_k\to0$, then $W$ is a nontrivial bounded entire solution 
of $-\Delta W=\varphi(W)$, where $\varphi=f_\infty$ or $\varphi=f_0$, respectively,
which yields a contradiction.
Hence we may assume $m_k\to a\in(0,\infty)$.
Then $W$ is a nontrivial bounded entire solution of 
$-\Delta W=\frac{f(aW)}{\+f(a)}$, hence $\tilde W(y):=aW(\lambda y)$
with $\lambda:=\sqrt{\+f(a)/a}$ is a nontrivial bounded entire solution of 
$-\Delta\tilde W=f(\tilde W)$, which yields a contradiction again.
\end{proof}

\goodbreak

\section{Appendix: Some properties of regularly varying functions}

It is well known (see \cite[Theorem~1.1]{Se}) that, for any regularly varying function of index $p\in\R$, the generalized rescaling limits 
are uniform on compact subsets of $(0,\infty)$.
We have actually used the following lemma, which gives a stronger property: When $p>0$,
 the convergence is uniform on compact subsets of $[0,\infty)$.
 This is probably known but we haven't found the result in the literature.

\begin{lemma} \label{LemUnifConv0}
Let $f\in C([0,\infty))$ and $p>0$.

\strut\hbox{\rm(i)} If $f(s)>0$ for $s>0$ large 
and $\lim\limits_{\lambda\to\infty}\frac{f(\lambda s)}{f(\lambda)}=s^p$
for all $s>0$, then
\be{limLocUnif}
\lim_{\lambda\to\infty}\frac{f(\lambda s)}{f(\lambda)}=s^p \quad\hbox{locally uniformly for } s\in[0,\infty).
\ee

\strut\hbox{\rm(ii)} If $f(s)>0$ for $s>0$ small
and $\lim\limits_{\lambda\to0+}\frac{f(\lambda s)}{f(\lambda)}=s^p$
for all $s>0$, then
\be{limLocUnif0}
\lim_{\lambda\to0+}\frac{f(\lambda s)}{f(\lambda)}=s^p \quad\hbox{locally uniformly for } s\in[0,\infty).
\ee
\end{lemma}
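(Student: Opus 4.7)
The plan is to reduce both assertions to Karamata's uniform convergence theorem for slowly varying functions, and then to extend the conclusion up to $s=0$ by means of Potter's inequalities. I would first set $L(\lambda):=\lambda^{-p}f(\lambda)$ (well defined and positive for $\lambda$ large in case (i), or $\lambda>0$ small in case (ii)), so that the hypothesis $f(\lambda s)/f(\lambda)\to s^p$ for each $s>0$ translates into $L(\lambda s)/L(\lambda)\to 1$; in other words, $L$ is slowly varying at $\infty$ in case (i) and at $0^+$ in case (ii). Karamata's classical uniform convergence theorem (see, e.g., \cite[Theorem~1.1]{Se}) then yields $L(\lambda s)/L(\lambda)\to 1$, hence $f(\lambda s)/f(\lambda)\to s^p$, uniformly for $s$ in each compact subset of $(0,\infty)$. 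The remaining task is thus to secure a matching estimate on a small interval $[0,s_0]$.

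For this I fix $\eta\in(0,p)$ and invoke Potter's inequality, which yields $C>0$ and a threshold $\lambda_0$ such that
\[
\frac{L(\mu)}{L(\lambda)}\le C\max\bigl((\mu/\lambda)^{\eta},(\mu/\lambda)^{-\eta}\bigr)
\]
whenever $\lambda$ and $\mu$ both lie in the asymptotic regime, namely $\lambda,\mu\ge\lambda_0$ in case (i), and $\lambda,\mu\in(0,\lambda_0]$ in case (ii). Specialising to $\mu=\lambda s$ with $s\in(0,1]$ (so that the maximum equals $s^{-\eta}$), this reduces to the key bound $f(\lambda s)/f(\lambda)\le Cs^{p-\eta}$, which vanishes as $s\to 0^+$ since $p-\eta>0$.

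In case (ii), the regime condition is automatically verified for all $s\in(0,1]$ as soon as $\lambda<\lambda_0$, so choosing $s_0\in(0,1]$ small enough that $Cs_0^{p-\eta}+s_0^p<\varepsilon$ completes the argument on $(0,s_0]$; the value $s=0$ is then disposed of by continuity, which, combined with the bound above, forces $f(0)=0$. In case (i), the analogous analysis covers only the sub-range $\lambda s\ge\lambda_0$, and the hard part will be the complementary sub-range $\lambda s<\lambda_0$, for which Potter's inequality gives no information. Here I would use the continuity of $f$ on $[0,\lambda_0]$ to bound the numerator by $M_0:=\max_{[0,\lambda_0]}f$, and exploit the lower Potter bound (valid because $p>0$), which yields $f(\lambda)\ge c\lambda^{p-\eta}\to\infty$ as $\lambda\to\infty$, to conclude that $f(\lambda s)/f(\lambda)\le M_0/f(\lambda)\to 0$ uniformly in such $s$. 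Assembling the three estimates (Karamata on $[s_0,M]$; Potter on the asymptotic part of $[0,s_0]$; continuity plus growth of $f$ on the residual non-asymptotic part) yields the locally uniform convergence claimed in both parts.
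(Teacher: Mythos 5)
Your proposal is correct and follows essentially the same route as the paper: reduce to the slowly varying function $L(\lambda)=\lambda^{-p}f(\lambda)$, invoke Karamata's uniform convergence theorem on compacts of $(0,\infty)$, then handle a neighborhood of $s=0$ via polynomial bounds on $L(\lambda s)/L(\lambda)$, finishing case (i) with a separate treatment of the non-asymptotic sub-range $\lambda s<\lambda_0$ using boundedness of $f$ on $[0,\lambda_0]$ together with $f(\lambda)\to\infty$. The only difference is cosmetic: you quote Potter's inequality directly, whereas the paper derives the equivalent estimates from the Karamata representation theorem (\cite[Theorem~1.2]{Se}), and in case (ii) you should note that Potter's bounds hold in the $0^+$ regime after the substitution $s\mapsto1/s$, as the paper does explicitly.
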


\begin{proof}
(i) Set $L(s):=s^{-p}f(s)$ for $s>0$.
Then $L$ satisfies 
\be{Lslow}
\lim_{\lambda\to\infty}\frac{L(\lambda s)}{L(\lambda)}=1\quad\hbox{for } s>0,
\ee
hence $L$ is slowly varying (see \cite{Se}), and \cite[Theorem~1.1]{Se} guarantees
that the limit in \eqref{Lslow} is locally uniform for $s\in(0,\infty)$.
Since $\frac{f(\lambda s)}{f(\lambda)}=s^p\frac{L(\lambda s)}{L(\lambda)}$,
assertion \eqref{limLocUnif} is true for $s\in(0,\infty)$.
Consequently, it is sufficient to prove that given $\eps>0$, there exist $\delta>0$
and $\Lambda>0$ such that 
\be{estLocUnif}
\Big|\frac{f(\lambda s)}{f(\lambda)}-s^p\Big|<\eps
\quad\hbox{whenever } s\in[0,\delta)\ \hbox{ and } \lambda>\Lambda.   
\ee
Fix $\eps>0$ and $\theta\in(0,p)$.
Then the representation theorem \cite[Theorem 1.2]{Se} implies
the existence of $\Lambda_1>0$ such that $L(\lambda)\geq\lambda^{-\theta}$ for $\lambda>\Lambda_1$.
In particular, $f(\lambda)\geq\lambda^{p-\theta}$ for $\lambda>\Lambda_1$,
hence there exists $\tilde\Lambda_1\ge\Lambda_1$ such that the inequality in \eqref{estLocUnif} 
is true for $s=0$ and $\lambda>\tilde\Lambda_1$.
The representation theorem \cite[Theorem 1.2]{Se} also implies the existence of $\delta_2\in(0,1)$ and $\Lambda_2>0$
such that $\frac{L(\lambda s)}{L(\lambda)}\leq s^{-\theta}$ provided $s\in(0,\delta_2)$ and $\lambda s>\Lambda_2$.
Consequently, there exists $\delta_1\in(0,\delta_2]$ such that
$$ \Big|\frac{f(\lambda s)}{f(\lambda)}-s^p\Big|=s^p\Big|\frac{L(\lambda s)}{L(\lambda)}-1\Big|
\leq s^p(s^{-\theta}+1)<\eps$$
whenever $s\in(0,\delta_1)$ 
and $\lambda s>\Lambda_2$.

Next assume $0<\lambda s\leq\Lambda_2$, $s<\delta_3$, $\lambda>\Lambda_3\geq\Lambda_1$.
Since $|f(t)|\leq C_2$ for $t\in[0,\Lambda_2]$, we obtain  
$$ \Big|\frac{f(\lambda s)}{f(\lambda)}-s^p\Big|=s^p\Big|\frac{L(\lambda s)}{L(\lambda)}-1\Big|
\leq s^p(C_2(\lambda s)^{-p}\lambda^\theta+1)=s^p+C_2\lambda^{\theta-p}<\eps$$
provided $\delta_3$ is small enough and $\Lambda_3$ is large enough.
This concludes the proof of \eqref{estLocUnif}. 

\smallskip

(ii) Set $\tilde L(s):=s^{-p}f(s)$ and $L(s):=\tilde L(1/s)$ for $s>0$.
Then \eqref{Lslow} is true and
$\frac{f(\lambda s)}{f(\lambda)}=s^p\frac{L(1/(\lambda s))}{L(1/\lambda)}$,
hence, as in (i), the limit in \eqref{limLocUnif0} is locally uniform for $s\in(0,\infty)$
and it is sufficient to show
\be{estLocUnif0}
\Big|\frac{f(s/\lambda)}{f(1/\lambda)}-s^p\Big|<\eps
\quad\hbox{whenever }\ s\in[0,\delta)\ \hbox{ and }\ \lambda>\Lambda.   
\ee
Since, by assumption, $\lim\limits_{\lambda\to0+}\frac{f(\lambda/2)}{f(\lambda)}=2^{-p}<1$
hence $f(0)=0$, the inequality in \eqref{estLocUnif0}
is true if $s=0$.
As in (i) we obtain $\frac{L(\lambda/s)}{L(\lambda)}\leq s^{-\theta}$ if $s<\delta_2$ and $\lambda>\Lambda_2$.
Consequently,
$$ \Big|\frac{f(s/\lambda)}{f(1/\lambda)}-s^p\Big|=s^p\Big|\frac{L(\lambda/s)}{L(\lambda)}-1\Big|
\leq s^p(s^{-\theta}+1)<\eps$$
if $s>0$ is small enough and $\lambda>\Lambda_2$, hence \eqref{estLocUnif0} is true.
\end{proof}

We next give the proof of Lemma~\ref{lem-theta}.

\begin{proof}[Proof of Lemma~\ref{lem-theta}]
We will only prove (ii); the proof of (i) is analogous.

It is easily seen that $f_0$ is continuous.
We first prove \eqref{assgplus}.
Set $K_s=\{U\in K:\ |U|=s\}$. 
Fix $\eps,M>0$. By \eqref{assg} 
there exists $\lambda_0>0$ such that, for all $0<\lambda\le\lambda_0$,
\be{F1abs}
\Bigl|\frac{f(\lambda U)}{\+f(\lambda)}-f_0(U)\Bigr|\le\eps
                    \quad\hbox{ for } U\in K,\ |U|\le M.
\ee
In particular we have
$$
\frac{|f(\lambda U)|}{\+f(\lambda)}\le |f_0(U)|+\eps\le \+f_0(s)+\eps
                    \quad\hbox{ for } U\in K_s,\ 0\le s\le M.
$$
Since $K_{\lambda s}=\lambda K_s$, by taking supremum over $K_s$, we get
\begin{equation} \label{F1plusupper}
\frac{\+f(\lambda s)}{\+f(\lambda)}\le \+f_0(s)+\eps
                    \quad\hbox{ for } 0\le s\le M.
\end{equation}
On the other hand, for any $s\ge 0$ we may choose $U_s\in K_s$ such that $\+f_0(s)=|f_0(U_s)|$,
hence \eqref{F1abs} with $U=U_s$ implies
\begin{equation} \label{F1pluslower}
\frac{\+f(\lambda s)}{\+f(\lambda)}\ge\frac{|f(\lambda U_s)|}{\+f(\lambda)}\ge |f_0(U_s)|-\eps=\+f_0(s)-\eps
                    \quad\hbox{ for } 0\le s\le M.
\end{equation}
Inequalities \eqref{F1plusupper} and \eqref{F1pluslower} together imply \eqref{assgplus}.

Notice that $\+f_0(1)=1$ and $\+f_0$ is continuous, hence $\+f_0$ is positive on a neighborhood of~$1$.
Consequently,
by standard results from regular variation theory (see, e.g.,~\cite[Theorem~1.3]{Se}), 
it follows from \eqref{assgplus} that $f_+$ has regular variation at $0$,
i.e. there exists $q\in\R$ such that
$$
L(s):= s^{-q} \+f(s) \hbox{ satisfies } 
\lim_{\lambda\to 0+} \frac{L(\lambda s)}{L(\lambda)}=1\ \hbox{ for each $s>0$.}
$$
This implies $\+f_0(s)=s^q$.
Moreover we have $q\ge 0$ owing to the continuity of $f$.
Then, for each $\mu>0$, going back to \eqref{assg}, we obtain
\begin{equation} 
f_0(\mu U)=\lim_{\lambda\to0+}\frac{f(\lambda \mu U)}{\+f(\lambda)}=
\lim_{\lambda\to0+}\frac{\+f(\mu\lambda)}{\+f(\lambda)}\lim_{\lambda\to0+}\frac{f(\lambda \mu U)}{\+f(\lambda\mu)}
=\mu^q f_0(U)
\end{equation}
i.e.,~$f_0$ is homogeneous of degree $q$.

On the other hand, by the integral representation property for slow variation functions (see~\cite[Section 1.2]{Se}),
there exist continuous functions $\eta,\xi$ such that
 $\lim_{s\to 0} \eta(s) = c$, $\lim_{z\to 0} \xi(z)=0$ and
$$  L(s)=\exp\Bigl[\eta(s)+\int_1^s z^{-1}\xi(z)\,dz\Bigr],\quad s\in(0,\Lambda). 
$$
As a consequence, we have $\log L(s)=o(|\log s|)$ as $s\to 0$,
which readily implies \eqref{fp0}. 
\end{proof}

\noindent{\bf Acknowledgement.}
The first author was supported in part by the Slovak Research and Development Agency
under the contract No.~APVV-23-0039 and by VEGA grant 1/0245/24.
Part of this work was done during visits of the first author at Universit\'e Sorbonne Paris Nord
and of the second author at Comenius University. The authors thank these institutions for their support.

\medskip

\noindent{\bf Declarations.}
The authors have no competing interests to declare regarding the work in this paper.

\font\pc=cmcsc9
\font\rmn=cmr9
\font\sln=cmsl9
\font\rmb=cmbx8 scaled 1125 \rm
\font\it=cmti9
\font\spa=cmr9


\begin{thebibliography}{10}

 
 \bibitem{AS} 
{\pc  S.N. Armstrong, B. Sirakov,}
Nonexistence of positive supersolutions of el- liptic equations via the maximum principle, 
{\sln Comm. Partial Differential Equations} 36 (2011), 2011--2047.


 \smallskip
 
\bibitem{AYZ} 
{\pc  F. Arthur, X. Yan, M. Zhao,}
A Liouville-type theorem for higher order elliptic systems, 
{\sln Discrete Contin. Dyn. Syst.} 34 (2014), 3317--3339.

 \smallskip
 
 
\bibitem{BFdP00}
{\pc  R. Bam\'on, I. Flores, M. del Pino,}
Ground states of semilinear elliptic equations: a geometric approach.
{\sln Ann. Inst. Henri Poincar\'e, Analyse non lin\'eaire} 17 (2000), 551--581.


  \smallskip
 
 \bibitem{BCN} 
{\pc  H. Berestycki, L. Caffarelli, L. Nirenberg, }
Further qualitative properties for elliptic equations in unbounded domains, 
{\sln Ann. Sc. Norm. Super. Pisa Cl. Sci.} (4) 25 (1997), 69--94.

 
 \smallskip
 
\bibitem{BCDN} 
{\pc  H. Berestycki, I. Capuzzo-Dolcetta, L. Nirenberg, }
Superlinear indefinite elliptic problems and nonlinear Liouville theorems, 
{\sln Topol. Methods Nonlinear Anal.} 4 (1994), 59--78.

 \smallskip
 
\bibitem{Bia} 
{\pc  G. Bianchi,}
 Non-existence of positive solutions to semilinear elliptic equa- tions on $\R^n$ or $\R^n_+$ through the method of moving planes, 
 Comm. Partial Differential Equations 22 (1997), 1671--1690.

 \smallskip
 
\bibitem{BVGHV} 
{\pc M.-F. Bidaut-V\'eron, M. Garc\'ia-Huidobro, L. V\'eron},
Estimates of solutions of elliptic equations with a source reaction term involving the product of the
function and its gradient,
{\sln Duke Math. J.} 168 (2019), 1487--1537.

 \smallskip
 
\bibitem{BVGHV2} 
{\pc M.-F. Bidaut-V\'eron, M. Garc\'ia-Huidobro, L. V\'eron},
A priori estimates for elliptic equations with reaction terms involving the function and its gradient,
{\sln Math. Ann.} 378 (2020), 13--56.

 \smallskip

\bibitem{BVR}
{\pc  M.-F. Bidaut-V\'eron, Th. Raoux,}
Asymptotics of solutions of some nonlinear elliptic systems,
{\sln Comm. Partial Differential Equations} 21 (1996), 1035--1086.

\bibitem{BVV}
{\pc  M.-F. Bidaut-V\'eron, L. V\'eron,}
Nonlinear elliptic equations on compact Riemannian manifolds and asymptotics of Emden equations, 
{\sln Invent. Math.} 106 (1991), 489--539.
 
\smallskip

\bibitem{BiMi} 
{\pc I. Birindelli, E. Mitidieri,}
Liouville theorems for elliptic inequalities and applications, 
{\sln Proc. Roy. Soc. Edinburgh} Sect. A 128 (1998), 1217--1247.

\smallskip

\bibitem{BM} {\pc  J. Busca, R. Man\'asevich},
A Liouville-type theorem for Lane-Emden system,
{\sln  Indiana Univ. Math. J.} 51 (2002),  37--51.

\smallskip

\bibitem{BuSi} {\pc  J. Busca, B. Sirakov},
Symmetry results for semilinear elliptic systems in the whole space,
{\sln  J. Differ. Equations} 163 (2000), 41-56.

\smallskip

\bibitem{CGS} 
{\pc L.A. Caffarelli, B. Gidas, J. Spruck},
Asymptotic symmetry and local behavior of semilinear elliptic equations with critical Sobolev growth, 
{\sln Comm. Pure Appl. Math.} 42 (1989), 271--297.

\smallskip

\bibitem{CLi}
{\pc  W. Chen, C. Li,}
Classification of solutions of some nonlinear elliptic equations, 
{\sln Duke Math. J.} 63 (1991), 615--622.

\smallskip

\bibitem{CLZ}
{\pc  Z. Chen, C.-S. Lin, W. Zou,}
Monotonicity and nonexistence results to cooperative systems in the half space, 
{\sln J. Funct. Anal.} 266 (2014), 1088--1105.

\smallskip
 
 \bibitem{DAm}
{\pc  L. D'Ambrosio,}
 A new critical curve for a class of quasilinear elliptic systems, 
{\sln Nonlinear Anal.} 78 (2013), 62--78.
  \smallskip
 
\bibitem{Dan}
{\pc  E.N. Dancer,}
Some notes on the method of moving planes, 
{\sln Bull. Austral. Math. Soc. } 46 (1992), 425--434.

\smallskip

\bibitem{DWW}  
{\pc  N. Dancer, J.-C. Wei, T. Weth},
A priori bounds versus multiple existence of positive solutions for
a nonlinear Schr\"odinger system,
{\sln Ann. Inst. H. Poincar\'e Anal. Non Lin\'eaire} 27 (2010), 953--969.

\smallskip

\bibitem{DW}  {\pc  N. Dancer, T. Weth},
Liouville-type results for noncooperative elliptic systems in a half space,
{\sln J. Lond. Math. Soc., II.} Ser.  86 (2012), 111--128.


\smallskip

\bibitem{DYZ}
 {\pc  E.N. Dancer, H. Yang, W. Zou},
Liouville-type results for a class of quasilinear elliptic systems and applications,
{\sln J. London Math. Soc.} 99 (2019), 273--294.

\smallskip

\bibitem{Dav}  
{\pc  E. B. Davies,}
 The equivalence of certain heat kernel and Green function bounds, 
{\sln J. Funct. Anal.} 71 (1987), 88--103.

\smallskip


\bibitem{DG16}
{\pc  J. D\'avila, I. Guerra,}
Slowly decaying radial solutions of an elliptic equation with subcritical and supercritical exponents.
{\sln J. Anal. Math.} 129 (2016), 367--391.


\smallskip

\bibitem{DFT}
{\pc  L. Dupaigne, A. Farina, T. Petitt,}
Liouville-type theorems for the Lane-Emden equation in the half-space and cones,
{\sln J. Funct. Anal.} 284 (2023), 109906.

\smallskip

\bibitem{DSS}
{\pc  L. Dupaigne, B. Sirakov, Ph. Souplet,}
A Liouville-type theorem for the Lane-Emden equation in a half-space,
{\sln Int. Math. Res. Not.} 2022, 9024-9043 (2022).

\smallskip


\bibitem{Far1}
{\pc  A. Farina,}
On the classification of solutions of the Lane-Emden equation on unbounded domains of $R^N$,
{\sln J. Math. Pures Appl.} 87 (2007), 537--561.

\smallskip

\bibitem{Far}
{\pc  A. Farina,}
 Symmetry of components, Liouville-type theorems and classification results for some nonlinear elliptic systems, 
{\sln Discrete Contin. Dyn. Syst.} 35 (2015), 5869--5877.

\smallskip

\bibitem{FV07}
{\pc  A. Farina, E. Valdinoci,} Flattening results for elliptic PDEs in unbounded domains with applications to overdetermined problems, 
{\sln Arch. Ration. Mech. Anal.} 195 (2010), 1025--1058.

\smallskip

\bibitem{Faz}  
{\pc  M. Fazly,}
Liouville theorems for the polyharmonic H\'enon-Lane-Emden system, 
{\sln Methods Appl. Anal.} 21 (2014), 265--282.

\smallskip

\bibitem{FazGh}  
{\pc  M. Fazly, N. Ghoussoub,}
On the H\'enon-Lane-Emden conjecture, 
{\sln Discrete Contin. Dyn. Syst.} 34 (2014), 2513--2533.

\smallskip

\bibitem{Fel}  
{\pc P. Felmer},
Non-existence and symmetry theorems for elliptic systems in $\R^n$, 
{\sln Rendi. Circ. Mate. Palermo} 42 (1994), 259--284.

\smallskip

\bibitem{dFF}  {\pc D.G. de Figueiredo, P. Felmer},
A Liouville-type theorem for elliptic systems,
{\sln Ann. Sc. Norm. Super. Pisa Cl. Sci.}
(4) 21 (1994), 387--397.

 \smallskip
 
\bibitem{FPS}
{\pc R. Filippucci, P. Pucci, Ph. Souplet,}
A Liouville-type theorem in a half-space and its applications to the gradient blow-up behavior for superquadratic diffusive Hamilton-Jacobi equations,
{\sln Commun. Partial Differ. Equations} 45 (2020), 321--349.

 \smallskip
 
\bibitem{FPS2}
{\pc R. Filippucci, P. Pucci, Ph. Souplet,}
A Liouville-type theorem for an elliptic equation with superquadratic growth in the gradient,
{\sln Adv. Nonlinear Stud.}
 20 (2020),  245--251.
 
  \smallskip
  
  
  \bibitem{F04}
{\pc I. Flores,}
A resonance phenomenon for ground states of an elliptic equation of Emden-Fowler type.
{\sln J. Differ. Equations} 198 (2004), 1--15.


  \smallskip

\bibitem{Fran}
{\pc  D.J. Frantzeskakis},
Dark solitons in atomic Bose-Einstein condensates: from theory to experiments. 
{\sln J. Phys. A: Math. Theor.} 43 (2010), 213001.
 

 \smallskip

\bibitem{GIR}
{\pc J. Garc\'\i a-Meli\'an, L. Iturriaga, H. Ramos Quoirin,}
A priori bounds and existence of solutions for slightly superlinear elliptic problems,
{\sln Adv. Nonlinear Stud.} 15 (2015), 923--938.


 \smallskip

\bibitem{GSa} 
{\pc  B.~Gidas, J.~Spruck, }
Global and local behavior of positive solutions of nonlinear elliptic equations,
{\sln Commun. Pure Appl. Math. } 34 (1981), 525--598.

 \smallskip
 
\bibitem{GSb} 
{\pc  B.~Gidas, J.~Spruck, }
A~priori bounds for positive solutions of a nonlinear elliptic equations,
{\sln Commun. Partial. Differ. Equations.} 6 (1981), 883--901.

\smallskip

\bibitem{GLW} 
{\pc Y. Guo, B. Li, J. Wei},
Entire nonradial solutions for non-cooperative coupled elliptic system with critical exponents in $\R^3$,
{\sln J. Differ. Equations} 256 (2014), 3463--3495.

\smallskip
 
\bibitem{GuLi} 
{\pc Y. Guo, J. Liu,} 
Liouville type theorems for positive solutions of elliptic system in $\R^n$,
{\sln Comm. Partial Differential Equations} 33 (2008), 263--284.

 \smallskip
 
\bibitem{KL} 
{\pc Yu.S. Kivshar, B. Luther-Davies},
Dark optical solitons: physics and applications.
{\sln Phys. Rep.} 298 (1998), 81--197.

 \smallskip
 \bibitem{JLi} 
 {\pc  J. Li},
Gradient estimates and Harnack inequalities for nonlinear parabolic and nonlinear elliptic equations on Riemannian manifolds,
{\sln J. Funct. Anal.} 100 (1991), 233--256.
  \smallskip
  
 \bibitem{LZZ}
 {\pc  K. Li, Z.-T. Zhang,}
 Proof of the H\'enon-Lane-Emden conjecture in $\R^3$, 
{\sln J. Differential Equations} 266 (2019), 202--226.
 \smallskip
 
 
 \bibitem{LN88}
 {\pc  C.-S. Lin, W.-M. Ni,}
A counterexample to the nodal domain conjecture and a related semilinear equation.
{\sln Proc. Amer. Math. Soc.} 102 (1988), 271--277.


 \smallskip

 \bibitem{LZ}
{\pc  Y. Li, L. Zhang, }
Liouville-type theorems and Harnack-type inequalities for semilinear elliptic equations, 
{\sln J. Anal. Math.} 90 (2003), 27--87.
 \smallskip


  \bibitem{Lio85}
  {\pc  P.-L. Lions},
 Quelques remarques sur les probl\`emes elliptiques quasilin\'eaires du second ordre, 
{\sln  J. Analyse Math.} 45 (1985), 234--254.

  \smallskip
  
  \bibitem{Lou}
 {\pc  Y. Lou,}
 Necessary and sufficient condition for the existence of positive solutions of certain cooperative system, 
 {\sln Nonlinear Anal.} 26 (1996), 1079--1095.
  \smallskip
  
  \bibitem{Lu}
 {\pc  Z. Lu},
Logarithmic gradient estimate and universal bounds for semilinear elliptic equations revisited,
Preprint arXiv:2308.14026.

  \smallskip
 
\bibitem{MP}
{\pc  E. Mitidieri, S.I. Pohozaev,}
A priori estimates and blow-up of solutions of nonlinear partial differential equations and inequalities, 
{\sln Proc. Steklov Inst. Math.} 234 (2001), 1--362.

 \smallskip

\bibitem{MSS} 
 {\pc A. Montaru, B. Sirakov, Ph. Souplet, }
Proportionality of components, Liouville theorems and a priori estimates for noncooperative elliptic systems,
{\sln Arch. Ration. Mech. Anal.}  213 (2014), 129--169.
 \smallskip

\bibitem{NNPY}
{\pc  Q.A Ngo, V.H. Nguyen, Q.H. Phan, D. Ye, }
Exhaustive existence and non-existence results for some prototype polyharmonic equations in the whole space,
{\sln J. Differ. Equations} 269, (2020) 11621--11645.
 \smallskip
 

\bibitem{Phan1}
{\pc Q.H. Phan,}
Liouville-type theorems and bounds of solutions for Hardy-H\'enon elliptic systems,
 {\sln Adv. Differential Equations} 17 (2012), 605--634.

 \smallskip
 
\bibitem{Phan2}
{\pc Q.H. Phan,}
 Liouville-type theorems for polyharmonic H\'enon-Lane-Emden system,
 {\sln Adv. Nonlinear Stud.} 15 (2015), 415--432.
 \smallskip

\bibitem{PQS1}
{\pc  P. Pol\'a\v cik, P. Quittner, Ph. Souplet, }
Singularity and decay estimates in superlinear problems
            via Liouville-type theorems.  Part I: elliptic equations and systems,
{\sln Duke Math. J.}
139 (2007), 555--579.

 \smallskip
 
 \bibitem{PQS2}
{\pc  P. Pol\'a\v cik, P. Quittner, Ph. Souplet, }
Singularity and decay estimates in superlinear problems
via Liouville-type theorems.  Part II: parabolic equations.
{\sln Indiana Univ. Math. J.} 56 (2007), 879--908.
 
 \smallskip

  \bibitem{PW}
{\pc  M.H. Protter, H. Weinberger},
Maximum Principles in Differential Equations.
Springer, New York, 1984

 \smallskip

\bibitem{PuS} 
{\pc  P. Pucci, J. Serrin},
A general variational identity,
{\sln Indiana Univ. Math. J.}
35 (1986), 681--703.

 \smallskip

\bibitem{Q16}	    
{\pc  P. Quittner, }
Liouville theorems for scaling invariant superlinear parabolic problems with gradient
structure,
{\sln Math. Ann.} 364 (2016), 269--292.

 \smallskip
\bibitem{Q21}  
{\pc  P. Quittner,}
Optimal Liouville theorems for superlinear parabolic problems. 
{\sln Duke Math. J. } 170 (2021), 1113--1136.

 \smallskip
\bibitem{Q21b}  
{\pc  P. Quittner,}
Liouville theorems for parabolic systems with homogeneous nonlinearities and gradient structure,
{\sln Partial Differ. Equations Appl.} 3 (2022), 26.

 \smallskip

\bibitem{QS12}	          
{\pc  P. Quittner, Ph. Souplet,}
Symmetry of components for semilinear elliptic systems,
{\sln SIAM J. Math. Anal.} 44 (2012), 2545--2559.

\smallskip

\bibitem{QS-CMP} 
{\pc  P.~Quittner, Ph.~Souplet}, 
Optimal Liouville-type theorems for noncooperative elliptic Schr\"odinger systems and applications,
{\sln Comm. Math. Phys.} 311 (2012), 1--19.

\smallskip

\bibitem{QSb}	         
{\pc   P. Quittner, Ph. Souplet,}
Superlinear parabolic problems. Blow-up, global existence and steady states,
Second Edition, Birkh\"auser Advanced Texts, 2019.

\smallskip

\bibitem{QSparab23} 
{\pc  P.~Quittner, Ph.~Souplet}, 
Liouville theorems and universal estimates for superlinear parabolic problems without scale invariance, preprint.

\smallskip

\bibitem{RZ}
 {\pc  W.~Reichel, H.~Zou},
Non-existence results for semilinear cooperative elliptic systems via moving spheres,
{\sln J. Differ. Equations}
161 (2000), 219--243.

\smallskip

\bibitem{Se}
{\pc E. Seneta}, 
Regularly Varying Functions, 
Lecture Notes in Math., vol. 508, Springer-Verlag, Berlin, New York, 1976.

\smallskip

\bibitem{SZ96}
{\pc  J. Serrin, H. Zou,}
Non-existence of positive solutions of Lane-Emden systems, 
{\sln Differential Integral Equations} 9 (1996), 635--653.

\smallskip

\bibitem{SZ}
{\pc  J. Serrin, H. Zou,}
Cauchy-Liouville and universal boundedness theorems for quasilinear
elliptic equations and inequalities, 
{\sln Acta Math.} 189  (2002), 79--142.

\smallskip

\bibitem{SouAdv} 
{\pc  Ph.~Souplet}, The proof of the Lane-Emden conjecture in four space dimensions,
{\sln Adv. Math.} 221 (2009), 1409--1427.

\smallskip

\bibitem{SouNHM}
 {\pc Ph.~Souplet}, Liouville-type theorems for elliptic Schr\"odinger systems associated with copositive matrices, 
{\sln Netw. Heterog. Media} 7 (2012), 967--988.

\smallskip

\bibitem{SouDCDS} 
{\pc Ph.~Souplet}, 
Universal estimates and Liouville theorems for superlinear problems without scale invariance, 
{\sln Discrete Contin. Dyn. Syst. 43} (2023), 1702--1734.

\smallskip

\bibitem{TTVW} 
 {\pc H. Tavares, S. Terracini, G.-M. Verzini, T. Weth},
Existence and nonexistence of entire solutions for non-cooperative cubic elliptic systems,
{\sln Comm. Partial Differ. Eq.} 36  (2011), 1988--2010.

\smallskip

\bibitem{YZ}
 {\pc M. Yu, Z. Zhang},
Optimal Liouville-type theorems for polyharmonic elliptic gradient systems and applications,
{\sln J. Math. Anal. Appl.} 519 (2023), 126753.

\smallskip

\bibitem{Zou} 
 {\pc H. Zou,}
 A priori estimates for a semilinear elliptic systems without variational structure and their applications, 
 {\sln Math. Ann.} 323 (2002), 713--735.
 
 \smallskip
 
 \bibitem{Zou2} 
  {\pc H. Zou,}
 A note on non-existence results for semi-linear cooperative elliptic systems via moving spheres,
 {\sln Proc. Am. Math. Soc.} 134 (2006), 1635-1646.
\end{thebibliography}
\end{document}